\numberwithin{equation}{section}
\newtheorem{theorem}{Theorem}
\numberwithin{section}{part}
\newtheorem{corollary}{Corollary}
\newtheorem{lemma}{Lemma}
\newtheorem*{remark}{Remark}
\newtheorem*{definition}{Definition}
\newtheorem*{remarks}{Remarks}
\newcommand{\LA}[1]{\refstepcounter{equation}\text{(\theequation)}\label{#1}}
\newenvironment{proof}[1][Proof]{\noindent\textbf{#1.} }{\ \rule{0.5em}{0.5em}}
\newcommand{\G}{\Gamma}
\newcommand{\vg}{\vec{\Gamma}}
\newcommand{\vvg}{\vec{\vec{\Gamma}}}
\newcommand{\tg}{\tilde{\Gamma}}
\newcommand{\blobdef}[1][]{%
  \ifthenelse{\isempty{#1}}%
  {(\G(x, M,\tau))_{x\in E, M>0, \tau\in (0,\tau_{\max}]}}
  {(\G_{#1}(x, M,\tau))_{x\in E, M>0, \tau\in (0,\tau_{\max}]}}
  }
\newcommand{\R}{\mathbb{R}}
\newcommand{\Boxx}{\textbf{Box} }
\renewcommand{\P}{\mathcal{P}}
\newcommand{\A}{\mathcal{A}}
\newcommand{\M}{\mathcal{M}}
\renewcommand{\qed}{\hfill$\blacksquare$}
\begin{document}
\title{Efficient Algorithms for Approximate Smooth Selection}
\date{\today }
\author{Charles Fefferman\thanks{Princeton University. Supported by AFOSR FA9550-18-1-069, NSF DMS-1700180, BSF 2014055. Supported by the US-Israel BSF.}, Bernat Guill\'{e}n Pegueroles\thanks{Princeton University. Supported by Fulbright-Telef\'{o}nica, AFOSR FA9550-18-1-069, NSF. Contact: bernatp@princeton.edu}}

\maketitle
\vspace{0.05\textheight}
\emph{In fond memory of Elias Stein.}
\part{Introduction and Notation}

\section{Introduction\label{sec:intro}}

This paper continues a study of extension and approximation of functions, going back to H. Whitney \cite{whitneyAnalyticExtensionsDifferentiable1934, whitneyDifferentiableFunctionsDefined1934, whitneyFunctionsDifferentiableBoundaries1934}, with important contributions from E. Bierstone, Y. Brudnyi, C. Fefferman, G. Glaeser, A. Israel, B. Klartag, E. Le Gruyer, G. Luli, P. Milman, W. Paw\l ucki, P. Shvartsman and N. Zobin.

See \cite{bierstone2007mathcal,bierstoneDifferentiableFunctionsDefined2003,bierstoneHigherorderTangentsFefferman2006,brudnyi1985linear,brudnyiGeneralizationsWhitneyExtension1994,brudnyiWhitneyExtensionProblem2001,brudnyiWhitneyProblemExistence1997,feffermanExtensionOmegaSmooth2009,feffermanFinitenessPrinciplesSmooth2016,feffermanFittingSmoothFunction2009,feffermanFittingSmoothFunction2009a,feffermanGeneralizedSharpWhitney2005,feffermanInterpolationExtrapolationSmooth2005,feffermanSharpFormWhitney2005,feffermanSobolevExtensionLinear2014,feffermanWhitneyExtensionProblem2006,glaeserEtudeQuelquesAlgebres1958,le2009minimal,luliCmExtensionBoundeddepth2010,shvartsmanWhitneytypeExtensionTheorems2017,zobinExtensionSmoothFunctions1999,zobinWhitneyProblemExtendability1998}.

The motivation of these problems is to reconstruct functions from data. In particular, the work of \cite{feffermanFittingSmoothFunction2009, feffermanFittingSmoothFunction2009a} shows how to interpolate a function given precise data points. However, in real applications the data is measured with error. A ``finiteness" theorem underlies the results of \cite{feffermanFittingSmoothFunction2009,feffermanFittingSmoothFunction2009a} for interpolation of perfectly specified data. The paper  \cite{feffermanFinitenessPrinciplesSmooth2016} proves a corresponding finiteness theorem for interpolation of data measured with error. However, the proofs of the main results of \cite{feffermanFinitenessPrinciplesSmooth2016} are nonconstructive. The interpolation of data specified with error remains a challenging problem.

Fix positive integers $m, n, D$. We work in $\mathcal{C}^{m}(\R^n, \R^D)$, the space of all $F:\R^n \to \R^D$ with all partial derivatives of order up to $m$ continuous and bounded on $\R^n$. We use the norm
\begin{itemize}
	\item[\LA{eq:intro-1}] $\|F\| = \sup_{x\in\R^n} \max_{|\alpha|\leq m} |\partial^\alpha F(x)|$
\end{itemize}
(or an equivalent one) which is finite. We write $c, C, C'$, etc. to denote constants depending only on $m,n,D$. These symbols may denote different constants in different occurrences.

Let $E \subset \R^n$ be a finite set with $N$ elements. For each $x\in E$, suppose we are given a \textbf{bounded} convex set $K(x)\subset \R^D$.  
A {\bf $\mathcal{C}^m$ selection} of $\vec{K}:=(K(x))_{x\in E}$ is a function $F\in \mathcal{C}^{m}(\R^n, \R^D)$ such that $F(x)\in K(x)$ for all $x\in E$. We want to compute a $\mathcal{C}^m$ selection $F$ whose norm $\|F\|$ is as small as possible up to a factor of $C$. Such problems arise naturally when we try to fit smooth functions to data. A simple example with $n=D=1$ is shown in Figure \ref{fig:1}; the sets $K(x)\subset \R^1$ are "error bars".

\begin{figure}[h!]
	\centering
	\includegraphics[width=\textwidth]{./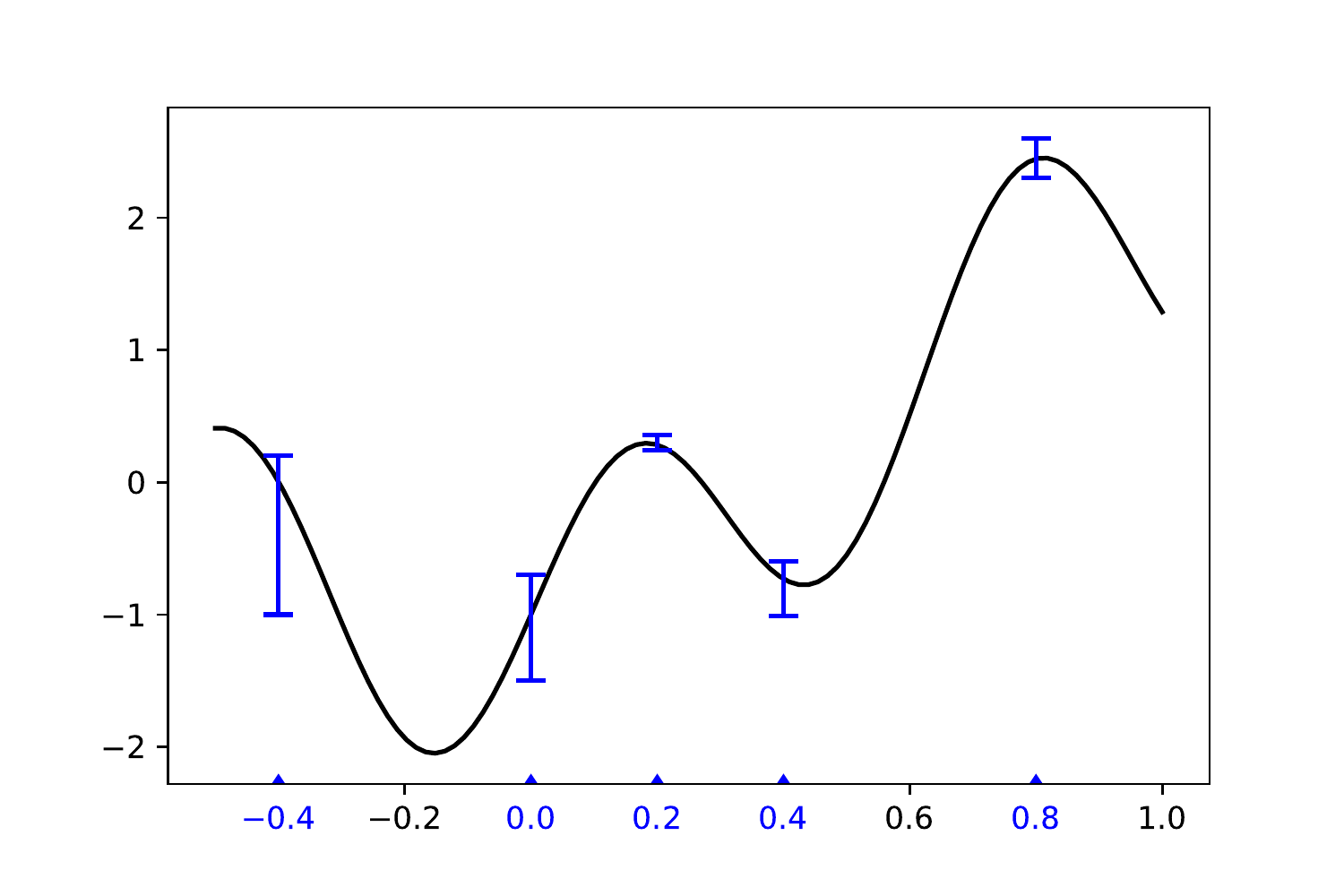}
	\renewcommand{\thefigure}{\Roman{section}.\arabic{figure}}
	\label{fig:1}
	\caption{A simple $\mathcal{C}^m$ selection problem. The set $E$ consists of the dots on the $x-$axis marked by a triangle. Above each $x\in E$ is an interval $K(x)$. The function $F$ shown here satisfies $F(x) \in K(x)$ for all $x\in E$.}
\end{figure}
If each $K(x)$ consists of a single point, then our $\mathcal{C}^m$ selection problem reduces to the problem of {\bf interpolation}: We are given a function $f:E\to \R^D$, and we want to compute an $F\in \mathcal{C}^m(\R^n, \R^D)$ such that $F=f$ on $E$, with $\|F\|$ as small as possible up to a factor $C$. For interpolation, we can take $D=1$ without loss of generality.

We want to solve the above problems by algorithms, to be implemented on an (idealized) computer with standard von Neumann architecture, able to deal with real numbers to infinite precision (no roundoff errors). We hope our algorithms will be {\bf efficient}, i.e., they require few computer operations. (An ``operation'' consists e.g. of fetching a number from RAM or multiplying two numbers.)

For interpolation problems, the following algorithm was presented in \cite{feffermanFittingSmoothFunction2009, feffermanFittingSmoothFunction2009a}:

\begin{algorithm}[h!]
	\TitleOfAlgo{Interpolation Algorithm}
	\KwData{$N-$element set $E\in \R^n$, $f:E\to \R^D$}
	\KwResult{\texttt{Query} function, a subroutine that receives $x\in \R^n$ and returning the $m-$th degree Taylor Polynomial at $x$ of a function $F\in\mathcal{C}^m(\R^n, \R^D)$. The function $F$ is uniquely determined by the data $m,n,D, E, f$. In particular, $F$ does not depend on the points $x$ for which we call the query subroutine. Furthermore, $F$ is guaranteed to satisfy $F=f$ at $E$, with $\|F\|$ as small as possible up to a factor $C$.}
	\label{alg:interp}
	\caption{Interpolation algorithm definition}
\end{algorithm}

\begin{remark}
	We can think of \texttt{Query} as an efficient, computer-friendly encoding of a fixed function $F\in\mathcal{C}^m(\R^n, \R^D)$ that gives us all the information we can have of $F$ at point $x$: Its $m-$th degree Taylor Polynomial.
\end{remark}
Moreover, Algorithm \ref{alg:interp} requires at most $CN\log N$ operation, and each call to the \texttt{Query} function requires at most $C\log N$ operations.	

Very likely the above $N\log N$ and $\log N$ are the best possible.

We hope to find an equally efficient algorithm for $\mathcal{C}^m$ selection problems. Already in simple one-dimensional cases like the problem depicted in Figure \ref{fig:1}, we don't know how to do that.

To make the problem easier, we allow ourselves to enlarge the "targets" $K(x)$ slightly. Given $\tau\in(0, 1)$ and $K\subset \R^D$ bounded and convex, we define
\begin{itemize}
	\item[\LA{eq:enlarg}] $(1+\tau)\blacklozenge K := \{v + \frac{\tau}{2} v' - \frac{\tau}{2} v'': v, v', v'' \in K\}$
\end{itemize}
If $\tau$ is small, then $(1+\tau)\blacklozenge K$ is a slightly enlarged version of $K$ whenever $K$ is bounded.

We would like to find $F\in \mathcal{C}^m(\R^n, \R^D)$ such that $F(x)\in K(x)$ for all $x\in E$. Instead, we will find an $F$ that satisfies $F(x) \in (1+\tau)\blacklozenge K(x)$ for a given small $\tau$. As $\tau\to 0$, the work of our algorithm increases rapidly.

In its simplest form, the main result of this paper is the $\mathcal{C}^m$ Selection Algorithm (Algorithm \ref{alg:cmselect}). This algorithm receives as input real numbers $M>0$ and $\tau \in (0, 1)$, a finite set $E \subset \R^n$, and a convex polytope $K(x) \subset \R^D$ for each $x \in E$. We suppose that each $K(x)$ is specified by at most $C$ linear constraints. 

Given the above input, we produce one of the following outcomes. 

\begin{itemize}
	\item {\bf Success:} We return a function $f:E\to\R^D$, with $f(x) \in (1+\tau)\blacklozenge K(x)$ for each $x \in E$. Moreover, we guarantee that there exists $F \in \mathcal{C}^m (\R^n, \R^D)$ with norm $\|F\| \leq CM$ such that $F = f$ on $E$.
	\item {\bf No go:} We guarantee that there exists no $F \in \mathcal{C}^m(\R^n, \R^D)$ with norm at most $M$, such that $F(x) \in K(x)$ for all $x \in E$.
\end{itemize}

\begin{algorithm}[!h]
	\TitleOfAlgo{$\mathcal{C}^m$ Selection Algorithm}
	\KwData{Real numbers $M>0$, $\tau\in(0,1)$, an $N-$element set $E\in \R^n$ and a convex polytope $K(x)\subset\R^D$ for each $x\in E$}
	\tcc{We suppose that each $K(x)$ is specified by at most $C$ linear constraints}
	\KwResult{One of the following two outcomes:
		
		{\bf Success}: We return a function $f:E\to\R^D$, with $f(x)\in (1+\tau)\blacklozenge K(x)$ for each $x\in E$. Moreover, we guarantee that there exists $F\in \mathcal{C}^m(\R^n, \R^D)$ with norm $\|F\|\leq CM$ such that $F = f$ on $E$.
	
		{\bf No go}: We guarantee that there exists no $F\in \mathcal{C}^m(\R^n, \R^D)$ with norm at most $M$, such that $F(x) \in K(x)$ for all $x\in E$.
	}
	\label{alg:cmselect}
	\caption{$\mathcal{C}^m$ selection algorithm description.}
\end{algorithm}
In the event of success, we can find the function $F$ by applying to $f$ the Interpolation Algorithm (Algorithm \ref{alg:interp}).

The $\mathcal{C}^m$ Selection Algorithm requires at most $C(\tau)N\log N$ operations, where $C(\tau)$ depends only on $\tau, m, n, D$.

We needn't require the convex sets $K(x)$ to be polytopes. Instead, we suppose that an {\bf Oracle} responds to a query $\tau \in (0,1)$ by producing a family of convex polytopes $K_{\tau}(x)$ ($x\in E$), each defined by at most $C(\tau)$ linear constraints, such that $K(x) \subset K_{\tau}(x) \subset (1+\tau)\blacklozenge K(x)$ for each $x \in E$.

To produce all the $K_{\tau}(x)$ ($x\in E$) for a given $\tau$, the oracle charges us $C(\tau)N$ operations of work. In particular, if each $K(x)$ is already a polytope defined by at most $C$ constraints, then the oracle can simply return $K_{\tau}(x) = K(x)$ for each $x \in E$.

We sketch a few of the ideas behind our algorithm. We oversimplify for ease of understanding. See the sections below for a correct discussion.

The first step is to place the problem in a wider context. Instead of merely examining the values of $F$ at points $x \in E$, we consider the $(m-1)$-rst degree Taylor polynomial of $F$ at $x$, which we denote by $J_x(F)$. We write $\P$ to denote the vector space of all such Taylor polynomials. Instead of families of convex sets $K(x) \subset \R^D$, we consider families of convex sets $\Gamma(x, M, \tau) \subset \P$ ($x\in E, M>0, \tau\in (0,1)$). We want to find $F\in \mathcal{C}^m(\R^n, \R^D)$ with norm at most $CM$, such that $J_x(F) \in \Gamma(x, M, \tau)$ for all $x \in E$.

Under suitable assumptions on the $\Gamma(x, M, \tau)$, we provide the following algorithm.

\begin{algorithm}
	\TitleOfAlgo{Generalized Selection Algorithm}
	\KwData{Real numbers $M>0$, $\tau\in(0,1)$. A suitable family of convex sets $\Gamma(x,M,\tau)$.}
	\KwResult{One of the following two outcomes:
		
		{\bf Success}: We exhibit a polynomial $P^x \in \Gamma(x, CM, C\tau)$ for each $x\in E$. Moreover, we guarantee that there exists $F\in \mathcal{C}^m(\R^n, \R^D)$ with norm $\|F\|\leq CM$ such that $J_x(F) = P^x$ for all $x \in E$.
		
		{\bf No go}: We guarantee that there exists no $F\in \mathcal{C}^m(\R^n, \R^D)$ with norm at most $M$, such that $J_x(F) \in \Gamma(x, M, \tau)$ for all $x\in E$.}
	\label{alg:select}
	\caption{Generalized $\mathcal{C}^m$ selection algorithm description.}
\end{algorithm}

The algorithm requires at most $C(\tau)N\log N$ operations. Our previous $\mathcal{C}^m$ selection algorithm is a special case of the Generalized selection algorithm.

Once we are dealing with $\Gamma$'s, we can take $D=1$ without loss of generality, i.e., we may deal with scalar valued functions $F$. From now on, we suppose $D=1$, and we write $\mathcal{C}^m(\R^n)$ in place of $\mathcal{C}^m(\R^n, \R^D)$.

To produce the Generalized Selection Algorithm we adapt ideas from the proof of the ``finiteness theorem'' in \cite{feffermanSharpFormWhitney2005}. The key ingredients are:
\begin{itemize}
	\item Refinements of $\Gamma$'s.
	\item Local Selection Problems, and
	\item Labels.
\end{itemize}

We provide a brief description of each of these ingredients, then indicate how they are used to produce the Generalized Selection Algorithm.

We begin with refinement of $\Gamma$'s. 

Suppose we are given a collection of convex sets $\Gamma(x, M, \tau) \subset \P$ ($x\in E, M>0, \tau\in(0,1)$). Let $M$ and $\tau$ be given. We want to find $F\in \mathcal{C}^m(\R^n)$ such that 

\begin{itemize}
	\item[\LA{eq:conds}] $\|F\|\leq M$ and $J_x(F) \in \Gamma(x, M, \tau)$ for all $x \in E$.
\end{itemize}

We can define a convex subset $\tg(x, M, \tau) \subset \G(x, M, \tau)$ for each $x\in E$ such that \eqref{eq:conds} implies the seemingly stronger condition

\begin{itemize}
	\item[\LA{eq:conds-ref}] $\|F\|\leq M$ and $J_x(F) \in \tg(x, M, \tau)$ for all $x \in E$.
\end{itemize}

That's because any $F \in \mathcal{C}^m(\R^n)$ with norm at most $M$ satisfies $|\partial^{\alpha}(J_x(F)-J_y(F))(x)|\leq M\|x-y\|^{m-|\alpha|}$ ($|\alpha|\leq m-1$) by Taylor's theorem. Consequently, if $F$ satisfies \eqref{eq:conds} and $J_x(F) = P$, then

\begin{itemize}
	\item[\LA{eq:conds-tayl}] For every $y \in E$ there exists $P' \in \Gamma(y, M, \tau)$ such that $|\partial^{\alpha}(P-P')(x)| \leq M \|x-y\|^{m-|\alpha|}$ ($|\alpha|\leq m-1$).
\end{itemize}

(We can just take $P' = J_y(F)$.)

Thus \eqref{eq:conds} implies \eqref{eq:conds-ref} if we take $\tg(x, M, \tau)$ to consist of all $P \in \G(x, M, \tau)$ satisfying \eqref{eq:conds-tayl}.

In fact, we need a different definition of $\tg$, because the $\tg$ defined by \eqref{eq:conds-tayl} is too expensive to compute. We proceed as in \cite{feffermanFittingSmoothFunction2009a}, using the Well-Separated Pairs Decomposition \cite{callahanDecompositionMultidimensionalPoint1995} from computer science. 

The {\bf first refinement} of the collection of convex sets \\$\G = (\G(x, M, \tau))_{x\in E, M>0, \tau\in(0,1)}$ is defined to be $\tg = (\tg(x, M, \tau))_{x \in E, M>0, \tau\in(0,1)}$. Proceeding by induction on $l \geq 0$, we then define the $l$-th refinement $\G_l = (\G_l(x, M, \tau))_{x\in E, M>0, \tau\in(0,1)}$ by setting $\G_0 = \G$, $\G_{l+1} = $ first refinement of $\G_l$.

We will consider the $l$-th refinement $\G_l$ for $l = 0,\dots,l_{\ast}$, where $l_{\ast}$ is a large enough integer constant determined by $m, n$.

The main properties of $\G_l$ are as follows:

\begin{itemize}
	\item Any $F \in \mathcal{C}^m(\R^n)$ that satisfies \eqref{eq:conds} also satisfies
	\begin{itemize}
		\item[\LA{eq:conds-ref-all}] $J_x(F) \in \G_l(x, M, \tau)$ for all $x\in E$ and $l = 0, \dots, l_{\ast}$
	\end{itemize}
	\item Given $P\in \G_l(x, M, \tau)$ and $y \in E$, there exists
	\begin{itemize}
		\item[\LA{eq:conds-exist-near}] $P' \in \G_{l-1}(y, M, \tau)$ such that $|\partial^{\alpha}(P-P')(x)| \leq M\|x-y\|^{m-|\alpha|}$ for $|\alpha|\leq m-1$.
	\end{itemize}
	\item For a given $(M,\tau)$, the set $\G_l(x, M, \tau)$ may be empty for some $l$, even if all the $\G(x, M, \tau)$ are nonempty. In this case, no $F\in\mathcal{C}^m(\R^n)$ can satisfy \eqref{eq:conds}; that's immediate from \eqref{eq:conds-ref-all}.
\end{itemize}
This concludes our introductory remarks about refinements.

We next discuss {\bf Local Selection Problems} and {\bf Labels}.

Let $\G = (\G(x, M, \tau))_{x\in E, M>0, \tau\in(0,1)}$ as above. Fix $M_0 > 0$, $\tau_0 \in (0,1)$. Suppose we are given a cube $Q_0 \subset \R^n$, a point $x_0 \in E\cap Q_0$, and a polynomial $P_0\in \G(x_0, M_0, \tau_0)$.

The {\bf Local Selection Problem}, denoted $LSP(Q_0, x_0, P_0)$, is to find an $F\in\mathcal{C}^m(Q_0)$ such that
\begin{itemize}
	\item $|\partial^\alpha F| \leq CM_0$ on $Q_0$ for $|\alpha| = m$
	\item $J_{x_0}(F) = P_0$, and
	\item $J_x(F) \in \G(x, CM, C\tau_0)$ for all $x \in E\cap Q_0$.
\end{itemize}

To measure the difficulty of a local selection problem $LSP(Q_0, x_0, P_0)$, we will attach {\bf labels} to it. A ``label'' is a subset $\A$ of the set $\M$ of all multiindices $\alpha = (\alpha_1, \dots, \alpha_n)$ of order $|\alpha| = \alpha_1 + \cdots + \alpha_n \leq m-1$. To decide whether we can attach a given label $\A$ to a problem $LSP(Q_0, x_0 P_0)$ we examine the geometry of the convex set $\G_l(x_0, M_0, \tau_0)$, where $l = l(\A)$ is an integer constant determined by $\A$. Roughly speaking, we attach the label $\A$ to the problem $LSP(Q_0, x_0, P_0)$ if the following condition holds, where $\delta_{Q_0}$ denotes the sidelength of $Q_0$.
\begin{itemize}
	\item[\LA{eq:LSP-cond}] For every $(\xi_\alpha)_{\alpha \in \A}$, with each $\xi_\alpha$ a real number satisfying $|\xi_\alpha|\leq M_0\delta_{Q_0}^{m-|\alpha|}$, there exists $P \in \G_{l(\A)}(x_0, M_0, \tau_0)$ such that $\partial^{\alpha}(P-P_0)(x_0) = \xi_{\alpha}$ for all $\alpha \in \A$.
\end{itemize}
We allow the case $\A = \emptyset$; in that case \eqref{eq:LSP-cond} asserts simply that $P_0 \in \G_{l(\emptyset)}(x_0, M_0, \tau_0)$. A given $LSP(Q_0, x_0, P_0)$ may admit more than on label $\A$.

We impose a total order relation $<$ on labels $\A$. If $\A<\mathcal{B}$ then, roughly speaking, a typical problem $LSP(Q_0, x_0, P_0)$ with label $\A$ is easier than a typical problem $LSP(Q_0, x_0, P_0)$ with label $\mathcal{B}$. If $\mathcal{B} \subset \A$ then $\A< \mathcal{B}$. In particular, the empty set $\emptyset$ is the maximal label with respect to $<$, and the set $\M$ of all multiindices of order at most $(m-1)$ is the minimal label. So $\M$ labels the easiest local selection problems, and $\emptyset$ labels the hardest problems. 

This completes our (oversimplified) introductory explanation of labels.

To make use of refinements, local selection problems and labels, we establish the following result for each label $\A$.

\begin{lemma}[Main Lemma for $\A$ (simplified)]
	Let $\G = (\G(x, M, \tau))_{x\in E, M>0, \tau\in(0,1)}$ be given. Fix $M_0 > 0$ and $\tau_0 \in (0,1)$. Then any local selection problem $LSP(Q_0, x_0, M_0)$ that carries the label $\A$ has a solution $F$. Moreover, such an $F$ can be computed by an efficient algorithm.
\end{lemma}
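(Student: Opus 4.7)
The plan is to induct on the label $\A$ with respect to the order $<$, starting from the minimal (easiest) label $\A = \M$ and working upward to the maximal (hardest) label $\A = \emptyset$. Because labels are subsets of the finite set of multi-indices of order at most $m-1$, and $\mathcal{B} \subset \A$ forces $\A < \mathcal{B}$, the poset of labels has depth bounded by a constant $C(m,n)$, so the induction terminates in a constant number of steps.

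For the base case $\A = \M$, condition \eqref{eq:LSP-cond} gives maximal flexibility at $x_0$: every polynomial obtained from $P_0$ by perturbing its Taylor coefficients within the budget $M_0 \delta_{Q_0}^{m-|\alpha|}$ still lies in $\G_{l(\M)}(x_0, M_0, \tau_0)$. The approach here is to combine this with property \eqref{eq:conds-exist-near}: for each $x \in E \cap Q_0$, transport $P_0$ to a polynomial $P_x \in \G_{l(\M)-1}(x, M_0, \tau_0) \subset \G(x, M_0, \tau_0)$ satisfying the Whitney-compatibility estimate $|\partial^\alpha(P_0 - P_x)(x_0)| \leq M_0 \|x - x_0\|^{m-|\alpha|}$. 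The jet field $\{P_0\} \cup \{P_x : x \in E \cap Q_0\}$ is then Whitney-compatible in the classical sense, and applying Algorithm~\ref{alg:interp} will produce the desired $F \in \mathcal{C}^m(Q_0)$ with the required $C^m$ bound.

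For the inductive step with $\A \ne \M$, the plan is to perform a Whitney/Callahan--Kosaraju dyadic decomposition of $Q_0$ into a bounded-overlap family of subcubes $\{Q_\nu\}$, each with an anchor $x_\nu \in E \cap Q_\nu$, and to show that every resulting subproblem $LSP(Q_\nu, x_\nu, P_\nu)$ carries a label $\A_\nu \supsetneq \A$, hence strictly easier in $<$. The key observation is that at the smaller scale $\delta_{Q_\nu}$ the perturbation budget $M_0 \delta_{Q_\nu}^{m-|\alpha|}$ shrinks, so the convex body $\G_{l(\A_\nu)}(x_\nu, M_0, \tau_0)$ has to cover a smaller region around $P_\nu$, which combined with the additional smoothing furnished by one more application of the refinement operator should add at least one multi-index to $\A$ at each recursion level. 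By the inductive hypothesis each subproblem is solved by some $F_\nu$, and patching via a Whitney partition of unity $F = \sum_\nu \theta_\nu F_\nu$ will yield the global $F$, with uniform $C^m$ bounds and membership $J_x(F) \in \G(x, CM_0, C\tau_0)$ controlled by the bounded overlap of $\{Q_\nu\}$.

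The hardest step, and the true crux of the proof, will be the label-upgrade dichotomy: verifying that the subproblem on each $Q_\nu$ genuinely admits a strictly larger (as a set) label. This is where the integer $l_\ast = l_\ast(m,n)$ must be chosen large enough, because each recursion level consumes one refinement index via property \eqref{eq:conds-exist-near}, and the induction must complete without exhausting the refinement tower $\G_0, \G_1, \dots, \G_{l_\ast}$. Once this dichotomy is in place, the algorithmic complexity follows by implementing each level of the decomposition with the Well-Separated Pairs Decomposition, resolving the labels by linear programming against the defining constraints of $\G_{l(\A)}$, and patching via the standard partition of unity, all within the $C(\tau) N \log N$ budget.
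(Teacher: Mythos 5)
Your high-level architecture (induct on $\A$ in the $<$ order; decompose $Q_0$ into sub-cubes with strictly smaller labels; solve each subproblem by induction; patch) matches the paper's proof. However, there are three genuine gaps.

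First, and most importantly, you identify as the crux the claim that each subproblem carries a label $\A_\nu \supsetneq \A$, i.e.\ that the label grows as a \emph{set}. This is stronger than what the paper proves, and it is not generally achievable. The total order $<$ on labels satisfies $\mathcal{B}\subset\A\Rightarrow\A<\mathcal{B}$, but the converse fails: a label $\hat{\A}<\A$ can be incomparable to $\A$ under set inclusion. The paper's Relabeling Lemma (Lemma \ref{lemma-pb2}) and its corollary (Lemma \ref{lemma-pb3}) only produce a monotonic $\hat{\A}$ with $\hat{\A}<\A$ in the total order; there is no set-inclusion statement. So your ``label-upgrade dichotomy'' is aimed at a false target. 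What actually drives the descent is a Calder\'on--Zygmund stopping rule: a dyadic cube is deemed ``OK'' when a certain critical length scale (computed via an auxiliary linear program on the refinement $\tg_{l(\A)-3}$) dominates $\epsilon^{-1}\delta_Q$, and the Relabeling Lemma converts a large off-diagonal coefficient in the auxiliary basis into a strict decrease in the order $<$, not into a superset.

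Second, the patching map cannot be a linear partition of unity $F=\sum_\nu\theta_\nu F_\nu$. The only tool you have for propagating $J_z(F)\in\G_0(z, CM_0, C\tau_0)$ from the pieces to the sum is the $(C_w,\delta_{\max})$-convexity property of the blob field, which is phrased in terms of jets $Q_i$ with $\sum_i Q_i\odot_z Q_i=1$. You therefore need the \emph{squared} Whitney partition $F=\sum_\nu\theta_\nu^2 F_\nu$ with $\sum_\nu\theta_\nu^2\equiv1$, so that the jets $J_z(\theta_\nu)$ play the role of the $Q_i$ in Lemma \ref{lemma-wsf2}. With a linear partition, condition (C2) does not follow.

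Third, your base case is both more complicated than necessary and doesn't clearly close. For $\A=\M$ one does not need to construct Whitney-compatible jets and call the interpolation algorithm: by the Transport Lemma (Corollary \ref{cor-to-transport}), transporting the $(\M,\cdot,\cdot)$-basis from $x_0$ to any $z\in E_0$ gives a $\hat P^{\#}$ with $\partial^\beta(\hat P^{\#}-P^0)=0$ for \emph{all} $\beta\in\M$, i.e.\ $\hat P^{\#}=P^0$, forcing $P^0\in\G_0(z, C'M_0, C'\tau_0)$ for every $z\in E_0$. Hence the constant polynomial $F:=P^0$ already satisfies (C1) and (C2). Your route would need an extra argument to land $J_z(F)$ in the right $\G_0$-body rather than near it, which is not supplied by the interpolation algorithm. (A smaller point: the definition $l(\A)=1+3\cdot\#\{\A'<\A\}$ consumes three refinement indices per level, not one; that changes how $l_\ast$ is budgeted but is easily fixed.)
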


We prove the above Main Lemma by induction on $\A$, with respect to the order $<$. In the base case $\A = \M$, we can simply take $F = P_0$. This $F$ solves the local selection problem $LSP(Q_0, x_0, P_0)$ because in the base case $\A = \M$, the $\Gamma(x_0, M_0, \tau_0)$ are big enough.

For the induction step, we fix a label $\A\neq \M$, and make the inductive assumption
\begin{itemize}
	\item[\LA{eq:ind-ass}] The Main Lemma for $\A'$ holds for all labels $\A' < \A$.
\end{itemize}
Under this assumption, we then prove the Main Lemma for $\A$. To do so we must solve any given $LSP(Q_0, x_0, P_0)$ that carries the label $\A$. We make a Calder\'{o}n-Zygmund decomposition of $Q_0$ into finitely many subcubes $Q_\nu$. For each $Q_\nu$ we pick a base point $x_\nu \in E$ that lies in or near $Q_\nu$ (our Calder\'{o}n-Zygmund stopping rule guarantees that such an $x_\nu$ exists). If $E\cap Q_{\nu}$ is non-empty, we take $x_\nu \in E\cap Q_{\nu}$.

Because $LSP(Q_0, x_0, P_0)$ carries the label $\A$, we know that $P_0 \in \G_{l(\A)} (x_0, M_0, \tau_0)$. Using the basic property \eqref{eq:conds-exist-near} of the $\G_l$, we find a polynomial $P_\nu \in \G_{l(\A)-1}(x_\nu, M_0, \tau_0)$ for each $\nu$, such that $|\partial^{\alpha}(P_\nu - P_0)(x_0)| \leq M_0\|x_\nu - x_0\|^{m-|\alpha|}$ for $|\alpha|\leq m-1$.

Fix $\nu$, and suppose $E \cap Q_\nu \neq \emptyset$. We then pose the local selection problem $LSP(Q_\nu, x_\nu, P_\nu)$. Our Calder\'{o}n-Zygmund stopping rule guarantees that this problem is either trivial (because $E\cap Q_\nu$ contains only one point), or else carries a label $\A_\nu' < \A$. Consequently, our induction hypothesis \eqref{eq:ind-ass} lets us compute a solution $F_\nu$ to $LSP(Q_\nu, x_\nu, P_\nu)$. This holds if $E \cap Q_\nu \neq \emptyset$. If $E \cap Q_\nu = \emptyset$, then we just set $F_\nu = P_\nu$.

Patching together the above $F_\nu$ by a partition of unity adapted to the Calder\'{o}n-Zygmund decomposition $\{Q_\nu\}$, we obtain a solution $F$ to the given local selection problem $LSP(Q_0, x_0, P_0)$. This completes our induction on $\A$, and thus proves the Main Lemma.

Finally, we apply the above discussion to produce the Generalized $\mathcal{C}^m$ Selection Algorithm. We suppose we are given $\G = (\G(x, M, \tau))_{x\in E, M>0, \tau\in(0,1)}$, together with real numbers $M_0 >0$, $\tau_0 \in (0,1)$. Let $\G_l = (\G_l(x, M, \tau))_{x \in E, M>0, \tau \in (0,1)}$ be the $l$-th refinement of $\G$. We compute the $\G_l(x, M_0, \tau_0)$ for all $x \in E$ and all $l = 0,\dots,l_{\ast}$. If any of these $\G_l (x, M_0, \tau_0)$ are empty, then we produce the outcome {\bf No go} of Algorithm \ref{alg:select}.	Thanks to \eqref{eq:conds-ref-all}, we know that no $F \in \mathcal{C}^m(\R^n)$ with norm at most $M$ can satisfy $J_x(F) \in \G(x, M_0, \tau_0)$ for all $x \in E$.

On the other hand, suppose $\G_l(x, M_0, \tau_0)$ is non-empty for each $x \in E$. Let $Q_0$ be a cube of sidelength $1$ containing a point $x_0 \in E$. Then we can find a polynomial $P_0 \in \G_l(x_0, M_0, \tau_0)$ with $l = l(\emptyset)$. The local selection problem $LSP(Q_0, x_0, P_0)$ carries the label $\emptyset$, thanks to the remark immediately after \eqref{eq:LSP-cond}. The Main Lemma for the label $\emptyset$ allows us to compute a function $F_{Q_0}\in \mathcal{C}^m(Q_0)$ with $\mathcal{C}^m$ norm at most $CM_0$, such that $J_x(F_{Q_0}) \in \G(x, CM_0, C\tau_0)$ for all $x \in E\cap Q_0$. 

Covering $E$ by cubes $Q_0$ of unit length, and patching together the above $F_{Q_0}$ using a partition of unity, we obtain a function $F \in \mathcal{C}^m(\R^n)$ with norm at most $CM$, such that $J_x(F) \in \G(x, CM_0, C\tau_0)$ for all $x \in E$.

Thus, we have produced the outcome {\bf Success} for the Generalized $C^m$ Selection Algorithm. This concludes our sketch of that algorithm.

So far, we've omitted all mention of the assumptions we have to impose on our inputs $\G(x, M, \tau)$. One of those assumptions is that
\begin{itemize}
	\item[\LA{eq:blob-intro}] $(1+\tau)\blacklozenge \G(x, M, \tau) \subset \G(x, M', \tau')$ for $M' \geq CM$, $\tau' \geq C\tau$.
\end{itemize}
This allows us to ``simplify'' many convex sets $G \subset \P$ that arise in executing the Generalized $\mathcal{C}^m$ Selection Algorithm (\ref{alg:select}). More precisely, without harm, we may replace $G$ by a convex polytope $G_\tau$ defined by at most $C(\tau)$ linear constraints, such that $G \subset G_\tau \subset (1+\tau)\blacklozenge G$.

This prevents the complexity of the relevant convex polytopes from growing uncontrollably as we execute Algorithm \ref{alg:select}. 

We close our introduction by again warning the reader that we have oversimplified matters. The sections that follow give the correct results. Therefore, even the basic notation and definitions are to be taken from subsequent sections, not from this introduction. 

We are grateful to the participants of several workshops on Whitney Problems for valuable comments. We thank the National Science Foundation, the Air Force Office of Scientific Research, the US-Israel Binational Science Foundation, the Fulbright Commission (Spain) and Telefonica for generous financial support. We also thank Kevin Luli for his remarks regarding the application of these algorithms to the interpolation of non-negative functions. 

\section{Notation and Preliminaries\label{notation-and-preliminaries}}

Fix $m$, $n\geq 1$. We will work with cubes in $\mathbb{R}^{n}$; all our
cubes have sides parallel to the coordinate axes. If $Q$ is a cube, then $%
\delta _{Q}$ denotes the sidelength of $Q$. For real numbers $A>0$, $AQ$
denotes the cube whose center is that of $Q$, and whose sidelength is $%
A\delta _{Q}$. Note that, for general convex sets $K$ we define $AK = \{Av: v\in K\}$. It will always be clear in context which of these two conventions are in effect. 

A \underline{dyadic} cube is a cube of the form $I_{1}\times I_{2}\times
\cdots \times I_{n}\subset \mathbb{R}^{n}$, where each $I_{\nu }$ has the
form $[2^{k}\cdot i_{\nu },2^{k}\cdot \left( i_{\nu }+1\right) )$ for
integers $i_{1},\cdots ,i_{n}$, $k$. Each dyadic cube $Q$ is contained in
one and only one dyadic cube with sidelength $2\delta _{Q}$; that cube is
denoted by $Q^{+}$.

We write $\mathcal{P}$ to denote the vector space of all real-valued
polynomials of degree at most $\left( m-1\right) $ on $\mathbb{R}^{n}$. If $%
x\in \mathbb{R}^{n}$ and $F$ is a real-valued $C^{m-1}$ function on a
neighborhood of $x$, then $J_{x}\left( F\right) $ (the \textquotedblleft
jet" of $F$ at $x$) denotes the $\left( m-1\right) ^{rst}$ order Taylor
polynomial of $F$ at $x$, i.e.,

\begin{equation*}
J_{x}\left( F\right) \left( y\right) =\sum_{\left\vert \alpha \right\vert
	\leq m-1}\frac{1}{\alpha !}\partial ^{\alpha }F\left( x\right) \cdot \left(
y-x\right) ^{\alpha }.
\end{equation*}%
Thus, $J_{x}\left( F\right) \in \mathcal{P}$. Note that for all convex sets $K \in \P$, the convention of $AK = \{Av: v\in K\}$ will apply.

For each $x\in \mathbb{R}^{n}$, there is a natural multiplication $\odot
_{x} $ on $\mathcal{P}$ (\textquotedblleft multiplication of jets at $x$")
defined by setting%
\begin{equation*}
P\odot _{x}Q=J_{x}\left( PQ\right) \text{ for }P,Q\in \mathcal{P}\text{.}
\end{equation*}%
We write $C^{m}\left( \mathbb{R}^{n}\right) $ to denote the Banach space of
real-valued locally $C^{m}$ functions $F$ on $\mathbb{R}^{n}$ for which the
norm 
\begin{equation*}
\left\Vert F\right\Vert _{C^{m}\left( \mathbb{R}^{n}\right) }=\sup_{x\in 
	\mathbb{R}^{n}}\max_{\left\vert \alpha \right\vert \leq m}\left\vert
\partial ^{\alpha }F\left( x\right) \right\vert
\end{equation*}%
is finite. Similarly, for $D\geq 1$, we write $C^{m}\left( \mathbb{R}^{n},%
\mathbb{R}^{D}\right) $ to denote the Banach space of all $\mathbb{R}^{D}$%
-valued locally $C^{m}$ functions $F$ on $\mathbb{R}^{n}$, for which the
norm 
\begin{equation*}
\left\Vert F\right\Vert _{C^{m}\left( \mathbb{R}^{n},\mathbb{R}^{D}\right)
}=\sup_{x\in \mathbb{R}^{n}}\max_{\left\vert \alpha \right\vert \leq
	m}\left\Vert \partial ^{\alpha }F\left( x\right) \right\Vert
\end{equation*}%
is finite. Here, we use the Euclidean norm on $\mathbb{R}^{D}$.

If $F$ is a real-valued function on a cube $Q$, then we write $F\in
C^{m}\left( Q\right) $ to denote that $F$ and its derivatives up to $m$-th
order extend continuously to the closure of $Q$. For $F\in C^{m}\left(
Q\right) $, we define 
\begin{equation*}
\left\Vert F\right\Vert _{C^{m}\left( Q\right) }=\sup_{x\in
	Q}\max_{\left\vert \alpha \right\vert \leq m}\left\vert \partial ^{\alpha
}F\left( x\right) \right\vert .
\end{equation*}%
Similarly, if $F$ is an $\mathbb{R}^{D}$-valued function on a cube $Q$,
then we write $F\in C^{m}\left( Q,\mathbb{R}^{D}\right) $ to denote that $F$
and its derivatives up to $m$-th order extend continuously to the closure of 
$Q$. For $F\in C^{m}\left( Q,\mathbb{R}^{D}\right) $, we define 
\begin{equation*}
\left\Vert F\right\Vert _{C^{m}\left( Q,\mathbb{R}^{D}\right) }=\sup_{x\in
	Q}\max_{\left\vert \alpha \right\vert \leq m}\left\Vert \partial ^{\alpha
}F\left( x\right) \right\Vert \text{,}
\end{equation*}%
where again we use the Euclidean norm on $\mathbb{R}^{D}$.

If $F\in C^{m}\left( Q\right) $ and $x$ belongs to the boundary of $Q$, then
we still write $J_{x}\left( F\right) $ to denote the $\left( m-1\right)
^{rst}$ degree Taylor polynomial of $F$ at $x$, even though $F$ isn't
defined on a full neighborhood of $x\in \mathbb{R}^{n}$.

We write $\mathcal{M}$ to denote the set of all multiindices $\alpha =\left(
\alpha _{1},\cdots ,\alpha _{n}\right) $ of order $\left\vert \alpha
\right\vert =\alpha _{1}+\cdots +\alpha _{n} \leq m-1$.

We define a (total) order relation $<$ on $\mathcal{M}$, as follows. Let $%
\alpha =\left( \alpha _{1},\cdots ,\alpha _{n}\right) $ and $\beta =\left(
\beta _{1},\cdots ,\beta _{n}\right) $ be distinct elements of $\mathcal{M}$%
. Pick the largest $k$ for which $\alpha _{1}+\cdots +\alpha
_{k}\not=\beta _{1}+\cdots +\beta _{k}$. (There must be at least one such $k$%
, since $\alpha $ and $\beta $ are distinct). Then we say that $\alpha
<\beta $ if $\alpha _{1}+\cdots +\alpha _{k}<\beta _{1}+\cdots +\beta _{k}$.

We also define a (total) order relation $<$ on subsets of $\mathcal{M}$, as
follows. Let $\mathcal{A},\mathcal{B}$ be distinct subsets of $\mathcal{M}$,
and let $\gamma $ be the least element of the symmetric difference $\left( 
\mathcal{A\setminus B}\right) \cup \left( \mathcal{B\setminus A}\right) $
(under the above order on the elements of $\mathcal{M}$). Then we say that $%
\mathcal{A}<\mathcal{B}$ if $\gamma \in \mathcal{A}$.

One checks easily that the above relations $<$ are indeed total order
relations. Note that $\mathcal{M}$ is minimal, and the empty set $\emptyset $
is maximal under $<$. A set $\mathcal{A}\subseteq \mathcal{M}$ is called 
\underline{monotonic} if, for all $\alpha \in \mathcal{A}$ and $\gamma \in 
\mathcal{M}$, $\alpha +\gamma \in \mathcal{M}$ implies $\alpha +\gamma \in 
\mathcal{A}$. We make repeated use of a simple observation:

Suppose $\mathcal{A}\subseteq \mathcal{M}$ is monotonic, $P\in \mathcal{P}$
and $x_{0}\in \mathbb{R}^{n}$. If $\partial ^{\alpha }P\left( x_{0}\right)
=0 $ for all $\alpha \in \mathcal{A}$, then $\partial ^{\alpha }P\equiv 0$
on $\mathbb{R}^{n}$ for all $\alpha \in \A$.

This follows by writing $\partial ^{\alpha }P\left( y\right)
=\sum_{\left\vert \gamma \right\vert \leq m-1-\left\vert \alpha \right\vert }%
\frac{1}{\gamma !}\partial ^{\alpha +\gamma }P\left( x_{0}\right) \cdot
\left( y-x_{0}\right) ^{\gamma }$ and noting that all the relevant $\alpha
+\gamma $ belong to $\mathcal{A}$, hence $\partial ^{\alpha +\gamma }P\left(
x_{0}\right) =0$.

For finite sets $X$, we write $\#\left( X\right) $ to denote the numbers of
elements in $X$.

If $\lambda =\left( \lambda _{1},\cdots ,\lambda _{n}\right) $ is an $n$%
-tuple of positive real numbers, and if $\beta =\left( \beta _{1},\cdots
,\beta _{n}\right) \in \mathbb{Z}^{n}$, then we write $\lambda ^{\beta }$ to
denote 
\begin{equation*}
\lambda _{1}^{\beta _{1}}\cdots \lambda _{n}^{\beta _{n}}.
\end{equation*}%
We write $B_{n}\left( x,r\right) $ to denote the open ball in $\mathbb{R}%
^{n} $ with center $x$ and radius $r$, with respect to the Euclidean metric. 

\part{Convex Sets}
\label{part:cvx}
\section{Approximating Convex Sets}
  Given a convex set $K \subset \R^D$, we define $(1+\epsilon)\blacklozenge K = K + \frac{\epsilon}{2}K - \frac{\epsilon}{2}K$ and we want to approximate $K$ by a polytope described by  $k(D,\epsilon)$ half-spaces ($\xi_i\cdot v \leq b_i$) such that
  \begin{align}
    K\subset \{v: \xi_i\cdot v \leq b_i\forall 1\leq i \leq k(D,\epsilon)\} \subset (1+\epsilon)\blacklozenge K.    
  \end{align}

\begin{remark}
	If $K$ is not bounded, then it could be that $(1+\tau)\blacklozenge K$ is $\R^D$ for every $\tau > 0$ (for example if $K$ is a half-space).
\end{remark}

\begin{lemma}\label{lem1}
Let $K\subset\R^D$ be closed, convex, nonempty, bounded. Let $\hat{e}_1,\dots,\hat{e}_D$ be an orthonormal basis for $\R^D$, let $\lambda_1,\dots,\lambda_D$ be nonnegative real numbers, and let $w_0\in\R^D$ be given. Let $C_0>0$ be a real number. Assume:
\begin{enumerate}
  \item $w_0+\lambda_l\hat{e}_l$ and $w_0-\lambda_l\hat{e}_l$ belong to $K$ for each $l$.
  \item For each $l$, $\|w^+-w^-\|\leq C_0\lambda_l$ for all $w^+,w^-\in K$ s.t. $w^+-w^- \bot \hat{e}_{l'}$ for all $l'<l$.
\end{enumerate}
Then:
\begin{enumerate}
  \item[3.] $\{v\in\R^D: |(v-w_0)\cdot \hat{e}_l|\leq c_1\lambda_l,\,1\leq l \leq D\}\subset K$ and\\ $K \subset \{v\in\R^D: |(v-w_0)\cdot \hat{e}_l|\leq C_1\lambda_l,\,1\leq l\leq D\}$.
\end{enumerate}
\end{lemma}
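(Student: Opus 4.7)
The plan is to prove the two set containments separately: the inner one uses only assumption 1 (and convexity), while the outer one uses assumption 2 together with an induction on $l$ that invokes assumption 1 at each step to build convex combinations inside $K$.

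For the inner containment I would first note that $w_0 \in K$, since it is the midpoint of $w_0 + \lambda_l \hat{e}_l$ and $w_0 - \lambda_l \hat{e}_l$, both in $K$ by assumption 1. Given any $v = w_0 + \sum_{l=1}^D b_l \hat{e}_l$ with $|b_l| \leq \lambda_l/D$, I would express $v$ as the convex combination of the $2D$ extreme points $w_0 \pm \lambda_l \hat{e}_l$ obtained by assigning mass $\tfrac{1}{2D}(1 \pm b_l/\lambda_l)$ to $w_0 \pm \lambda_l \hat{e}_l$ for each $l$ with $\lambda_l > 0$; the constraint forces $b_l = 0$ whenever $\lambda_l = 0$, so those indices contribute trivially. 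Convexity of $K$ yields $v \in K$, giving the inner containment with $c_1 = 1/D$.

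For the outer containment I would induct on $l$, proving that there exist constants $C^{(l)}$ depending only on $C_0$ and $l$ such that $|(w - w_0) \cdot \hat{e}_l| \leq C^{(l)} \lambda_l$ for every $w \in K$. The base case $l=1$ is immediate, since assumption 2 with the vacuous orthogonality condition bounds the diameter of $K$ by $C_0 \lambda_1$. For the inductive step, fix $w \in K$, set $a_{l'} := (w - w_0) \cdot \hat{e}_{l'}$ and $L' := \{l' < l : a_{l'} \neq 0\}$, and choose signs $s_{l'} \in \{-1,+1\}$ with $s_{l'} a_{l'} = |a_{l'}|$. Since the inductive hypothesis forces $\lambda_{l'} > 0$ for every $l' \in L'$, I can set $\alpha := \bigl(1 + \sum_{l' \in L'} |a_{l'}|/\lambda_{l'}\bigr)^{-1}$ and $\alpha_{l'} := \alpha |a_{l'}|/\lambda_{l'}$, and form the convex combination $w^+ := \alpha w + \sum_{l' \in L'} \alpha_{l'}(w_0 - s_{l'} \lambda_{l'} \hat{e}_{l'}) \in K$. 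A direct computation then shows $(w^+ - w_0) \cdot \hat{e}_{l'} = 0$ for $l' < l$ and $(w^+ - w_0) \cdot \hat{e}_l = \alpha a_l$, so applying assumption 2 to the pair $w^+, w_0 \in K$ gives $|\alpha a_l| \leq \|w^+ - w_0\| \leq C_0 \lambda_l$. The inductive hypothesis supplies the lower bound $\alpha \geq (1 + \sum_{l' < l} C^{(l')})^{-1}$, and I conclude $|a_l| \leq C^{(l)} \lambda_l$ with $C^{(l)} := C_0(1 + \sum_{l' < l} C^{(l')})$.

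The main technical care is in the degenerate coordinates. When $\lambda_l = 0$ at the current step, assumption 2 gives $\|w^+ - w_0\| = 0$, so $\alpha a_l = 0$ and hence $a_l = 0$, matching the desired bound. When $\lambda_{l'} = 0$ for some $l' < l$, the inductive hypothesis already gives $a_{l'} = 0$, so such $l'$ are absent from $L'$ and the construction of $w^+$ makes sense without division by zero. Taking $C_1 := \max_l C^{(l)}$ then completes the outer containment and the proof.
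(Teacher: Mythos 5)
Your proof is correct in substance, and the outer-containment argument takes a genuinely different route from the paper's. The paper scales the test point $v$ down to $cv \in K$ (using $w_0 = 0$ and convexity), then truncates to the first $l-1$ coordinates and shows the truncation lies in $K$ by invoking the inner-box containment already established; the pair (scaled point, truncation) then has the required orthogonality, and assumption~2 gives the estimate. You instead build $w^+$ directly as a convex combination of $w$ with the extreme points $w_0 - s_{l'}\lambda_{l'}\hat e_{l'}$, with weights chosen so that the first $l-1$ coordinates of $w^+ - w_0$ vanish exactly, and then compare $w^+$ with the center $w_0$. Your version does not use the inner-box estimate as a lemma for the outer one, so it is slightly more self-contained; the paper's is a little shorter once the inner box is in hand. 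Both yield constants growing like $(1+C_0)^{l}$, and you treat the degenerate $\lambda_{l'}=0$ coordinates explicitly via the inductive hypothesis, which the paper leaves implicit.

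One small arithmetic slip in the inner containment: with mass $\tfrac{1}{2D}(1 \pm b_l/\lambda_l)$ on $w_0 \pm \lambda_l \hat e_l$, the convex combination evaluates to $w_0 + \tfrac{1}{D}\sum_l b_l\hat e_l$, not to $v = w_0 + \sum_l b_l\hat e_l$. The intended weight is $\tfrac{1}{2D}\bigl(1 \pm D b_l/\lambda_l\bigr)$, which is nonnegative precisely when $|b_l| \le \lambda_l/D$ and reproduces $v$ after summation; this still gives $c_1 = 1/D$, so the conclusion is unchanged.
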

\begin{proof}
  Assume, WLOG, that $w_0 = 0$ and $\hat{e}_1, \dots, \hat{e}_D$ are the usual unit vectors in $\R^D$, then 1. and 2. imply:
  \begin{enumerate}
    \item[4.] $(v_1,\dots,v_D)\in\R^D$ belongs to $K$ provided $|v_l|\leq c\lambda_l$ for each $l$.
    \item[5.] For each $l$, the following holds. Let $w^+, w^-$ be two points in $K$, s.t. $w_j^+ = w_j^-$ for all $j<l$. Then $\| w^+ - w^- \| \leq C_0 \lambda_l$
  \end{enumerate}
  Then by the following induction on $l$ one proves that if $v = (v_1, \dots, v_D)\in K$ then $|v_l|\leq C_l\lambda_l$ with $C_l$ determined by $C_0, D$. 
  
  We define $w^+ = (cv_1,\dots,cv_{l-1},0,\dots,0)$ and $w^-=cv$ ($c<1$). 
  
  By the induction step and 4., $w^+$ belongs in $K$, and $w^-$ also. Applying 5., we learn that $c|v_l|\leq \|w^+-w^-\| \leq C_0\lambda_l$. 
\end{proof}
\begin{definition}{1}
Fix a dimension $D$. A \textbf{descriptor} is an object of the form
\begin{equation}
  \Delta = [(\xi_i)_{i=1,\dots,I},(b_i)_{i=1,\dots,I}]
\end{equation}
where each $\xi_i$ is a vector in $\R^D$ and each $b_i$ is a real number. We call $I$ the \emph{length} of the descriptor $\Delta$ and we denote the length by $|\Delta|$.

If $\Delta$ is a descriptor, then we define:
\begin{equation}
  K(\Delta) = \{v\in\R^D :  \xi_i \cdot  v \leq b_i, \; i=1,\dots,I\}
\end{equation}
\end{definition}
We use \textbf{Megiddo's Algorithm} \cite{megiddoLinearProgrammingLinear1984} to give a solution (or say it's unbounded or unfeasible) to the problem:
\begin{equation*}
\begin{aligned}
& \underset{v\in\R^D}{\text{minimize}}
& & - \xi\cdot  v  \\
& \text{subject to}
& &  \xi_i\cdot  v  \leq b_i, \; i = 1, \ldots, I.
\end{aligned}
\end{equation*}
The work and storage are linear in $|\Delta|$, with constants depending only on $D$.

\begin{lemma}\label{lem:descsub} Given a descriptor $\Delta$ for which $K(\Delta)\subset \R^D$ is nonempty and bounded, and given a subspace $H\subset\R^D$ of dimension $L\geq 1$, there exists an algorithm producing vectors $v^+,v^-,\hat{e}$ and a scalar $\lambda$ s.t.:
  \begin{enumerate}
  \item[\refstepcounter{equation}\text{(\theequation)}\label{ds1}] $v^+,v^- \in K(\Delta)$ and $v^+-v^-\in H$.
  \item[\refstepcounter{equation}\text{(\theequation)}\label{ds2}] If $w^+,w^-$ are other vectors with property \eqref{ds1}, then $\|w^+-w^-\|\leq D^{1/2}\|v^+-v^-\|$.
  \item[\refstepcounter{equation}\text{(\theequation)}\label{ds3}] $\hat{e}\in H$, $\lambda\geq 0$, $\|\hat{e}\|=1$ and $v^+ - v^- = \lambda\hat{e}$.
  \end{enumerate}
The total work and storage required by the algorithm are at most $C|\Delta|$ where $C$ depends only on $D$.
\end{lemma}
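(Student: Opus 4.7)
The plan is to reduce the problem to $L$ calls of Megiddo's linear programming algorithm, one for each coordinate direction in $H$. First, I would pick (any) orthonormal basis $\hat{u}_1,\dots,\hat{u}_L$ of $H$ and an orthonormal basis $\hat{w}_1,\dots,\hat{w}_{D-L}$ of $H^\perp$. The constraint $v^+-v^- \in H$ can be imposed by the $D-L$ linear equalities $(v^+-v^-)\cdot \hat{w}_k=0$, each expressed as a pair of linear inequalities. Then, for each $j=1,\dots,L$, I would run Megiddo's algorithm on the linear program in the $2D$ variables $(v^+,v^-)\in\R^{2D}$ given by
\begin{equation*}
\text{maximize}\quad \hat{u}_j\cdot(v^+-v^-)\quad \text{subject to}\quad v^+,v^-\in K(\Delta),\ v^+-v^-\in H.
\end{equation*}
Because $K(\Delta)$ is nonempty and bounded, each such LP has a finite optimum $\lambda_j\geq 0$, attained at some feasible pair $(v_j^+,v_j^-)$. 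Each call costs $O(|\Delta|)$ work and storage with a constant depending only on $D$, and we make $L\leq D$ such calls, so the total cost is $C|\Delta|$ as claimed.

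Next, I would pick $j^\ast$ that maximizes $\lambda_j$ and set $v^+=v_{j^\ast}^+$, $v^-=v_{j^\ast}^-$, $\lambda=\|v^+-v^-\|$. If $\lambda>0$, set $\hat{e}=(v^+-v^-)/\lambda$; if $\lambda=0$ (meaning $K(\Delta)$ is a single point), set $\hat{e}$ to be an arbitrary unit vector in $H$. Then property \eqref{ds1} is immediate from the LP constraints, and property \eqref{ds3} holds by construction.

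The one thing to verify is \eqref{ds2}. Given any $w^+,w^-\in K(\Delta)$ with $w^+-w^-\in H$, I would expand in the basis $\hat{u}_1,\dots,\hat{u}_L$:
\begin{equation*}
\|w^+-w^-\|^2=\sum_{j=1}^L\bigl(\hat{u}_j\cdot(w^+-w^-)\bigr)^2.
\end{equation*}
Since the pair $(w^+,w^-)$ is feasible for each LP (and so is $(w^-,w^+)$, which flips the sign of the objective), each term is bounded by $\lambda_j^2$. Therefore $\|w^+-w^-\|^2\leq L\,\lambda_{j^\ast}^2\leq D\,\lambda_{j^\ast}^2$. On the other hand $\lambda_{j^\ast}=\hat{u}_{j^\ast}\cdot(v^+-v^-)\leq\|v^+-v^-\|$ by Cauchy-Schwarz, so $\|w^+-w^-\|\leq D^{1/2}\|v^+-v^-\|$, which is exactly \eqref{ds2}.

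I do not expect any genuine obstacle; the main point is just that maximizing $\|v^+-v^-\|$ is not itself an LP, so one must replace it by the $L$ linear objectives $\hat{u}_j\cdot(v^+-v^-)$ and absorb the resulting loss into the factor $D^{1/2}$ via the Pythagorean identity above. The boundedness of $K(\Delta)$ is used only to ensure that each LP has a finite optimum, and the $L\leq D$ bound on the number of LPs is what keeps the work linear in $|\Delta|$ with constant depending only on $D$.
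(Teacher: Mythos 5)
Your proposal is correct and follows essentially the same route as the paper's Algorithm \ref{alg:descsub} and its explanation: fix an orthonormal basis of $H$, solve one LP per basis direction via Megiddo, take the direction achieving the largest optimum, and bound $\|w^+-w^-\|$ via the Pythagorean expansion in the $H$-basis together with Cauchy--Schwarz. The only cosmetic difference is that you spell out the Pythagorean step and the $\lambda_j\geq 0$ observation, which the paper leaves implicit.
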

\begin{algorithm}
  \TitleOfAlgo{Find diameter in subspace}
  \KwData{$\Delta$ such that $K(\Delta)$ is nonempty and bounded, and $\tilde{e}_1,\dots,\tilde{e}_L$ orthonormal basis for $H\subset\R^D$ different from $\{0\}$}
  \KwResult{vectors $v^+, v^-, \hat{e}$ and a scalar $\lambda$ as in Lemma \ref{lem:descsub}}

  \For{$l=1,\dots,L$}{
    Using \textbf{Megiddo's Algorithm}, solve the problem:
    \begin{equation*}
      \begin{array}{ll@{}ll}
        \text{maximize}_{v_l^+, v_l^-}  & \mu_l &&\\
        \text{subject to}& \mu_l = &(v^+_l - v^-_l)\cdot\tilde{e}_l &\\
                                        & & v_l^+-v_l^- \in H&\\
                                        & & v_l^+, v_l^- \in K(\Delta)&
      \end{array}
    \end{equation*}
  }
  $\hat{l} = \text{argmax}_l \mu_l$\;
  $v^+ = v_{\hat{l}}^+$, $v^- = v_{\hat{l}}^-$\;
  \uIf{$v^+ \neq v^-$}{
    $\lambda = \|v^+-v^-\|$\;
    $\hat{e} = \frac{v^+-v^-}{\lambda}$\;
  }\uElse{
    $\lambda=0$\;
    $\hat{e}$ any unit vector in $H$\;
  }
  \KwRet{$v^+, v^-, \hat{e}, \lambda$}
  \caption{Find diameter in subspace}
  \label{alg:descsub}
\end{algorithm}
 
\emph{Explanation for Algorithm \ref{alg:descsub}}:
If $w^+, w^-$ as in \eqref{ds2}, then $(w^+-w^-)\cdot \tilde{e}_l \leq \mu_l$; also $(w^- - w^+)\cdot \tilde{e}_l \leq \mu_l$. Thus $|(w^+-w^-)\cdot\tilde{e}_l|\leq \mu_l$.

Picking $\hat{l}$ to maximize $\mu_{\hat{l}}$, we see that any $w^+,w^-$ satisfying \eqref{ds2} satisfy also
\begin{equation*}
  \|w^+-w^-\|\leq D^{1/2}\mu_{\hat{l}} = D^{1/2}|(v_{\hat{l}}^+-v_{\hat{l}}^-)\cdot\tilde{e}_{\hat{l}}|\leq D^{1/2}\|v_{\hat{l}}^+ - v_{\hat{l}}^-\|.
\end{equation*}
Thus $v^+ = v_{\hat{l}}^+$ and $v^-= v_{\hat{l}}^-$ satisfy \eqref{ds1} and \eqref{ds2}.

For $\lambda, \hat{e}$, cases:
\begin{itemize}
  \item $v^+\neq v^-$, then $\lambda=\|v^+-v^-\|$ and $\hat{e}=\frac{v^+-v^-}{\lambda}$.
  \item Else, $\lambda=0$ and $\hat{e}$ any unit vector in $H$.
\end{itemize}
\qed

\begin{lemma}\label{lem:box}Given a descriptor $\Delta$ for which $K(\Delta)$ is nonempty and bounded, we produce vectors $\hat{e}_1,\dots,\hat{e}_D, w_0$ and scalars $\lambda_1,\dots,\lambda_D$ satisfying the hypotheses of Lemma \ref{lem1} for $K=K(\Delta)$, with some $C_0$ depending only on $D$. The work and storage are at most $C|\Delta|$ where $C$ depends only on $D$.
\end{lemma}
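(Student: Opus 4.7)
The plan is to apply Lemma~\ref{lem:descsub} iteratively down a nested chain of orthogonal subspaces, then average the resulting midpoints to manufacture the center $w_0$. Set $H_1=\R^D$ and run Algorithm~\ref{alg:descsub} on $(\Delta,H_1)$ to obtain $v_1^+,v_1^-\in K(\Delta)$, a unit vector $\hat e_1$, and $\lambda_1'\geq 0$ with $v_1^+-v_1^-=\lambda_1'\hat e_1$. Inductively, once $\hat e_1,\dots,\hat e_{l-1}$ have been produced, I set $H_l$ to be the orthogonal complement of $\mathrm{span}(\hat e_1,\dots,\hat e_{l-1})$ (which is nonzero, so Lemma~\ref{lem:descsub} applies) and call the algorithm again on $(\Delta,H_l)$ to produce $v_l^{\pm}$, $\hat e_l\in H_l$, and $\lambda_l'=\|v_l^+-v_l^-\|$. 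By construction $\hat e_1,\dots,\hat e_D$ form an orthonormal basis of $\R^D$, and the total work is $D$ calls to Lemma~\ref{lem:descsub}, hence at most $C|\Delta|$ with $C$ depending only on $D$.

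Next define $c_l=\tfrac{1}{2}(v_l^++v_l^-)\in K(\Delta)$ and set
\begin{equation*}
w_0=\frac{1}{D}\sum_{l=1}^D c_l,\qquad \lambda_l=\frac{\lambda_l'}{2D}.
\end{equation*}
To verify hypothesis~1 of Lemma~\ref{lem1}, observe that
\begin{equation*}
w_0\pm\lambda_l\hat e_l=\frac{1}{D}\sum_{l'\neq l}c_{l'}+\frac{1}{D}\bigl(c_l\pm\tfrac{\lambda_l'}{2}\hat e_l\bigr)=\frac{1}{D}\sum_{l'\neq l}c_{l'}+\frac{1}{D}v_l^{\pm},
\end{equation*}
which is a convex combination of $D$ elements of $K(\Delta)$, hence lies in $K(\Delta)$ by convexity.

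For hypothesis~2, fix $l$ and any $w^+,w^-\in K(\Delta)$ with $w^+-w^-\perp\hat e_{l'}$ for all $l'<l$, i.e.\ $w^+-w^-\in H_l$. Property~\eqref{ds2} applied to the $l$-th call of Lemma~\ref{lem:descsub} yields
\begin{equation*}
\|w^+-w^-\|\leq D^{1/2}\|v_l^+-v_l^-\|=D^{1/2}\lambda_l'=2D^{3/2}\lambda_l,
\end{equation*}
so $C_0=2D^{3/2}$ works. The only point requiring a moment of care is degeneracy: if $K(\Delta)$ lies in a proper affine subspace, some $\lambda_l'$ vanish and the corresponding $\hat e_l$ comes from the else-branch of Algorithm~\ref{alg:descsub}; hypothesis~1 then reduces to $w_0\in K(\Delta)$, which holds as $w_0$ is a convex combination of points of $K(\Delta)$, and hypothesis~2 for such $l$ is vacuous because \eqref{ds2} forces every admissible difference $w^+-w^-$ to have norm $0$. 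This degenerate bookkeeping is the only obstacle, and it is handled uniformly by the formula $\lambda_l=\lambda_l'/(2D)$ above.
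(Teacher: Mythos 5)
Your proof is correct and takes essentially the same route as the paper's: iterate Lemma~\ref{lem:descsub} over the nested orthogonal complements of the previously found directions, then set $w_0=\frac{1}{2D}\sum_l(v_l^++v_l^-)$ and $\lambda_l=\hat\lambda_l/(2D)$ (your $\frac{1}{D}\sum_l c_l$ is identical). Your explicit verification of hypotheses 1 and 2 of Lemma~\ref{lem1}, including the degenerate case $\lambda_l'=0$, fills in details the paper's explanation leaves implicit.
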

\begin{algorithm}
  \TitleOfAlgo{Produce Box}
  \KwData{$\Delta$ such that $K(\Delta)$ is nonempty and bounded}
  \KwResult{Vectors $\hat{e}_1,\dots,\hat{e}_D, w_0$ and scalars $\lambda_1,\dots,\lambda_D$ satisfying hypotheses of Lemma \ref{lem1}}
  $E =\{0\}$\;
  \For{$l=1,\dots,D$}{
    $H = \langle E \rangle ^\bot$\;
    $\tilde{e}_1,\dots,\tilde{e}_L$ orthonormal basis of $H$\;
    $v_l^+, v_l^-, \hat{e}_l, \hat{\lambda}_l = $ result of applying Algorithm \ref{alg:descsub} to $\Delta, H$\;
    $E = E \cup \{\hat{e}_l\}$\;
    $\lambda_l = \frac{1}{2D}\hat{\lambda}_l$\;
  }
  $w_0 = \frac{1}{2D}\sum_l (v_l^+ + v_l^-)$\;
  \KwRet{$\hat{e}_1,\dots,\hat{e}_D, w_0, \lambda_1,\dots,\lambda_D$}
  \caption{Produce Box from descriptor}
  \label{alg:box}
\end{algorithm}

\emph{Explanation of Algorithm \ref{alg:box}}: For $l=1,\dots,D$ we will produce vectors $v_l^+, v_l^-, \hat{e}_l$ and a scalar $\hat{\lambda}_l$ s.t.:
\begin{enumerate}
  \item $v_l^+,v_l^-\in K(\Delta)$
  \item $v_l^+-v_l^-\bot \hat{e}_{l'}$ for $l'<l$
  \item if $w_l^+,w_l^-\in K(\Delta)$ are other vectors such that $w_l^+-w_l^-\bot \hat{e}_{l'}$ for $l'<l$, then $\|w_l^+-w_l^-\|\leq D^{1/2}\|v_l^+-v_l^-\|$.
  \item $\hat{e}_l\bot\hat{e}_{l'}$ for all $l'<l$
  \item $\hat{\lambda}_l\geq 0$ and $v_l^+-v_l^- = \hat{\lambda}_l\hat{e}_l$.
\end{enumerate}
To do so, we proceed by induction on $l$. Given that we have constructed these for $l'<l$ then we compute the next by applying Algorithm 1 with $H$ the orthocomplement of $\text{span}\{\hat{e}_{l'}, l'<l\}$. At the end we compute:
\begin{equation*}
  w_0 = \frac{1}{2D}\sum_l (v_l^+ + v_l^-)
\end{equation*}
and $\lambda_l = \frac{\hat{\lambda}_l}{2D}$. These satisfy the hypotheses of Lemma 1 for $K(\Delta)$.
\qed

We will work with a small parameter $\tau>0$. We write $c(\tau), C(\tau),...$ to denote constants depending only on $m,n,\tau$.  Recall that if $\Gamma\subset\P$ is a nonempty bounded convex set, we write $(1+\tau)\blacklozenge\Gamma$ to denote the convex set $\Gamma -\frac{\tau}{2}\Gamma + \frac{\tau}{2}\Gamma$. 

\begin{lemma}
	\label{lem:dilation}
	Let $\Gamma = w_0 + \sigma$ where $\Gamma, \sigma \subset \P$ are convex sets, $A^{-1}B \subset \sigma \subset AB$ for the Euclidean unit ball $B\subset \R^D$, some $A>1$ and $w_0 \in \P$. Then:
	\begin{equation}
		(1+\tau)A^{-2}\sigma \subset \Gamma-\frac{\tau}{2}\Gamma + \frac{\tau}{2}\Gamma - w_0 \subset (1+\tau)A^2\sigma
	\end{equation}
\end{lemma}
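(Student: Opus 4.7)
The plan is to reduce the statement to a purely set-theoretic computation about $\sigma$ and the Euclidean ball $B$, then sandwich the ``symmetric piece'' $\tfrac{\tau}{2}\sigma-\tfrac{\tau}{2}\sigma$ between multiples of $\sigma$, and finally add $\sigma$ back.

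First, I would eliminate $w_{0}$. Since scaling a Minkowski shift gives $\alpha(w_{0}+\sigma)=\alpha w_{0}+\alpha\sigma$, expanding $\Gamma-\tfrac{\tau}{2}\Gamma+\tfrac{\tau}{2}\Gamma-w_{0}$ produces the constant contribution $w_{0}-\tfrac{\tau}{2}w_{0}+\tfrac{\tau}{2}w_{0}-w_{0}=0$, so the whole expression collapses to
\[
\sigma+\tfrac{\tau}{2}\sigma-\tfrac{\tau}{2}\sigma.
\]
The task then becomes to show $(1+\tau)A^{-2}\sigma\subset\sigma+\tfrac{\tau}{2}\sigma-\tfrac{\tau}{2}\sigma\subset(1+\tau)A^{2}\sigma$.

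Second, I would bound the symmetric piece. Using $\sigma\subset AB$ and the fact that $B$ is symmetric, Minkowski arithmetic gives $\tfrac{\tau}{2}\sigma-\tfrac{\tau}{2}\sigma\subset\tfrac{A\tau}{2}B+\tfrac{A\tau}{2}B=A\tau B$ (since $B-B=2B$). Symmetrically, $A^{-1}B\subset\sigma$ gives $\tfrac{\tau}{2}\sigma-\tfrac{\tau}{2}\sigma\supset A^{-1}\tau B$. Now I convert these $B$-bounds back to $\sigma$-bounds with the same hypotheses read in the opposite direction: $B\subset A\sigma$ yields $A\tau B\subset A^{2}\tau\sigma$, while $B\supset A^{-1}\sigma$ yields $A^{-1}\tau B\supset A^{-2}\tau\sigma$. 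Combining,
\[
A^{-2}\tau\sigma\;\subset\;\tfrac{\tau}{2}\sigma-\tfrac{\tau}{2}\sigma\;\subset\;A^{2}\tau\sigma.
\]

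Third, I would add $\sigma$ to each part. Because $\sigma$ is convex and nonempty, the identity $\sigma+\lambda\sigma=(1+\lambda)\sigma$ holds for every $\lambda\ge 0$ (one inclusion is immediate, the other uses convex combinations). This gives
\[
(1+A^{-2}\tau)\sigma\;\subset\;\sigma+\tfrac{\tau}{2}\sigma-\tfrac{\tau}{2}\sigma\;\subset\;(1+A^{2}\tau)\sigma.
\]
Finally, since $A\ge 1$ we have $(1+\tau)A^{-2}=A^{-2}+A^{-2}\tau\le 1+A^{-2}\tau$ and $(1+\tau)A^{2}=A^{2}+A^{2}\tau\ge 1+A^{2}\tau$, and the hypothesis $A^{-1}B\subset\sigma$ forces $0\in\sigma$, so scaling a convex set containing the origin by a smaller nonnegative factor yields a subset. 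Both outer inclusions of the claim follow.

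There is no real obstacle here beyond being careful with Minkowski arithmetic; the one spot that needs attention is justifying $\lambda\sigma\subset\mu\sigma$ for $0\le\lambda\le\mu$, which requires $0\in\sigma$ and is where the hypothesis $A^{-1}B\subset\sigma$ (with $B$ the Euclidean unit ball centered at the origin) is quietly used a second time.
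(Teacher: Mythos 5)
Your proof is correct, and it takes a genuinely different route from the paper's. The paper argues directly on elements: after setting $w_0=0$, for the upper inclusion it bounds $\|P\|$ for $P=P_0+\frac{\tau}{2}P_1-\frac{\tau}{2}P_2$ with $P_i\in\sigma\subset AB$, getting $\|P\|\leq(1+\tau)A$ and hence $P\in(1+\tau)AB\subset(1+\tau)A^2\sigma$; for the lower inclusion it writes $P=(1+\tau)P'$ with $P'\in A^{-2}\sigma$ and exhibits the explicit decomposition $P=P'+\frac{\tau}{2}P'-\frac{\tau}{2}(-P')$, using $A^{-2}\sigma\subset A^{-1}B\subset\sigma$ and the symmetry of $A^{-1}B$ to certify $\pm P'\in\sigma$. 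You instead work entirely with Minkowski arithmetic: you first sandwich the symmetric piece, $\tau A^{-2}\sigma\subset\frac{\tau}{2}\sigma-\frac{\tau}{2}\sigma\subset\tau A^2\sigma$, by passing through the ball $B$, then add $\sigma$ and use the convexity identity $\sigma+\lambda\sigma=(1+\lambda)\sigma$, and finally compare scalar factors using $0\in\sigma$. Your route yields the slightly sharper intermediate statement $(1+\tau A^{-2})\sigma\subset\sigma+\frac{\tau}{2}\sigma-\frac{\tau}{2}\sigma\subset(1+\tau A^2)\sigma$ before relaxing to the stated bounds, and isolates cleanly where each hypothesis ($\sigma\subset AB$, $A^{-1}B\subset\sigma$, convexity, $0\in\sigma$) is used; the paper's argument is shorter but hides the sharper intermediate inclusions. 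You also correctly flag the one subtle point --- that $\lambda\sigma\subset\mu\sigma$ for $0\leq\lambda\leq\mu$ needs $0\in\sigma$, which follows from $A^{-1}B\subset\sigma$.
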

\begin{proof}
	Assume, WLOG, that $w_0 = 0$.
	
	Let $P = P_0 + \frac{\tau}{2}P_1 - \frac{\tau}{2}P_2 \in \Gamma-\frac{\tau}{2}\Gamma + \frac{\tau}{2}\Gamma$, with $P_0, P_1, P_2 \in \Gamma$.
	
	Examining $\|P\|$, we see that $\|P\| \leq (1+\tau)A$. Therefore $\Gamma -\frac{\tau}{2}\Gamma + \frac{\tau}{2}\Gamma - w_0 \subset (1+\tau)AB \subset (1+\tau)A^2\sigma$.
	
	On the other hand if $P \in (1+\tau)A^{-2}\sigma$ then $P = (1+\tau)P' = P' + \frac{\tau}{2}P' - \frac{\tau}{2}(-P')$, $P' \in A^{-2}\sigma$. Since $A^{-2}\sigma \subset A^{-1}B \subset \sigma$ we have $-P' \in A^{-1}B \subset \sigma$ and thus $P \in \sigma + \frac{\tau}{2}(\sigma - \sigma)$. In conclusion, $(1+\tau)A^{-2}\sigma \subset \Gamma -\frac{\tau}{2}\Gamma + \frac{\tau}{2}\Gamma$.
\end{proof}
	
\begin{lemma}\label{lem2} Let $\Lambda$ be a $\tau$-net in the Euclidean unit ball $B\subset\R^D$, and let $K\subset\R^D$ be a closed convex set satisfying $A^{-1} B \subset K \subset AB$ for some given $A>1$. Let $0 < \tau \leq \frac{1}{2}$.

  Define $K_{\tau} = \{v\in \R^D:  \xi\cdot v \leq \max_{w\in K}  \xi\cdot w \;\forall \xi \in \Lambda\}$.

  Then $K\subset K_{\tau} \subset (1+6A^2\tau)\blacklozenge K$

\end{lemma}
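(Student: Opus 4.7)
The inclusion $K \subset K_\tau$ is immediate from the definition: every $v \in K$ satisfies $\xi \cdot v \leq \max_{w \in K} \xi \cdot w$ for all $\xi \in \Lambda$. The plan for the nontrivial inclusion $K_\tau \subset (1+6A^2\tau)\blacklozenge K$ is a Hahn-Banach separation argument. The key quantitative point is that $K \supset A^{-1}B$ forces the directional width of $K$ to be at least $2A^{-1}$ in every direction, so the dilation $(1+6A^2\tau)\blacklozenge K = K + 3A^2\tau(K-K)$ buys a cushion of at least $6A\tau$ beyond $K$ in each supporting direction, while the $\tau$-net $\Lambda$ is dense enough that any separating direction $\eta$ can be approximated by an element of $\Lambda$ up to $O(\tau)$ errors.

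Before launching the separation argument, I would establish the a priori bound $\|v\| \leq 2A$ for every $v \in K_\tau$. For an arbitrary unit vector $\hat{u}$ choose $\xi \in \Lambda$ with $\|\xi - \hat{u}\| \leq \tau$; then $\hat{u} \cdot v \leq \xi \cdot v + \tau\|v\| \leq A + \tau\|v\|$, using $K \subset AB$ and $\|\xi\| \leq 1$. Taking $\hat{u} = v/\|v\|$ and invoking $\tau \leq 1/2$ yields $\|v\| \leq 2A$. This boundedness is essential because it is what converts the Euclidean proximity $\|\xi - \eta\| \leq \tau$ into proximity of the linear functionals $\xi \cdot v$ and $\eta \cdot v$.

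For the main argument, suppose for contradiction that $v \in K_\tau$ but $v \notin (1+6A^2\tau)\blacklozenge K$. Since $K$ is compact, so is $(1+6A^2\tau)\blacklozenge K$, and Hahn-Banach produces a unit vector $\eta$ with $\eta \cdot v > s^+ + 3A^2\tau(s^+ - s^-)$, where $s^+ = \max_{w \in K} \eta \cdot w$ and $s^- = \min_{w \in K} \eta \cdot w$. From $A^{-1}B \subset K$ one gets $s^+ - s^- \geq 2A^{-1}$, so $\eta \cdot v > s^+ + 6A\tau$. Now pick $\xi \in \Lambda$ with $\|\xi - \eta\| \leq \tau$: then $\xi \cdot v \leq \max_{w \in K} \xi \cdot w \leq s^+ + A\tau$ (since every $w \in K$ has $\|w\| \leq A$), and $|\eta \cdot v - \xi \cdot v| \leq \tau\|v\| \leq 2A\tau$, giving $\eta \cdot v \leq s^+ + 3A\tau$, which contradicts $\eta \cdot v > s^+ + 6A\tau$. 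The main obstacle is really the bookkeeping that makes the constant $6A^2$ work: one has to simultaneously absorb an $A\tau$ error from swapping $\xi$ for $\eta$ inside the support of $K$ and a $2A\tau$ error from swapping on $v$, against a cushion proportional to $A^2\tau \cdot A^{-1}$; once the a priori bound on $\|v\|$ is in hand, the rest is a routine chase of constants through Hahn-Banach.
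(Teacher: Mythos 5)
Your proof is correct. The a priori bound $\|v\|\leq 2A$ is established exactly as in the paper, and the rest is a sound Hahn--Banach separation argument: the support function of $(1+6A^2\tau)\blacklozenge K=K+3A^2\tau(K-K)$ in a unit direction $\eta$ is $s^+ + 3A^2\tau(s^+-s^-)\geq s^+ + 6A\tau$, while for $\xi\in\Lambda$ within $\tau$ of $\eta$ the defining constraint $\xi\cdot v\leq\max_{w\in K}\xi\cdot w$ together with the two $O(A\tau)$ swap errors gives $\eta\cdot v\leq s^+ + 3A\tau$, a contradiction. The paper organizes the same duality estimate directly rather than by contradiction: it shows $\xi\cdot v\leq\max_{w\in K}\xi\cdot w + 3A\tau$ for every unit $\xi$, divides by $\max_{w\in K}\xi\cdot w\geq A^{-1}$ to get $\xi\cdot v\leq(1+3A^2\tau)\max_{w\in K}\xi\cdot w$, and concludes $v/(1+3A^2\tau)\in K$ (a slightly stronger statement, namely $K_\tau\subset(1+3A^2\tau)K$, using the fact that $0\in K$), and finally expands $v=v'+\frac{6A^2\tau}{2}v'-\frac{6A^2\tau}{2}\cdot 0$ to land in the $\blacklozenge$-dilation. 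So the two proofs are duality arguments differing mainly in packaging: yours lands on the $\blacklozenge$-enlargement in one stroke via separation, whereas the paper's route passes through the cleaner intermediate inclusion $K_\tau\subset(1+3A^2\tau)K$ and then converts scaling into the $\blacklozenge$ operation using $0\in K$. Either is fine; the paper's intermediate step gives a marginally sharper fact that could be reused elsewhere.
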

\begin{proof} Obviously $K\subset K_\tau$.\\
  Let $v\in K_\tau$, and $\xi\in\R^D$ with $\|\xi\| = 1$. Pick $\eta\in\Lambda$ such that $\xi-\eta \in \tau B$. Then:
  \begin{flalign*}
     \xi\cdot v &\leq  \eta\cdot v + \tau\|v\|  &&\\
                          &\leq \max_{w\in K}  \eta\cdot w + \tau\|v\| &&\text{[Because } v\in K_\tau\text{]}\\
                          &\leq \max_{w\in K}  \xi\cdot  w  +\|\xi-\eta\|\max_{w\in K} \|w\| + \tau \|v\| &&\\
                          &\leq \max_{w\in K}  \xi\cdot  w + A\tau + \tau\|v\| &&
  \end{flalign*}
  Also, $\eta\cdot v \leq \max_{w\in K} \eta\cdot w\leq \|\eta\|\max_{w\in K}\|w\|\leq A$, hence the above inequalities show that $\xi\cdot v \leq A + \tau \|v\|$, for any $\xi \in \R^D$ with $\|\xi\| = 1$. Thus $\|v\|\leq 2A$ and therefore $\xi\cdot v\leq\max_{w\in K}\xi\cdot w + 3A \tau $.

  On the other hand,
  \begin{equation}
    \max_{w\in K} \xi\cdot w \geq \max_{w\in A^{-1}B}  \xi\cdot w = A^{-1}
  \end{equation}
  Therefore
  \begin{equation}
     \xi \cdot  v \leq \max_{w\in K}  \xi\cdot  w + 3A^2\tau\max_{w\in K}\xi\cdot w\;\forall v\in K_\tau,\|\xi\|=1
  \end{equation}
  Because $K$ is compact, convex and $0\in K$, it follows $\frac{v}{1+3A^2\tau}\in K$ for any $v\in K_\tau$. That is, any $v \in K_{\tau}$ also is in $(1+3A^2\tau)K$ and we can write it as $v = (1+3A^2\tau)v'$ for $v' \in K$. Therefore it is $v = v' +\frac{6A^2 \tau}{2} v' - \frac{6A^2 \tau}{2} (0)$ so $v \in K + \frac{3A^2\tau}{2} K - \frac{3A^2\tau}{2} K$ so $v \in (1+6A^2\tau)\blacklozenge K$.

\end{proof}
\begin{lemma}\label{lem:approx-tau}Given $\tau>0$ and given a descriptor $\Delta$ for which $K(\Delta)$ is nonempty and bounded, there is an algorithm that produces a vector $w_0$ and a descriptor $\tilde{\Delta}$ with the following properties:
\begin{enumerate}
  \item $|\tilde{\Delta}|$ is bounded by a constant determined by $\tau$ and $D$.
  \item $K(\tilde{\Delta})\subset K(\Delta)-w_0\subset (1+\tau) \blacklozenge K(\tilde{\Delta})$.
\end{enumerate}
The work and storage used are at most $C(\tau)|\Delta|$, where $C(\tau)$ is determined by $\tau$ and $D$.
\end{lemma}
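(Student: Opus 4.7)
The plan is to reduce $K(\Delta)$ to a normalized ``round'' body and then approximate it from the inside by the convex hull of extreme points taken in many directions. First I would apply Algorithm \ref{alg:box} to $\Delta$, obtaining the vector $w_0$, an orthonormal basis $\hat e_1,\dots,\hat e_D$, and scalars $\lambda_1,\dots,\lambda_D\geq 0$ satisfying the hypotheses of Lemma \ref{lem1}. Setting $K':=K(\Delta)-w_0$, Lemma \ref{lem1} then yields the box sandwich
\[
\{v:|v\cdot\hat e_l|\leq c_1\lambda_l,\;l=1,\dots,D\}\subset K'\subset \{v:|v\cdot\hat e_l|\leq C_1\lambda_l,\;l=1,\dots,D\},
\]
and the formula $w_0=\frac{1}{2D}\sum_l(v_l^++v_l^-)$ from Algorithm \ref{alg:box} exhibits $w_0$ as a convex combination of points of $K(\Delta)$, so $0\in K'$. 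Assuming every $\lambda_l>0$, let $T$ be the diagonal linear map sending $v$ to $\sum_l\lambda_l(v\cdot\hat e_l)\hat e_l$. Then $T^{-1}K'$ is trapped between two concentric axis-aligned cubes of half-sides $c_1$ and $C_1$, so $A^{-1}B\subset T^{-1}K'\subset AB$ for the Euclidean unit ball $B$ and some $A=A(D)\geq 1$. This step costs $C(D)|\Delta|$.

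Next I would fix a $\tau'$-net $\{\xi_j\}_{j=1}^N$ on the unit sphere $S^{D-1}$ with $\tau':=\tau/(C_{*}A^2)$, where $C_{*}$ is a constant chosen below; then $N\leq C(\tau,D)$. For each $j$, Megiddo's algorithm applied to $\Delta$ with linear objective $u\mapsto T^{-1}\xi_j\cdot u$ produces, in $O(|\Delta|)$ work, a maximizer $u_j\in K(\Delta)$; setting $v_j:=T^{-1}(u_j-w_0)$ gives a point of $T^{-1}K'$ that attains $\sup_{v\in T^{-1}K'}\xi_j\cdot v$. Form $\tilde K_0:=\mathrm{conv}\{v_1,\dots,v_N\}$; by convexity $\tilde K_0\subset T^{-1}K'$.

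I would then verify $T^{-1}K'\subset(1+\tau)\blacklozenge\tilde K_0$ by a support-function computation. The $\tau'$-density of the net, Lipschitz continuity of $h_{T^{-1}K'}$ (with constant at most the diameter, bounded by $2A$), and the bound $\|v_j\|\leq A$ give $h_{\tilde K_0}(\xi)\geq h_{T^{-1}K'}(\xi)-C'A\tau'$ for every unit $\xi$; equivalently, $T^{-1}K'\subset\tilde K_0+C'A\tau'\,B$. The same estimate combined with $h_{T^{-1}K'}(\xi)\geq A^{-1}$ forces $\tilde K_0\supset(A^{-1}/2)B$ for $\tau'$ small, whence $\tilde K_0-\tilde K_0\supset A^{-1}B$. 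Choosing $C_{*}$ large enough that $C'A\tau'\leq(\tau/2)A^{-1}$ then gives $T^{-1}K'\subset\tilde K_0+(\tau/2)(\tilde K_0-\tilde K_0)=(1+\tau)\blacklozenge\tilde K_0$.

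Finally I would convert $\tilde K_0$ into a halfspace description (at most $C(\tau,D)$ facets by the upper bound theorem, computable in $C(\tau,D)$ work independent of $|\Delta|$), push the halfspaces through $T$, and read off the descriptor $\tilde\Delta$ for $\tilde K:=T(\tilde K_0)$. Because $T$ is linear it commutes with $\blacklozenge$, so the inclusions for $\tilde K_0$ become $\tilde K\subset K'\subset(1+\tau)\blacklozenge\tilde K$, as required. If some $\lambda_l=0$, the same construction runs inside $V=\mathrm{span}\{\hat e_l:\lambda_l>0\}$ and the pairs of halfspaces $\pm\hat e_l\cdot v\leq 0$ (for $l$ with $\lambda_l=0$) are appended to $\tilde\Delta$. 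The dominant cost is the $N$ Megiddo calls, giving total work $C(\tau,D)|\Delta|$, and $|\tilde\Delta|\leq C(\tau,D)$. I expect the hardest step to be the support-function argument producing the ball containment $\tilde K_0\supset(A^{-1}/2)B$, since it is this containment that allows the $\blacklozenge$-expansion to absorb the net-approximation error; the remainder is either a direct appeal to the previous lemmas or routine bookkeeping.
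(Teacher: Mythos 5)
Your proof is correct, but it takes a genuinely different route from the paper's. The paper builds $K(\tilde{\Delta})$ directly as an intersection of supporting halfspaces: for each $\xi$ in the net it records the constraint $\xi\cdot v\le \max_{w\in K}\xi\cdot w$, and then invokes Lemma~\ref{lem2} to get the sandwich. This produces an \emph{outer} approximation ($K\subset K_\tau\subset (1+C\tau)\blacklozenge K$) and yields the descriptor in halfspace form with no further work, so $|\tilde\Delta|$ equals the net size immediately. You instead collect the \emph{maximizer points} $v_j$ in each net direction and form $\tilde K_0 = \mathrm{conv}\{v_j\}$, an \emph{inner} approximation, and you redo the support-function estimate yourself rather than citing Lemma~\ref{lem2}; your estimate is right (the net-density bound plus Lipschitz continuity of $h$ gives $T^{-1}K'\subset\tilde K_0 + C'A\tau' B$, the bound $h_{\tilde K_0}\ge A^{-1}/2$ gives $\tilde K_0\supset (A^{-1}/2)B$, and together with $\tau'=\tau/(C_*A^2)$ these absorb the error into $\frac{\tau}{2}(\tilde K_0-\tilde K_0)$). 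The one extra step your route incurs is the V-to-H conversion of $\tilde K_0$: you need the upper bound theorem to cap the facet count at $C(\tau,D)$ and a convex-hull algorithm to produce the halfspaces, whereas the paper never faces this because its construction is natively an H-polytope. Your route does have the small virtue of matching the inclusion direction as literally stated in the lemma, $K(\tilde\Delta)\subset K(\Delta)-w_0\subset(1+\tau)\blacklozenge K(\tilde\Delta)$, whereas the paper's explanation first derives the reverse inclusion $K(\Delta)\subset K(\Delta_\tau)$ before restating the result in the other direction; in practice either direction serves the downstream uses in the paper. Your work and storage accounting is fine: $C(\tau,D)$ Megiddo calls at cost $C(D)|\Delta|$ each, plus $C(\tau,D)$ postprocessing independent of $|\Delta|$, and your handling of the degenerate $\lambda_l=0$ directions via the equality constraints $\hat e_l\cdot v=0$ (justified by Lemma~\ref{lem1} part 3) is correct.
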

\begin{algorithm}
  \TitleOfAlgo{Approximating Polytopes}
  \KwData{$\tau >0$, $\Lambda$ a $\frac{\tau}{C(A)}$-net in the Euclidean unit ball $B\subset\R^D$ and $\Delta$ such that $K(\Delta)$ is nonempty and bounded. Here $A$ is a constant depending only on $D$.}
  \KwResult{vector $w_0$ and descriptor $\tilde{\Delta}$ with the properties of Lemma \ref{lem:approx-tau}}
  $w_0, \hat{e}_1,\dots,\hat{e}_D,\lambda_1,\dots,\lambda_D = $ result of applying Algorithm \ref{alg:box} to $\Delta$\;
  Apply a linear transformation $T$ to $K(\Delta)$ (using $w_0, \hat{e}_1,\dots,\hat{e}_D,\lambda_1,\dots,\lambda_D$) to obtain a $\hat{\Delta}$ such that $K(\hat{\Delta}) = T(K(\Delta))$ and $\{v\in\R^{D'}: |v_l|\leq A^{-1} \;\forall l\} \subset K(\hat{\Delta}) \subset \{v\in\R^{D'}: |v_l|\leq A\;\forall l\}$\;
  $\hat{\Delta}_\tau = [\emptyset$]\;
  \ForEach{$\xi \in \Lambda$}{
  	
    $\mu_\xi = \max_{w\in K(\hat{\Delta})} \xi\cdot w$ (use Megiddo's Algorithm)\;
    $\hat{\Delta}_\tau = \Delta_\tau \cup \{[\xi, \mu_\xi] \}$\;
  }
  Apply the inverse linear transformation $T^{-1}$ to $K(\hat{\Delta}_{\tau})$ to obtain $\Delta_\tau$ such that $T(K(\Delta_{\tau})) = K(\hat{\Delta}_\tau)$\;
  \KwRet{$\Delta_\tau$}
  \caption{Approximating Polytopes}
  \label{alg:approx-tau}  
\end{algorithm}

\emph{Explanation:}
Suppose first that we know that
\begin{equation}
  \{v\in\R^D: |v_l|\leq A^{-1} \;\forall l\} \subset K(\Delta) \subset \{v\in\R^D: |v_l|\leq A\;\forall l\}
\end{equation}
for some given constant $A$. By applying Lemma \ref{lem2}, together with Megiddo's algorithm to compute $\max_{w\in K}\xi\cdot w$ for each $\xi\in\Lambda$ (as in Lemma 2), we can compute, using work and storage at most $C(A,\tau)|\Delta|$ a descriptor $\Delta_\tau$ such that $|\Delta_\tau|\leq C(A,\tau)$ and
\begin{equation}
  K(\Delta)\subset K(\Delta_\tau) \subset (1+6A^2\tau)\blacklozenge K(\Delta)
\end{equation}

Next, suppose that we know that
\begin{equation}
  \{v\in\R^D: |v_l|\leq \lambda_lA^{-1} \;\forall l\} \subset K(\Delta) \subset \{v\in\R^D: |v_l|\leq \lambda_lA\;\forall l\}
\end{equation}
for known positive numbers $A,\lambda_1,\dots,\lambda_D$. We can trivially reduce the problem to the previous case (rescaling). If instead of assuming that all $\lambda_l$ are positive, we assume that they are nonnegative, we can reduce the problem to a lower dimensional one.

Next, if we have vectors $w_0,\hat{e}_1,\dots,\hat{e}_D$ and scalars $\lambda_1,\dots,\lambda_D\geq 0$ such that the $\hat{e}_l$ form an orthonormal basis of $\R^D$ and
\begin{equation}
  \{v\in\R^D: |( v- w_0)\cdot \hat{e}_l|\leq \lambda_lA^{-1} \forall l\} \subset K(\Delta) \subset \{v\in\R^D: |( v- w_0)\cdot \hat{e}_l|\leq \lambda_lA\forall l\}
\end{equation}
we can compute a descriptor $\Delta_\tau$ s.t. $|\Delta_\tau|\leq C(A,\tau)$ and $w_0+K(\Delta_\tau)\subset K(\Delta) \subset w_0 + (1+\tau)\blacklozenge K(\Delta_\tau)$.

Finally, given a descriptor $\Delta$ we apply Algorithm \ref{alg:box} to find $w_0,\hat{e}_1,\dots,\hat{e}_l$, $\lambda_1,\dots,\lambda_D$ with $A$ depending only on $D$. We get the desired descriptor from there.

\begin{remark}
	Note that a $\tau$-net of the unit ball contains $C\tau^{-D}$ points. That is both the number of Linear Programming Problems that will be solved, and the size of the resulting descriptor. During the rest of the document, we recommend to the reader that they read $C(\tau)$ as $C\tau^{-D}$ to gauge the size of the constants appearing in the runtimes and space requirements of the algorithm.
\end{remark}

We end this subsection with a result that will be used later in the specific application to the smooth selection problem.

\begin{lemma}
	\label{lem:twicetau}
	Let $K$ be a convex set. Then $(1+\tau) \blacklozenge ((1+\tau) \blacklozenge K) = (1+(2+\tau)\tau)\blacklozenge K$.
\end{lemma}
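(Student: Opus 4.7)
The plan is to unfold both sides using the definition $(1+\sigma)\blacklozenge K = K + \tfrac{\sigma}{2}K - \tfrac{\sigma}{2}K$ and then collect terms using the standard fact that for a convex set $K$ and nonnegative scalars $a,b$, one has $aK+bK=(a+b)K$ (the nontrivial inclusion follows by writing $au+bv=(a+b)\bigl(\tfrac{a}{a+b}u+\tfrac{b}{a+b}v\bigr)$ and invoking convexity).

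First I set $L := (1+\tau)\blacklozenge K = K+\tfrac{\tau}{2}K-\tfrac{\tau}{2}K$. Using the distributivity of scalar multiplication over Minkowski sums I rewrite $\tfrac{\tau}{2}L = \tfrac{\tau}{2}K+\tfrac{\tau^2}{4}K-\tfrac{\tau^2}{4}K$ and similarly for $-\tfrac{\tau}{2}L$. Substituting into $(1+\tau)\blacklozenge L = L+\tfrac{\tau}{2}L-\tfrac{\tau}{2}L$ and grouping the positively scaled copies of $K$ separately from the negatively scaled copies, the coefficient on the positive side is $1+\tau+\tfrac{\tau^2}{2}$ and on the negative side is $\tau+\tfrac{\tau^2}{2}$. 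By the convex-set identity above, both collections collapse, yielding
\begin{equation*}
(1+\tau)\blacklozenge L \;=\; \Bigl(1+\tau+\tfrac{\tau^2}{2}\Bigr)K \;-\; \Bigl(\tau+\tfrac{\tau^2}{2}\Bigr)K.
\end{equation*}

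Next, setting $b := \tau+\tfrac{\tau^2}{2}$, I apply the identity once more in the form $(1+b)K = K+bK$ (for convex $K$ and $b\geq 0$) to rewrite the right-hand side as $K + bK - bK$. This is precisely $K + \tfrac{s}{2}K - \tfrac{s}{2}K$ with $s = 2b = 2\tau+\tau^2 = (2+\tau)\tau$, i.e.\ $(1+(2+\tau)\tau)\blacklozenge K$, finishing the proof.

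There is no real obstacle; the only subtle point is that the step $aK+bK=(a+b)K$ genuinely requires convexity of $K$ and $a,b\geq 0$, and I must be careful to apply it only to same-sign terms so as not to inadvertently assume $K-K$ is a single point. The argument is purely algebraic once the bookkeeping is set up.
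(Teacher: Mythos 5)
Your proof is correct. You work at the level of Minkowski arithmetic on sets, carefully distinguishing the positively-scaled from the negatively-scaled copies of $K$, and collapse each group using the identity $aK+bK=(a+b)K$ for convex $K$ and $a,b\geq0$. This yields equality directly, without splitting into two inclusions. The paper's proof takes a different route: it works at the element level, proving the forward inclusion by expanding $x=x_{0,0}+\frac{\tau}{2}(x_{0,1}-x_{0,2})+\cdots$ and grouping into $K$ plus sums of $(K-K)$, then appealing to symmetry and (implicitly) convexity of $K-K$ to collapse; the reverse inclusion is proved separately by exhibiting an explicit decomposition $x=\tilde{x}_0+\frac{\tau}{2}\tilde{x}_1-\frac{\tau}{2}\tilde{x}_2$ with each $\tilde{x}_i\in(1+\tau)\blacklozenge K$. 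The two arguments are really the same algebra in different clothing, but yours has the merit of making explicit the precise convexity fact being invoked, and of getting both inclusions simultaneously; the paper's reverse inclusion instead requires one to find the right decomposition by inspection. Your cautionary remark about not ``cancelling'' a $+aK$ against a $-aK$ is exactly the right thing to flag, since $K-K\neq\{0\}$ in general.
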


\begin{proof}
	Let $x \in (1+\tau) \blacklozenge ((1+\tau) \blacklozenge K)$. Then $x = x_0 + \frac{\tau}{2}x_1 - \frac{\tau}{2}x_2$ for $x_i \in (1+\tau)\blacklozenge K$. In turn each $x_i = x_{i,0} + \frac{\tau}{2}x_{i,1} - \frac{\tau}{2}x_{i,2}$ where $x_{i,j} \in K$.
	
	Therefore
	\begin{equation*}
	x = x_{0,0} + \frac{\tau}{2}(x_{0,1}-x_{0,2}) + \frac{\tau}{2}(x_{1,0}-x_{2,0}) + \left( \frac{\tau}{2} \right)^2 (x_{1,1}-x_{1,2}) + \left( \frac{\tau}{2} \right)^2 (x_{2,1}-x_{2,2})
	\end{equation*}
	
	Each of the summands (except $x_{0,0}$ which belongs to $K$) is a member of $K-K$, a symmetric convex set. Therefore, $x \in K + \frac{\tau}{2}(K-K) + \frac{\tau}{2}(K-K) + \left( \frac{\tau}{2} \right)^2 (K-K) + \left( \frac{\tau}{2} \right)^2 (K-K)$. Because $K-K$ is symmetric we can group these Minkowski sums, so 
	
	\begin{flalign*}
	K + \frac{\tau}{2}(K-K) + \frac{\tau}{2}(K-K) + &\\\left( \frac{\tau}{2} \right)^2 (K-K) + \left( \frac{\tau}{2} \right)^2 (K-K) &= K + (\frac{\tau}{2} + \frac{\tau}{2} + \left( \frac{\tau}{2} \right)^2  + \left( \frac{\tau}{2} \right)^2) (K-K)\\
	&= K + (\tau+ \frac{\tau^2}{2})(K-K)\\
	&= K + (\tau+ \frac{\tau^2}{2})K - (\tau+ \frac{\tau^2}{2})K \\
	&= (1+(2+\tau)\tau)\blacklozenge K
	\end{flalign*}
	
	The reverse inclusion proceeds similarly. Let $x \in (1+(2+\tau)\tau) \blacklozenge K$. Therefore $x = x_0 + \frac{(2+\tau)\tau}{2}x_1 - \frac{(2+\tau)\tau}{2}x_2$ where $x_i \in K$. Now we can reverse the above operations and see that
	\begin{flalign*}
	x &= x_0 + \frac{(2+\tau)\tau}{2}x_1 - \frac{(2+\tau)\tau}{2}x_2 \\
	  &= x_0 + (\frac{\tau}{2}+\frac{\tau}{2} +\left(\frac{\tau}{2}\right)^2 + \left(\frac{\tau}{2}\right)^2)(x_1-x_2)\\
	  &= \underbrace{x_0 + \frac{\tau}{2}(x_1-x_2)}_{\tilde{x}_0 \in (1+\tau)\blacklozenge K} + \frac{\tau}{2}\underbrace{(x_1+\frac{\tau}{2}(x_1-x_2))}_{\tilde{x}_1\in (1+\tau)\blacklozenge K} - \frac{\tau}{2}\underbrace{(x_2+\frac{\tau}{2}(x_2-x_1))}_{\tilde{x}_2\in (1+\tau) \blacklozenge K} \\
	  &= \tilde{x}_0 +\frac{\tau}{2}\tilde{x}_1 - \frac{\tau}{2}\tilde{x}_2
	\end{flalign*}
	belongs to $(1+\tau) \blacklozenge ((1+\tau) \blacklozenge K)$.
	
\end{proof}

\section{Approximate Minkowski Sums}
Let $\Boxx=\{v\in\R^D: |v\cdot\hat{e}_i|\leq\lambda_i, i = 1,\dots,D\}$ and
$\Boxx'=\{v\in\R^D: |v\cdot\hat{e}'_i|\leq\lambda'_i, i=1,\dots,D\}$ where $\hat{e}_i$ and $\hat{e}_i'$ ($i=1,\dots,D$) are orthonormal bases for $\R^D$ and $\lambda_i, \lambda'_i$ are nonnegative numbers.

We will say here that two symmetric convex sets $K_1, K_2$ are "comparable" if $cK_1\subset K_2\subset CK_1$ for $c,C$ depending only on $D$.

Let $I=\{i:\lambda_i\neq 0\}$ and $I'=\{i:\lambda'_i\neq 0\}$. Let $V=\text{span}\{\hat{e}_i: i\in I\}$ and $V'=\text{span}\{\hat{e}_i':i\in I'\}$.

A box \Boxx can be written equivalently as $\Boxx=\{v\in V: |v\cdot\hat{e}_i|\leq\lambda_i, i \in I \} \subset \R^D$. It is comparable to an \textbf{Ellipsoid} $E = \{v\in V: q(v) = \sum_{i\in I} (\frac{v\cdot\hat{e}_i}{\lambda_i})^2 \leq 1\}$.

\begin{algorithm}
	\TitleOfAlgo{BoxAMS (Box Approximate Minkowski Sum)}
	\KwData{Two nonempty boxes, $\Boxx$ and $\Boxx'$.}
	\KwResult{A box $\overline{\Boxx}$ comparable to $\Boxx+ \Boxx'$.}
	$I = \{i:\lambda_i \neq 0\}$, $I' = \{i:\lambda'_i \neq 0\}$\;
	$V = \text{span}\{\hat{e}_i: i\in I\}$, $V' = \text{span}\{\hat{e}_i':i\in I'\}$\;
	$D' = \text{dim}(V+V')$\;
	Define $Q(w) = \min\limits_{\substack{v+v'=w \\ v\in V, v'\in V'}} q(v) + q'(v')$ for $w \in V+V'$\;
	Diagonalize $Q$ to obtain an orthonormal basis $\tilde{e}_1,\dots,\tilde{e}_L$ for $V+V'$ and positive numbers $\mu_1,\dots,\mu_L$\;
	Complete orthonormal basis to $\R^D$, with $\mu_i = 0$ for $i > L$\;
	\KwRet{$\overline{\Boxx} = \{v\in\R^D: |v\cdot\tilde{e}_i|\leq\mu_i, i=1,\dots,D\}$}
	\caption{BoxAMS}
	\label{alg:BoxAMS}
\end{algorithm}

We will compute a box comparable to the Minkowski sum $\Boxx+\Boxx'$.
We know $\Boxx$ is comparable to $\textbf{Ellipsoid}$  and $\Boxx'$ is comparable to $\textbf{Ellipsoid}'$.
Then $\Boxx+\Boxx'$ is comparable to
\begin{align}
  \textbf{Ellipsoid}+\textbf{Ellipsoid}'=\{w\in V+V':\min_{\substack{v+v'=w \\ v\in V, v'\in V'}} \max\{q(v),q'(v')\}\leq 1\}
\end{align}

which in turn is comparable to $\{w\in V+V': \min\limits_{\substack{v+v'=w \\ v\in V, v'\in V'}} q(v) + q'(v') \leq 1\} $ .
The minimum here may be expressed as $Q(w)$ for a positive definite quadratic form $Q$ on $V+V'$. By diagonalizing $Q$ we find an orthonormal basis $\tilde{e}_1,\dots,\tilde{e}_L$ for $V+V'$ and positive numbers $\mu_1,\dots,\mu_L$
such that $\Boxx+\Boxx'$ is comparable to
\begin{align}
  \{w\in V+V' : \sum_{i=1,\dots,L}(\frac{w\cdot \tilde{e}_i}{\mu_i})^2 \leq 1\}
\end{align}
Completing $\tilde{e}_1,\dots,\tilde{e}_L$ to an orthonormal basis $\tilde{e}_1,\dots,\tilde{e}_D$ of $\R^D$,
 and setting $\mu_i = 0$ for $i=L+1,\dots,D$ we see that $\Boxx+\Boxx'$ is comparable to $\{w\in\R^D: |w\cdot \tilde{e}_i|\leq\mu_i, i=1,\dots,D\}$. Algorithm \ref{alg:BoxAMS} describes this process, and the total work and storage to compute this box is at most $C(D)$, a constant depending only on $D$.

 \begin{algorithm}[H]
   \TitleOfAlgo{AMS (Approximate Minkowski Sum)}
   \KwData{Two nonempty bounded convex polytopes $K=K(\Delta)$ and $K'=K(\Delta')$ in $\R^D$,  $\tau>0$}
   \KwResult{Convex polytope $\tilde{K} = K(\tilde{\Delta})$ with $|\tilde{\Delta}|\leq C(\tau)$ such that $K+K' \subset \tilde{K} \subset (1+\tau)\blacklozenge(K+K')$}
   \uIf{$K == \emptyset$ or $K' == \emptyset$}{
     \KwRet{$\emptyset$}
   }
   $w_0, \hat{e}_1, \dots, \hat{e}_D, \lambda_1,\dots, \lambda_D = $ result of applying Algorithm \ref{alg:box} to $\Delta$\;
   $w_0', \hat{e}_1', \dots, \hat{e}_D', \lambda_1',\dots, \lambda_D' = $ result of applying Algorithm \ref{alg:box} to $\Delta'$\;
   $\Boxx = \{v\in\R^D: |v\cdot\hat{e}_i|\leq\lambda_i, i = 1,\dots,D\}$\;
   $\Boxx' = \{v\in\R^D: |v\cdot\hat{e}'_i|\leq\lambda'_i, i = 1,\dots,D\}$\;
   $\widetilde{\Boxx} = \text{BoxAMS}(\Boxx, \Boxx')$.
   $I = \{i:\lambda_i \neq 0\}$, $I' = \{i:\lambda'_i \neq 0\}$\;
   $V = \text{span}\{\hat{e}_i: i\in I\}$, $V' = \text{span}\{\hat{e}_i':i\in I'\}$\;
   $D' = \text{dim}(V+V')$\;
   $\Lambda$ a $\frac{\tau}{C(A')}$-net on $B(V+V')$, $A'$ a constant depending only on $D'$\;
   Rescale and recenter both $K$ and $K'$ as in Algorithm \ref{alg:approx-tau} with $\tilde{e_i}$ and $\mu_i$ from $\widetilde{\Boxx}$\;
   $\tilde{\hat{\Delta}} = \{\emptyset\}$\;
   \ForEach{$\xi \in \Lambda$}{
     $\mu_\xi = \max_{w\in \hat{K}} \xi\cdot w + \max_{w'\in \hat{K}'}\xi\cdot w'$\;
     $\tilde{\hat{\Delta}} = \tilde{\hat{\Delta}} \cup \{[\xi, \mu_\xi]\}$\;
   }
   Rescale and recenter $\tilde{\hat{\Delta}}$ as in Algorithm \ref{alg:approx-tau} to produce $\tilde{\Delta}$\;
   \KwRet{$\tilde{\Delta}$}\;
   \caption{AMS}
   \label{alg:AMS}
\end{algorithm}

\emph{Explanation of Algorithm \ref{alg:AMS}}: We write $C, C'$, etc. to denote constants depending only on $D$. By an earlier algorithm we can find points $w\in K$, $w'\in K'$ and rectangular boxes

\begin{align}
  \Boxx = \{ v\in \R^D : |v\cdot\hat{e}_i|\leq\lambda_i, i=1,\dots,D\}\\
  \Boxx' = \{ v\in \R^D: |v\cdot\hat{e}_i'|\leq\lambda'_i, i=1,\dots,D\}
\end{align}

such that $\Boxx\subset K-w\subset C\Boxx$ and $\Boxx'\subset K'-w' \subset C\Boxx'$. Without loss of generality we may assume $w=w'=0$. We then apply the algorithm immediately preceding this one, to compute a rectangular box
$\widetilde{\Boxx}\subset\R^D$ such that $\Boxx+\Boxx'\subset \widetilde{\Boxx}\subset C(\Boxx+\Boxx')$, and therefore
\begin{align}
  c\widetilde{\Boxx}\subset K+K' \subset C \widetilde{\Boxx}
  \label{tildebox}
\end{align}
By applying an invertible linear map to $\R^D$ we may assume that (\ref{tildebox}) holds with
\begin{align}
  \widetilde{\Boxx} = \{v=(v_1,\dots,v_D)\in\R^D: |v_i|\leq 1, i=1,\dots,I, v_i=0, i>I\}
\end{align}
for some $I$. We may regard $K, K'$ as subsets of $\R^I$. We may now apply Algorithm 3 but maximizing over $K+K'$ instead of a single $K$. To compute it we simply compute
\begin{align}
  \max_{w\in K+K'} \xi\cdot w = \max_{w\in K} \xi\cdot w+\max_{w\in K'} \xi \cdot w
\end{align}
The work used to do the above is at most $C(\tau)[|\Delta|+|\Delta'|]$.

\section{Approximate Intersections}
In this section, we present an algorithm to compute an approximation of the intersection of $k$ nonempty, bounded convex sets $K_1 = K(\Delta_1),\dots,K_k = K(\Delta_k)$. We use the tools and algorithms from previous sections. The algorithm uses work and storage at most $C(\tau)\sum_l |\Delta_l|$ with $C(\tau)$ determined by $\tau, D$.

\begin{remark}
	\label{rem:intersect}
	The intersection of $k$ non-empty convex sets $K_1, \dots, K_k$ given by $k$ descriptors $\Delta_1, \dots, \Delta_k$ is described by the union $\cup_l \Delta_l$. The algorithm is needed to keep the size of the descriptor controlled even when $k$ is very large.
\end{remark}
 \begin{algorithm}
   \TitleOfAlgo{AI (Approximate Intersection)}
   \KwData{$k$ nonempty bounded convex polytopes $K_l=K(\Delta_l)$ in $\R^D$,  $\tau>0$}
   \KwResult{Convex polytope $\tilde{K} = K(\tilde{\Delta})$ with $|\tilde{\Delta}|\leq C(\tau)$ such that $\cap_{l}K_l \subset \tilde{K} \subset (1+\tau)\blacklozenge(\cap_lK_l)$}
   $\hat{\Delta}= \{\emptyset\}$\;
   \For{$l = 1,\dots,k$}{
     \uIf{$K_l == \emptyset$}{
       \KwRet{$\emptyset$}
     }
     $\hat{\Delta} = \hat{\Delta}\cup\Delta_l$\;
   }
   $\tilde{\Delta} = $ result of applying Algorithm \ref{alg:approx-tau} to $\hat{\Delta}$ and $\tau$\;
   \KwRet{$\tilde{\Delta}$}
   \caption{Algorithm: AI}
   \label{alg:AI}
 \end{algorithm}
 \clearpage

\part{Blob Fields and Their Refinements}

\section{Finding Critical Delta}\label{sec:find-critical-delta}

In this section we work in $\P$, the vector space of polynomials of degree less than or equal to $m-1$ on $\mathbb{R}^n$. We denote (possibly empty) convex sets of polynomials by $\G$. Let $D = \dim \P$. Constants $c,C,C',$etc. depend only on $m, n$ unless we say otherwise.

Recall from \cite{feffermanFittingSmoothFunction2009a}.

\begin{lemma}{"Find Critical Delta" in Symmetric Case.}

Let $\xi_1,\dots,\xi_D$ be linear functionals on $\P$, and let $\lambda_1,\dots,\lambda_D$ be nonnegative real numbers. Let $\A\subset\M$ and let $x_0\in\R^n$, let $A\geq 1$.
There exists an algorithm that given the above produces $\hat{\delta}\in[0,\infty]$ for which the following hold:
\begin{enumerate}
  \item[(I)] Given $0<\delta<\hat{\delta}$ there exist $P_{\alpha}\in\P$ ($\alpha\in\A$) such that:
  \begin{enumerate}
    \item[(A)] $\partial^\beta P_\alpha(x_0) = \delta_{\beta\alpha}$ for $\beta,\alpha\in\A$.
    \item[(B)] $|\partial^\beta P_\alpha(x_0)| \leq CA\delta^{|\alpha|-|\beta|}$ for $\beta\in\M$, $\alpha\in\A$, $\beta\geq\alpha$.
    \item[(C)] $|\xi_{l}(\delta^{m-|\alpha|}P_\alpha)|\leq CA\lambda_l$ for $\alpha\in \A$, $l=1,\dots,D$.
  \end{enumerate}
  \item[(II)] Suppose $0<\delta<\infty$ and $P_\alpha\in\P$ ($\alpha\in\A$) satisfy
  \begin{enumerate}
    \item[(A)] $\partial^\beta P_\alpha(x_0) = \delta_{\beta\alpha}$ for $\beta,\alpha\in\A$.
    \item[(B)] $|\partial^\beta P_\alpha(x_0)| \leq cA\delta^{|\alpha|-|\beta|}$ for $\beta\in\M$, $\alpha\in\A$, $\beta\geq\alpha$.
    \item[(C)] $|\xi_{l}(\delta^{m-|\alpha|}P_\alpha)|\leq cA\lambda_l$ for $\alpha\in \A$, $l=1,\dots,D$.
  \end{enumerate}
  Then $0<\delta<\hat{\delta}$.
\end{enumerate}
The work and storage used to compute $\hat{\delta}$ are at most $C$ (see Lemma 1 in section 8 of "Fitting II" \cite{feffermanFittingSmoothFunction2009a}).
\end{lemma}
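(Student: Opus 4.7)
The plan is to follow the structure of Lemma 1 in Section 8 of Fitting II, cited for the conclusion, and reduce to a parametric linear programming problem for each $\alpha \in \mathcal{A}$ separately. Parametrize $P_\alpha$ by its Taylor coefficients $c_\beta = \partial^\beta P_\alpha(x_0)$ for $\beta \in \mathcal{M}$. Condition (A) then fixes $c_\beta = \delta_{\beta\alpha}$ for $\beta \in \mathcal{A}$; condition (B) imposes $|c_\beta|\leq CA\delta^{|\alpha|-|\beta|}$ for the $\beta\in\mathcal{M}\setminus\mathcal{A}$ with $\beta\geq\alpha$; condition (C), after unfolding $\xi_l$ into its coordinates $\mu_{l,\beta}=\xi_l\!\left(\tfrac{(y-x_0)^\beta}{\beta!}\right)$, gives $D$ linear inequalities $\bigl|\sum_\beta c_\beta\mu_{l,\beta}\bigr|\leq CA\lambda_l\,\delta^{|\alpha|-m}$. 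The coefficients $c_\beta$ with $\beta\in\mathcal{M}\setminus\mathcal{A}$ and $\beta<\alpha$ are unconstrained and will serve as the LP's free variables.

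The next step is to remove most of the $\delta$-dependence by the change of variables $\tilde{c}_\beta=\delta^{|\beta|-|\alpha|}c_\beta$. Condition (A) fixes $\tilde{c}_\alpha=1$ and $\tilde{c}_\beta=0$ for $\beta\in\mathcal{A}\setminus\{\alpha\}$; (B) becomes the $\delta$-free bound $|\tilde{c}_\beta|\leq CA$; (C) becomes $\bigl|\sum_\beta \tilde{c}_\beta\,\delta^{m-|\beta|}\mu_{l,\beta}\bigr|\leq CA\lambda_l$, in which $\delta$ enters only through the monomial weights $\delta^{m-|\beta|}$. Thus for each $\alpha$ we obtain a linear program $\mathrm{LP}_\alpha(\delta)$ in a fixed finite number of variables and constraints, whose dependence on $\delta$ is entirely through monomials in a single real parameter.

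From here I define $\delta_\alpha$ to be the supremum of $\delta>0$ for which $\mathrm{LP}_\alpha(\delta)$ is feasible with the constant $C$ of (I), and set $\hat{\delta}=\min_{\alpha\in\mathcal{A}}\delta_\alpha$ (with the convention $\min\emptyset=\infty$). Since $\mathrm{LP}_\alpha(\delta)$ has $O(1)$ variables and $O(1)$ constraints, its threshold $\delta_\alpha$ is determined by at most $O(1)$ polynomial equations in $\delta$ coming from the activation of a constraint face, and can be computed in $O(1)$ work and storage; taking the minimum over $\alpha\in\mathcal{A}$ is likewise $O(1)$. Direction (I) then holds by construction: for $0<\delta<\hat{\delta}\leq\delta_\alpha$ the LP is feasible, and reversing the rescaling yields polynomials $P_\alpha$ with the required properties.

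The main obstacle is direction (II), which requires the gap between the small constant $c$ (hypothesis) and the large constant $C$ (conclusion) to be matched exactly to the slack in $\mathrm{LP}_\alpha$. The idea is that if polynomials $P_\alpha$ exist for some $\delta$ with the tight bounds of (II), then after the same rescaling they produce a solution of $\mathrm{LP}_\alpha(\delta)$ that lies strictly inside the feasible region associated to the loose constant $C$, with margin controlled by $C/c$. Choosing the constant $C$ in (I) sufficiently large relative to the constant $c$ in (II) (as dictated by the dimensions $\#\mathcal{A}$ and $D$) then forces $\delta<\delta_\alpha$ for every $\alpha\in\mathcal{A}$, hence $\delta<\hat{\delta}$. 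This calibration of constants is the technical content worked out in Fitting II, which I invoke to conclude.
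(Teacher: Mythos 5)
The paper does not actually prove this lemma: it is recalled verbatim from Fefferman--Klartag's \emph{Fitting II} (Lemma 1, Section 8) and invoked by citation. So there is no proof in the paper to compare against, but your reduction to a per-$\alpha$ parametric LP over Taylor coefficients is indeed the right framework, and the computational claim ($O(1)$ variables, $O(1)$ constraints, hence $O(1)$ work) is sound. Two essential points, however, are asserted rather than established, and both matter.

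First, you claim that defining $\delta_\alpha := \sup\{\delta : \mathrm{LP}_\alpha(\delta)\text{ feasible}\}$ ``by construction'' gives (I) for all $0<\delta<\hat\delta$. This tacitly uses that the feasible $\delta$-set is downward closed, which does not follow from having defined $\delta_\alpha$ as a supremum. It \emph{is} true, but only because of the specific structure of (B) and (C): the order relation $\beta\geq\alpha$ gives $|\beta|\geq|\alpha|$, so in (B) the exponent $|\alpha|-|\beta|\leq 0$ and the bound only loosens as $\delta$ decreases; and in (C) the exponent $m-|\alpha|\geq 1>0$, so the left side only shrinks. Thus the \emph{same} $P_\alpha$ that works at $\delta_1$ works at every $\delta_2<\delta_1$, and the feasible set is an interval $(0,\delta_\alpha)$ or $(0,\delta_\alpha]$. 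Without this observation, ``$\delta<\delta_\alpha$ implies feasibility'' is an unproved assertion. Second, your argument for (II) leans on a heuristic (``margin controlled by $C/c$'' forcing $\delta<\delta_\alpha$). Interior solutions at one parameter value do not automatically persist under parameter changes in an LP, and in any case (II) needs the \emph{strict} inequality $\delta<\hat\delta$, which a supremum alone doesn't give. The correct mechanism is multiplicative: if $P_\alpha$ satisfies (A)--(C) at scale $\delta$ with constant $c$, then the \emph{same} $P_\alpha$ satisfies them at $2\delta$ with constant $2^{m}c$ (check each exponent). Taking $c\leq 2^{-m}C$ then yields $2\delta\leq\delta_\alpha$, hence $\delta<\delta_\alpha$ for every $\alpha\in\mathcal{A}$ and so $\delta<\hat\delta$. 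This explicit factor-of-two rescaling is the calibration you gesture at, and it is what turns your sketch into a proof.
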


We study the case in which $\Gamma = K(\Delta)$, the compact convex polytope arising from a descriptor $\Delta$. Recall that we can use the results from Part \ref{part:cvx} to compute $P_w\in\Gamma$, linear functionals $\xi_1,\dots,\xi_D$ on $\P$, and nonnegative real numbers $\lambda_1,\dots,\lambda_D$ such that:
\begin{equation}
  \{P\in\P: |\xi_l(P-P_w)|\leq \lambda_l\}\subset\Gamma\subset\{p\in\P:|\xi_l(P-P_w)|\leq C\lambda_l\}
\end{equation}
If we set $\sigma = \{P\in\P: |\xi_l(P)|\leq \lambda_l \forall l\}$ then it follows that $\Gamma+c\tau\sigma\subset (1+\tau)\blacklozenge\Gamma$.
\begin{lemma} Find Critical Delta, General Case.
\label{lem:find-crit-delta}
Given $\emptyset\neq\A\subset\M$, $x_0\in\R^n$, $A\geq 1$, $M\geq 1$, $1>\tau>0$, $\Gamma_{in}=K(\Delta_{in})\subset\Gamma=K(\Delta)\subset\P$ with $\Gamma_{in},\Gamma$ non-empty, compact; we compute $\tilde{\delta}\in\left[0,\infty \right)$ with the following properties:
\begin{enumerate}
  \item[(I)] There exist $P_w\in\Gamma_{in}$ and $P_\alpha\in\P$ ($\alpha\in\A$), that we compute as well, such that:
  \begin{enumerate}
    \item[(A)] $\partial^\beta P_\alpha (x_0) =\delta_{\beta\alpha}$ for $\beta,\alpha\in\A$.
    \item[(B)] $|\partial^\beta P_\alpha(x_0)| \leq CA\tilde{\delta}^{|\alpha|-|\beta|}$ for $\alpha\in\A$,$\beta\in\M$, $\beta\geq\alpha$.
    \item[(C)] $P_w \pm \frac{M\tilde{\delta}^{m-|\alpha|}P_\alpha}{CA}\in(1+\tau)\blacklozenge\Gamma$
  \end{enumerate}
  \item[(II)] Suppose $0<\delta<\infty$ and $P_w \in \Gamma_{in}$, $P_\alpha\in\P$ ($\alpha\in \A$) satisfy:
  \begin{enumerate}
    \item[(A)] $\partial^\beta P_\alpha (x_0) =\delta_{\beta\alpha}$ for $\beta,\alpha\in\A$.
    \item[(B)] $|\partial^\beta P_\alpha(x_0)| \leq cA\delta^{|\alpha|-|\beta|}$ for $\alpha\in\A$,$\beta\in\M$, $\beta\geq\alpha$.
    \item[(C)] $P_w \pm \frac{M\delta^{m-|\alpha|}P_\alpha}{cA}\in(1+\tau)\blacklozenge\Gamma$
  \end{enumerate}
\end{enumerate}
Then $0<\delta\leq\tilde{\delta}$.

The work and storage used are at most a constant determined by $|\Delta_{in}|$, $|\Delta|, \tau, m, n$.
\end{lemma}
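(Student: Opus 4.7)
The plan is to reduce to the Symmetric Case of Find Critical Delta by replacing the constraint $P_w\pm\epsilon\in(1+\tau)\blacklozenge\Gamma$ with bounds on linear functionals obtained from a box inscribed in $\Gamma$.

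First, apply Algorithm \ref{alg:box} to $\Delta$ to produce an orthonormal basis $\hat e_1,\dots,\hat e_D$ of $\P$, a center $P_c\in\Gamma$, and half-widths $\lambda_1,\dots,\lambda_D\geq 0$ satisfying the conclusion of Lemma \ref{lem1} for $K=\Gamma$. Set $\xi_l(P):=(P-P_c)\cdot\hat e_l$ and $\sigma:=\{P\in\P:|\xi_l(P)|\leq\lambda_l,\ 1\leq l\leq D\}$; the paragraph preceding the lemma then yields $\Gamma+c_0\tau\sigma\subset(1+\tau)\blacklozenge\Gamma$ for some $c_0$ depending only on $m,n$. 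Second, apply Algorithm \ref{alg:box} to $\Delta_{in}$ and take as $P_w$ the box center it produces, which lies in $\Gamma_{in}$ since it is a convex combination of points of $\Gamma_{in}$. Third, invoke the Symmetric Case Find Critical Delta lemma with the same $\A,x_0,A$, the linear functionals $\xi_l$, and the \emph{rescaled} half-widths $\tilde\lambda_l:=\tfrac{c_0\tau}{M}\lambda_l$. It returns $\hat\delta$ together with polynomials $P_\alpha\in\P$ (for each $0<\delta<\hat\delta$) satisfying the symmetric (A), (B), (C). Set $\tilde\delta:=\hat\delta$ and return $(\tilde\delta,P_w,P_\alpha)$.

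The forward direction (I) transfers immediately: (A) and (B) are identical in the two lemmas, and the symmetric (C) bound $|\xi_l(\delta^{m-|\alpha|}P_\alpha)|\leq CA\tilde\lambda_l=\tfrac{CAc_0\tau}{M}\lambda_l$ is equivalent to $\pm\tfrac{M\delta^{m-|\alpha|}}{CA}P_\alpha\in c_0\tau\sigma$, which combined with $P_w\in\Gamma$ and $\Gamma+c_0\tau\sigma\subset(1+\tau)\blacklozenge\Gamma$ gives the general (C). For the reverse direction (II), given $P_w'\in\Gamma_{in}$ and $P_\alpha$ satisfying the general hypotheses, write $(1+\tau)\blacklozenge\Gamma-P_w'=(\Gamma-P_w')+\tfrac{\tau}{2}(\Gamma-\Gamma)$; Lemma \ref{lem1} applied to $\Gamma$ yields $\Gamma-\Gamma\subset 2C_1\sigma$, so $(1+\tau)\blacklozenge\Gamma-P_w'\subset C'\sigma$ for a constant $C'$ independent of $\tau$ (using $\tau<1$). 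The general (C) then gives $|\xi_l(\delta^{m-|\alpha|}P_\alpha)|\leq\tfrac{cC'A}{M}\lambda_l$, and choosing the small constant $c$ of the general (II) at most $\tfrac{c_{\mathrm{sym}}c_0\tau}{C'}$, where $c_{\mathrm{sym}}$ is the small constant from the symmetric (II), reproduces the symmetric (II) hypothesis and forces $\delta<\hat\delta=\tilde\delta$.

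The main obstacle is exactly this constant-reconciliation in (II): the bound on $(1+\tau)\blacklozenge\Gamma-P_w'$ valid for an arbitrary $P_w'\in\Gamma_{in}\subset\Gamma$ is $\tau$-independent, whereas the one-sided inclusion used in (I) is tight only up to a factor $\tau$, so the small constant $c$ in the general (II) hypothesis must itself be taken of order $\tau$. This is consistent with the section's allowance for $\tau$-dependent constants and with the stated complexity bound, which is already a function of $\tau,m,n,|\Delta|,|\Delta_{in}|$. The total work and storage are dominated by the two calls to Algorithm \ref{alg:box} (cost $C(|\Delta|+|\Delta_{in}|)$) plus the constant cost of the symmetric lemma, giving the claimed bound.
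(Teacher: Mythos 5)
Your reduction to the Symmetric Case via the inscribed box is the right first move, and your forward argument is essentially the paper's argument at the point $\delta = \tau\hat\delta$. The flaw is in (II), and you already sensed it: by folding the factor $\tau$ into the box half-widths and returning $\tilde\delta := \hat\delta$ directly, you are forced to take the small constant $c$ in (II) of order $\tau$. You claim this is acceptable, but it is not. By the paper's conventions $c, C$ depend only on $m, n$ (later also on $A$), and it is only the \emph{work and storage} that may depend on $\tau$. This distinction is load-bearing: in Lemma \ref{lem:cz-equiv}, to land in case (II) of this algorithm one needs $A \geq C/c$ for the fixed $A$ of the Large $A$ Assumption \eqref{s2}, and that $A$ must be determined by $C_B, C_w, m, n, C_\G$ alone. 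If $c \sim \tau$, then $A \gtrsim 1/\tau$, which cascades through the Small $\epsilon$ Assumption to make $C(\epsilon)$, $C'(\epsilon)$ in the Main Lemma and ultimately $C_*$ in Theorem \ref{bigTh} blow up as $\tau_0 \to 0$. The theorem's conclusion then degenerates.

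The paper's proof closes exactly this gap by \emph{not} returning $\hat\delta$. After applying the Symmetric Case to $\sigma$ with half-widths $\lambda_l/M$ (no $\tau$-rescaling), the algorithm builds a geometric list $\tau\hat\delta = \delta_1 \leq \cdots \leq \delta_{\nu_{\max}} = \hat\delta$ with ratio $\delta_{\nu+1}/\delta_\nu \leq 2$ and length $\nu_{\max} \leq C\log(10/\tau)$, tests feasibility of the (I) conditions at each $\delta_\nu$ as a linear program with the fixed Symmetric Case constant $C$, and returns the largest feasible $\delta_\nu$. Feasibility at $\delta_1 = \tau\hat\delta$ follows as in your forward step. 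For (II): if the conditions held at $\tilde\delta$ with $c_1$, the geometric spacing gives $\tilde\delta < \delta_{\nu+1} \leq 2\tilde\delta$, and the same $P_w, P_\alpha$ would witness feasibility at $\delta_{\nu+1}$ with constant $2^m c_1$; choosing $c_1$ so that $2^m c_1 < C$ contradicts maximality, and then monotonicity in $\delta$ of the (II) conditions extends the conclusion to all $\delta > \tilde\delta$. The price paid is a factor $2^m$, not $1/\tau$, so $c_1$ is $\tau$-independent; the extra $C\log(10/\tau)$ linear programs are absorbed into the $\tau$-dependent work budget. Your proposal has no analogue of this discrete search, and that is the missing ingredient. (A minor secondary point: you set $\xi_l(P) := (P - P_c)\cdot\hat e_l$ and $\sigma := \{P : |\xi_l(P)| \leq \lambda_l\}$, a box centered at $P_c$; then $\Gamma + c_0\tau\sigma$ acquires an unwanted drift $c_0\tau P_c$. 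You want the symmetric box $\{P : |P\cdot\hat e_l| \leq \lambda_l\}$, as the paper uses.)
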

\begin{algorithm}
	\label{alg:find-crit-delta}
	\TitleOfAlgo{Find Critical Delta}
	\KwData{$\emptyset\neq\A\subset\mathcal{M}$, $x_0\in\R^n$, $A\geq 1$, $M\geq 1$, $\tau>0$, $\Gamma_{in} = K(\Delta_{in}) \subset \Gamma = K(\Delta) \subset\P$ with $\Gamma_{in},\Gamma$ non-empty, compact.}
	\KwResult{$\tilde{\delta}$, $P_w \in \Gamma_{in}$, $P_\alpha \in \P$ ($\alpha \in \A$) as in Lemma \ref{lem:find-crit-delta}}
	$\xi_1,\dots,\xi_D,\tilde{P}_w,\tilde{\lambda}_1,\dots,\tilde{\lambda}_D$ = result of applying Algorithm \ref{alg:box} to $\Delta$.\;
	$\lambda_i = \frac{\tilde{\lambda}_i}{M}$ for all $i = 1,\dots, D$\;
	$\sigma =\{P\in\P: |\xi_l(P)|\leq \lambda_l,\, l=1,\dots,L\}$\;
	$\hat{\delta} = $ result of applying Algorithm "Find Critical Delta in Symmetric Case" to $\sigma$\;
	\uIf{$\hat{\delta}>0$}{
		Produce $[\tau\hat{\delta} = \delta_1, \delta_2,\dots, \delta_{\nu_{\max}} = \hat{\delta}]$ such that $\delta_{\nu +1}\leq 2\delta_\nu$ and $\nu_{\max} \leq C\log\frac{10}{\tau}$\;
		\For{$\nu = 1,\dots, \nu_{\max}$}{
			Use Megiddo's Algorithm to solve 
		   \begin{equation*}
			\begin{array}{ll@{}ll}
			\mathop{\text{maximize}}\limits_{\substack{P_w \in \Gamma_{in}\\ P_\alpha \in P\\ Q_{\nu\alpha}, Q_{\nu\alpha}', Q_{\nu\alpha}''\in \Gamma}}  & 1 &&\\
			\text{subject to}& \partial^\beta P_\alpha(x_0) &=\delta_{\beta\alpha} & \alpha,\beta\in\A\\
			&|\partial^\beta P_\alpha(x_0)| &\leq CA\delta_\nu^{|\alpha|-|\beta|}&\alpha \in \A, \beta\geq\alpha\\
			&P_w \pm \frac{M\delta_\nu^{m-|\alpha|}P_{\alpha}}{CA} &= Q_{\nu\alpha} - \frac{\tau}{2}Q_{\nu\alpha}' + \frac{\tau}{2}Q_{\nu\alpha}''& \alpha\in\A
			\end{array}
			\end{equation*}
		}
		$\tilde{\delta} = \delta_{\nu^{'}}$ the max $\nu$ such that the above linear programming problem has a solution, and $P_w, P_{\alpha}$ the corresponding polynomials\;
	}\uElse{
		We set $\tilde{\delta}=0$\;
		We find $P_w \in \Gamma_{in}$ using Megiddo's Algorithm.\;
		We set $P_\alpha(x) = \frac{1}{\alpha !}(x-x_0)^\alpha$\;
	}
	\KwRet{$\tilde{\delta}, P_w, P_\alpha (\alpha\in\A)$}
	\caption{Find Critical Delta}
\end{algorithm}

\emph{Explanation:} By applying Algorithm \ref{alg:box} and Lemma \ref{lem1} from a previous section, and dividing by $M$, we compute a vector $\tilde{P}_w \in \Gamma$ and a symmetric "box":
\begin{equation}
  \sigma=\{P\in\P: |\xi_l(P)|\leq \lambda_l,\; l=1,\dots,L,\; L\leq D\}
\end{equation}
such that
\begin{equation}
  \tilde{P}_w+M\sigma \subset \Gamma \subset \tilde{P}_w+CM\sigma
\end{equation}
Here, the $\xi_l$ are linear functionals on $\P$, the $\lambda_l$ are non-negative real numbers, and we need not have $\tilde{P}_w \in \Gamma_{in}$.
%\emph{Note:} As a reminder, if $\alpha = (\alpha_1, \dots, \alpha_n)$ then $\alpha ! = \alpha_1 ! \cdots \alpha_n !$.
Next we apply the algorithm "Find Critical Delta in Symmetric Case" to the box $\sigma$, the point $x_0$, the set $\A\subset\M$ and the number $A$. We obtain $\hat{\delta}\in[ 0, \infty ] $ for which the following hold.
\begin{enumerate}
  \item[(I)] There exist $P_\alpha\in\P$ ($\alpha\in\A$) such that
  \begin{enumerate}
    \item[(A)] $\partial^\beta P_\alpha(x_0) = \delta_{\beta\alpha}$ for $\beta,\alpha\in \A$
    \item[(B)]$|\partial^\beta P_\alpha(x_0)|\leq CA\hat{\delta}^{|\alpha|-|\beta|}$ for $\alpha\in\A$, $\beta\in\M$, $\beta\geq\alpha$.
    \item[(C)]$\frac{\hat{\delta}^{m-|\alpha|}P_\alpha}{CA}\in\sigma$ for $\alpha\in\A$.
  \end{enumerate}
  \item[(II)] There do not exist $P_\alpha\in\P$ ($\alpha\in\A$) such that
  \begin{enumerate}
    \item[(A)] $\partial^\beta P_\alpha(x_0) = \delta_{\beta\alpha}$ for $\beta,\alpha\in \A$
    \item[(B)]$|\partial^\beta P_\alpha(x_0)|\leq cA\hat{\delta}^{|\alpha|-|\beta|}$ for $\alpha\in\A$, $\beta\in\M$, $\beta\geq\alpha$.
    \item[(C)]$\frac{\hat{\delta}^{m-|\alpha|}P_\alpha}{cA}\in\sigma$ for $\alpha\in\A$.
  \end{enumerate}
\end{enumerate}

Note that we cannot have $\hat{\delta} = \infty$ because that would contradict the fact that $\sigma$ is bounded. Indeed for any $\delta>0$ there would exist $P_\alpha\in\P$ ($\alpha\in\A\neq\emptyset$) such that $\partial^\beta P_\alpha(x_0) = \delta_{\beta\alpha}$ for $\beta,\alpha\in\A$ and $\delta^{m-|\alpha|}P_\alpha \in CA\sigma$. Therefore, we cannot have $\tilde{\delta} = \infty$.

If $\hat{\delta} \neq 0,\infty$ we compute a point $P_w\in\Gamma_{in}\subset\Gamma$. Letting $P_\alpha$ ($\alpha\in\A$) be as in (I), we note that
\begin{equation}
  \frac{(\tau\hat{\delta})^{m-|\alpha|}P_\alpha}{CA}\in\tau\sigma\;(0<\tau\leq1)
\end{equation}
therefore,
\begin{equation}
  \pm\frac{M(\tau\hat{\delta})^{m-|\alpha|}P_\alpha}{CA}\in\frac{\tau}{2}[(M\sigma+\tilde{P}_w)-(-M\sigma+\tilde{P}_w)]\subset\frac{\tau}{2}(\Gamma-\Gamma)
\end{equation}
and consequently
\begin{equation}
  P_w \pm\frac{M(\tau\hat{\delta})^{m-|\alpha|}P_\alpha}{CA}\in(1+\tau)\blacklozenge\Gamma \; \text{ for }\alpha\in\A
\end{equation}
Also, $\partial^\beta P_\alpha(x_0) = \delta_{\beta\alpha}$ for $\beta,\alpha \in \A$ and $|\partial^\beta P_\alpha(x_0)| \leq CA\hat{\delta}^{|\alpha|-|\beta|}\leq CA(\tau\hat{\delta})^{|\alpha|-|\beta|}$ for $\alpha\in\A$, $\beta\in\M$, $\beta\geq\alpha$.

So, for $\delta = \tau\hat{\delta}$ there exist $P_w\in\Gamma_{in}$ and $P_\alpha\in\P$ $\alpha\in\A$ such that
\begin{align}
  & P_w \pm\frac{M\delta^{m-|\alpha|}P_\alpha}{CA}\in(1+\tau)\blacklozenge\Gamma \; \text{ for }\alpha\in\A\\
  & \partial^\beta P_\alpha(x_0) = \delta_{\beta\alpha}\;(\beta,\alpha\in\A)\\
  & |\partial^\beta P_\alpha(x_0)| \leq C A \delta^{|\alpha|-|\beta|} (\beta\in\M, \alpha\in\A, \beta\geq\alpha)
\end{align}
On the other hand, suppose $0<\delta<\infty$ and suppose there exist $P_w\in\Gamma_{in}$ and $P_\alpha \in \P$ ($\alpha\in\A$) such that:
\begin{align}
  & P_w \pm\frac{M\delta^{m-|\alpha|}P_\alpha}{c_1A}\in(1+\tau)\blacklozenge\Gamma \; \text{ for }\alpha\in\A\\
  & \partial^\beta P_\alpha(x_0) = \delta_{\beta\alpha}\;(\beta,\alpha\in\A)\\
  & |\partial^\beta P_\alpha(x_0)| \leq c_1 A \delta^{|\alpha|-|\beta|} (\beta\in\M, \alpha\in\A, \beta\geq\alpha)
\end{align}
for $c_1$ small enough.

Then,
\begin{equation}
  \frac{2M\delta^{m-|\alpha|}P_\alpha}{c_1A} \in (1+\tau)\blacklozenge\Gamma - (1+\tau)\blacklozenge\Gamma \subset MC'\sigma
\end{equation}
with $C'$ independent of our choice of $c_1$ (and $C'>1$). Therefore $\frac{\delta^{m-|\alpha|}P_\alpha}{(C'c_1)A}\in\sigma$ ($\alpha\in\A$), $\partial^\beta P_\alpha(x_0) = \delta_{\beta\alpha}$ ($\beta,\alpha\in\A$) and $|\partial^\beta P_\alpha(x_0)|\leq (C'c_1)A\delta^{|\alpha|-|\beta|}$ ($\alpha\in\A, \beta\in\M, \beta\geq\alpha$).

Taking $c_1$ small enough, and recalling the defining condition for $\hat{\delta}$ we conclude that $\delta<\hat{\delta}$

Now we produce a list $\delta_\nu$ ($\nu = 1,\dots,\nu_{\max}$) of real numbers starting at $\tau\hat{\delta}$ and ending at $\hat{\delta}$ with, for example $\delta_{\nu+1}\leq2\delta_\nu$, and $\nu_{\max}\leq C\log{\frac{10}{\tau}}$.

For each $\delta_\nu$ we check whether there exist $P_w\in\Gamma_{in}$, $P_\alpha \in \P$ ($\alpha\in\A$) such that
\begin{align}
  & \partial^\beta P_\alpha(x_0) = \delta_{\beta\alpha}&(\beta,\alpha\in\A)\\
  & |\partial^\beta P_\alpha(x_0)| \leq C A \delta_\nu^{|\alpha|-|\beta|} &(\beta\in\M, \alpha\in\A, \beta\geq\alpha)\\
  & P_w \pm\frac{M\delta_\nu^{m-|\alpha|}P_\alpha}{CA}=Q_{\nu\alpha}-\frac{\tau}{2}Q'_{\nu\alpha}+\frac{\tau}{2}Q''_{\nu\alpha} &Q_{\nu\alpha},Q'_{\nu\alpha},Q''_{\nu\alpha}\in\Gamma,\alpha\in\A.
\end{align}
Here, $C$ is the same as in the case $\delta = \tau\hat{\delta}$.

This is a linear program and we can solve it using Megiddo's algorithm. We know such $P_w, P_\alpha$ exist for $\delta_1 = \tau\hat{\delta}$. Let $\tilde{\delta}$ be the largest of the $\delta_\nu$ for which such $P_w,P_\alpha$ exist.

Therefore we have found $P_w\in\Gamma_{in}$, $P_\alpha\in\P$ ($\alpha\in\A$) such that
\begin{align}
  & P_w \pm\frac{M\tilde{\delta}^{m-|\alpha|}P_\alpha}{CA}\in(1+\tau)\blacklozenge\Gamma &\alpha\in\A\\
  & \partial^\beta P_\alpha(x_0) = \delta_{\beta\alpha}&(\beta,\alpha\in\A)\\
  & |\partial^\beta P_\alpha(x_0)| \leq C A \tilde{\delta}^{|\alpha|-|\beta|} &(\beta\in\M, \alpha\in\A, \beta\geq\alpha)
\end{align}

Suppose now there exist $P_w\in\Gamma_{in}$, $P_\alpha\in\P$ ($\alpha\in\A$) such that
\begin{align}
  & P_w \pm\frac{M\tilde{\delta}^{m-|\alpha|}P_\alpha}{c_1 A}\in(1+\tau)\blacklozenge\Gamma &\alpha\in\A\\
  & \partial^\beta P_\alpha(x_0) = \delta_{\beta\alpha}&(\beta,\alpha\in\A)\\
  & |\partial^\beta P_\alpha(x_0)| \leq c_1 A \tilde{\delta}^{|\alpha|-|\beta|} &(\beta\in\M, \alpha\in\A, \beta\geq\alpha)
\end{align}
with $c_1$ small enough, to be picked below. We know in that case $\tilde{\delta} < \hat{\delta} = \delta_{\nu_{\max}}$ and therefore it makes sense to speak of $\delta_{\nu+1}$ where $\tilde{\delta}=\delta_\nu$.
Furthermore we have $\tilde{\delta} < \delta_{\nu+1} \leq 2\tilde{\delta}$.

Therefore, our $P_w \in \Gamma_{in}$ and $P_\alpha\in\P$ ($\alpha\in\A$) satisfy:
\begin{align}
  & P_w \pm\frac{M\delta_{\nu+1}^{m-|\alpha|}P_\alpha}{2^m c_1 A}\in(1+\tau)\blacklozenge\Gamma &\alpha\in\A\\
  & \partial^\beta P_\alpha(x_0) = \delta_{\beta\alpha}&(\beta,\alpha\in\A)\\
  & |\partial^\beta P_\alpha(x_0)| \leq 2^m c_1 A \delta_{\nu+1}^{|\alpha|-|\beta|} &(\beta\in\M, \alpha\in\A, \beta\geq\alpha)
\end{align}
If we pick $c_1$ small enough that $2^m c_1< C$ (same as in the first case) then the above $P_w, P_\alpha$ violate the maximality of the $\delta_\nu$.

Therefore there do not exist $P_w\in\Gamma_{in}$, $P_\alpha\in\P$ ($\alpha\in\A$) such that
\begin{align}
  & P_w \pm\frac{M\tilde{\delta}^{m-|\alpha|}P_\alpha}{c_1 A}\in(1+\tau)\blacklozenge\Gamma &\alpha\in\A\\
  & \partial^\beta P_\alpha(x_0) = \delta_{\beta\alpha}&(\beta,\alpha\in\A)\\
  & |\partial^\beta P_\alpha(x_0)| \leq c_1 A \tilde{\delta}^{|\alpha|-|\beta|}& (\beta\in\M, \alpha\in\A, \beta\geq\alpha).
\end{align}

These conditions are the properties of $\tilde{\delta}$ asserted in Algorithm Find Critical Delta, General Case in the case $\hat{\delta}\in(0,\infty)$.

Suppose $\hat{\delta}=0$ Then for any $\delta>0$ there do not exist $P_\alpha\in\P$ ($\alpha\in\A$) such that:
\begin{align}
  & \partial^\beta P_\alpha(x_0) = \delta_{\beta\alpha}&(\beta,\alpha\in\A)\\
  & |\partial^\beta P_\alpha(x_0)| \leq c A \delta^{|\alpha|-|\beta|} &(\beta\in\M, \alpha\in\A, \beta\geq\alpha)\\
  & \delta^{m-|\alpha|}P_\alpha\in cA\sigma & \alpha\in\A
\end{align}

We set $\tilde{\delta}=0$. We use Megiddo's Algorithm to find $P_w \in \Gamma_{in}$. So (I) is satisfied.

Regarding (II), suppose there exist $0<\delta<\infty$, $P_w\in\Gamma_{in}$, $P_\alpha\in\P$ ($\alpha\in\A$) such that
\begin{align}
  & P_w \pm\frac{M\delta^{m-|\alpha|}P_\alpha}{c_1 A}\in(1+\tau)\blacklozenge\Gamma &\alpha\in\A\\
  & \partial^\beta P_\alpha(x_0) = \delta_{\beta\alpha}&(\beta,\alpha\in\A)\\
  & |\partial^\beta P_\alpha(x_0)| \leq c_1 A \delta^{|\alpha|-|\beta|} &(\beta\in\M, \alpha\in\A, \beta\geq\alpha)
\end{align}
with $c_1$ small enough.

Then,
\begin{equation}
  \frac{2M\delta^{m-|\alpha|}P_\alpha}{c_1A} \in (1+\tau)\blacklozenge\Gamma - (1+\tau)\blacklozenge\Gamma \subset MC'(1+\tau)\sigma
\end{equation}
with $C'$ independent of our choice of $c_1$ (choose $C'>1$). Therefore $\frac{\delta^{m-|\alpha|}P_\alpha}{(C'c_1)A(1+\tau)}\in\sigma$ ($\alpha\in\A$), $\partial^\beta P_\alpha(x_0) = \delta_{\beta\alpha}$ ($\beta,\alpha\in\A$) and $|\partial^\beta P_\alpha(x_0)|\leq (C'c_1)A\delta^{|\alpha|-|\beta|}$ ($\alpha\in\A, \beta\in\M, \beta\geq\alpha$).

If we pick $c_1$ small enough, then we get a contradiction. Therefore (II) holds with $\tilde{\delta}=0$. This settles all cases except $\A=\emptyset$, which we ruled out. This completes the explanation of the Algorithm.
\qed

We will use the above algorithm with:
\begin{align}
  \Gamma_{in}=\{&P\in\Gamma: \partial^\beta(P-P_{\text{given}})\equiv 0 ,\beta\in\A,\\
   &|\partial^\beta(P-P_{\text{given}})(x_\text{given})|\leq M_\text{given}\delta^{m-|\beta|}_\text{given}, \beta\in\M\}
\end{align}
Where $P_\text{given}\in\P$, $M_\text{given}$, $\delta_\text{given}$ are given.

\section{Blobs}

Recall from \cite{feffermanSharpFormWhitney2005} that a family of convex sets $(\Gamma(x, M))_{x\in E,M > 0}$ in a finite dimensional vector space is a shape field if for all $x \in E$ and $0 < M' \leq M \leq \infty$, $\Gamma(x, M)$ is a possibly empty convex set and  $\Gamma(x, M') \subset \Gamma(x, M)$.
 
A family of convex sets $\Gamma(M,\tau)$ in a finite dimensional vector space (possibly empty), parameterized by $M>0$ and $\tau\in(0,\tau_{\max}]$ is a \textbf{blob} with \textbf{blob constant} $C$ if it satisfies:

\begin{itemize}
\item[\refstepcounter{equation}\text{(\theequation)}\label{blob1}]$ (1+\tau)\blacklozenge\G(M,\tau) \subset \G(M',\tau ')\ \text{for}\ M'\geq CM, \frac{\tau_{\max}}{C}\geq \tau ' \geq C\tau$.
%\item[\refstepcounter{equation}\text{(\theequation)}\label{blob2}]$ \G(M,\tau) \subset \G(M',\tau)\ \text{for}\ M'\geq M, \tau_{\max} \geq \tau$.
\end{itemize}

A blob field with blob constant $C$ is a family of convex sets $\Gamma(x,M,\tau)\subset\P$ parameterized by $x\in E$, $M,\tau$ as above, such that for each $x\in E$, the family $(\Gamma(x,M,\tau))_{\substack{M>0\\ \tau\in(0,\tau_{\max}]}}$ is a blob with blob constant $C$.

\subsection{Specifying a blob field}
\label{sec:specifying}
Recall that $N = \# E$. In order to develop algorithms that compute the jet of an interpolant, we need to explain how to specify a blob field. We will use an Oracle that gives us the needed descriptors of a blob field in $O(N \log N)$ work.

\begin{definition}
	\label{def:oracle}
	A Blob Field is specified by an Oracle $\Omega$. We query $\Omega$ with an $M>0$ and a $\tau < \tau_{\max}$ and, after charging $O(N\log N)$ work, $\Omega$ returns a list $(\Delta(\Gamma(x, M, \tau)))_{x\in E}$ with the descriptors of $\Gamma(x, M, \tau)$ for each $x$. Moreover, the sum of all lengths $|\Delta(\Gamma(x, M, \tau))|$ over all $x \in E$ is assumed to be at most $CN$.
\end{definition}

\begin{remark}
	Without loss of generality, we can assume that for each $x$, the length of the descriptor $\Delta(\Gamma(x,M,\tau))$ is at most $C(\tau)$. We can approximate each of the descriptors using Algorithm \ref{alg:approx-tau} if that was not the case.
\end{remark}

Not every blob field can be specified by an oracle, because $\Gamma(x, M, \tau)$ needn't be a polytope. However, when we perform computations, we will deal only with blob fields that can be specified by an oracle.

\subsection{Operations with blobs and blob fields}

The Minkowski sum of blobs $\vec{\Gamma}=(\Gamma(M,\tau))_{\substack{M>0\\\tau\in(0,\tau_{\max}]}}$ and $\vec{\Gamma}'=(\Gamma'(M,\tau))_{\substack{M>0\\\tau\in(0,\tau_{\max}]}}$ is the family of convex sets
 $(\Gamma(M,\tau)+\Gamma'(M,\tau))_{\substack{M>0\\\tau\in(0,\tau_{\max}]}}$.

One checks easily that the Minkowski sum is again a blob; its blob constant can be taken to be the maximum of the blob constant of $\vec{\Gamma}$ and that of $\vec{\Gamma'}$. Here we use the fact that $(1+\tau)\blacklozenge K + (1+\tau)\blacklozenge K' = (1+\tau)\blacklozenge(K+K')$.

The intersection of blobs $\vec{\Gamma}$ and $\vec{\Gamma}'$ above is given by $(\Gamma(M,\tau)\cap\Gamma'(M,\tau))_{\substack{M>0\\\tau\in(0,\tau_{\max}]}}$.

Again, one checks easily that this is again a blob with blob constant less than or equal to the maximum of the blob constants of $\vec{\Gamma},\vec{\Gamma}'$. Here we use the fact that $(1+\tau)\blacklozenge(K\cap K') \subset(1+\tau)\blacklozenge K  \cap (1+\tau)\blacklozenge K'$ for convex $K, K'$. From now on we write $\vec{\Gamma}+\vec{\Gamma}'$ and $\vec{\Gamma}\cap\vec{\Gamma}'$ to denote the Minkowski sum and intersection.

The same applies for blob fields.

\subsection{C-equivalent blobs}
Two blobs $\vec{\Gamma}$ and $\vec{\Gamma}'$ are called $C-$equivalent if
\begin{align}
  \Gamma(M,\tau)\subset\Gamma'(M',\tau') \\
  \Gamma'(M,\tau)\subset\Gamma(M',\tau')
\end{align}
for $M'\geq CM$ and $\frac{\tau_{\max}}{C}\geq\tau'\geq C\tau$. Similarly for blob fields.

\begin{lemma}
  \label{lem:c-eq-bl}
  Suppose $\vec{\Gamma}$ is a blob with blob constant $C_1$ and suppose $\vec{\Gamma}'$ is a collection of convex sets $\Gamma'(M,\tau)\subset\P$ indexed by $M>0$, $\tau\in(0,\tau_{\max}]$ such that
  \begin{align}
    \Gamma(M,\tau)\subset\Gamma'(M,\tau)\subset (1+\tau)\blacklozenge\Gamma(C_2 M, C_2\tau)
  \end{align}
  for $M>0$, $0 < \tau < \frac{\tau_{\max}}{C_2}$.
  
  Then $\vec{\Gamma}'$ is a blob, with blob constant determined by $C_1,C_2$. Moreover the blobs are $C-$equivalent, with $C$ determined by $C_1$ and $C_2$.
\end{lemma}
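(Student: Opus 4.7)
The plan is to derive both the blob property of $\vec{\Gamma}'$ and the $C$-equivalence directly from the two-sided sandwich relation, chaining it with the blob property of $\vec{\Gamma}$ and Lemma \ref{lem:twicetau}. A preliminary observation I will use throughout is monotonicity of the blacklozenge operation: for any convex $K$ and any $0 \leq s_1 \leq s_2$, one has $(1+s_1)\blacklozenge K \subset (1+s_2)\blacklozenge K$, which one checks by rewriting $v + \tfrac{s_1}{2}v' - \tfrac{s_1}{2}v''$ as $v + \tfrac{s_2}{2}\bigl(\tfrac{s_1}{s_2}v' + (1-\tfrac{s_1}{s_2})v''\bigr) - \tfrac{s_2}{2}v''$ and using convexity of $K$.

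For the blob property of $\vec{\Gamma}'$, fix $M, \tau$ in range. Using the upper inclusion hypothesis, I first get
\[
(1+\tau)\blacklozenge \Gamma'(M,\tau) \subset (1+\tau)\blacklozenge\bigl[(1+\tau)\blacklozenge \Gamma(C_2 M, C_2\tau)\bigr].
\]
By Lemma \ref{lem:twicetau}, the right-hand side equals $(1 + (2+\tau)\tau)\blacklozenge \Gamma(C_2 M, C_2\tau)$, which is contained in $(1+3\tau)\blacklozenge \Gamma(C_2 M, C_2\tau)$ as soon as $\tau \leq 1$, and hence in $(1+C_2\tau)\blacklozenge \Gamma(C_2 M, C_2\tau)$ provided $C_2 \geq 3$ (which we may always arrange). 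Now I invoke the blob property \eqref{blob1} of $\vec{\Gamma}$ with parameters $M_0 = C_2 M$, $\tau_0 = C_2\tau$: this gives $(1+C_2\tau)\blacklozenge \Gamma(C_2 M, C_2\tau) \subset \Gamma(M'',\tau'')$ whenever $M'' \geq C_1 C_2 M$ and $\tfrac{\tau_{\max}}{C_1} \geq \tau'' \geq C_1 C_2 \tau$. Finally the lower inclusion of the hypothesis gives $\Gamma(M'',\tau'') \subset \Gamma'(M'',\tau'')$. Composing these steps shows $(1+\tau)\blacklozenge \Gamma'(M,\tau) \subset \Gamma'(M'',\tau'')$ for $M'' \geq C_1 C_2 M$ and $\tau''$ in the prescribed range, establishing the blob property of $\vec{\Gamma}'$ with blob constant determined by $C_1$ and $C_2$.

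For $C$-equivalence, one direction $\Gamma(M,\tau) \subset \Gamma'(M,\tau)$ is immediate from the lower inclusion. For the reverse, I start from $\Gamma'(M,\tau) \subset (1+\tau)\blacklozenge \Gamma(C_2 M, C_2 \tau)$, use $\tau \leq C_2\tau$ and monotonicity to upgrade to $(1+C_2\tau)\blacklozenge \Gamma(C_2 M, C_2 \tau)$, and then apply the blob property of $\vec{\Gamma}$ exactly as above to obtain $\Gamma'(M,\tau) \subset \Gamma(M',\tau')$ for $M' \geq C_1 C_2 M$, $\tau' \geq C_1 C_2 \tau$ (and $\tau' \leq \tau_{\max}/C_1$). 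This is the required $C$-equivalence with $C$ determined by $C_1$ and $C_2$.

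The computations themselves are routine; the main thing to watch is bookkeeping of the admissible ranges for $\tau$, in particular making sure that the $\tau''$ produced by invoking \eqref{blob1} still satisfies $\tau'' \leq \tau_{\max}/C$ for the final blob constant $C$, which forces the hypotheses $0 < \tau < \tau_{\max}/C_2$ in the statement. Once these ranges are tracked correctly, all inclusions compose cleanly, and no further idea is needed.
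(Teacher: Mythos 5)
Your argument follows the same sandwich structure as the paper's, but handles the key step differently. The paper's proof takes $(1+\tau)\blacklozenge\Gamma'(M,\tau)\subset(1+\tau)\blacklozenge\bigl[(1+\tau)\blacklozenge\Gamma(C_2M,C_2\tau)\bigr]$ and then applies the blob property of $\vec\Gamma$ \emph{twice}, once per blacklozenge, ending with blob constant on the order of $C_1^2 C_2$. You instead invoke Lemma \ref{lem:twicetau} to collapse the double blacklozenge into a single $(1+(2+\tau)\tau)\blacklozenge$, then dominate this by a single $(1+C_2\tau)\blacklozenge$ and apply the blob property once, giving the slightly sharper constant $C_1C_2$. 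Both are valid routes; the paper's is more robust because it never needs to compare $(2+\tau)\tau$ against $C_2\tau$, while yours is a bit more economical when that comparison goes through.

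Two small points in your version deserve more care. First, the bound $(2+\tau)\leq 3$ requires $\tau\leq 1$; nothing in the framework forces $\tau_{\max}\leq 1$, so you should instead write $(2+\tau)\leq(2+\tau_{\max})$ and carry the $\tau_{\max}$-dependent constant along (which is standard in this paper, cf.\ the blob constants in \eqref{br0}). Second, the parenthetical ``which we may always arrange'' for $C_2\geq 3$ is doing real work: the hypothesis $\Gamma'(M,\tau)\subset(1+\tau)\blacklozenge\Gamma(C_2M,C_2\tau)$ is not obviously monotone in $C_2$. To enlarge $C_2$ you need $\Gamma(C_2M,C_2\tau)\subset\Gamma(C_2'M,C_2'\tau)$ for $C_2'>C_2$, which itself comes from applying the blob property of $\vec\Gamma$ (giving $\Gamma(M_0,\tau_0)\subset\Gamma(C_1M_0,C_1\tau_0)$ and iterating). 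So the ``arrangement'' secretly re-introduces an extra application of the blob property — at which point you are essentially back to the paper's two-application argument, just organized differently. If you prefer to keep your route clean, the most transparent fix is to abandon the comparison against $C_2\tau$ entirely: from $(1+(2+\tau_{\max})\tau)\blacklozenge\Gamma(C_2M,C_2\tau)$, simply apply the blob property of $\vec\Gamma$ a bounded number of times (as many as needed to absorb the factor $(2+\tau_{\max})$, that count depending only on $C_1$ and $\tau_{\max}$), without worrying about the size of $C_2$.
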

\begin{proof}
  Since $\vec{\Gamma}$ is a blob, we know
  \begin{align}
    (1+\tau)\blacklozenge\Gamma(M, \tau)\subset\Gamma(M',\tau')
  \end{align}
  for $M'\geq C_1M$ and $\frac{\tau_{\max}}{C_1} \geq \tau'\geq C_1\tau$. We have
  \begin{align}
    (1+\tau)\blacklozenge\Gamma'(M,\tau) \subset (1+\tau)\blacklozenge[(1+\tau)\blacklozenge\Gamma(C_2M, C_2\tau)]
  \end{align}

  and, applying the blob property twice, $(1+\tau)\blacklozenge\Gamma'(M,\tau) \subset \Gamma(C_2 M'', C_2 \tau'') \subset \Gamma'(C_2 M'', C_2 \tau'')$ for $M'' \geq C_1^2 M$ and $\frac{\tau_{\max}}{C_1^2}\geq \tau'' \geq C_1^2 \tau$.
  Therefore, $\vec{\Gamma}'$ is a blob with blob constant $C_1^2 C_2$.
  The proof also shows the $C_1C_2$-equivalence of both blobs.  
\end{proof}

\subsection{$(C_w, \delta_{\max} )$-convexity}
\label{sec:cwconvexity}

A blob $\vec{\Gamma} = (\Gamma(M,\tau))_{M\geq 0, \tau\in(0,\tau_{\max}]}$ is called $(C_w, \delta_{\max})$-convex at $x\in\R^n$ if the following holds:

Let $0<\delta\leq\delta_{\max}$, $M>0$, $\tau\in(0,\tau_{\max}]$, $P_1,P_2\in\Gamma(M,\tau)$, $Q_1,Q_2\in\P$. Assume
\begin{itemize}
  \item $|\partial^\beta(P_1-P_2)(x)|\leq M\delta^{m-|\beta|}$ for $\beta\in\M$ and
  \item$|\partial^\beta Q_i(x)|\leq\delta^{-|\beta|}$ for $\beta\in\M$ and $i=1,2$.
\end{itemize}
Assume also that $\sum_{i=1}^2 Q_i \odot_x Q_i = 1$. Then $\sum_{i=1}^2Q_i\odot_x Q_i \odot_x P_i\in \Gamma(C_wM, C_w\tau)$.

A blob field $(\Gamma(x,M,\tau))$ is $(C_w,\delta_{\max})$-convex if for each $x\in E$, the blob $(\Gamma(x,M,\tau))$ is $(C_w,\delta_{\max})$-convex at $x$.

\begin{remark}
	The intersection of blobs $(C_w, \delta_{\max})$-convex at $x$ is also a $(C_w, \delta_{\max})$-convex blob at $x$.
\end{remark}

We write $\mathscr{B}(x,\delta) = \{P\in\P: |\partial^\beta P(x)| \leq \delta^{m-|\beta|}\; \text{for }\beta\in\M\}$.

\begin{lemma}[Hopping Lemma]
  \label{hopping-lemma}
  
  Let $\vec{\Gamma}=(\Gamma(M,\tau))$ be a blob with blob constant $C_0$. Assume $\vec{\Gamma}$ is $(C_w,\delta_{\max})$-convex at $y$. Let $\|x-y\|\leq\tilde{\delta}\leq \delta_{\max}$.

  Then $\vec{\Gamma}' =(\Gamma'(M,\tau))_{M,\tau}= (\Gamma(M,\tau) + M\mathscr{B}(x,\tilde{\delta}))_{M,\tau}$ is a blob, and that blob is $(C_w', \delta_{\max})$-convex at $x$, where $C_w'$ depends only on $C_w, C_0, m,n$. The blob constant for $\vg'$ depends only on $C_0, m, n$.
\end{lemma}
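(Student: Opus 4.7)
The plan is to verify the two claims (the blob structure of $\vg'$ and the $(C_w',\delta_{\max})$-convexity at $x$) separately.

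For the blob structure, observe that $\vg' = \vg + (M\mathscr{B}(x,\tilde\delta))_{M,\tau}$ is a Minkowski sum. The family $(M\mathscr{B}(x,\tilde\delta))_{M,\tau}$ is itself a blob (constant in $\tau$): since $\mathscr{B}(x,\tilde\delta)$ is symmetric and convex, $(1+\tau)\blacklozenge M\mathscr{B}(x,\tilde\delta) = (1+\tau)M\mathscr{B}(x,\tilde\delta) \subset 2M\mathscr{B}(x,\tilde\delta)$ for $\tau\leq 1$, with blob constant $2$. Invoking the fact established earlier in the section on operations with blobs that the Minkowski sum of blobs is a blob, with constant bounded by the max, we conclude that $\vg'$ is a blob with constant depending only on $C_0,m,n$.

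For the convexity, let $P_1',P_2'\in\G'(M,\tau)$ and $Q_1,Q_2\in\P$ satisfy the convexity hypotheses at $x$ with parameter $\delta\in(0,\delta_{\max}]$. Decompose $P_i' = P_i + R_i$ with $P_i\in\G(M,\tau)$ and $R_i\in M\mathscr{B}(x,\tilde\delta)$. The identity $\sum_i Q_i\odot_x Q_i = 1$ gives
\begin{equation*}
\sum_i Q_i\odot_x Q_i\odot_x P_i' = P_1' + (P_2'-P_1')\odot_x Q_2\odot_x Q_2,
\end{equation*}
and a product-rule estimate shows $(P_2'-P_1')\odot_x Q_2\odot_x Q_2\in CM\mathscr{B}(x,\delta)$. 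Fix a threshold $K_0 = K_0(m,n)$ to be chosen below. In the \emph{narrow regime} $\delta\leq K_0\tilde\delta$, we have $\mathscr{B}(x,\delta)\subset C'\mathscr{B}(x,\tilde\delta)$ (since $|\beta|\leq m-1$ forces $\delta^{m-|\beta|}\leq K_0^{m-1}\tilde\delta^{m-|\beta|}$), so the expression lies in $\G'(M,\tau)+C'M\mathscr{B}(x,\tilde\delta)\subset\G'(C''M,C''\tau)$, as desired. In the \emph{wide regime} $\delta>K_0\tilde\delta$, this crude bound is insufficient and we must invoke the $(C_w,\delta_{\max})$-convexity of $\vg$ at $y$.

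In the wide regime, by Taylor (using $\|x-y\|\leq\tilde\delta<\delta/K_0$), $|\partial^\beta(P_1-P_2)(y)|\leq CM\delta^{m-|\beta|}$ and $|\partial^\beta Q_i(y)|\leq C\delta^{-|\beta|}$. I would construct $\tilde Q_i\in\P$ with $\sum_i\tilde Q_i\odot_y\tilde Q_i=1$ and $|\partial^\beta\tilde Q_i(y)|\leq C\delta^{-|\beta|}$ by setting $\tilde Q_i := Q_i\odot_y T$, where $T$ is a $\odot_y$-square-root of the $\odot_y$-inverse of $\sum_j Q_j\odot_y Q_j = 1+E$. Since the polynomial $\sum Q_j^2 - 1$ vanishes to order $\geq m$ at $x$ (because $J_x(\sum Q_j^2)=1$), one obtains $|E(y)|\leq C(\tilde\delta/\delta)^m<1/2$ by choosing $K_0$ large enough, securing the existence and bounds of $T$ in the truncated polynomial algebra $(\P,\odot_y)$. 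Applying $(C_w,\delta_{\max})$-convexity at $y$ with scale $\delta$ gives $\sum_i\tilde Q_i\odot_y\tilde Q_i\odot_y P_i\in\G(C_wM,C_w\tau)$. The key remaining step is a comparison computation: using the identities $\sum Q_i^2\odot_x P_i = P_1 + Q_2^2\odot_x(P_2-P_1)$ and $\sum\tilde Q_i^2\odot_y P_i = P_1 + \tilde Q_2^2\odot_y(P_2-P_1)$ (each depending only on $P_2-P_1$, never on the individual $P_i$), a Taylor-remainder analysis on polynomial products of degree $\leq 3m-3$ shows the difference of these two expressions has derivatives at $x$ bounded by $CM\tilde\delta^{m-|\beta|}$, hence lies in $CM\mathscr{B}(x,\tilde\delta)$. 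A separate, easy estimate (using $\tilde\delta<\delta$ and the same partition-of-unity identity) gives $\sum_i Q_i^2\odot_x R_i\in CM\mathscr{B}(x,\tilde\delta)$. Combining yields $\sum_i Q_i^2\odot_x P_i'\in\G(C_wM,C_w\tau)+C'M\mathscr{B}(x,\tilde\delta)\subset\G'(C_w'M,C_w'\tau)$.

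The main obstacle is the wide-regime analysis, specifically (i) extracting the $\odot_y$-square-root $T$ of $(1+E)^{-1}$ in the truncated polynomial algebra with effective bounds, and (ii) proving the comparison estimate which must land in $\mathscr{B}(x,\tilde\delta)$ rather than merely $\mathscr{B}(x,\delta)$. Both rely crucially on the smallness of $\tilde\delta/\delta$ forced by the threshold $K_0$, which ensures that polynomial identities valid at $x$ transfer to $y$ with remainder of order $(\tilde\delta/\delta)^m$, absorbable into the $M\mathscr{B}(x,\tilde\delta)$ slack built into the definition of $\vg'$.
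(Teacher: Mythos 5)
Your proposal follows essentially the same route as the paper: both split into the regimes $\delta\lesssim\tilde\delta$ (easy absorption into the $M\mathscr{B}(x,\tilde\delta)$ slack) and $\delta\gtrsim\tilde\delta$ (transfer to $y$, apply $(C_w,\delta_{\max})$-convexity there at scale $\delta$, then compare the $\odot_x$ and $\odot_y$ expressions and absorb the discrepancy into $CM\mathscr{B}(x,\tilde\delta)$). The only presentational difference is in the wide regime: the paper builds the actual functions $\theta_i = Q_i'/(Q_1'^2+Q_2'^2)^{1/2}$ on $B_n(x,c_0\delta)$ and then takes $J_y(\theta_i)$, whereas you build the same polynomials directly by inverting and square-rooting in the truncated algebra $(\P,\odot_y)$; since $J_y(\theta_i) = Q_i\odot_y J_y\bigl((Q_1^2+Q_2^2)^{-1/2}\bigr)$, your $\tilde Q_i$ is identical to the paper's $J_y(\theta_i)$, and your algebraic route trades the paper's one-line Taylor estimate for a $C^m$ function for the jet-algebra bookkeeping you flag as the main obstacle (both are routine). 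The key comparison estimate landing in $\mathscr{B}(x,\tilde\delta)$ rather than $\mathscr{B}(x,\delta)$ is handled identically: the error is a Taylor remainder between jets at $x$ and $y$, controlled by $\|x-y\|\leq\tilde\delta$.
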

\begin{proof}
	
	First, note that $(M\mathscr{B}(x,\tilde{\delta}))$ is a blob if we consider it as a function $(M,\tau) \to M\mathscr{B}(x, \tilde{\delta})$, with blob constant $1+\tau_{\max}$. Therefore $\vec{\Gamma}'$ is a blob. Its blob constant is the maximum between the blob constant $C_0$ and $(1+ \tau_{\max})$.

  Let $0<\delta\leq \delta_{\max}$, $M>0$, $\tau\in(0,\tau_{\max}]$, $P_1',P_2'\in\Gamma'(M,\tau)$, and $Q_1',Q_2'\in\P$. Assume:
  \begin{enumerate}
    \item $|\partial^\beta(P_1'-P_2')(x)|\leq M\delta^{m-|\beta|}$ for $\beta\in\M$.
    \item $|\partial^\beta Q_i'(x)|\leq\delta^{-|\beta|}$ for $\beta \in \M$.
    \item $\sum_{i=1,2}Q_i'\odot_x Q_i' = 1$.
  \end{enumerate}
  We write $P_i' = P_i + MP_{bi}$ where $P_i \in \Gamma(M,\tau)$ and $|\partial^\beta P_{bi}(x)| \leq \tilde{\delta}^{m-\beta}$ for $\beta \in \M$.

  We want to prove there exists a $P\in \Gamma(CM,C\tau)$ such that
  \begin{align}
    |\partial^\beta(\sum_{i=1,1}Q_i'\odot_x Q_i' \odot_x P_i' - P)(x)| \leq CM\tilde{\delta}^{m-|\beta|}\text{ for }\beta \in \M.
  \end{align}

  We define
  \begin{flalign*}
    \theta_i = \frac{Q'_i}{(Q_1^{'2} + Q_2^{'2})^{\frac{1}{2}}}\text{ on } B_n(x, c_0\delta) 
  \end{flalign*}
  for a $c_0 <1$ small enough so that $\theta_i$ is well defined and $|\partial^\beta\theta_i|\leq C\delta^{-|\beta|}$ on $B_n(x, c_0\delta)$. (Note that $\theta_1^2 + \theta_2^2 = 1$ on $B_n(x, c_0\delta)$ and $J_x(\theta_i)=Q_i'$ .)

  We divide the proof in two cases:
  \paragraph{Case 1:} Suppose $\tilde{\delta}\leq c_0\delta$.

  Then
  \begin{flalign*}
    |\partial^\beta(P_1 - P_2)(x)| &\leq |\partial^\beta(P_1 - P'_1)(x)| + |\partial^\beta(P'_1 - P'_2)(x)| + |\partial^\beta(P'_2 - P_2)(x)| &\\
    &\leq M\tilde{\delta}^{m-|\beta|} + M\delta^{m-|\beta|} + M\tilde{\delta}^{m-|\beta|} \leq CM\delta^{m-|\beta|}&
  \end{flalign*}
  for $|\beta|\leq m-1$. Consequently,
  \begin{flalign*}
    |\partial^\beta(P_1-P_2)| \leq CM\delta^{m-\beta}\;\text{on }B_n(x, c_0\delta)\text{ for }|\beta|\leq m
  \end{flalign*}
  In particular,
  \begin{flalign*}
    |\partial^\beta(P_1-P_2)(y)| \leq CM\delta^{m-\beta}
  \end{flalign*}

  Let $Q_i = J_y(\theta_i)$, we know $|\partial^\beta Q_i (y)|\leq C\delta^{-|\beta|}$. We know $\vec{\Gamma}$ is $(C_w,\delta_{\max})$-convex at $y$, therefore
  \begin{flalign*}
    P = J_y(\theta_1^2 P_1 + \theta_2^2P_2) \in \Gamma(CM, C\tau)
  \end{flalign*}
  for $\tau_{\max} \geq C\tau$, where $C$ depends on $C_w, C_0, m, n$. We propose $P$ as a candidate for seeing
  \begin{flalign*}
    |\partial^\beta(\theta_1^2 P'_1 + \theta_2^2 P'_2 - P)(x)|\leq CM\tilde{\delta}^{m-|\beta|}
  \end{flalign*}
  Since $\theta_1^2 + \theta_2^2 = 1$ and $J_y P_1 = P_1$:
  \begin{flalign*}
    (\theta_1^2 P'_1 + \theta_2^2 P'_2 - J_y(\theta_1^2 P_1 + \theta_2^2 P_2)) &= &\theta_1^2(P'_1 - P_1) + \theta_2^2(P'_2-P_2) +\\
    &&+[\theta_1^2 P_1 + \theta_2^2 P_2 - J_y(\theta_1^2 P_1 + \theta_2^2 P_2)]\\
    &= &\theta_1^2(P'_1 - P_1) + \theta_2^2(P'_2-P_2) +\\
    &&+[\theta_1^2 P_1 + \theta_2^2 P_2 - P_1 - J_y(\theta_2^2(P_2-P_1))]\\
    &= &\theta_1^2(P'_1-P_1)+\theta_2^2(P'_2-P_2) +\\
    &&+\theta_2^2(P_2-P_1) - J_y(\theta_2^2(P_2-P_1)).
  \end{flalign*}
  Now, on one hand we know $|\partial^\beta[\theta_i^2(P'_i - P_i)](x)|\leq CM\tilde{\delta}^{m-|\beta|}$ (apply the product rule and properties of $\theta_i$ and $P'_i - P_i$, and remember that $\delta^{-|\beta|}\leq C\tilde{\delta}^{-|\beta|}$).

  On the other hand,
  \begin{flalign*}
    |\partial^\beta[\theta_2^2(P_1-P_2)](z)|\leq CM\delta^{m-|\beta|}\text{ for all }z\in B_n(x,c_0\delta), |\beta|\leq m
  \end{flalign*}
  In particular $|\partial^\beta [\theta_2^2 (P_1-P_2)]|\leq CM$ on $B_n(x,c_0\delta)$ for $|\beta|=m$. Applying Taylor's theorem, we find:
  \begin{flalign*}
    |\partial^\beta[\theta_2^2(P_1-P_2)-J_y(\theta_2^2(P_1-P_2))](x)|\leq CM\|x-y\|^{m-|\beta|}\text{ for }|\beta|\leq m-1
  \end{flalign*}
  which implies by our assumption $\|x-y\|\leq \tilde{\delta}$:
  \begin{flalign*}
    |\partial^\beta[\theta_2^2(P_1-P_2)-J_y(\theta_2^2(P_1-P_2))](x)|\leq CM\tilde{\delta}^{m-|\beta|}\text{ for }|\beta|\leq m-1
  \end{flalign*}
  
  \paragraph{Case 2:} Suppose now $\tilde{\delta}>c_0\delta$.

  Then we have
  \begin{flalign*}
    P' &= Q'_1\odot_x Q'_1 \odot_x P'_1 + Q'_2\odot_x Q'_2\odot_x P'_2\\
    &= P'_1 + Q'_2\odot_x Q'_2\odot_x (P'_2 - P'_1).
  \end{flalign*}
  From our assumptions for $Q'_2$ and $P'_2 - P'_1$ we have
  \begin{flalign*}
    |\partial^\beta (P' - P'_1)(x)| \leq CM\delta^{m-|\beta|}\text{ for }|\beta|\leq m-1
  \end{flalign*}
  and since $\delta \leq C \tilde{\delta}$, we have $|\partial^\beta(P'-P'_1)(x)|\leq CM\tilde{\delta}^{m-|\beta|}$. We know that there exists $P_1 \in \Gamma(M,\tau)$ such that
  \begin{flalign*}
    |\partial^\beta(P'_1-P_1)(x)|\leq M\tilde{\delta}^{m-|\beta|}\text{ for }|\beta|\leq m-1
  \end{flalign*}
  which allows us to conclude that
  \begin{flalign*}
    |\partial^\beta(P'-P_1)(x)|\leq CM\tilde{\delta}^{m-|\beta|}\text{ for }|\beta|\leq m-1.
  \end{flalign*}
  This concludes our proof for Lemma \ref{hopping-lemma}.
\end{proof}

\begin{lemma}
  \label{lem:c-eq-conv}
  Let $\vg = (\Gamma(M,\tau))$ and $\vg' = (\G'(M,\tau))$ be two $C-$equivalent blobs. Assume $\vg$ is $(C_w,\delta_{\max})-$convex at $x$. Then $\vg'$ is $(C'_w,\delta_{\max})-$convex at $x$, where $C'_w$ depends only on $C$ and $C_w$.
\end{lemma}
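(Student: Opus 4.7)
The plan is a direct three-step chase of the definition of $(C'_w, \delta_{\max})$-convexity for $\vg'$ at $x$, using $\vg$ as an intermediary. Fix admissible inputs: $0<\delta\le\delta_{\max}$, $M>0$, $\tau\in(0,\tau_{\max}]$, polynomials $P_1',P_2'\in\G'(M,\tau)$, and $Q_1,Q_2\in\P$ satisfying $|\partial^\beta(P_1'-P_2')(x)|\le M\delta^{m-|\beta|}$, $|\partial^\beta Q_i(x)|\le\delta^{-|\beta|}$ for $\beta\in\M$, and $Q_1\odot_x Q_1 + Q_2\odot_x Q_2=1$. I need to produce the conclusion $Q_1\odot_x Q_1\odot_x P_1' + Q_2\odot_x Q_2\odot_x P_2'\in\G'(C'_w M, C'_w\tau)$.

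First, I use one direction of $C$-equivalence to move the data into $\vg$: because $\G'(M,\tau)\subset\G(CM,C\tau)$ I have $P_i'\in\G(CM,C\tau)$ for $i=1,2$, and the displacement bound upgrades trivially to $|\partial^\beta(P_1'-P_2')(x)|\le (CM)\delta^{m-|\beta|}$, which is the form required by $(C_w,\delta_{\max})$-convexity with mass parameter $CM$ in place of $M$. Second, I apply that convexity hypothesis of $\vg$ at $x$ to obtain $Q_1\odot_x Q_1\odot_x P_1' + Q_2\odot_x Q_2\odot_x P_2'\in\G(C_w\cdot CM,\,C_w\cdot C\tau)$. Third, I reverse the $C$-equivalence: $\G(C_w CM,\,C_w C\tau)\subset\G'(C^2 C_w M,\,C^2 C_w\tau)$. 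This gives the desired containment with $C'_w = C^2 C_w$, which depends only on $C$ and $C_w$.

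The one bookkeeping point to mind is the range of $\tau$: each use of $C$-equivalence requires its output $\tau$-parameter to lie in $(0,\tau_{\max}/C]$, so the chain above is strictly valid only when $\tau$ is small enough that $C^2\tau\le\tau_{\max}$. This is the same implicit constraint used throughout this section (compare the ``$\tau_{\max}\ge C\tau$'' caveat appearing in the proof of the Hopping Lemma, Lemma \ref{hopping-lemma}), and so does not constitute a real obstacle. No other step poses any difficulty: the argument is simply a two-sided squeeze of $\vg'$ against $\vg$, combined with the fact that the hypotheses of $(C_w,\delta_{\max})$-convexity are monotone in $M$, so weakening $M$ to $CM$ preserves them.
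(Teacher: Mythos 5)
Your proof is correct and follows exactly the paper's route: push $P_1',P_2'$ into $\vec{\Gamma}$ via one direction of $C$-equivalence (noting the displacement bound is monotone in $M$), apply the $(C_w,\delta_{\max})$-convexity of $\vec{\Gamma}$ at $x$, then pull back via the other direction of $C$-equivalence, yielding $C_w' = C^2 C_w$. Your remark about the $\tau$-range is a reasonable observation; the paper leaves this implicit as part of its standing conventions in this section.
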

\begin{proof}
Let $0<\delta\leq\delta_{\max}$, $M>0$, $\tau\in(0,\tau_{\max}]$, $P_1,P_2\in\Gamma'(M,\tau)$, $Q_1,Q_2\in\P$. Assume
\begin{itemize}
  \item $|\partial^\beta(P_1-P_2)(x)|\leq M\delta^{m-|\beta|}$ for $\beta\in\M$ and
  \item$|\partial^\beta Q_i(x)|\leq\delta^{-|\beta|}$ for $\beta\in\M$ and $i=1,2$.
\end{itemize}
Assume also that $\sum_{i=1}^2 Q_i \odot_x Q_i = 1$.

Because $\G',\G$ are $C-$equivalent we know $P_1,P_2 \in \Gamma(M',\tau')$ for $M' \geq CM$ and $\tau' \geq C\tau$. Then because $\G$ is $(C_w,\delta_{\max})$-convex at $x$, we have $\sum_{i=1}^2Q_i\odot_x Q_i \odot_x P_i\in \Gamma(C_wM', C_w\tau')$. Again applying $C-$equivalence, $\sum_{i=1}^2Q_i\odot_x Q_i \odot_x P_i\in \Gamma(C_wC^2M, C_wC^2\tau)$.
\end{proof}

We recover some lemmas from \cite{feffermanFinitenessPrinciplesSmooth2016}. We refer the reader to \cite{feffermanFinitenessPrinciplesSmooth2016} for the proofs, which have to be trivially modified to account for $\tau$.
\begin{lemma}
\label{lemma-wsf1} Suppose $\vec{\Gamma}=\left( \Gamma \left( x,M,\tau\right)
\right) _{x\in E, M>0, \tau\in(0,\tau_{\max}]}$ is a $\left( C_{w},\delta _{\max }\right) $-convex
blob field with blob constant $C_{\G}$. Let

\begin{itemize}
\item[\refstepcounter{equation}\text{(\theequation)}\label{6}] $0<\delta
\leq \delta_{\max}$, $x \in E$, $M>0$, $P_1,P_2,Q_1,Q_2 \in \mathcal{P}$ and 
$A^{\prime },A^{\prime \prime }>0$.
\end{itemize}

Assume that

\begin{itemize}
\item[\refstepcounter{equation}\text{(\theequation)}\label{7}] $P_1,P_2 \in
\Gamma(x,A^{\prime }M, A'\tau)$ with $A'\tau \leq \tau_{\max}$;

\item[\refstepcounter{equation}\text{(\theequation)}\label{8}] $\left\vert
\partial ^{\beta }\left( P_{1}-P_{2}\right) \left( x\right) \right\vert \leq
A^{\prime}M \delta^{  m-\left\vert \beta \right\vert }$ for $\left\vert \beta
\right\vert \leq m-1$;

\item[\refstepcounter{equation}\text{(\theequation)}\label{9}] $\left\vert
\partial ^{\beta }Q_{i}\left( x\right) \right\vert \leq A^{\prime \prime 
}\delta^{-\left\vert \beta \right\vert }$ for $\left\vert \beta \right\vert \leq m-1$
and $i=1,2$;

\item[\refstepcounter{equation}\text{(\theequation)}\label{10}] $Q_{1}\odot
_{x}Q_{1}+Q_{2}\odot _{x}Q_{2}=1$.

\item[\refstepcounter{equation}\text{(\theequation)}\label{ctau}] $C\tau \leq \tau_{\max}$ for a constant $C$ determined by $A^{\prime }$, $A^{\prime \prime }$, $C_{w}$, $C_{\Gamma}$, $m$, and $n$.
\end{itemize}

Then

\begin{itemize}
\item[\refstepcounter{equation}\text{(\theequation)}\label{11}] $%
P:=Q_{1}\odot _{x}Q_{1}\odot _{x}P_{1}+Q_{2}\odot _{x}Q_{2}\odot
_{x}P_{2}\in \Gamma \left( x,CM, C\tau\right) $ with $C$ determined by $A^{\prime }$%
, $A^{\prime \prime }$, $C_{w}$, $C_{\Gamma}$, $m$, and $n$.
\end{itemize}
\end{lemma}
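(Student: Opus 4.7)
The plan is to reduce directly to the $(C_w, \delta_{\max})$-convexity definition at $x$ by rescaling $\delta$ and $M$ so as to absorb the excess factors $A'$ and $A''$. Without loss of generality I may assume $A', A'' \geq 1$ (otherwise replace each by $\max(\cdot,1)$, which only weakens the hypotheses).

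The key observation that makes the rescaling go through is that evaluating the identity $Q_1 \odot_x Q_1 + Q_2 \odot_x Q_2 = 1$ at the point $x$ collapses each $\odot_x$-product into an ordinary pointwise product, so $Q_1(x)^2 + Q_2(x)^2 = 1$ and in particular $|Q_i(x)| \leq 1$ for free. With this in hand, I set $\tilde{\delta} := \delta/A''$ and $\tilde{M} := (A'')^m A' M$, and then verify three things: (i) $\tilde{\delta} \leq \delta_{\max}$, which is immediate from $A'' \geq 1$; (ii) $|\partial^\beta Q_i(x)| \leq \tilde{\delta}^{-|\beta|}$ for every $\beta \in \M$, where the case $|\beta|\geq 1$ follows by expanding $A''\delta^{-|\beta|} = (A'')^{1-|\beta|}\tilde{\delta}^{-|\beta|} \leq \tilde{\delta}^{-|\beta|}$ and the case $|\beta|=0$ is supplied by the previous observation; and (iii) $|\partial^\beta(P_1-P_2)(x)| \leq \tilde{M}\tilde{\delta}^{m-|\beta|}$, obtained by expanding $A'M\delta^{m-|\beta|} = A'(A'')^{m-|\beta|} M \tilde{\delta}^{m-|\beta|} \leq (A'')^m A' M \tilde{\delta}^{m-|\beta|}$.

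Next I use the blob property, together with the trivial inclusion $K \subset (1+\tau)\blacklozenge K$, to lift $P_1, P_2 \in \Gamma(x, A'M, A'\tau)$ into $\Gamma(x, \tilde{M}, \tilde{\tau})$ with $\tilde{\tau} := C_\Gamma A'\tau$, enlarging $\tilde{M}$ to $\max((A'')^m, C_\Gamma)\, A' M$ if necessary (this only strengthens (iii)). Hypothesis \eqref{ctau} is exactly what keeps $\tilde{\tau}$, as well as the further factor $C_w\tilde{\tau}$ that appears in the conclusion, within $(0, \tau_{\max}]$. At this point every hypothesis of the $(C_w, \delta_{\max})$-convexity at $x$ is satisfied with parameters $(\tilde{M}, \tilde{\tau}, \tilde{\delta})$, and the definition delivers $P \in \Gamma(x, C_w \tilde{M}, C_w \tilde{\tau})$; collecting constants gives $P \in \Gamma(x, CM, C\tau)$ with $C$ depending only on $A', A'', C_w, C_\Gamma, m, n$, as claimed. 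The genuinely subtle step — really the heart of the argument — is the $|\beta|=0$ slot in (ii): the naive rescaling of $A''\delta^{-|\beta|}$ to $\tilde{\delta}^{-|\beta|}$ fails there and is rescued only by the pointwise evaluation of the quadratic partition-of-unity identity. Everything else is bookkeeping that \eqref{ctau} is designed to absorb.
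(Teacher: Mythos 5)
Your approach is the natural one, and your core observation is exactly right: the $|\beta|=0$ slot of the convexity hypothesis demands $|Q_i(x)|\le 1$, which the rescaling $\tilde{\delta}=\delta/A''$ cannot deliver from $|Q_i(x)|\le A''$ alone, and it is the pointwise evaluation $Q_1(x)^2+Q_2(x)^2=1$ of the quadratic partition-of-unity identity that supplies it. (For the record, the paper does not give its own proof of this lemma but cites the proof in \cite{feffermanFinitenessPrinciplesSmooth2016}; your argument is a reasonable direct reconstruction.)

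Two bookkeeping points deserve attention. First, as written you set $\tilde{\tau}:=C_\G A'\tau$ but $\tilde{M}:=\max((A'')^m,C_\G)A'M$, so $\tilde{M}/M$ and $\tilde{\tau}/\tau$ differ whenever $(A'')^m>C_\G$, and the convexity definition then lands you in $\Gamma(x,C_1M,C_2\tau)$ with $C_1\neq C_2$; since blobs carry no monotonicity in $M$ or $\tau$ separately, ``collecting constants'' into a single $C$ is not automatic. The fix is easy and uses the slack built into the blob axiom: $(1+\tau)\blacklozenge\Gamma(M,\tau)\subset\Gamma(M',\tau')$ for \emph{any} $\tau'$ in $[C_\G\tau,\tau_{\max}/C_\G]$, so take $\tilde{\tau}:=\max((A'')^m,C_\G)\,A'\tau$ from the outset (which \eqref{ctau} keeps below $\tau_{\max}/C_\G$), and then $\tilde{M}/M=\tilde{\tau}/\tau$ and $C:=C_w\max((A'')^m,C_\G)A'$ works. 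Second, the opening ``WLOG $A',A''\ge 1$'' is dubious for $A'$: replacing $A'$ by $\max(A',1)$ in \eqref{7} requires an inclusion $\Gamma(A'M,A'\tau)\subset\Gamma(M,\tau)$ that the blob axioms do not hand you directly when $A'<1$. Fortunately you never actually need $A'\ge 1$ — it enters only through \eqref{7} (handled by one blob-property application into $\Gamma(\tilde{M},\tilde{\tau})$) and \eqref{8} (absorbed into the factor $(A'')^mA'$ in $\tilde{M}$) — so the cleanest route is to drop that reduction, keep $A'$ as given, and normalize only $A''$ to $\max(A'',1)$, which genuinely only loosens \eqref{9}.
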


\begin{lemma}
\label{lemma-wsf2} Suppose $\vec{\Gamma}=\left( \Gamma \left( x,M,\tau\right)
\right) _{x\in E, M>0, \tau\in(0,\tau_{\max}]}$ is a $\left( C_{w},\delta _{\max }\right) $-convex
blob field with blob constant $C_{\G}$. Let

\begin{itemize}
\item[\refstepcounter{equation}\text{(\theequation)}\label{12}] $0<\delta
\leq \delta _{\max }$, $x\in E$, $M>0,A^{\prime },A^{\prime \prime }>0$, $%
P_{1},\cdots P_{k},Q_{1},\cdots ,Q_{k}\in \mathcal{P}$. 
\end{itemize}

Assume that

\begin{itemize}
\item[\refstepcounter{equation}\text{(\theequation)}\label{13}] $P_{i}\in
\Gamma \left( x,A^{\prime }M, A'\tau\right) $ for $i=1,\cdots ,k$ ($A'\tau \leq \tau_{\max}$);

\item[\refstepcounter{equation}\text{(\theequation)}\label{14}] $\left\vert
\partial ^{\beta }\left( P_{i}-P_{j}\right) \left( x\right) \right\vert \leq
A^{\prime}M\delta^{  m-\left\vert \beta \right\vert }$ for $\left\vert \beta
\right\vert \leq m-1$, $i,j=1,\cdots ,k$;

\item[\refstepcounter{equation}\text{(\theequation)}\label{15}] $\left\vert
\partial ^{\beta }Q_{i}\left( x\right) \right\vert \leq A^{\prime \prime }\delta^{
-\left\vert \beta \right\vert }$ for $\left\vert \beta \right\vert \leq m-1$
and $i=1,\cdots ,k $;

\item[\refstepcounter{equation}\text{(\theequation)}\label{16}] $%
\sum_{i=1}^{k}Q_{i}\odot _{x}Q_{i}=1$.

\item[\refstepcounter{equation}\text{(\theequation)}\label{ctau2}] $C\tau \leq \tau_{\max}$ for a constant $C$ determined by $A^{\prime }$, $A^{\prime \prime }$, $C_{w}$, $C_{\Gamma}$, $m$, $n$ and $k$.

\end{itemize}

Then

\begin{itemize}
\item[\refstepcounter{equation}\text{(\theequation)}\label{17}] $%
\sum_{i=1}^kQ_{i}\odot _{x}Q_{i}\odot _{x}P_{i}\in \Gamma \left( x,CM,C\tau\right)
, $ with $C$ determined by $A^{\prime }$, $A^{\prime \prime }$, $C_{w}$, $C_{\Gamma}$, $m$%
, $n$, $k$.
\end{itemize}
\end{lemma}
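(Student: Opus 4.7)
The plan is to induct on $k$. The base case $k=2$ is exactly Lemma \ref{lemma-wsf1}, whose constants we denote by $C_1 = C_1(A',A'',C_w,C_\G,m,n)$; the case $k=1$ is trivial since $Q_1 \odot_x Q_1 = 1$ forces $P = P_1 \in \Gamma(x,A'M,A'\tau)$. For the inductive step, we reduce from $k$ weights to $k-1$ by collapsing two of them into one, chosen to guarantee a quantitative lower bound.

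After relabelling so that $|Q_k(x)| = \max_i |Q_i(x)|$, the identity $\sum_i Q_i(x)^2 = 1$ yields $Q_k(x)^2 \geq 1/k$, hence $S(x) := Q_{k-1}(x)^2 + Q_k(x)^2 \geq 1/k$. Treating $Q_{k-1},Q_k$ as smooth representatives of their own jets, define the smooth function $\tilde\theta := \sqrt{Q_{k-1}^2 + Q_k^2}$ in a neighborhood of $x$ (well-defined by the lower bound on $S$) and set
\begin{equation*}
\tilde Q := J_x(\tilde\theta), \qquad \tilde q_{k-1} := J_x(Q_{k-1}/\tilde\theta), \qquad \tilde q_k := J_x(Q_k/\tilde\theta).
\end{equation*}
By direct computation with $\odot_x$, one checks $\tilde Q \odot_x \tilde Q = Q_{k-1} \odot_x Q_{k-1} + Q_k \odot_x Q_k$, $\tilde Q \odot_x \tilde q_i = Q_i$ for $i=k-1,k$, and $\tilde q_{k-1} \odot_x \tilde q_{k-1} + \tilde q_k \odot_x \tilde q_k = 1$. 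The product and quotient rules, together with $S(x) \geq 1/k$ and the hypothesis $|\partial^\beta Q_i(x)| \leq A''\delta^{-|\beta|}$, yield derivative bounds
\begin{equation*}
|\partial^\beta \tilde Q(x)| \leq C(k,A'')\delta^{-|\beta|}, \qquad |\partial^\beta \tilde q_i(x)| \leq C(k,A'')\delta^{-|\beta|} \quad (i=k-1,k).
\end{equation*}

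Apply Lemma \ref{lemma-wsf1} to the pair $(P_{k-1},P_k)$ with weights $(\tilde q_{k-1},\tilde q_k)$: hypothesis \eqref{8} comes from \eqref{14} with $i=k-1,j=k$, hypothesis \eqref{9} is the bound just derived, and the $\tau$-admissibility \eqref{ctau} is ensured by choosing the constant in \eqref{ctau2} large enough. We obtain a polynomial
\begin{equation*}
\tilde P := \tilde q_{k-1} \odot_x \tilde q_{k-1} \odot_x P_{k-1} + \tilde q_k \odot_x \tilde q_k \odot_x P_k \in \Gamma(x, C_1' M, C_1' \tau),
\end{equation*}
where $C_1' = C_1'(A',A'',C_w,C_\Gamma,m,n,k)$. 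Multiplying through by $\tilde Q \odot_x \tilde Q$ and using the identities above gives
\begin{equation*}
\tilde Q \odot_x \tilde Q \odot_x \tilde P = Q_{k-1} \odot_x Q_{k-1} \odot_x P_{k-1} + Q_k \odot_x Q_k \odot_x P_k.
\end{equation*}
Thus $\sum_{i=1}^k Q_i \odot_x Q_i \odot_x P_i = \sum_{i=1}^{k-2} Q_i \odot_x Q_i \odot_x P_i + \tilde Q \odot_x \tilde Q \odot_x \tilde P$, which is in the form to which the inductive hypothesis (for $k-1$ weights) applies, with tuple $(P_1,\ldots,P_{k-2},\tilde P)$ and weights $(Q_1,\ldots,Q_{k-2},\tilde Q)$, satisfying $\sum_{i=1}^{k-2} Q_i \odot_x Q_i + \tilde Q \odot_x \tilde Q = 1$.

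What remains is to verify hypothesis \eqref{14} of the inductive step, i.e.\ that $|\partial^\beta(\tilde P - P_j)(x)| \leq A'_{\text{new}} M \delta^{m-|\beta|}$ for $j \leq k-2$, with some enlarged constant $A'_{\text{new}}$. Since $\tilde q_{k-1} \odot_x \tilde q_{k-1} + \tilde q_k \odot_x \tilde q_k = 1$, we have $\tilde P - P_j = \tilde q_{k-1} \odot_x \tilde q_{k-1} \odot_x (P_{k-1}-P_j) + \tilde q_k \odot_x \tilde q_k \odot_x (P_k - P_j)$, and the product rule combined with the derivative bounds on $\tilde q_i$ and on $P_i - P_j$ (from \eqref{14}) yields the required estimate. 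This is the only real computation; I expect it to be the main obstacle, but it is entirely routine given the bounds already established, and the resulting constants depend only on the allowed parameters. The induction then delivers the conclusion $\sum_{i=1}^k Q_i \odot_x Q_i \odot_x P_i \in \Gamma(x, CM, C\tau)$ with $C = C(A',A'',C_w,C_\Gamma,m,n,k)$, as claimed.
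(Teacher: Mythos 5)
Your proof is correct and follows the standard route. The paper itself delegates the proof to the cited reference \cite{feffermanFinitenessPrinciplesSmooth2016} (``the proofs have to be trivially modified to account for $\tau$''), where the $k$-term version is reduced to the two-term version (Lemma~\ref{lemma-wsf1}) by exactly this pairwise-collapse induction, and your key device --- relabelling so that the two indices you merge carry a weight $Q_{k-1}(x)^2 + Q_k(x)^2 \geq 1/k$ bounded below, then forming $\tilde\theta = \sqrt{Q_{k-1}^2+Q_k^2}$, $\tilde Q = J_x(\tilde\theta)$, $\tilde q_i = J_x(Q_i/\tilde\theta)$ and using the ring-homomorphism property $J_x(f)\odot_x J_x(g) = J_x(fg)$ to verify the identities --- is the same one. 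The verification you flagged as ``the main obstacle'' (that $|\partial^\beta(\tilde P - P_j)(x)| \leq CM\delta^{m-|\beta|}$ for $j\leq k-2$) does go through routinely via the decomposition $\tilde P - P_j = \tilde q_{k-1}\odot_x\tilde q_{k-1}\odot_x(P_{k-1}-P_j) + \tilde q_k\odot_x\tilde q_k\odot_x(P_k-P_j)$ together with the product rule and the bounds $|\partial^\gamma\tilde q_i(x)|\leq C(k,A'')\delta^{-|\gamma|}$; and the $\tau$-admissibility bookkeeping works because the constants picked up over the $O(k)$ induction steps are absorbed into the constant of \eqref{ctau2}, which is permitted to depend on $k$.
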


\section{Refinements}
\label{sec:ref}
Say $\vec{\vec{\Gamma}} = (\Gamma(x, M, \tau))_{\substack{x\in E\\ M>0 \\ \tau\in(0,\tau_{\max}]}}$ is a blob field, $\#(E) = N$. We define a new blob field called the first refinement of $\vec{\vec{\Gamma}}$. To do so, we imitate (\cite{feffermanFittingSmoothFunction2009}). We use a Well Separated Pairs Decomposition $E\times E-\text{Diag} = \cup_{1\leq \nu\leq \nu_{\max}}E'_{\nu}\times E''_{\nu}$ with $\nu_{\max}\leq CN$. Additionally each $E'_{\nu}$ has the form $E'_{\nu}=E\cap Q'_{\nu}$ and each $E''_{\nu} = E\cap Q''_{\nu}$ where $Q'_{\nu},Q''_{\nu}$ are boxes. See \cite{feffermanFittingSmoothFunction2009} for more details.

  Moreover, each $E'_{\nu}$ and each $E''_{\nu}$ may be decomposed as a disjoint union of at most $C\log N$ dyadic intervals $I'_{\nu i}$ ($i=1,\dots,i'_{\max}(\nu)$) and $I''_{\nu i}$ ($i=1, \dots, i''_{\max}(\nu)$) in $E$, respectively, with respect to an order relation on $E$. We say that the $I'_{\nu i}$ appear in $E'_{\nu}$ and that the $I''_{\nu i}$ appear in $E''_{\nu}$.
  
For a subset $A\subset \R^n$ we define

\begin{equation*} 
\text{diam}_\infty (A) = \sqrt{n}\sup_{(x_1, \dots, x_n),(y_1, \dots, y_n)\in A}\max_{1\leq i \leq n} |x_i - y_i|,
\end{equation*}
the $l_\infty$ diameter of $A$.
\begin{enumerate}
\item[Step 1:] For each dyadic interval $I$ in $E$ we fix a representative $x_I \in I$ and define:
  \begin{flalign*}
    \Gamma_{\text{step 1}}(I, M, \tau) = \cap_{y\in I}[\Gamma(y,M,\tau)+M\mathscr{B}(x_I, \text{diam}_{\infty} I)]
  \end{flalign*}

\item[Step 2:] For each $E''_{\nu}$ define a representative $x''_{\nu} \in E''_{\nu}$ and define:
  \begin{flalign*}
    \Gamma_{\text{step 2}}(E''_{\nu}, M, \tau) = \cap_{I\text{ appears in }E''_{\nu}}[\Gamma_{\text{step 1}}(I, M, \tau) + M\mathscr{B}(x''_{\nu}, \text{diam}_{\infty} E''_{\nu})]
  \end{flalign*}
\item[Step 3:] For each $E'_{\nu}$ we fix a representative $x'_{\nu}$ and define:
  \begin{flalign*}
    \Gamma_{\text{step 3}}(E'_{\nu}, M, \tau) = \Gamma_{\text{step 2}}(E''_{\nu}, M, \tau) + M\mathscr{B}(x'_{\nu}, \|x'_{\nu}-x''_{\nu}\|)
  \end{flalign*}
\item[Step 4:] For each dyadic interval $I$, define:
  \begin{flalign*}
    \Gamma_{\text{step 4}}(I, M, \tau) = \cap_{\substack{\text{all }E'_{\nu} s.t. \\ I\text{ appears in }E'_{\nu}}} \Gamma_{\text{step 3}}(E'_{\nu}, M, \tau)
  \end{flalign*}
\item[Step 5:] For each $x\in E$, define:
  \begin{flalign*}
    \Gamma_{\text{step 5}}(x, M, \tau) = [\cap_{I\ni x} \Gamma_{\text{step 4}}(I,M,\tau)]\cap \Gamma(x,M,\tau)
  \end{flalign*}
\end{enumerate}

All of these are blobs, with blob constants controlled by the blob constant of $\vec{\vec{\Gamma}}$. 
\begin{lemma}
	\label{lem:cw-convex-ref}
If $\vec{\vec{\Gamma}}$ is $(C_w,\delta_{\max})$-convex, then:
\begin{enumerate}
  \item[(I)] $(\Gamma_{\text{step 1}}(I,M,\tau))_{\substack{M>0\\ \tau\in(0,\tau_{\max}]}}$ is $(C',\delta_{\max})$-convex at $x_I$.
  \item[(II)] $(\Gamma_{\text{step 2}}(E''_{\nu},M,\tau))_{\substack{M>0\\ \tau\in(0,\tau_{\max}]}}$ is $(C',\delta_{\max})$-convex at $x''_{\nu}$ (by Lemma \ref{hopping-lemma} and intersection properties).
  \item[(III)] $(\Gamma_{\text{step 3}}(E'_{\nu},M,\tau))_{\substack{M>0\\ \tau\in(0,\tau_{\max}]}}$ is $(C',\delta_{\max})$-convex at any point of $E'_{\nu}$ .
  \item[(IV)] $(\Gamma_{\text{step 4}}(I,M,\tau))_{\substack{M>0\\ \tau\in(0,\tau_{\max}]}}$ is $(C',\delta_{\max})$-convex at any point of $I\subset E$.
  \item[(V)] $(\Gamma_{\text{step 5}}(x,M,\tau))_{\substack{M>0\\ \tau\in(0,\tau_{\max}]}}$ is $(C',\delta_{\max})$-convex at $x$ .
\end{enumerate}
\end{lemma}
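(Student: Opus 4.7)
\medskip

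\noindent\textbf{Plan of proof for Lemma \ref{lem:cw-convex-ref}.}
The plan is to process the five steps in order, invoking exactly two ingredients at each stage: the Hopping Lemma (Lemma \ref{hopping-lemma}) to move the center of convexity from one base point to a nearby one via a Minkowski sum with $M\mathscr{B}$, and the remark immediately after the definition of $(C_w,\delta_{\max})$-convexity, which says that an intersection of blobs that are each $(C_w,\delta_{\max})$-convex at a common point $x$ is again $(C_w,\delta_{\max})$-convex at $x$. Throughout, the controlled blowup of the constant $C_w$ after each application of Lemma \ref{hopping-lemma} is absorbed into a new constant $C'$, still depending only on $C_w$, the blob constant of $\vvg$, and $m,n$.

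\medskip

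\noindent\textit{Steps (I) and (II).} For (I), fix $y\in I$. The blob $(\Gamma(y,M,\tau))_{M,\tau}$ is $(C_w,\delta_{\max})$-convex at $y$ by hypothesis. Since $y,x_I\in I$, we have $\|y-x_I\|\le\text{diam}_\infty I$, so Lemma \ref{hopping-lemma} (with $\tilde{\delta}=\text{diam}_\infty I$) turns $(\Gamma(y,M,\tau)+M\mathscr{B}(x_I,\text{diam}_\infty I))_{M,\tau}$ into a blob that is $(C',\delta_{\max})$-convex at $x_I$. Intersecting over $y\in I$ keeps $(C',\delta_{\max})$-convexity at the common point $x_I$. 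Step (II) is entirely parallel: each $(\Gamma_{\text{step 1}}(I,M,\tau))$ with $I$ appearing in $E''_{\nu}$ is, by (I), $(C',\delta_{\max})$-convex at $x_I\in I\subset E''_{\nu}$, and since $\|x_I-x''_{\nu}\|\le\text{diam}_\infty E''_{\nu}$, Lemma \ref{hopping-lemma} gives $(C'',\delta_{\max})$-convexity of the Minkowski-sum blob at $x''_{\nu}$; intersecting over the $I$'s preserves this.

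\medskip

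\noindent\textit{Step (III).} Applying Lemma \ref{hopping-lemma} directly to $(\Gamma_{\text{step 2}}(E''_{\nu},M,\tau))$ at $y=x''_{\nu}$ with $x=x'_{\nu}$ and $\tilde{\delta}=\|x'_{\nu}-x''_{\nu}\|$ gives $(C''',\delta_{\max})$-convexity of $\Gamma_{\text{step 3}}(E'_{\nu},M,\tau)$ at the single point $x'_{\nu}$. This is the main obstacle: we need convexity at \emph{every} $z\in E'_{\nu}$, not just at the chosen representative. Here the Well-Separated Pairs Decomposition enters through the separation inequality $\text{diam}_\infty E'_{\nu}\le c_{\text{sep}}\|x'_{\nu}-x''_{\nu}\|$ with $c_{\text{sep}}$ small. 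Using Taylor expansion we see $M\mathscr{B}(x'_{\nu},\|x'_{\nu}-x''_{\nu}\|)\supset c\,M\mathscr{B}(z,\|x'_{\nu}-x''_{\nu}\|)$ (the two $\mathscr{B}$-balls at nearby base points are comparable, with the distortion controlled by $c_{\text{sep}}$), so $\Gamma_{\text{step 3}}(E'_{\nu},M,\tau)$ contains a set of the form $\Gamma_{\text{step 2}}(E''_{\nu},M,\tau)+c\,M\mathscr{B}(z,\|x'_{\nu}-x''_{\nu}\|)$, to which we apply Lemma \ref{hopping-lemma} a second time (with $y=x''_{\nu}$ and $x=z$, valid since $\|z-x''_{\nu}\|\le(1+c_{\text{sep}})\|x'_{\nu}-x''_{\nu}\|$). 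Using that the resulting smaller set lies inside $\Gamma_{\text{step 3}}(E'_{\nu},M,\tau)$ and is itself $(C''',\delta_{\max})$-convex at $z$, one then checks directly from the definition that $\Gamma_{\text{step 3}}(E'_{\nu},M,\tau)$ inherits $(C''',\delta_{\max})$-convexity at $z$ (since the convex-combination formula lands inside this smaller set once the appropriate $M$-scaling is absorbed).

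\medskip

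\noindent\textit{Steps (IV) and (V).} Both are pure intersection arguments. For (IV), every $E'_{\nu}$ in which $I$ appears contains $I$, so by (III), each $\Gamma_{\text{step 3}}(E'_{\nu},M,\tau)$ is $(C''',\delta_{\max})$-convex at every point of $I$; intersecting over such $\nu$ preserves this. For (V), each $\Gamma_{\text{step 4}}(I,M,\tau)$ with $I\ni x$ is $(C''',\delta_{\max})$-convex at $x$ by (IV), and $\Gamma(x,M,\tau)$ is $(C_w,\delta_{\max})$-convex at $x$ by hypothesis; the intersection is $(C',\delta_{\max})$-convex at $x$. The final constant $C'$ depends only on $C_w$, on the blob constant of $\vvg$, and on $m,n$, as claimed.
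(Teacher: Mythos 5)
Your proof follows essentially the same route as the paper's: Lemma~\ref{hopping-lemma} plus the intersection remark handle (I), (II), (IV), (V), and the crux of (III) is the Well-Separated-Pairs separation inequality $\|z-x'_{\nu}\|\le\kappa\|x'_{\nu}-x''_{\nu}\|$, which you use (as the paper does) to move the convexity from the representative $x'_{\nu}$ to an arbitrary $z\in E'_{\nu}$.

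One spot in your Step (III) is too compressed to stand on its own. After you apply Lemma~\ref{hopping-lemma} at $z$ to the \emph{smaller} blob $(\Gamma_{\text{step 2}}(E''_{\nu},M,\tau)+cM\mathscr{B}(z,\tilde\delta))$, you assert that the \emph{larger} blob $\Gamma_{\text{step 3}}$ ``inherits'' $(C',\delta_{\max})$-convexity because the smaller blob sits inside it. That implication is false in general: $(C_w,\delta_{\max})$-convexity is not monotone under passing to a superset, since the polynomials $P_1,P_2$ in the definition range over the \emph{larger} set and need not lie in the smaller one. What rescues the argument is the \emph{two-sided} comparability of the boxes, $c\,\mathscr{B}(z,\tilde\delta)\subset\mathscr{B}(x'_{\nu},\tilde\delta)\subset C\,\mathscr{B}(z,\tilde\delta)$, which you do state in passing; this makes $(\Gamma_{\text{step 3}}(E'_{\nu},M,\tau))_{M,\tau}$ and $(\Gamma_{\text{step 2}}(E''_{\nu},M,\tau)+M\mathscr{B}(z,\tilde\delta))_{M,\tau}$ $C$-equivalent blobs, and then Lemma~\ref{lem:c-eq-conv} (stability of $(C_w,\delta_{\max})$-convexity under $C$-equivalence) finishes the job. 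Replacing ``one then checks directly from the definition'' with an explicit appeal to the two-sided inclusion and Lemma~\ref{lem:c-eq-conv} makes the step airtight; the paper instead re-runs the hopping-lemma computation at the arbitrary point $x$ and transfers the estimate to $x'_{\nu}$ via Taylor's theorem, but the two routes are equivalent and cost the same constants.
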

    \begin{proof}
    	\begin{enumerate}
    		\item[(I)] By Lemma \ref{hopping-lemma} and intersection properties.
    		\item[(II)] By Lemma \ref{hopping-lemma} and intersection properties.
    		\item[(III)]       We proceed as in Lemma \ref{hopping-lemma}. Obviously $\Gamma_{\text{step 3}}$ is $(C_w,\delta_{\max})-$convex at $x_{\nu}'$ (by Lemma \ref{hopping-lemma}), but we need to prove it for every $x \in E'_\nu$.
    		Let $0<\delta\leq \delta_{\max}$, $M>0$, $\tau\in(0,\tau_{\max}]$, $P_1',P_2'\in\Gamma_{\text{step 3}}(E'_{\nu},M,\tau)$, and $Q_1',Q_2'\in\P$. Let $x\in E'_{\nu}$. Assume:
    		\begin{enumerate}
    			\item $|\partial^\beta(P_1'-P_2')(x)|\leq M\delta^{m-|\beta|}$ for $\beta\in\M$.
    			\item $|\partial^\beta Q_i'(x)|\leq\delta^{-|\beta|}$.
    			\item $\sum_{i=1,2}Q_i'\odot_x Q_i' = 1$.
    		\end{enumerate}
    		We write $P_i' = P_i + MP_{bi}$ where $P_i \in \Gamma_{\text{step 2}}(E''_{\nu},M,\tau)$ and $|\partial^\beta P_{bi}(x_{\nu}')| \leq \|x_{\nu}' - x_{\nu}''\|^{m-\beta}$ for $\beta \in \M$.
    		
    		We want to prove there exists a $P\in \Gamma_{\text{step 2}}(E''_{\nu},CM,C\tau)$ such that
    		\begin{align}
    		\label{eq:cwmaxref}
    		|\partial^\beta(\sum_{i=1,1}Q_i'\odot_x Q_i' \odot_x P_i' - P)(x_{\nu}')| \leq CM\|x'_{\nu}-x''_{\nu}\|^{m-|\beta|}\text{ for }\beta \in \M
    		\end{align}
    		
    		We proceed exactly as in Lemma \ref{hopping-lemma} and divide in two cases $\|x-x'_\nu\| \leq c_0\delta$ or $\|x-x'_\nu\| > c_0\delta$. In both cases, proceeding as in Lemma \ref{hopping-lemma}, we would arrive at the inequality we want to see except we would have $\textit{left hand side of \eqref{eq:cwmaxref}} \leq CM \|x-x'_\nu\|^{m-|\beta|}$ . By the Well Separated Pairs Composition, $\|x-x'_\nu\| \leq \kappa\|x'_\nu - x''_\nu \|$ for all $x \in E'_\nu$. Therefore, \eqref{eq:cwmaxref} follows from the analogous inequality with $x'_\nu$ replaced by $x$. That is how we would prove the $(C_w, \delta_{\max})$-convexity at every point in $E'_\nu$.
    		\item[(IV)] By intersection properties.
    		\item[(V)] By intersection properties.
    	\end{enumerate}

  This concludes our proof.

    \end{proof}

Given $P_5\in\Gamma_{\text{step 5}}(x,M,\tau)$ and $I'\ni x$, we have $P_5 \in \Gamma_{\text{step 4}}(I', M,\tau)$. Given any $E'_{\nu}\ni x$, we have $I'\ni x$ for some $I'$ appearing in $E'_{\nu}$, hence $P_5 \in \Gamma_{\text{step 3}}(E'_{\nu}, M, \tau)$. We then have some $P_2 \in \Gamma_{\text{step 2}}(E''_{\nu}, M, \tau)$ such that
\begin{align}
  |\partial^{\beta}(P_2-P_5)(x_{\nu}')|\leq M\|x'_{\nu}-x''_{\nu}\|^{m-|\beta|}\text{ for }\beta\in\M
\end{align}

Since $\|x - x_{\nu}'\| \leq \kappa \|x_{\nu}'' - x_\nu'\|$ we have

\begin{align}
  |\partial^{\beta}(P_2-P_5)(x)|\leq CM\|x'_{\nu}-x''_{\nu}\|^{m-|\beta|}\text{ for }\beta\in\M
\end{align}
Given $I''$ appearing in $E''_{\nu}$ there exists $P_1 \in \Gamma_{\text{step 1}}(I'', M, \tau)$ such that:
\begin{align}
  |\partial^\beta(P_1-P_2)(x_\nu'')|\leq M(\text{diam}_{\infty} E''_{\nu})^{m-|\beta|}\text{ for }\beta\in\M
\end{align}
and because $\text{diam}_{\infty} E''_{\nu} \leq C\|x'_\nu - x''_{\nu}\|$ (with $C$ depending only on $n, m, \kappa$), we can substitute $x_\nu''$ with $x_\nu'$ and then $x_\nu'$ with $x$, so we have

 \begin{align}
 |\partial^\beta(P_1-P_2)(x)|\leq CM(\|x'_\nu - x''_\nu\|)^{m-|\beta|}\text{ for }\beta\in\M.
 \end{align}

Finally, given $y\in I''$ there exists $P\in\Gamma(y,M,\tau)$ such that $|\partial^{\beta}(P-P_1)(x_{I''})|\leq CM(\text{diam}_\infty I'')^{m-|\beta|}\text{ for }\beta\in\M$, and we can repeat the previous substitutions. Moreover, every $y\in E''_{\nu}$ belongs to some $I''$ appearing on $E''_{\nu}$.

Therefore, given $(x,y)\in E\times E-\text{Diag}$ , and given $P_5\in\Gamma_{\text{step 5}}(x,M,\tau)$ there exists $P\in \Gamma(y,M,\tau)$ such that:
\begin{align}
  |\partial^\beta(P_5-P)(x)|\leq CM\|x'_\nu-x''_\nu\|^{m-|\beta|}\text{ for }\beta\in \M
\end{align}
where $x'_\nu, x''_\nu$ are the representatives of the $E'_\nu, E''_\nu$ that correspond to $(x, y)$. Since $c\|x-y\| \leq \|x'_\nu - x''_\nu\| \leq C\|x-y\|$, we finally have
\begin{align}
|\partial^\beta(P_5-P)(x)|\leq CM\|x-y\|^{m-|\beta|}\text{ for }\beta\in \M
\end{align}
which corresponds to the refinements defined in \cite{feffermanSharpFormWhitney2005}.

If $x=y$, we can just take $P=P_5$.

Next, let $F\in\mathcal{C}_{\text{loc}}^m(\R^n)$ such that $|\partial^\beta F|\leq cM$ on $\R^n$ for all $|\beta|=m$ and $J_x(F) \in \Gamma(x,M,\tau)$ for all $x\in E$. Then:
\begin{flalign}
  J_{x_I}(F) &\in \Gamma_{\text{step 1}}(I, M, \tau) &\text{ for all }I\\
  J_{x''_{\nu}}(F) &\in \Gamma_{\text{step 2}}(E''_{\nu}, M, \tau) &\text{ for all }\nu \\
  J_{x}(F) &\in \Gamma_{\text{step 3}}(E'_{\nu}, M, \tau) &\text{ for all }x\in E'_{\nu},\text{ any }\nu \\
  J_{x}(F) &\in \Gamma_{\text{step 4}}(I, M, \tau) &\text{ for all }x\in I,\text{ any }I \\
  J_{x}(F) &\in \Gamma_{\text{step 5}}(x, M, \tau) &\text{ for all }x\in E .
\end{flalign}

We define $\vec{\vec{\Gamma_1}} = (\Gamma_{\text{step 5}}(x, M,\tau))_{\substack{x\in E\\ M>0\\ \tau\in(0,\tau_{\max}]}}$ to be the first refinement of $\vec{\vec{\Gamma}}$ . The above discussion shows that:
\begin{itemize}
\item[\LA{br0}] $\vec{\vg}_1$ is a blob field with blob constant determined by that of $\vec{\vg}$, together with $m, n$ and $\tau_{\max}$.
\item[\LA{br1}] If $\vec{\vec{\Gamma}}$ is $(C_w, \delta_{\max})$-convex, then $\vec{\vec{\Gamma_1}}$ is $(C',\delta_{\max})$-convex, with $C'$ determined by $C_w, m, n$ and the blob constant for $\vec{\vg}$.
\item[\LA{br2}] Given $P\in\Gamma_{\text{step 5}}(x, M, \tau)$ and given $y\in E$, there exists $P' \in \Gamma(y,M,\tau)$ such that $|\partial^\beta(P-P')(x)|\leq CM\|x-y\|^{m-|\beta|}$ for $\beta\in\M$. Note that for $y=x$ the result also is true since $\Gamma_{\text{step 5}}(y, M, \tau) \subset \Gamma(y, M, \tau)$.
\item[\LA{br3}] If $F\in\mathcal{C}^m_{\text{loc}}(\R^n)$ satisfies $|\partial^\beta F|\leq cM$ on $\R^n$ for $|\beta|=m$ and $J_x (F)\in\Gamma(x,M,\tau)$ for all $x\in E$, then also $J_x(F)\in\Gamma_{\text{step 5}}(x,M,\tau)$ for all $x\in E$.
\end{itemize}

Now we define the $l^{th}$ refinement of $\vec{\vg}$ by recursion: $\vec{\vg}_0 = \vec{\vg}$, $\vec{\vg}_{l+1} = \vec{\vg}_{l, \text{step 5}}$. 

\subsection*{Computing the blobs}
\label{sec:computing}

Suppose that our initial blob field $\vec{\vg} = \vvg_0$ is given by an oracle $\Omega$ as in Section \ref{sec:specifying}.

We won't compute $\vec{\vec{\Gamma_1}}$; instead, we compute a $C-$equivalent approximation, using the following algorithms.
\begin{itemize}
\item Approximate Minkowski Sum. See \textbf{Algorithm 6}, and note that the approximate sum for each $M, \tau$ is contained in a $\Gamma(x, CM, C\tau)$ by the definition of blobs.
\item Approximate Intersection: For each $M,\tau$ we concatenate the descriptors for all convex sets if all of them are nonempty, run the Megiddo Algorithm to know if the intersection is non-empty, and then apply \textbf{Algorithm 3}. 
\end{itemize}
We note that these computations will give convex sets that are contained in a blob $\Gamma(x, CM, C\tau)$ for a constant $C$ depending only on $n,m$. This means that the properties explained in section \ref{sec:ref} still hold true, except that in each refinement we replace $M$ and $\tau$ by $CM, C\tau$ respectively. This will determine our initial choice for $\tau$ so that $C^{l}\tau < \tau_{\max}\text{.}$ 

More precisely, let $\tg_0$ be a blob field specified by an Oracle which is known to be $C-$equivalent (for $C$ depending only on $n,m$) to $\G_0$ and for each $x \in E$ let $\tg_l(x, M, \tau)$ be the $l-$th refinement using the approximate Minkowski sum and approximate intersection algorithms. Then we know that $\G_l(x, M, \tau) \subset \tg_l(x,M,\tau) \subset (1+\tau)\blacklozenge\G_l(x, CM, C\tau)$ where $C$ depends on the blob constant of $\vvg_0$, together with $l, m, n, \tau_{\max}$. By Lemma \ref{lem:c-eq-bl} they are $C-$equivalent for some $C$ depending on the blob constant of $\vvg$. Therefore, by Lemma \ref{lem:c-eq-conv} $\tg_l(x, M, \tau)$ have the same $(C_w,\delta_{\max})-$convexity properties as $\Gamma_l (x, M, \tau)$.
The above discussion shows that:
\begin{itemize}
\item[\LA{abr0}] If $\tg_0$ is a blob field with blob constant $C$, then $\tg_l$ is also a blob field with blob constant $C'$ depending only on $l, C_\G, m, n, \tau_{\max}$.
\item[\LA{abr1}] If $\tg_0$ is $(C_w,\delta_{\max})$ convex, then $\tg_l$ is $(C',\delta_{\max})$ convex, with $C'$ depending only on $l, C_w, m, n, C_\G, \tau_{\max}$.
\item[\LA{abr2}] Given $P \in \tg_l(x,M,\tau)$ and given $y \in E$, there exists $P'\in \G_{l-1}(y, CM, C\tau)$ such that $|\partial^{\beta}(P-P')(x)|\leq C'M\|x-y\|^{m-|\beta|}$ for $\beta \in \M$. $C, C'$ depend only on $C_\G, m, n, C_w, \tau_{\max}, l$.
\item[\LA{abr3}] If $F\in\mathcal{C}^m_{\text{loc}}(\R^n)$ satisfies $|\partial^\beta F|\leq cM$ on $\R^n$ for $c$ depending on $n, m, C_\G, l, \tau_{\max}$ and for $|\beta|=m$ and $J_x (F)\in\Gamma(x,M,\tau)$ for all $x\in E$, then also $J_x(F)\in\tg_l(x,CM,C\tau)$ for all $x\in E$.
\end{itemize}

\begin{remark}
  Note that the only difference is between \eqref{br2}, \eqref{abr2}, \eqref{abr3}. The other properties (\eqref{abr0}, \eqref{abr1}) are conserved and only the size of constants $C, C'$ changes (but they still do not depend on $N$ or $\tau$ or $M$).
\end{remark}
\begin{remark}
  All of the proofs from \cite{feffermanFinitenessPrinciplesSmooth2016} will work with our $\vvg$ (just a few minor changes are needed but the proof remains the same). For that reason, the rest of the document will focus on $\tg$. Furthermore, even for $\tg$ the proofs remain the same until Lemma \ref{lemma-transport} of section \ref{transport-lemma}. 
\end{remark}

Recall from \cite{feffermanFittingSmoothFunction2009} and previous sections in this paper that up until now we don't need more than $C(\tau)N\log N$ operations to call the Blob oracle and to create the refinements. Indeed, calling the original blob oracle that returns the whole blob field for a given $M, \tau$ costs $C(\tau)N\log N$, while the approximate Minkowski sum of two convex sets $K(\Delta), K'(\Delta')$ takes $C(\tau)[|\Delta|+|\Delta'|]$ operations but $|\Delta| \leq C(\tau, m)$. The work used to compute the intersection of $k$ convex sets is $kC(\tau,D)$ for the same reasons. Therefore the amount of work in step 1 and 5 is bounded by
\begin{equation}
  \label{eq:1}
  \sum_{I\text{ dyadic interval}} |I| < C(\tau)N\log N
\end{equation}
by (7) from Section 5 in \cite{feffermanFittingSmoothFunction2009}. Step 2 requires no more than $C(\tau)N\log N$ operations, step 3 takes $C(\tau)N$ operations, and step 4 takes $C(\tau)N\log N$ operations. In total, the number of operations is no more than $C(\tau)N\log N$ and the storage is bounded by $C'(\tau)N$. A new Oracle is therefore produced that for a given $M, \tau$ returns all the first refinements in $C(\tau)N\log N)$.

The main theorem of this paper is

\begin{theorem}
	\label{bigTh}For a large enough $l_{\ast }=l_{\ast }\left( m,n\right) $, the
	following holds. Let $\tg _{0}=\left( \tg_{0}\left( x,M, \tau\right)
	\right) _{x\in E,M>0,\tau\in(0,\tau_{\max}]}$ be a $\left( C_{w},\delta _{\max }\right) $-convex
	blob field with blob constant $C_{\Gamma}$, and for $l\geq 1$, let $\tg _{l}=\left( \tg_{l}\left(
	x,M, \tau \right) \right) _{x\in E,M>0, \tau\in(0,\tau_{\max}]}$ be its approximate $l^{th}$ refinement. Suppose we are
	given a cube $Q_{\max }$ of sidelength $\delta _{\max }$, a point $x_{0}\in
	E\cap Q_{\max }$, a number $M_{0}>0$, and a polynomial $P_{0}\in \tg
	_{l_{\ast }}\left( x_{0},M_{0},\tau_{0}\right) $. Then there exists $F\in C^{m}\left( 
	\mathbb{R}^{n}\right) $ such that 
	
	\begin{itemize}
		\item[\LA{mt1}]$J_{x}(F)\in \tg_{0}(x,C_{\ast }M_{0}, C_{\ast} \tau_0)$ for all $x\in Q_{\max
		}\cap E$, and
		
		\item[\LA{mt2}] $|\partial ^{\beta }(F-P_{0})(x)|\leq C_{\ast }M_{0}\delta _{\max
		}^{m-\left\vert \beta \right\vert }$ for all $x\in Q_{\max }$, $|\beta| \leq
		m$.
	\end{itemize}
	
	Here, $C_{\ast }$ depends only on $m$, $n$, $C_{w}$, $C_{\Gamma}$.
\end{theorem}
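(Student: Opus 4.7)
The plan is to prove this theorem by strong induction on labels $\A \subseteq \M$, following the strategy sketched in the introduction. For each label $\A$, I will formulate a ``Main Lemma for $\A$'' asserting: any local selection problem with base data $(Q_0, x_0, P_0)$ that carries the label $\A$ (a condition to be made precise in terms of the geometry of $\tg_{l(\A)}(x_0, M_0, \tau_0)$, where $l(\A)$ is an integer depending on $\A$, chosen monotone in the order $<$ so that $l(\M)=0$ and $l(\emptyset)$ is maximal) admits a solution $F \in C^m(Q_0)$ satisfying the analogues of \eqref{mt1} and \eqref{mt2} on $Q_0$. The theorem itself then follows by applying the Main Lemma for $\A = \emptyset$: setting $l_\ast = l(\emptyset)$, the hypothesis $P_0 \in \tg_{l_\ast}(x_0, M_0, \tau_0)$ is exactly what is required for $(Q_{\max}, x_0, P_0)$ to carry label $\emptyset$, and the resulting $F$ on $Q_{\max}$ is then glued to $P_0$ outside via a cutoff to produce a global $F \in C^m(\R^n)$ with the stated bounds.

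The base case is $\A = \M$, the minimal label. The geometric condition in this case is strong enough that we may simply take $F \equiv P_0$ on $Q_0$: the required inclusion $J_x(F) = P_0 \in \tg_0(x, C_\ast M_0, C_\ast \tau_0)$ for $x \in E \cap Q_0$ is a direct consequence of the label condition combined with property \eqref{abr2} and the $(C_w, \delta_{\max})$-convexity of $\tg_0$ via Lemma \ref{lemma-wsf2}. For the inductive step, fix $\A \neq \M$ and assume the Main Lemma holds for every $\A' < \A$. Given $(Q_0, x_0, P_0)$ with label $\A$, I perform a Calder\'on--Zygmund decomposition of $Q_0$ into subcubes $\{Q_\nu\}$ under a stopping rule designed so that for each $\nu$ with $E \cap Q_\nu \neq \emptyset$, either $\#(E \cap Q_\nu) \leq 1$ (handled trivially) or the sub-problem $(Q_\nu, x_\nu, P_\nu)$ carries a strictly smaller label $\A_\nu' < \A$. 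Property \eqref{abr2} applied at level $l(\A)$ produces the required base polynomials $P_\nu \in \tg_{l(\A)-1}(x_\nu, CM_0, C\tau_0)$ with $|\partial^\alpha(P_\nu - P_0)(x_0)| \leq C M_0 \|x_\nu - x_0\|^{m-|\alpha|}$ for $|\alpha| \leq m-1$.

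By the inductive hypothesis, each sub-problem yields $F_\nu \in C^m(Q_\nu)$, and these are patched via a partition of unity $\{\theta_\nu\}$ adapted to the CZ decomposition with $\sum_\nu \theta_\nu^2 = 1$ and the standard derivative bounds $|\partial^\beta \theta_\nu| \leq C \delta_{Q_\nu}^{-|\beta|}$. The squared form is precisely what Lemma \ref{lemma-wsf2} is designed for: applied pointwise at each $x \in E \cap Q_0$ to the collection $\{P_\nu\}_{\nu : x \in \text{supp}\,\theta_\nu}$ (which are pairwise close in the sense of \eqref{14} because each $P_\nu$ approximates $P_0$ and the supports have controlled size), it gives $J_x(F) \in \tg_0(x, C M_0, C \tau_0)$ at $x \in E \cap Q_0$. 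The $C^m$ bound on $F$ follows from the $C^m$ bounds on each $F_\nu$ together with the partition of unity estimates.

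The principal obstacles will be: first, producing the precise label definition and CZ stopping rule so that labels strictly decrease on subproblems, which requires identifying a multiindex $\alpha^\ast \in \M \setminus \A$ along which the geometry of $\tg_{l(\A)-1}(x_\nu, M_0, \tau_0)$ gains a new tight constraint, and using the order relation on $\M$ to ensure monotonicity of the new label. Since labels are subsets of $\M$ and $<$ is a total order with $\#\M$ bounded by $m, n$, the induction terminates at depth at most $2^{\#\M}$, pinning down a finite $l_\ast = l_\ast(m, n)$. Second, controlling the cumulative blow-up of $M_0$ and $\tau_0$: each patching and refinement step multiplies these by factors bounded in terms of $C_w, C_\Gamma, m, n$, so after $l_\ast$ levels the final constant $C_\ast$ depends only on $m, n, C_w, C_\Gamma$, provided the implicit requirement $C_\ast \tau_0 \leq \tau_{\max}$ holds, which is absorbed into the standing conventions of Section \ref{sec:computing}. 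Third, passing from $\tg$-statements to the original $\G$ at the boundary of each induction level is handled transparently by the $C$-equivalence established via Lemmas \ref{lem:c-eq-bl} and \ref{lem:c-eq-conv}.
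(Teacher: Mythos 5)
Your proposal matches the paper's own argument: an inductive Main Lemma over labels $\A\subseteq\M$ ordered by $<$, base case $\A=\M$ taking $F\equiv P_0$, a Calder\'on--Zygmund decomposition whose stopping rule forces each child subproblem to carry a strictly smaller label (implemented via the Relabeling Lemma \ref{lemma-pb2} and Transport Lemma \ref{lemma-transport}), and patching by squared Whitney cutoffs using the $(C_w,\delta_{\max})$-convexity Lemma \ref{lemma-wsf2}, with the theorem obtained by applying the Main Lemma at $\A=\emptyset$ (Restated Main Lemma and Theorem \ref{theorem-tu1}) and $l_*=l(\emptyset)+1$. The one mechanism you leave open, and correctly flag as a principal obstacle, is that each induction step consumes several refinement levels rather than one --- the transport, relabeling, and CZ-stopping arguments together force $l(\A)-3\ge l(\A')$ for $\A'<\A$, which is why the paper defines $l(\A)=1+3\cdot\#\{\A'\ \text{monotonic},\ \A'<\A\}$.
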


In this paper, we also implement an algorithm to compute for such an $F$, the jet $J_x(F)$ efficiently at each point $x\in E$.

\section{Polynomial bases}

We adapt some definitions from \cite{feffermanFinitenessPrinciplesSmooth2016}. Let $\vg = (\G(x, M,\tau))_{x\in E, M>0, \tau\in (0,\tau_{\max}]}$ be a blob field with blob constant $C$. Let $x_0 \in E$, $M_0 > 0$, $0 < \tau_0 \leq \frac{\tau_{\max}}{C}$, $P^0\in\P$, $\A\subset\M$, $P_{\alpha}\in\P$ for $\alpha\in\A$, $C_B >0$, $\delta>0$ be given. Then we say that $(P_{\alpha})_{\alpha\in\A}$ forms an $(\A, \delta, C_B)$-basis for $\vg
$ at $(x_0, M_0, \tau_0, P^0)$ if the following conditions are satisfied:
\begin{itemize}
\item[\refstepcounter{equation}\text{(\theequation)}\label{pb1}] $P^{0}\in
\Gamma \left( x_{0},C_{B}M_{0}, C_{B}\tau_0\right) $.
\end{itemize}

\begin{itemize}
\item[\refstepcounter{equation}\text{(\theequation)}\label{pb2}] $P^{0}+%
\frac{M_{0}\delta ^{m-\left\vert \alpha \right\vert }}{C_{B}}P_{\alpha }$, $%
P^{0}-\frac{M_{0}\delta ^{m-\left\vert \alpha \right\vert }}{C_{B}}P_{\alpha
}\in \Gamma \left( x_{0},C_{B}M_{0},C_{B}\tau_0\right) $ for all $\alpha \in \mathcal{A}$%
.
\end{itemize}

\begin{itemize}
\item[\refstepcounter{equation}\text{(\theequation)}\label{pb3}] $\partial
^{\beta }P_{\alpha }\left( x_{0}\right) =\delta _{\alpha \beta }$ (Kronecker
delta) for $\beta ,\alpha \in \mathcal{A}$.
\end{itemize}

\begin{itemize}
\item[\refstepcounter{equation}\text{(\theequation)}\label{pb4}] $\left\vert
\partial ^{\beta }P_{\alpha }\left( x_{0}\right) \right\vert \leq
C_{B}\delta ^{\left\vert \alpha \right\vert -\left\vert \beta \right\vert }$
for all $\alpha \in \mathcal{A}$, $\beta \in \mathcal{M}$.
\end{itemize}

We say that $(P_{\alpha})_{\alpha\in\A}$ forms a weak $(\A, \delta, C_B)$-basis for $\vg$ at $(x_0, M_0, \tau_0, P^0)$ if conditions \eqref{pb1}, \eqref{pb2}, \eqref{pb3} hold as stated and condition \eqref{pb4} holds for $\alpha\in\A,\beta\in\M,\beta\geq\alpha$.

We make a few obvious remarks.

\begin{itemize}
\item[\refstepcounter{equation}\text{(\theequation)}\label{pb5}] Any $(%
\mathcal{A}, \delta, C_B)$-basis for $\vec{\Gamma}$ at $(x_0,M_0,\tau_0,P^0)$ is
also an $(\mathcal{A}, \delta, C_B^{\prime })$-basis for $\vec{\Gamma}$ at $%
(x_0,M_0,\tau_0,P^0)$, whenever $C^{\prime }_B \geq C_B$.
\end{itemize}

\begin{itemize}
\item[\refstepcounter{equation}\text{(\theequation)}\label{pb6}] Any $(%
\mathcal{A}, \delta, C_B)$-basis for $\vec{\Gamma}$ at $(x_0,M_0,\tau_0,P^0)$ is
also an $(\mathcal{A}, \delta^{\prime }, C_B\cdot[\max\{\frac{\delta^{\prime
}}{\delta},\frac{\delta}{\delta^{\prime }} \}]^m)$-basis for $\vec{\Gamma}$
at $(x_0,M_0,\tau_0,P^0)$, for any $\delta^{\prime }>0$.
\end{itemize}

\begin{itemize}
\item[\refstepcounter{equation}\text{(\theequation)}\label{pb7}] {Any weak $(%
\mathcal{A}, \delta, C_B)$-basis for $\vec{\Gamma}$ at $(x_0,M_0,\tau_0,P^0)$ is
also a weak $(\mathcal{A}, \delta^{\prime }, C_B^{\prime })$-basis for $\vec{%
\Gamma}$ at $(x_0,M_0,\tau_0,P^0)$, whenever $0<\delta^{\prime }\leq \delta$ and $%
C^{\prime }_B \geq C_B$. }
\end{itemize}

Note that \eqref{pb1} need not follow from \eqref{pb2}, since $\mathcal{A}$
may be empty.

\begin{itemize}
\item[\refstepcounter{equation}\text{(\theequation)}\label{pb7a}] If $%
\mathcal{A}=\emptyset$, then the existence of an $(\mathcal{A},\delta,C_B)$%
-basis for $\vec{\Gamma}$ at $(x_0,M_0,\tau_0,P^0)$ is equivalent to the assertion
that $P^0 \in \Gamma(x_0, C_BM_0, C_B\tau_0)$.
\end{itemize}

The main result of this section is Lemma \ref{lemma-pb2}. 

\begin{lemma}[Relabeling Lemma]
\label{lemma-pb2} Let $\vg = \blobdef$ be a $(C_w,\delta_{\max})$-convex blob field with blob constant $C_\G$. Let $x_{0}\in E
$, $M_{0}>0$, $0<\tau_{0}\leq\tau_{\max}$, $0<\delta \leq \delta _{\max }$, $C_{B}>0$, $P^{0}\in \Gamma
\left( x_{0},M_{0},\tau_{0}\right) $, $\mathcal{A}\subseteq \mathcal{M}$. Suppose $%
\left( P_{\alpha }^{00}\right) _{\alpha \in \mathcal{A}}$ is a weak $\left( 
\mathcal{A},\delta ,C_{B}\right) $-basis for $\vg$ at $\left(
x_{0},M_{0},\tau_{0},P^{o}\right) $. Then, for some monotonic $\hat{\mathcal{A}}\leq 
\mathcal{A}$, $\vec{\Gamma}$ has an $(\hat{\mathcal{A}},\delta
,C_{B}^{\prime })$-basis at $(x_{0},M_{0},\tau_{0},P^{0})$, with $C_{B}^{\prime }$
determined by $C_{B}$, $C_{w}$, $C_\G$, $m$, $n$. Moreover, if $\max_{\alpha \in 
\mathcal{A},\beta \in \mathcal{M}}\delta ^{|\beta |-|\alpha |}|\partial
^{\beta }P_{\alpha }^{00}(x_{0})|$ exceeds a large enough constant
determined by $C_{B}$, $C_{w}$, $m$, $n$, then we can take $\hat{\mathcal{A}}%
<\mathcal{A}$ (strict inequality).
\end{lemma}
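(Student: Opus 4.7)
The plan is to argue by induction on $\A$ under the total well-order $<$ on subsets of $\M$, exploiting that $\M$ is the minimum element. In the base case $\A = \M$, condition \eqref{pb3} pins down $P_\alpha^{00}(x) = (x-x_0)^\alpha/\alpha!$, so \eqref{pb4} holds with constant $1$ for every $\beta \in \M$ and $\hat\A = \M$ is monotonic. For the inductive step I fix $\A \neq \M$, assume the lemma for every $\A' < \A$, and examine $\Theta := \max_{\alpha\in\A,\beta\in\M}\delta^{|\beta|-|\alpha|}|\partial^\beta P_\alpha^{00}(x_0)|$, splitting on whether $\Theta$ exceeds a threshold $c_\ast$ depending only on $C_B$, $C_w$, $C_\Gamma$, $m$, $n$.

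For the large-$\Theta$ case (which simultaneously yields the moreover clause), I pick a pair $(\alpha_\ast,\beta_\ast)$ realising the maximum, choosing $\beta_\ast$ minimal in $<$ on $\M$ and then $\alpha_\ast$ maximising $\delta^{|\beta_\ast|-|\alpha|}|\partial^{\beta_\ast}P_\alpha^{00}(x_0)|$. The weak version of \eqref{pb4} forces $\beta_\ast < \alpha_\ast$, and \eqref{pb3} forces $\beta_\ast \in \M\setminus\A$. Set $\tilde P := P_{\alpha_\ast}^{00}/\partial^{\beta_\ast}P_{\alpha_\ast}^{00}(x_0)$; a case analysis on the location of $\beta$ (namely $\beta \geq \alpha_\ast$, $\beta\in\A$, $\beta = \alpha_\ast$, or $\beta \in \M\setminus\A$ with $\beta < \alpha_\ast$), each using \eqref{pb4}, \eqref{pb3}, or the maximality of $(\alpha_\ast,\beta_\ast)$, yields the sharp bound $|\partial^\beta \tilde P(x_0)| \leq \delta^{|\beta_\ast|-|\beta|}$ for every $\beta\in\M$, and \eqref{pb2} for $\tilde P$ follows from \eqref{pb2} for $P_{\alpha_\ast}^{00}$ by absorbing the large denominator via the affine-combination closure of the convex set. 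I then Gauss-eliminate: $\hat P_\alpha := P_\alpha^{00} - \partial^{\beta_\ast}P_\alpha^{00}(x_0)\,\tilde P$ for $\alpha \in \A\setminus\{\alpha_\ast\}$; using \eqref{pb3} of the original basis, $\partial^{\alpha'}\tilde P(x_0) = 0$ for $\alpha' \in \A\setminus\{\alpha_\ast\}$, so the new family indexed by $\hat\A := (\A\setminus\{\alpha_\ast\})\cup\{\beta_\ast\}$ satisfies \eqref{pb3} and a weak \eqref{pb4}, while \eqref{pb2} for the $\hat P_\alpha$ comes from a four-point convex combination of the $\pm$-constraints on $P_{\alpha_\ast}^{00}$ and $\tilde P$. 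Because $\beta_\ast$ is the least element of the symmetric difference and lies in $\hat\A$, we have $\hat\A < \A$, and the induction hypothesis applied to this new weak basis yields a monotonic $\hat{\hat{\A}} \leq \hat\A < \A$ together with the desired $(\hat{\hat{\A}},\delta,C_B')$-basis.

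For the small-$\Theta$ case, $(P_\alpha^{00})_{\alpha\in\A}$ is automatically a full $(\A,\delta,c_\ast)$-basis. If $\A$ is already monotonic I take $\hat\A = \A$. Otherwise let $\gamma_0$ be the least (in $<$) element of $\M\setminus\A$ of the form $\alpha_0 + \gamma$ with $\alpha_0\in\A$; I build a basis element for $\gamma_0$ by invoking Lemma~\ref{lemma-wsf1} with $P_1 := P^0 + (M_0\delta^{m-|\alpha_0|}/C_B)P_{\alpha_0}^{00}$ and $P_2 := P^0 - (M_0\delta^{m-|\alpha_0|}/C_B)P_{\alpha_0}^{00}$, together with cutoff jets $Q_1 = J_{x_0}(\cos(\phi/2))$ and $Q_2 = J_{x_0}(\sin(\phi/2))$ for a carefully chosen polynomial $\phi$ vanishing at $x_0$, so that $(Q_1\odot_{x_0}Q_1 - Q_2\odot_{x_0}Q_2)\odot_{x_0}P_{\alpha_0}^{00}$ has its leading Taylor coefficient at $x_0$ pointing in direction $\gamma_0$. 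The bound $\Theta\leq c_\ast$ controls the lower-order error terms in this construction. Gauss-eliminating the $\gamma_0$-components from the existing $P_\alpha^{00}$ then yields a weak basis for $\hat\A := \A\cup\{\gamma_0\}$; since $\gamma_0\in\hat\A$ is the unique element of the symmetric difference, $\hat\A < \A$, and the induction hypothesis finishes the job.

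The main obstacle I expect is tracking how the basis constant propagates through the Gauss-elimination of the large-$\Theta$ case: although the rescaled element $\tilde P$ keeps its own constant at $1$, the cross-terms in $\hat P_\alpha$ formally introduce a factor of $\Theta$. Because the induction runs on the finite subset lattice of $\M$ with depth at most $2^{|\M|}$ (a function of $m$ and $n$ only), a careful bookkeeping shows $C_B'$ ends up a function of $C_B$, $C_w$, $C_\Gamma$, $m$, $n$ alone, as each application of the induction hypothesis sees only the previous step's weak-basis constant rather than the raw $\Theta$. The secondary obstacle is the construction of the new basis element in the monotonicity-upgrade case, where selecting $\phi$ so that the resulting jet lines up correctly with $(x-x_0)^{\gamma_0}$ is a delicate polynomial manipulation that uses $(C_w,\delta_{\max})$-convexity through Lemma~\ref{lemma-wsf1} in an essential way.
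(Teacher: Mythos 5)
The paper itself does not present a proof of the Relabeling Lemma; it simply cites the corresponding lemma in Fefferman--Israel--Luli [\textit{Finiteness principles for smooth selection}], with ``trivial changes'' to handle the $\tau$ parameter. Your overall architecture --- induction on $\A$ with respect to the order $<$, base case $\A=\M$, case split on the size of $\Theta := \max_{\alpha\in\A,\beta\in\M}\delta^{|\beta|-|\alpha|}|\partial^\beta P_\alpha^{00}(x_0)|$, a pivot (``Gaussian elimination'') for large $\Theta$, and a $(C_w,\delta_{\max})$-convexity ``wiggle'' plus Gauss-elimination to upgrade to a monotonic label in the small-$\Theta$ case --- is indeed the same architecture used there. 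The small-$\Theta$ branch and the fact that $\beta_*<\alpha_*$, $\beta_*\notin\A$, $\hat\A:=(\A\setminus\{\alpha_*\})\cup\{\beta_*\}<\A$, the verification of \eqref{pb3} for the eliminated family, and the control of \eqref{pb2} by convex combinations all look right.

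There is, however, a genuine gap in the large-$\Theta$ step, precisely at the point you flag and then dismiss. After the pivot, the cross-term in $\hat P_\alpha = P_\alpha^{00} - \partial^{\beta_*}P_\alpha^{00}(x_0)\tilde P$ contributes, for $\alpha\in\A\setminus\{\alpha_*\}$ and $\alpha\leq\beta<\alpha_*$ with $\beta\in\M\setminus\A$, a bound of order $\Theta\,\delta^{|\alpha|-|\beta|}$: one checks that $|\partial^{\beta_*}P_\alpha^{00}(x_0)|\,|\partial^\beta\tilde P(x_0)| \leq \delta^{|\alpha|-|\alpha_*|}\,|\partial^\beta P_{\alpha_*}^{00}(x_0)|$, and for $\beta<\alpha_*$ the only available bound on the last factor is the $\Theta$ bound. (A concrete instance: $n=1$, $m=4$, $\A=\{1,3\}$, $\Gamma\equiv\P$, $P_1^{00}(x)=T\delta+(x-x_0)$, $P_3^{00}(x)=T\delta^3+\tfrac{T\delta}{2}(x-x_0)^2+\tfrac16(x-x_0)^3$ with $T$ huge; then $\Theta=T$ and $|\partial^2\hat P_1(x_0)|=T/\delta$, so the new weak-basis constant is forced to be at least $T$.) Thus the weak basis you hand to the induction hypothesis has constant of order $\Theta$, not of order $C_B$, and the inductive output constant is then determined by $\Theta$, which is not bounded in terms of $C_B,C_w,C_\Gamma,m,n$. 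The claim that ``each application of the induction hypothesis sees only the previous step's weak-basis constant rather than the raw $\Theta$'' is not a resolution: the previous step's weak-basis constant \emph{is} comparable to $\Theta$. Bounded recursion depth alone cannot save this, since even one step passes an uncontrolled constant downstream. To close the gap you would need a substantively different normalization or pivoting scheme (or an additional lemma that reduces $\Theta$ at controlled cost before eliminating); simply bookkeeping the constants of the scheme as written does not give the lemma's stated dependence of $C_B'$ on $C_B,C_w,C_\Gamma,m,n$ alone.
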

\begin{proof}
  To prove Lemma \ref{lemma-pb2} we proceed as in \cite{feffermanFinitenessPrinciplesSmooth2016}, with trivial changes in the proof and statement of the required technical lemmas.
\end{proof}

The next result is a consequence of the Relabeling Lemma (Lemma \ref%
{lemma-pb2}).

\begin{lemma}[Control $\Gamma $ Using Basis]
\label{lemma-pb3} Let $\vg=\blobdef$ be a $\left( C_{w},\delta _{\max }\right) $-convex blob
field with blob constant $C_{\Gamma}$. Let $x_{0}\in E$, $M_{0}>0$, $0<\tau_0\leq\tau_{\max}$, $0<\delta \leq \delta _{\max }$, $C_{B}>0
$, $\mathcal{A\subseteq M}$, and let $P$, $P^{0}\in \mathcal{P}$. Suppose $\vec{%
\Gamma}$ has an $\left( \mathcal{A},\delta ,C_{B}\right) $-basis at $\left(
x_{0},M_{0},\tau_0, P^{0}\right) $. Suppose also that

\begin{itemize}
\item[\refstepcounter{equation}\text{(\theequation)}\label{pb71}] $P\in
\Gamma \left( x_{0},C_{B}M_{0},C_{B}\tau_0\right) $,
\end{itemize}

\begin{itemize}
\item[\refstepcounter{equation}\text{(\theequation)}\label{pb72}] $\partial
^{\beta }\left( P-P^{0}\right) \left( x_{0}\right) =0$ for all $\beta \in 
\mathcal{A}$, and
\end{itemize}

\begin{itemize}
\item[\refstepcounter{equation}\text{(\theequation)}\label{pb73}] $%
\max_{\beta \in \mathcal{M}}\delta ^{\left\vert \beta \right\vert
}\left\vert \partial ^{\beta }\left( P-P^{0}\right) \left( x_{0}\right)
\right\vert \geq M_{0}\delta ^{m}$.
\end{itemize}

Then there exist $\hat{\mathcal{A}} \subseteq \mathcal{M}$ and $\hat{P}^0
\in \mathcal{P}$ with the following properties.

\begin{itemize}
\item[\refstepcounter{equation}\text{(\theequation)}\label{pb74}] $\hat{%
\mathcal{A}}$ is monotonic.
\end{itemize}

\begin{itemize}
\item[\refstepcounter{equation}\text{(\theequation)}\label{pb75}] $\hat{%
\mathcal{A}} < \mathcal{A}$ (strict inequality).
\end{itemize}

\begin{itemize}
\item[\refstepcounter{equation}\text{(\theequation)}\label{pb76}] $\vec{%
\Gamma}$ has an $(\hat{\mathcal{A}},\delta,C_B^{\prime })$-basis at $%
(x_0,M_0,\tau_0, \hat{P}^0)$, with $C_B^{\prime }$ determined by $C_B$, $C_\G$ $C_w$, $m$, $%
n$.
\end{itemize}

\begin{itemize}
\item[\refstepcounter{equation}\text{(\theequation)}\label{pb77}] $\partial
^{\beta }\left( \hat{P}^{0}-P^{0}\right) \left( x_{0}\right) =0$ for all $%
\beta \in \mathcal{A}$.
\end{itemize}

\begin{itemize}
\item[\refstepcounter{equation}\text{(\theequation)}\label{pb78}] $%
\left\vert \partial ^{\beta }\left( \hat{P}^{0}-P^{0}\right) \left(
x_{0}\right) \right\vert \leq M_{0}\delta ^{m-\left\vert \beta \right\vert }$
for all $\beta \in \mathcal{M}$.
\end{itemize}
\end{lemma}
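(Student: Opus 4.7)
The plan is to use hypothesis \eqref{pb73} to enlarge the label set $\A$ by one extra index $\beta^*\in\M\setminus\A$ at which the derivatives of $P-P^0$ concentrate; build a weak basis at a slightly shifted base point $\hat P^0$ indexed by $\A\cup\{\beta^*\}$; and then invoke the Relabeling Lemma (Lemma \ref{lemma-pb2}) to upgrade this weak basis into a full $(\hat\A,\delta,C_B')$-basis. The strict inequality $\hat\A<\A$ will come for free from the enlargement, not from the ``moreover'' clause of Lemma \ref{lemma-pb2}.

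Concretely, pick $\beta^*\in\M$ attaining the maximum in \eqref{pb73}, so that $\mu:=\partial^{\beta^*}(P-P^0)(x_0)$ satisfies $|\mu|\geq M_0\delta^{m-|\beta^*|}$. Hypothesis \eqref{pb72} forces $\beta^*\notin\A$, so the enlarged label $\A^*:=\A\cup\{\beta^*\}$ satisfies $\A^*<\A$ strictly (the symmetric difference $\{\beta^*\}$ lies inside $\A^*$). Define
\[
\tilde P_{\beta^*}:=\frac{P-P^0}{\mu},\qquad \tilde P_\alpha:=P_\alpha^{00}-\partial^{\beta^*}(P_\alpha^{00})(x_0)\cdot\tilde P_{\beta^*}\quad(\alpha\in\A),
\]
and the shifted base point
\[
\hat P^0:=P^0+t^*(P-P^0),\qquad t^*:=\frac{M_0\delta^{m-|\beta^*|}}{c_0\,\mu},
\]
for a large constant $c_0$ to be chosen later. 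The Kronecker condition \eqref{pb3} is immediate from the construction. The maximality of $\beta^*$ gives $|\partial^\beta \tilde P_{\beta^*}(x_0)|\leq\delta^{|\beta^*|-|\beta|}$ for all $\beta\in\M$, which combined with the old bound \eqref{pb4} for the $P_\alpha^{00}$ yields the weak form of \eqref{pb4} for every $\tilde P_\alpha$ ($\alpha\in\A^*$). The conclusions \eqref{pb77} and \eqref{pb78} follow directly from $\hat P^0-P^0=t^*(P-P^0)$, hypothesis \eqref{pb72}, and the bound $|t^*\mu|\leq M_0\delta^{m-|\beta^*|}/c_0$ coming from the choice of $t^*$ together with the maximality of $\beta^*$.

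The technical heart of the argument is verifying \eqref{pb1} and \eqref{pb2} at $\hat P^0$. After expansion, every quantity
\[
\hat P^0\pm\frac{M_0\delta^{m-|\alpha|}}{C_B'}\,\tilde P_\alpha\qquad(\alpha\in\A^*)
\]
becomes a linear combination of the three polynomials $P^0$, $P$, and $P^0\pm\frac{M_0\delta^{m-|\alpha|}}{C_B}P_\alpha^{00}$, of which the first two lie in $\Gamma(x_0,C_BM_0,C_B\tau_0)$ by \eqref{pb71} and the old \eqref{pb1}, and the third lies there by the old \eqref{pb2}. The plan is to choose $c_0$ and $C_B'$ as sufficiently large multiples of $C_B$ so that the three coefficients sum to $1$ and are simultaneously non-negative for both $\pm$ signs and every $\alpha\in\A^*$. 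This is where the main obstacle lies: the coefficient of $P-P^0$ takes the form $t^*\mp\frac{M_0\delta^{m-|\alpha|}\partial^{\beta^*}(P_\alpha^{00})(x_0)}{C_B'\,\mu}$, whose absolute value is at most $\frac{C_B}{C_B'}\cdot\frac{M_0\delta^{m-|\beta^*|}}{|\mu|}$ by the old \eqref{pb4}, and one must arrange $t^*$ and $C_B'$ so that this correction never dominates $t^*$ in either sign for any $\alpha$.

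Once this is done, the weak $(\A^*,\delta,C_B')$-basis at $(x_0,M_0,\tau_0,\hat P^0)$ is in place. Apply Lemma \ref{lemma-pb2} to it, obtaining a monotonic $\hat\A\leq\A^*$ and a full $(\hat\A,\delta,C_B'')$-basis at $(x_0,M_0,\tau_0,\hat P^0)$ with $C_B''$ determined by $C_B$, $C_w$, $C_\G$, $m$, $n$. This delivers \eqref{pb74} and \eqref{pb76}. Since $\A^*<\A$ strictly, we automatically have $\hat\A\leq\A^*<\A$, giving \eqref{pb75}.
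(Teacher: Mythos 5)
Your overall strategy is correct and (to the extent one can check, since the present paper simply cites \cite{feffermanFinitenessPrinciplesSmooth2016}) is the same mechanism used there: enlarge the label set to $\mathcal{A}^* = \mathcal{A}\cup\{\beta^*\}$ with $\beta^*$ the index of maximal normalized derivative, build a weak $\mathcal{A}^*$-basis at a slightly shifted base point $\hat P^0$, and feed this into the Relabeling Lemma. The observations that $\beta^*\notin\mathcal{A}$ (by \eqref{pb72}) and hence $\mathcal{A}^*<\mathcal{A}$ strictly, that the maximality of $\beta^*$ gives the $\delta$-scaled derivative bounds for $\tilde P_{\beta^*}$, that the Kronecker conditions hold by construction, and that the strict inequality $\hat{\mathcal{A}}<\mathcal{A}$ follows from $\hat{\mathcal{A}}\leq\mathcal{A}^*<\mathcal{A}$ rather than from the ``moreover'' clause of Lemma \ref{lemma-pb2}, are all correct.

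There is, however, a genuine gap in the construction of $\hat P^0$. You set $t^* = \frac{M_0\delta^{m-|\beta^*|}}{c_0\,\mu}$, so $t^*$ inherits the sign of $\mu=\partial^{\beta^*}(P-P^0)(x_0)$, which may be negative. When $\mu<0$, the shifted point $\hat P^0 = (1-t^*)P^0 + t^*P$ has a coefficient $t^*<0$ and is therefore not a convex combination of $P^0$ and $P$; neither \eqref{pb71} nor the old \eqref{pb1} then places $\hat P^0$ in $\Gamma(x_0,C_B'M_0,C_B'\tau_0)$, so your version of \eqref{pb1} for the weak basis fails. The same sign problem makes the coefficient $u_\alpha = t^* \mp \frac{M_0\delta^{m-|\alpha|}\partial^{\beta^*}(P_\alpha^{00})(x_0)}{C_B'\mu}$ of $P$ negative whenever $\mu<0$, no matter how large you take $c_0$ and $C_B'$. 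Your stated arrangement — ``the correction never dominates $t^*$'' — only keeps $u_\alpha$ bounded away from zero; it does not make $u_\alpha$ nonnegative when $t^*$ itself is negative. The fix is to define $t^*=\frac{M_0\delta^{m-|\beta^*|}}{c_0\,|\mu|}>0$. Since $|t^*\mu|=\frac{M_0\delta^{m-|\beta^*|}}{c_0}$ is unchanged, your verifications of \eqref{pb77} and \eqref{pb78} go through unchanged; the correction term now has absolute value at most $\frac{C_Bc_0}{C_B'}\,t^*$ (and for $\alpha=\beta^*$, at most $\frac{c_0}{C_B'}\,t^*$), so choosing, say, $c_0\geq 4$ and $C_B'\geq 4C_Bc_0$ keeps every $u_\alpha$ in $[0,\,1-\tfrac{C_B}{C_B'}]$ for both signs and every $\alpha\in\mathcal{A}^*$. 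With this modification your argument is complete.
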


\begin{proof}
  The proof of Lemma \ref{lemma-pb3} is the same as for the Lemma ``Control $\G$ Using Basis'' in \cite{feffermanFinitenessPrinciplesSmooth2016}.
\end{proof}

\section{The Transport Lemma}

\label{transport-lemma}

In this section, we present the following result.
\begin{lemma}[Transport Lemma]
\label{lemma-transport}Let $\tg_{0}=\blobdef[0]$ be a blob field with blob constant $C_{\G}$. For $l\geq 1$, let $\tg_{l}=\blobdef[l]$ be the approximate $l$-th refinement of $\tg_{0}$.

\begin{itemize}
\item[\refstepcounter{equation}\text{(\theequation)}\label{t1}] Suppose $%
\mathcal{A\subseteq M}$ is monotonic and $\hat{\mathcal{A}} \subseteq 
\mathcal{M}$ (not necessarily monotonic).
\end{itemize}

Let $x_{0}\in E$, $M_{0}>0$, $l_{0}\geq 1$, $\delta >0$, $C_{B}$, $\hat{C}%
_{B}$, $C_{DIFF}>0$. Let $P^{0}$, $\hat{P}^{0}\in \mathcal{P}$.
Assume that the following hold.

\begin{itemize}
\item[\refstepcounter{equation}\text{(\theequation)}\label{t2}] $\tg_{l_{0}}$ has an $\left( \mathcal{A},\delta ,C_{B}\right) $-basis at $\left(
x_{0},M_{0},\tau_{0},P^{0}\right) $, and an $\left(\hat{\mathcal{A}} ,\delta ,\hat{C}%
_{B}\right) $-basis at $\left(x_{0}, M_{0},\tau_0,\hat{P}^{0}\right) $.
\end{itemize}

\begin{itemize}
\item[\refstepcounter{equation}\text{(\theequation)}\label{t3}] $%
\partial^\beta(P^0-\hat{P}^0)\equiv 0$ for $\beta \in \mathcal{A}$.
\end{itemize}

\begin{itemize}
\item[\refstepcounter{equation}\text{(\theequation)}\label{t4}] $%
|\partial^\beta (P^0 - \hat{P}^0)(x_0)|\leq C_{DIFF}M_0\delta^{m-|\beta|}$
for $\beta \in \mathcal{M}$.
\end{itemize}

Let $y_0 \in E$, and suppose that

\begin{itemize}
\item[\refstepcounter{equation}\text{(\theequation)}\label{t5}] $%
|x_0-y_0|\leq \epsilon_0\delta $,
\end{itemize}
where $\epsilon_0$ is a a small enough constant determined by $C_B$, $\hat{C}%
_B$, $C_{DIFF}$, $m$, $n$ and the blob constant $C_{\G}$. Then there exists $\hat{P}^\# \in \mathcal{P}$
with the following properties.

\begin{itemize}
\item[\refstepcounter{equation}\text{(\theequation)}\label{t6}] $\tg%
_{l_0-1}$ has both an $(\mathcal{A},\delta,C_B^{\prime })$-basis and an $(%
\hat{\mathcal{A}},\delta,C_B^{\prime })$-basis at $(y_0,M_0,\tau_0,\hat{P}^\#)$,
with $C_B^{\prime }$ determined by $C_B$, $\hat{C}_B$, $C_{DIFF}$, $m$, $n$ and the blob constant $C_{\G}$.
\end{itemize}
\begin{itemize}
\item[\refstepcounter{equation}\text{(\theequation)}\label{t7}] $%
\partial^\beta(\hat{P}^\#-P^0)\equiv0$ for $\beta \in \mathcal{A}$.
\end{itemize}

\begin{itemize}
\item[\refstepcounter{equation}\text{(\theequation)}\label{t8}] $%
|\partial^\beta(\hat{P}^\#-P^0)(x_0)|\leq C^{\prime }M_0\delta^{m-|\beta|}$
for $\beta \in \mathcal{M}$, with $C^{\prime }$ determined by $C_B$, $\hat{C}%
_B$, $C_{DIFF}$, $m$, $n$ and the blob constant $C_{\G}$.
\end{itemize}
\end{lemma}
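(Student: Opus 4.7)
The plan is to follow the Transport Lemma proof in \cite{feffermanFinitenessPrinciplesSmooth2016}, with two modifications: each ``transport'' now costs one level of refinement and inflates $M,\tau$ by the constants from \eqref{abr2}, and all inclusions must be verified inside the approximate refinements $\tg_l$. The engine of the argument is property \eqref{abr2}, which lets us move polynomials from $\tg_{l_0}$ at $x_0$ to $\tg_{l_0-1}$ at $y_0$ while paying only a Taylor-type derivative error controlled by $\|x_0-y_0\|^{m-|\beta|}$.

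First I would transport the center of the $\hat{\A}$-basis. Applying \eqref{abr2} to $\hat{P}^0\in\tg_{l_0}(x_0,\hat{C}_B M_0,\hat{C}_B\tau_0)$ produces a preliminary candidate $\tilde{P}^\#\in\tg_{l_0-1}(y_0,CM_0,C\tau_0)$ satisfying $|\partial^\beta(\tilde{P}^\#-\hat{P}^0)(x_0)|\leq CM_0\|x_0-y_0\|^{m-|\beta|}\leq CM_0(\epsilon_0\delta)^{m-|\beta|}$ for $\beta\in\M$. I would then correct this transport using the $\A$-basis polynomials $\{P^0_\alpha\}_{\alpha\in\A}$ at $(x_0,M_0,\tau_0,P^0)$: set $\hat{P}^\#:=\tilde{P}^\#-\sum_{\alpha\in\A}c_\alpha P^0_\alpha$ with $c_\alpha:=\partial^\alpha(\tilde{P}^\#-P^0)(x_0)$. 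The Kronecker identity \eqref{pb3} gives $\partial^\alpha(\hat{P}^\#-P^0)(x_0)=0$ for every $\alpha\in\A$; monotonicity of $\A$ and the observation from Section~\ref{notation-and-preliminaries} then upgrade this to $\partial^\alpha(\hat{P}^\#-P^0)\equiv 0$ on $\R^n$, establishing \eqref{t7}. Combining the bound on $\tilde{P}^\#-\hat{P}^0$ at $x_0$ with hypothesis \eqref{t4} and the basis estimate \eqref{pb4} yields $|c_\alpha|\leq CM_0\delta^{m-|\alpha|}$, and hence \eqref{t8}.

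Next, I would construct the two bases at $(y_0,M_0,\tau_0,\hat{P}^\#)$. For the $\A$-basis, transport each of the $2|\A|$ points $P^0\pm(M_0\delta^{m-|\alpha|}/C_B)P^0_\alpha\in\tg_{l_0}(x_0,C_BM_0,C_B\tau_0)$ via \eqref{abr2}, obtaining $R^\pm_\alpha\in\tg_{l_0-1}(y_0,CC_BM_0,CC_B\tau_0)$. Define $\hat{P}^\#_\alpha$ by linearly recombining $(R^+_\alpha-R^-_\alpha)/2$ together with small corrections by the $P^0_\beta$ so that $\partial^\beta\hat{P}^\#_\alpha(y_0)=\delta_{\alpha\beta}$ for $\beta,\alpha\in\A$. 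The recombination matrix is an $O(\epsilon_0)$-perturbation of the identity (via Taylor expansion of the $P^0_\alpha$ about $x_0$), hence invertible once $\epsilon_0$ is small; the size estimates $|\partial^\beta\hat{P}^\#_\alpha(y_0)|\leq C'_B\delta^{|\alpha|-|\beta|}$ follow from the basis bounds on the $P^0_\alpha$ and Taylor comparison between $x_0$ and $y_0$. The inclusions $\hat{P}^\#\pm(M_0\delta^{m-|\alpha|}/C'_B)\hat{P}^\#_\alpha\in\tg_{l_0-1}(y_0,C'_BM_0,C'_B\tau_0)$ are obtained by writing the discrepancy $\hat{P}^\#\pm(M_0\delta^{m-|\alpha|}/C'_B)\hat{P}^\#_\alpha-R^\pm_\alpha$ as a $\pm\tfrac{\tau_0}{2}$-combination of polynomials already known to lie in $\tg_{l_0-1}(y_0,C''M_0,C''\tau_0)$, then invoking the blob enlargement property \eqref{blob1}. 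I would carry out the analogous construction for the $\hat{\A}$-basis, transporting $\hat{P}^0\pm(M_0\delta^{m-|\alpha|}/\hat{C}_B)\hat{P}^0_\alpha$ for $\alpha\in\hat{\A}$.

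The main obstacle is the re-centering in the previous step: both bases must be anchored at the single point $\hat{P}^\#$, although the transports naturally produce bases around the transport of $P^0$ on one side and around $\tilde{P}^\#$ on the other. The shift $\hat{P}^\#-\tilde{P}^\#$ is a linear combination of the $P^0_\alpha$ with coefficients of size $CM_0\delta^{m-|\alpha|}$, so re-centering the $\hat{\A}$-basis from $\tilde{P}^\#$ to $\hat{P}^\#$ introduces shifts of the same order, which must be absorbed into $\tg_{l_0-1}(y_0,C'_BM_0,C'_B\tau_0)$ via \eqref{blob1}. This is precisely what forces $\epsilon_0$ to be chosen sufficiently small in terms of $C_B$, $\hat{C}_B$, $C_{DIFF}$, $C_\G$, $m$, $n$, as required by the hypothesis of the lemma.
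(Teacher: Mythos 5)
Your overall architecture—transport via \eqref{abr2}, correct the center by a linear combination of the $P^0_\alpha$ to enforce \eqref{t7}, then recombine the transported basis points—matches the structure of the argument in \cite{feffermanFinitenessPrinciplesSmooth2016}, and your derivation of \eqref{t7} and \eqref{t8} from the Kronecker identity, \eqref{t4}, \eqref{pb4}, and the monotonicity observation in Section~\ref{notation-and-preliminaries} is sound. But the mechanism you propose for the basis inclusions in \eqref{t6} does not work. You claim that the discrepancy $\hat{P}^\#\pm(M_0\delta^{m-|\alpha|}/C'_B)\hat{P}^\#_\alpha-R^\pm_\alpha$ can be written as a $\pm\tfrac{\tau_0}{2}$-combination of elements of $\tg_{l_0-1}(y_0,C''M_0,C''\tau_0)$ and then absorbed by \eqref{blob1}. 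This cannot succeed in general: the blob enlargement $(1+\tau_0)\blacklozenge K = K+\tfrac{\tau_0}{2}(K-K)$ only tolerates perturbations whose size is $O(\tau_0)$ relative to the diameter of $K$, while $\tau_0\in(0,\tau_{\max}]$ is an \emph{independent} free parameter in the lemma. The constant $\epsilon_0$ in \eqref{t5} is not allowed to depend on $\tau_0$ (only on $C_B,\hat C_B,C_{DIFF},m,n,C_\G$), so shrinking $\epsilon_0$ cannot make your discrepancy small in the scale of $\tfrac{\tau_0}{2}(K-K)$. Your closing sentence (``This is precisely what forces $\epsilon_0$ to be chosen sufficiently small...'') therefore does not follow; it conflates control via $\epsilon_0$ with control via $\tau_0$.

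There is a second, related issue with the sizes you invoke. You estimate the recentering coefficients as $|c_\alpha|\leq CM_0\delta^{m-|\alpha|}$, using \eqref{t4}. But because of \eqref{t3}, $\partial^\alpha(\hat{P}^0-P^0)(x_0)=0$ for $\alpha\in\A$, so $c_\alpha=\partial^\alpha(\tilde{P}^\#-\hat{P}^0)(x_0)$ is purely a transport error and in fact satisfies the much sharper bound $|c_\alpha|\leq C\epsilon_0 M_0\delta^{m-|\alpha|}$. This sharper bound is exactly what is needed for the inclusion step, and it is achieved by a pure convexity argument on the individual convex sets $\Gamma$, not by $\blacklozenge$-enlargement. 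The point, mirrored by the paper's one-line remark that the refinement constants ``can be hidden into $C'_B$'', is that $\tau_0$ plays no active role in this proof: the transported vertices $R^0,R^\pm_\gamma,\hat R^0,\hat R^\pm_\gamma$ all lie in $\tg_{l_0-1}(y_0,CM_0,C\tau_0)$, one builds the new center and basis legs as convex combinations of these (using the shift coefficients of order $\epsilon_0$ to stay inside the simplex), and the enlarged constant $C'_B$ absorbs the inflation of $M$ and $\tau$ introduced by \eqref{abr2} and by the convex recombination—not by \eqref{blob1}. You should also be careful about which simplex absorbs which shift: the transverse component $\hat R^0-R^0$ (of size $C_{DIFF}$, not $\epsilon_0$) is not in $\operatorname{span}\{P^0_\alpha\}_{\alpha\in\A}$, so recentering the transported $\A$-simplex from $R^0$ all the way to $\hat R^0$ cannot be done with $\A$-legs alone; the $\hat{\A}$-simplex must participate, which is another place where pure convexity, rather than blob enlargement, carries the argument.
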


\begin{remark}
Note that $\mathcal{A}$ and $\hat{\mathcal{A}}$ play different roles here;
see \eqref{t1}, \eqref{t3}, and \eqref{t7}.
\end{remark}

\begin{proof}[Proof of the Transport Lemma]
The proof is the same as in \cite{feffermanFinitenessPrinciplesSmooth2016}. The constant introduced in the approximate refinements can be hidden into $C_B'$.
\end{proof}

For future reference, we state the special case of the Transport Lemma in
which we take $\hat{\mathcal{A}} = \mathcal{A}$, $\hat{P}^0=P^0$.

\begin{corollary}
\label{cor-to-transport} Let $\tg_{0}=\left( \Gamma _{0}\left(
x,M\right) \right) _{x\in E, M>0, \tau\in(0,\tau_{\max}]}$ be a blob field with blob constant $C_{\G}$. For $l\geq 1$, let $\tg_{l}=\left( \Gamma _{l}\left( x,M\right) \right) _{x\in E, M>0, \tau\in(0,\tau_{\max}]}$ be
the approximate $l$-th refinement of $\tg_{0}$. Suppose

\begin{itemize}
\item[\refstepcounter{equation}\text{(\theequation)}\label{t62}] $\mathcal{A}
\subseteq \mathcal{M}$ is monotonic.
\end{itemize}

Let $x_{0}\in E$, $M_{0}>0$, $0<\tau_0\leq \tau_{\max}$, $l_{0}\geq 1$, $\delta >0,C_{B}>0$; and let $%
P^{0}\in \mathcal{P}$. Assume that

\begin{itemize}
\item[\refstepcounter{equation}\text{(\theequation)}\label{t63}] $\tg_{l_{0}}$ has an $\left( \mathcal{A},\delta ,C_{B}\right) $-basis at $%
\left( x_{0},M_{0},\tau_0,P^{0}\right) $.
\end{itemize}

Let $y_0 \in E$, and suppose that

\begin{itemize}
\item[\refstepcounter{equation}\text{(\theequation)}\label{t64}] $\left\vert
x_{0}-y_{0}\right\vert \leq \epsilon _{0}\delta $, where $\epsilon _{0} $ is
a small enough constant determined by $C_{B}$, $m$, $n$ and the blob constant $C_{\G}$.
\end{itemize}

Then there exists $\hat{P}^{\#}\in \mathcal{P}$ with the following
properties.

\begin{itemize}
\item[\refstepcounter{equation}\text{(\theequation)}\label{t65}] $\tg_{l_{0}-1}$ has an $\left( \mathcal{A},\delta ,C_{B}^{\prime }\right) $%
-basis at $\left( y_{0},M_{0},\tau_0, \hat{P}^{\#}\right) $, with $C_{B}^{\prime }$
determined by $C_{B}$, $m$, $n$ and the blob constant $C_{\G}$.
\end{itemize}

\begin{itemize}
\item[\refstepcounter{equation}\text{(\theequation)}\label{t66}] $\partial
^{\beta }\left( \hat{P}^{\#}-P^{0}\right) \equiv 0$ for $\beta \in \mathcal{A%
}$.
\end{itemize}

\begin{itemize}
\item[\refstepcounter{equation}\text{(\theequation)}\label{t67}] $\left\vert
\partial ^{\beta }\left( \hat{P}^{\#}-P^{0}\right) \left( x_{0}\right)
\right\vert \leq C^{\prime }M_{0}\delta ^{m-\left\vert \beta \right\vert }$
for all $\beta \in \mathcal{M}$, with $C^{\prime }$ determined by $C_{B}$, $m
$, $n$ and the blob constant $C_{\G}$.
\end{itemize}
\end{corollary}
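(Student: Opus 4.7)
The plan is to obtain Corollary \ref{cor-to-transport} as an immediate specialization of the Transport Lemma (Lemma \ref{lemma-transport}) with the choices $\hat{\mathcal{A}} = \mathcal{A}$ and $\hat{P}^0 = P^0$. Under these choices, each hypothesis of Lemma \ref{lemma-transport} reduces to a hypothesis of the corollary (or is trivially true). Specifically, \eqref{t1} is fulfilled because $\mathcal{A}$ is monotonic by \eqref{t62} (the Transport Lemma places no monotonicity requirement on $\hat{\mathcal{A}}$). Hypothesis \eqref{t2} is met by taking the $(\mathcal{A},\delta,C_B)$-basis of $\tg_{l_0}$ at $(x_0, M_0, \tau_0, P^0)$ guaranteed by \eqref{t63} to play both roles, so that $\hat{C}_B = C_B$. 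Hypotheses \eqref{t3} and \eqref{t4} hold trivially since $P^0-\hat{P}^0 \equiv 0$, so we may take $C_{\text{DIFF}} = 0$.

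Next, I observe that the threshold $\epsilon_0$ from Lemma \ref{lemma-transport} is permitted to depend on $C_B$, $\hat{C}_B$, $C_{\text{DIFF}}$, $m$, $n$, and $C_\Gamma$; under our identifications $\hat{C}_B = C_B$ and $C_{\text{DIFF}} = 0$, this dependence collapses to the list $C_B$, $m$, $n$, $C_\Gamma$ stated in \eqref{t64}. Thus the geometric assumption \eqref{t64} is exactly the one required by \eqref{t5}, and we are entitled to invoke the Transport Lemma.

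Applying Lemma \ref{lemma-transport} produces $\hat{P}^\# \in \mathcal{P}$ for which $\tg_{l_0-1}$ admits both an $(\mathcal{A},\delta,C_B')$-basis and an $(\hat{\mathcal{A}},\delta,C_B')$-basis at $(y_0,M_0,\tau_0,\hat{P}^\#)$. Since $\hat{\mathcal{A}} = \mathcal{A}$, these two conclusions coincide, yielding \eqref{t65}. The conclusions \eqref{t7}–\eqref{t8} read off directly as \eqref{t66}–\eqref{t67}. The only point that merits care is ensuring that the constant $C_B'$ (and $C'$) given by the Transport Lemma, which is allowed to depend on $\hat{C}_B$ and $C_{\text{DIFF}}$, retains only the dependences $C_B$, $m$, $n$, $C_\Gamma$ after the identifications $\hat{C}_B = C_B$ and $C_{\text{DIFF}} = 0$. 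That is the ``main obstacle,'' but it is a routine bookkeeping check rather than a substantive difficulty. With this verification, the corollary follows.
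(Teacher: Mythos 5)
Your proposal is correct and is exactly how the paper obtains the corollary: it is stated as the specialization of the Transport Lemma (Lemma \ref{lemma-transport}) to $\hat{\mathcal{A}} = \mathcal{A}$, $\hat{P}^0 = P^0$. One pedantic note: the Transport Lemma requires $C_{\mathrm{DIFF}}>0$, so you should take (say) $C_{\mathrm{DIFF}}=1$ rather than $C_{\mathrm{DIFF}}=0$; since $P^0-\hat{P}^0\equiv 0$, \eqref{t4} holds for any positive value, and the resulting constants still depend only on $C_B$, $m$, $n$, $C_\Gamma$.
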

\begin{remark}We will need to find the polynomial $\hat{P}^{\#}$ in the main algorithm. This can be done by solving a linear programming problem with dimension and number of constraints bounded by a constant depending on $n,m$; and we know a solution exists.
\end{remark}
\part{The Main Lemma}
\section{Statement of the Main Lemma}

\label{statement-of-the-main-lemma}

For $\mathcal{A}\subseteq \mathcal{M}$ monotonic, we define

\begin{itemize}
\item[\refstepcounter{equation}\text{(\theequation)}\label{m1}] $l\left( 
\mathcal{A}\right) =1+3\cdot \#\left\{ \mathcal{A}^{\prime }\subseteq 
\mathcal{M}:\mathcal{A}^{\prime }\text{ monotonic, }\mathcal{A}^{\prime }<%
\mathcal{A}\right\} $.
\end{itemize}

Thus,

\begin{itemize}
\item[\refstepcounter{equation}\text{(\theequation)}\label{m2}] $l\left( 
\mathcal{A}\right) -3\geq l\left( \mathcal{A}^{\prime }\right) $ for $%
\mathcal{A}^{\prime },\mathcal{A}\subseteq \mathcal{M}$ monotonic with $%
\mathcal{A}^{\prime }<\mathcal{A}$.
\end{itemize}

By induction on $\mathcal{A}$ (with respect to the order relation $<$), we
will prove the following result.

\begin{lemma}[Main Lemma for $\mathcal{A}$]\label{lem:main-lemma}
	Let $\tg_{0}=\blobdef[0]$ be a $\left( C_{w},\delta _{\max }\right) $-convex blob field with blob constant $C_{\G}$, and for $l\geq 1$, let $\tg_{l}=\blobdef[l]$ be the approximate $l$-th refinement of $\tg_{0}$. Fix a dyadic cube $Q_{0}\subset \mathbb{R}^{n}$. Let $E_0 = E \cap \frac{65}{64}Q_0$, and assume it is not empty. Fix a point $x_{0}\in E_0$ and a polynomial $P^{0}\in \mathcal{P}$, as well as positive real numbers $M_{0}$, $0<\tau_{0}\leq\tau_{\max}$, $\epsilon $, $C_{B}$.  We make the following assumptions.
	\begin{itemize}
		\item[(A1)] $\tg_{l\left( \mathcal{A}\right) }$ has an $\left( \mathcal{A},\epsilon ^{-1}\delta _{Q_{0}},C_{B}\right) $-basis at $\left( x_{0},M_{0},\tau_{0},P^{0}\right) $.
		\item[(A2)] $\epsilon ^{-1}\delta _{Q_{0}}\leq \delta _{\max }$.
		\item[(A3)] (\textquotedblleft Small $\epsilon $ Assumption")\ $\epsilon $ is less than a small enough constant determined by $C_{B}$, $C_{w}$, $m$, $n$ and the blob constant $C_{\G}$.
	\end{itemize}
	Then there exists $F\in C^{m}\left( \frac{65}{64}Q_{0}\right)$ satisfying the following conditions.

	\begin{itemize}
		\item[(C1)] $\left\vert \partial ^{\beta }\left(F-P^{0}\right)\right\vert \leq C\left( \epsilon \right) M_{0}\delta _{Q_{0}}^{m-\left\vert\beta \right\vert }$ on $\frac{65}{64}Q_{0}$ for $\left\vert \beta\right\vert \leq m$, where $C\left( \epsilon \right) $ is determined by $\epsilon $, $C_{B}$, $C_{w}$, $m$, $n$, $C_{\G}$.
		
		\item[(C2)] $J_{z}\left( F\right) \in \Gamma _{0}\left(z,C^{\prime}\left( \epsilon \right) M_{0}, C'(\epsilon)\tau_0 \right) $ for all $z\in E_0$, where $C^{\prime }\left( \epsilon \right) $ is determined by $\epsilon $, $C_{B}$, $C_{w}$, $m$, $n$, $C_{\G}$.
	\end{itemize}
\end{lemma}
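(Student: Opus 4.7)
The plan is to prove the Main Lemma by induction on the monotonic label $\A \subseteq \M$ under the order relation $<$, mirroring the structure of the analogous argument in \cite{feffermanFinitenessPrinciplesSmooth2016} but now working throughout with the approximate refinements $\tg_l$ of this paper. In the base case $\A = \M$ we have $l(\M) = 1$, and \eqref{pb2}--\eqref{pb3} give a full-rank family of perturbations of $P^0$ inside $\tg_1(x_0, C_B M_0, C_B \tau_0)$. For each $z \in E_0$, Corollary~\ref{cor-to-transport} (with $\A = \M$ monotonic, $l_0 = 1$, $y_0 = z$) transports this basis to yield a polynomial $\hat P^{\#}_z$ with an $\M$-basis for $\tg_0$ at $(z, M_0, \tau_0, \hat P^{\#}_z)$ and $|\partial^\beta(\hat P^{\#}_z - P^0)(x_0)| \leq C'(\epsilon) M_0 \delta_{Q_0}^{m - |\beta|}$; in particular $\hat P^{\#}_z \in \tg_0(z, C_B' M_0, C_B' \tau_0)$. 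Constructing $F \in C^m(\tfrac{65}{64} Q_0)$ with $J_z(F) = \hat P^{\#}_z$ at each $z \in E_0$ and $|\partial^\beta(F - P^0)| \leq C(\epsilon) M_0 \delta_{Q_0}^{m - |\beta|}$ is then a matter of local patching of the $\hat P^{\#}_z$ across the finite set $E_0$, since the jet-norm spacing between any two $\hat P^{\#}_z$ is controlled by $C'(\epsilon) M_0 \delta_{Q_0}^{m - |\beta|}$.

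For the inductive step, fix monotonic $\A \subsetneq \M$ and assume the Main Lemma for all monotonic $\A' < \A$. I would run a Calder\'on--Zygmund decomposition of $Q_0$ governed by the following stopping rule: a dyadic $Q \subseteq Q_0$ is \emph{OK} if either $\#(E \cap \tfrac{65}{64} Q) \leq 1$, or for every $x \in E \cap \tfrac{65}{64} Q$ there exist a monotonic $\A'_x < \A$ and a polynomial $\hat P_x$, controlled in jet-norm relative to $P^0$, such that $\tg_{l(\A'_x)}$ has an $(\A'_x, \epsilon^{-1} \delta_Q, C_B^{\sharp})$-basis at $(x, M_0, \tau_0, \hat P_x)$. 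Bisect every non-OK cube; the process terminates because $E$ is finite. The reason the rule propagates is this: at a failed cube, Corollary~\ref{cor-to-transport} transports the current $\A$-basis from $x_0$ to any $x \in E \cap \tfrac{65}{64} Q$, and then Lemma~\ref{lemma-pb3} (``Control $\Gamma$ Using Basis'') followed by the Relabeling Lemma~\ref{lemma-pb2} either reproduces an $\A$-basis at $x$ (forcing further bisection to shrink $\delta_Q$ until points isolate), or strictly lowers the label and certifies the cube as OK. On each OK cube $Q_\nu$ I produce $F_\nu \in C^m(\tfrac{65}{64} Q_\nu)$: if $E \cap \tfrac{65}{64} Q_\nu = \emptyset$, set $F_\nu = P^0$; if it is a single point, treat it as in the base case; otherwise pick $x_\nu \in E \cap \tfrac{65}{64} Q_\nu$ and invoke the inductive hypothesis with label $\A'_{x_\nu}$ and basis polynomial $\hat P_{x_\nu}$. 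The final $F$ is obtained by patching via a partition of unity $\{\theta_\nu^2\}$ subordinate to $\{\tfrac{65}{64} Q_\nu\}$ with $\sum_\nu \theta_\nu^2 \equiv 1$ and $|\partial^\beta \theta_\nu| \leq C \delta_{Q_\nu}^{-|\beta|}$, setting $F = \sum_\nu \theta_\nu^2 F_\nu$. Condition (C1) follows from telescoping Taylor bounds between $F_\nu$ and $F_{\nu'}$ across neighboring cubes, using that all $\hat P_{x_\nu}$ stay close to $P^0$ in the jet-norm at scale $\delta_{Q_0}$; condition (C2) at a given $z \in E_0$ involves only the $O(1)$ cubes whose enlargement contains $z$, for which each $J_z(F_\nu) \in \tg_0(z, C M_0, C \tau_0)$, and Lemma~\ref{lemma-wsf2} with $P_i = J_z(F_{\nu_i})$ and $Q_i = J_z(\theta_{\nu_i})$ delivers $J_z(F) \in \tg_0(z, C'(\epsilon) M_0, C'(\epsilon) \tau_0)$.

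The main obstacle is the control of constants through the induction. Because each recursive call is on a strictly smaller label $\A' < \A$, \eqref{m2} guarantees $l(\A') \leq l(\A) - 3$, so three refinement levels are available to absorb, in succession: one level lost to the Transport Lemma~\ref{lemma-transport}/Corollary~\ref{cor-to-transport} when moving the basis from $x_0$ to $x_\nu$, one level lost to the Relabeling Lemma~\ref{lemma-pb2} when replacing a weak basis by a monotonic strictly smaller one, and one level held in reserve so that the inductive hypothesis at level $l(\A')$ can actually be invoked with a genuine $(\A', \epsilon^{-1} \delta_{Q_\nu}, C_B')$-basis. Verifying that the enlarged constants $C_B^{\sharp}$, $C(\epsilon)$, $C'(\epsilon)$ remain bounded by functions only of $C_B$, $C_w$, $m$, $n$, $C_\Gamma$ (with the $\epsilon$-dependence absorbed into $C(\epsilon)$ through powers of $\epsilon^{-1}$), and that the stopping rule never forces $\delta_{Q_\nu}$ below a threshold where the Small $\epsilon$ Assumption (A3) ceases to hold at the sub-problem, is the technical heart of the argument and is where the factor $3$ in the definition \eqref{m1} of $l(\A)$ is calibrated exactly to make room.
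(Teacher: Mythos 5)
Your outline follows the paper's broad route (induction on $\mathcal{A}$, Calder\'on--Zygmund decomposition, Transport/Relabeling to descend labels, partition of unity, Lemma~\ref{lemma-wsf2} for (C2)), and your accounting of the three refinement levels per step is correct. But there is a genuine gap in your CZ stopping rule. You ask for a (strong) $(\mathcal{A}'_x, \epsilon^{-1}\delta_Q, C_B^{\sharp})$-basis at each scanned point. Strong bases do not propagate downward in $\delta$ at a fixed constant --- property~\eqref{pb6} pays a factor of $(\delta/\delta')^{m}$ --- so your OK-ness would not automatically pass from a large OK cube $Q'$ to a smaller dyadic cube $Q^{+}$ whose blow-up sits inside $Q'$. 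That is exactly the step the Good Geometry Lemma (Lemma~\ref{lemma-cz2}) needs, and the ratio of sidelengths there is not a priori bounded. The paper's stopping rule \eqref{cz3} is by design equivalent to a \emph{weak} basis (Lemma~\ref{lem:ok-basis}), which is monotone downward in $\delta$ (property~\eqref{pb7}); the paper flags this as load-bearing immediately after the proof of Lemma~\ref{lemma-cz2}. The remark following \eqref{cz3} also warns about several other details your rule gets differently: the paper scans $E_0 \cap 5Q$ rather than $E \cap \frac{65}{64}Q$; uses a single $\hat{\mathcal{A}} < \mathcal{A}$ uniform over $y$, which need \emph{not} be monotonic (monotonicity is recovered only later, by Relabeling, inside Lemma~\ref{lemma-gn1}); tests the basis at the \emph{fixed} level $\tg_{l(\mathcal{A})-3}$, not at $\tg_{l(\mathcal{A}'_x)}$; and pins the auxiliary constraint $\mathcal{P}^{0}$ to $x_0, \delta_{Q_0}$ rather than to $y, \delta_Q$. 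Each of these is used somewhere between Lemma~\ref{lemma-cz2} and the auxiliary-polynomial lemmas.

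Two smaller points. In the base case $\mathcal{A} = \mathcal{M}$ no patching is needed: the Transport Corollary yields $\partial^{\beta}(\hat{P}^{\#}_z - P^{0}) \equiv 0$ for every $\beta \in \mathcal{M}$, forcing $\hat{P}^{\#}_z = P^{0}$, so $P^{0} \in \Gamma_0(z, C'M_0, C'\tau_0)$ for all $z\in E_0$ and one simply takes $F \equiv P^{0}$. You also pass over the two auxiliary-polynomial lemmas --- ``Controlled Auxiliary Polynomials'' (Lemma~\ref{lemma-ap1}) and ``Consistency of Auxiliary Polynomials'' (Lemma~\ref{lemma-ap2}) --- which are the technical crux: they show the bases transported to the CZ cubes are controlled at scale $\delta_Q$ and not merely at $\delta_{Q_0}$, a fact your ``telescoping Taylor bounds'' for (C1) implicitly rely on but do not establish. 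The contradiction argument in Lemma~\ref{lemma-ap1} climbs the dyadic tower against CZ-maximality, and it is calibrated exactly to the paper's version of the stopping rule; it would have to be redone from scratch for yours.
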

\begin{remark}
We state the Main Lemma only for monotonic $\mathcal{A}$.
\end{remark}

Note that we do not assert that $J_{x_0}(F)=P^0$.

\section{The Base Case}

\label{the-base-case} The base case of our induction on $\mathcal{A}$ is the
case $\mathcal{A}= \mathcal{M}$.

In this section, we prove the Main Lemma for $\mathcal{M}$. The hypotheses
of the lemma are as follows:

\begin{itemize}
\item[\refstepcounter{equation}\text{(\theequation)}\label{b1}] $\tg%
_{0}=\left( \Gamma _{0}\left( x,M,\tau\right) \right) _{x\in E, M>0, \tau\in(0,\tau_{\max}]}$ is a $\left(
C_{w},\delta _{\max }\right) $-convex blob field with blob constant $C_{\G}$.
\end{itemize}

\begin{itemize}
\item[\refstepcounter{equation}\text{(\theequation)}\label{b2}] $\tg%
_{1}=\left( \Gamma _{1}\left( x,M,\tau\right) \right) _{x\in E, M>0, \tau\in(0,\tau_{\max}]}$ is the first
approximate refinement of $\tg_{0}$.
\end{itemize}

\begin{itemize}
\item[\refstepcounter{equation}\text{(\theequation)}\label{b3}] $\tg%
_{1}$ has an $\left( \mathcal{M},\epsilon ^{-1}\delta _{Q_{0}},C_{B}\right) $%
-basis at $\left( x_{0},M_{0},\tau_0,P^{0}\right) $.
\end{itemize}

\begin{itemize}
\item[\refstepcounter{equation}\text{(\theequation)}\label{b4}] $\epsilon
^{-1}\delta _{Q_{0}}\leq \delta _{\max }$.
\end{itemize}

\begin{itemize}
\item[\refstepcounter{equation}\text{(\theequation)}\label{b5}] $\epsilon $
is less than a small enough constant determined by $C_{B}$, $C_{w}$, $m$, $n$,$C_{\G}$%
.
\end{itemize}

\begin{itemize}
\item[\refstepcounter{equation}\text{(\theequation)}\label{b6}] $x_{0}\in
E_0$.
\end{itemize}

We write $c$, $C$, $C^{\prime }$, etc., to denote constants determined by $%
C_{B}$, $C_{W}$, $m$, $n$, $C_{\G}$. These symbols may denote different constants in
different occurrences.

\begin{itemize}
\item[\refstepcounter{equation}\text{(\theequation)}\label{b7}] Let $z\in
E\cap \frac{65}{64}Q_{0}$.
\end{itemize}

Then \eqref{b6}, \eqref{b7} imply that

\begin{itemize}
\item[\refstepcounter{equation}\text{(\theequation)}\label{b8}] $\left\vert
z-x_{0}\right\vert \leq C\delta _{Q_{0}}=C\epsilon \cdot \left( \epsilon
^{-1}\delta _{Q_{0}}\right) $.
\end{itemize}

From \eqref{b1}, \eqref{b2}, \eqref{b3}, \eqref{b5}, \eqref{b8}, and
Corollary \ref{cor-to-transport} in Section \ref{transport-lemma}, we obtain
a polynomial $\hat{P}^{\#}\in \mathcal{P}$ such that

\begin{itemize}
\item[\refstepcounter{equation}\text{(\theequation)}\label{b9}] $\tg%
_{0}$ has an $\left( \mathcal{M},\epsilon ^{-1}\delta _{Q_{0}},C^{\prime
}\right) $-basis at $\left( z,M_{0},\tau_{0},\hat{P}^{\#}\right) $, and
\end{itemize}

\begin{itemize}
\item[\refstepcounter{equation}\text{(\theequation)}\label{b10}] $\partial
^{\beta }\left( \hat{P}^{\#}-P^{0}\right) =0$ for $\beta \in \mathcal{M}$.
\end{itemize}

From \eqref{b9}, we have $\hat{P}^\# \in \Gamma_0(z,C^{\prime }M_0,C'\tau_0)$, while %
\eqref{b10} tells us that $\hat{P}^\# = P^0$. Thus,

\begin{itemize}
\item[\refstepcounter{equation}\text{(\theequation)}\label{b11}] $P^{0}\in
\Gamma _{0}\left( z,C^{\prime }M_{0},C'\tau_0\right) $ for all $z\in E_0$.
\end{itemize}

Consequently, the function $F:=P^0$ on $\frac{65}{64}Q_0$ satisfies the
conclusions (C1), (C2) of the Main Lemma for $\mathcal{M}$.

This completes the proof of the Main Lemma for $\mathcal{M}$.   $ \blacksquare $

\section{Setup for the Induction Step}

\label{setup-for-the-induction-step}

Fix a monotonic set $\mathcal{A}$ strictly contained in $\mathcal{M}$, and
assume the following

\begin{itemize}
\item[\refstepcounter{equation}\text{(\theequation)}\label{s1}] \underline{%
Induction Hypothesis}: The Main Lemma for $\mathcal{A}^{\prime }$ holds for
all monotonic $\mathcal{A}^{\prime }< \mathcal{A}$.
\end{itemize}

Under this assumption, we will prove the Main Lemma for $\mathcal{A}$. Thus,
let $\tg_{0}$, $\tg_{l}$ $(l\geq 1)$, $C_{\G}$ $C_{w}$, $\delta
_{\max }$, $Q_{0}$, $E_0$, $x_{0}$, $P^{0}$, $M_{0}$, $\epsilon $, $C_{B}$ be as in
the hypotheses of the Main Lemma for $\mathcal{A}$. Our goal is to prove the
existence of $F\in C^{m}(\frac{65}{64}Q_{0})$ satisfying conditions (C1) and
(C2). To do so, we introduce a constant $A\geq 1$, and make the following
additional assumptions.

\begin{itemize}
\item[\refstepcounter{equation}\text{(\theequation)}\label{s2}] \underline{%
Large $A$ assumption:} $A$ exceeds a large enough constant determined by $C_B
$, $C_w$, $m$, $n$, $C_{\G}$.
\end{itemize}

\begin{itemize}
\item[\refstepcounter{equation}\text{(\theequation)}\label{s3}] \underline{%
Small $\epsilon$ assumption:} $\epsilon$ is less than a small enough
constant determined by $A$, $C_B$, $C_w$, $m$, $n$, $C_{\G}$.
\end{itemize}

We write $c$, $C$, $C^{\prime }$, etc., to denote constants determined by $%
C_{B}$, $C_{w}$, $m$, $n$, $C_{\G}$. Also we write $c(A)$, $C(A)$, $C^{\prime }(A)$,
etc., to denote constants determined by $A$, $C_{B}$, $C_{W}$, $m$, $n$, $C_{\G}$.
Similarly, we write $C\left( \epsilon \right) $, $c\left( \epsilon \right) $%
, $C^{\prime }\left( \epsilon \right) $, etc., to denote constants
determined by $\epsilon $, $A$, $C_{B}$, $C_{w}$, $m$, $n$, $C_{\G}$. These symbols
may denote different constants in different occurrences.

In place of (C1), (C2), we will prove the existence of a function $F\in
C^{m}\left( \frac{65}{64}Q_{0}\right) $ satisfying

\begin{itemize}
\item[(C*1)] $\left\vert \partial ^{\beta }\left( F-P^{0}\right) \right\vert
\leq C\left( \epsilon \right) M_{0}\delta _{Q_{0}}^{m-\left\vert \beta
\right\vert }$ on $\frac{65}{64}Q_{0}$ for $|\beta |\leq m$; and

\item[(C*2)] $J_{z}\left( F\right) \in \Gamma _{0}\left( z,C\left( \epsilon
\right) M_{0}, C(\epsilon)\tau_{0}\right) $ for all $z\in E_{0}$.
\end{itemize}

Conditions (C*1), (C*2) differ from (C1), (C2) in that the constants in
(C*1), (C*2) may depend on $A$.

Once we establish (C*1) and (C*2), we may fix $A$ to be a constant
determined by $C_{B}$, $C_{w}$, $m$, $n$, $C_\G$, large enough to satisfy the Large $%
A$ Assumption \eqref{s2}. The Small $\epsilon $ Assumption \eqref{s3} will
then follow from the Small $\epsilon $ Assumption (A3) in the Main Lemma for 
$\mathcal{A}$; and the desired conclusions (C1), (C2) will then follow from
(C*1), (C*2).

Thus, our goal is to prove the existence of $F\in C^{m}\left( \frac{65}{64}%
Q_{0}\right) $ satisfying (C*1) and (C*2), assuming \eqref{s1}, \eqref{s2}, %
\eqref{s3} above, along with hypotheses of the Main Lemma for $\mathcal{A}$.
This will complete our induction on $\mathcal{A}$ and establish the Main
Lemma for all monotonic subsets of $\mathcal{M}$.

\section{Calder\'on-Zygmund Decomposition}

\label{cz-decomposition}

We place ourselves in the setting of Section \ref%
{setup-for-the-induction-step}. Let $Q$ be a dyadic cube. We say that $Q$ is
``OK'' if \eqref{cz1} and \eqref{cz2} below are satisfied.

\begin{itemize}
\item[\refstepcounter{equation}\text{(\theequation)}\label{cz1}] $%
5Q\subseteq 5Q_{0}$.

\item[\refstepcounter{equation}\text{(\theequation)}\label{cz2}] Either $%
\#(E_0\cap 5Q)\leq 1$ or there exists $\hat{\A}<\mathcal{A}$ (strict
inequality) for which the following holds:

\item[\refstepcounter{equation}\text{(\theequation)}\label{cz3}] For each $y\in E_0 \cap 5Q$, Algorithm \ref{alg:find-crit-delta} with data $y$, $\hat{\mathcal{A}}$, $A$, $M_0$, $\tau_0$, $\Gamma_{in} = \tg_{l\left( \mathcal{A}\right) -3}(y, AM_0, A\tau_0) \cap \P^0$, $\Gamma = \tg_{l\left(\mathcal{A}\right)-3}(y, AM_0, A\tau_0)$ where
  \begin{align*}
    \P^0 = \{P\in\P : |&\partial^{\beta}(P-P^0)(x_0)| \leq AM_0(\epsilon^{-1}\delta_{Q_0})^{m-|\beta|}&\;\forall\beta\in\M\\
                      &\partial^{\beta}(P-P^0) \equiv 0&\;\forall\beta\in\mathcal{A}\}
  \end{align*}
  produces a $\tilde{\delta}$ such that $\tilde{\delta} \geq \epsilon^{-1}\delta_Q$.
\end{itemize}

\begin{remark}
The argument in this section and the next will depend sensitively on several
details of the above definition. Note that \eqref{cz3} involves $\tg_{l(%
\mathcal{A})-3}$ rather than $\tg_{l(\hat{\mathcal{A}})}$, and that
the set $\P^0$ of \eqref{cz3} involves $x_0$, $\delta_{Q_0}$ rather than $y$, $\delta_Q$. Note also
that the set $\hat{\mathcal{A}}$ in \eqref{cz2}, \eqref{cz3} needn't be
monotonic.
\end{remark}
We prove now two Lemmas relating the OK-ness of a cube with a weak basis.
\begin{lemma}
	\label{lem:cz-equiv}
	We place ourselves in the setting of Section \ref{setup-for-the-induction-step}. Let $Q$ be a dyadic cube. Suppose:
	\begin{itemize}
		\item[\refstepcounter{equation}\text{(\theequation)}\label{cz1-orig}] $%
		5Q\subseteq 5Q_{0}$. 
	
		\item[\refstepcounter{equation}\text{(\theequation)}\label{cz2-orig}] Either $%
		\#(E_0 \cap 5Q)\leq 1$ or there exists $\hat{A}<\mathcal{A}$ (strict
		inequality) for which the following holds:
	
		\item[\refstepcounter{equation}\text{(\theequation)}\label{cz3-orig}] For each $%
		y\in E_0 \cap 5Q$ there exists $\hat{P}^{y}\in \mathcal{P}$ satisfying
		
		\begin{itemize}
			\item[(\ref{cz3-orig}a)] $\tg_{l\left( \mathcal{A}\right) -3}$ has a weak $%
			\left( \hat{\mathcal{A}},\epsilon ^{-1}\delta _{Q},C\right) $-basis at $%
			\left( y,M_{0},\tau_{0},\hat{P}^{y}\right) $.
			
			\item[(\ref{cz3-orig}b)] $\left\vert \partial ^{\beta }\left( \hat{P}^{y}-P^{0}\right)
			\left( x_{0}\right) \right\vert \leq CM_{0}\left( \epsilon ^{-1}\delta
			_{Q_{0}}\right) ^{m-\left\vert \beta \right\vert }$ for all $\beta \in 
			\mathcal{M}$.
			
			\item[(\ref{cz3-orig}c)] $\partial ^{\beta }\left( \hat{P}^{y}-P^{0}\right) \equiv 0$ for 
			$\beta \in \mathcal{A}$.
		\end{itemize}
	\end{itemize}
	Then, the cube $Q$ is OK.
\end{lemma}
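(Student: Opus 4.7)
The plan is to reduce everything to a single application of statement (II) of Lemma~\ref{lem:find-crit-delta} (``Find Critical Delta, General Case''). Condition \eqref{cz1-orig} is literally \eqref{cz1}, and the sub-case $\#(E_0\cap 5Q)\leq 1$ of \eqref{cz2} is handled directly. So the real work is the case $\#(E_0\cap 5Q)\geq 2$: I would reuse the $\hat{\mathcal{A}}$ supplied by \eqref{cz2-orig} and verify \eqref{cz3}, i.e., that for every $y\in E_0\cap 5Q$, the output $\tilde{\delta}$ of Algorithm~\ref{alg:find-crit-delta} on the data $y,\hat{\mathcal{A}},A,M_0,\tau_0,\Gamma_{in}=\tg_{l(\mathcal{A})-3}(y,AM_0,A\tau_0)\cap\mathcal{P}^0,\Gamma=\tg_{l(\mathcal{A})-3}(y,AM_0,A\tau_0)$ satisfies $\tilde{\delta}\geq\epsilon^{-1}\delta_Q$.

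Fix such a $y$ and set $\delta:=\epsilon^{-1}\delta_Q$. By statement (II) of Lemma~\ref{lem:find-crit-delta}, it is enough to exhibit $P_w\in\Gamma_{in}$ and polynomials $P_\alpha\in\mathcal{P}$ ($\alpha\in\hat{\mathcal{A}}$) meeting conditions (A), (B), (C) there at this $\delta$. I would take $P_w:=\hat{P}^y$ and the $P_\alpha$ supplied by (\ref{cz3-orig}a). Throughout the verification I would cash in the Large $A$ Assumption \eqref{s2} to absorb the constant $C$ appearing in (\ref{cz3-orig}a)--(\ref{cz3-orig}c), namely to force both $A\geq C$ and $cA\geq C$ (where $c$ is the small constant appearing in statement (II) of Lemma~\ref{lem:find-crit-delta}).

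Membership $P_w\in\Gamma_{in}$ splits into two parts. First, \eqref{pb1} of the weak basis gives $\hat{P}^y\in\tg_{l(\mathcal{A})-3}(y,CM_0,C\tau_0)$, which is promoted to $\tg_{l(\mathcal{A})-3}(y,AM_0,A\tau_0)$ by monotonicity of the blob field in the $(M,\tau)$-arguments. Second, $\hat{P}^y\in\mathcal{P}^0$ is exactly the content of (\ref{cz3-orig}b) and (\ref{cz3-orig}c) once we use $C\leq A$. Condition (II)(A) is a restatement of \eqref{pb3}, and condition (II)(B) is immediate from the weak-basis version of \eqref{pb4} combined with $cA\geq C$.

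The only nontrivial step is condition (II)(C), namely $\hat{P}^y\pm\frac{M_0\delta^{m-|\alpha|}P_\alpha}{cA}\in(1+\tau_0)\blacklozenge\tg_{l(\mathcal{A})-3}(y,AM_0,A\tau_0)$. From \eqref{pb2} of the weak basis, both $\hat{P}^y$ and $\hat{P}^y\pm\frac{M_0\delta^{m-|\alpha|}P_\alpha}{C}$ lie in the convex set $\tg_{l(\mathcal{A})-3}(y,CM_0,C\tau_0)$. Writing $\hat{P}^y\pm\frac{M_0\delta^{m-|\alpha|}P_\alpha}{cA}=\frac{C}{cA}\bigl(\hat{P}^y\pm\frac{M_0\delta^{m-|\alpha|}P_\alpha}{C}\bigr)+\bigl(1-\frac{C}{cA}\bigr)\hat{P}^y$ (a convex combination because $cA\geq C$) places the left-hand side in that same blob, and hence in $\tg_{l(\mathcal{A})-3}(y,AM_0,A\tau_0)\subseteq(1+\tau_0)\blacklozenge\tg_{l(\mathcal{A})-3}(y,AM_0,A\tau_0)$. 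All three conditions of (II) are then met, so $\epsilon^{-1}\delta_Q\leq\tilde{\delta}$, which is \eqref{cz3}. The only real obstacle is the bookkeeping of which constants depend on $A$ versus on the weak-basis data; once the Large $A$ Assumption is invoked, each step is just convexity plus blob-field monotonicity.
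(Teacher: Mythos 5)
Your proof is correct and follows the same route as the paper: reduce to the case $\#(E_0\cap 5Q)\geq 2$, take $P_w=\hat P^y$ and the weak-basis polynomials $P_\alpha$ ($\alpha\in\hat{\mathcal{A}}$) as witnesses for statement (II) of Lemma~\ref{lem:find-crit-delta}, and invoke the Large $A$ Assumption to absorb the constant $C$ from the weak basis. Your write-up actually spells out the verification of (II)(A)--(C) more carefully than the paper does (the paper just says ``it is clear we are in case (II)''), and your convex-combination step for (II)(C) together with the blob-field inclusion $\tg_{l(\mathcal{A})-3}(y,CM_0,C\tau_0)\subset\tg_{l(\mathcal{A})-3}(y,AM_0,A\tau_0)\subset(1+\tau_0)\blacklozenge\tg_{l(\mathcal{A})-3}(y,AM_0,A\tau_0)$ is exactly the missing detail the paper elides.
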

\begin{proof} 
	If $\#(E_0 \cap 5Q) \leq 1$ we are done. Otherwise, we just need to compare \eqref{cz3} with \eqref{cz3-orig}. Suppose \eqref{cz3-orig}.
	
	Then, we know there exist $\hat{P}^y$ and $P_{\alpha}$, $\alpha \in \A$ satisfying:

	\begin{itemize}
		\item[\eqref{pb1}] $\hat{P}^{y}\in
		\Gamma \left( x_{0},CM_{0}, C\tau_0\right) $.
		
		\item[\eqref{pb2}] $\hat{P}^{y}+%
		\frac{M_{0}(\epsilon ^{-1}\delta _{Q}) ^{m-\left\vert \alpha \right\vert }}{C}P_{\alpha }$, $%
		\hat{P}^{y}-\frac{M_{0}(\epsilon ^{-1}\delta _{Q}) ^{m-\left\vert \alpha \right\vert }}{C}P_{\alpha
		}\in \Gamma \left( x_{0},CM_{0},C\tau_0\right) $ for all $\alpha \in \mathcal{A}$%
		.
		\item[\eqref{pb3}] $\partial
		^{\beta }P_{\alpha }\left( x_{0}\right) =\delta _{\alpha \beta }$ (Kronecker
		delta) for $\beta ,\alpha \in \mathcal{A}$.
		
		\item[\eqref{pb4}] $\left\vert
		\partial ^{\beta }P_{\alpha}\left( x_{0}\right) \right\vert \leq
		C(\epsilon ^{-1}\delta _{Q}) ^{\left\vert \alpha \right\vert -\left\vert \beta \right\vert }$
		for all $\alpha \in \mathcal{A}$, $\beta \geq \alpha$.
	\end{itemize}
	Also, applying Algorithm \ref{alg:find-crit-delta} as in \eqref{cz3} returns a $\tilde{\delta}$ such that:
	\begin{enumerate}
		\item[(I)] There exist $P_w\in\Gamma_{in}$ and $\tilde{P}_\alpha\in\P$ ($\alpha\in\A$) such that:
		\begin{enumerate}
			\item[(A)] $\partial^\beta \tilde{P}_\alpha (x_0) =\delta_{\beta\alpha}$ for $\beta,\alpha\in\A$.
			\item[(B)] $|\partial^\beta \tilde{P}_\alpha(x_0)| \leq CA\tilde{\delta}^{|\alpha|-|\beta|}$ for $\alpha\in\A$,$\beta\in\M$, $\beta\geq\alpha$.
			\item[(C)] $P_w \pm \frac{M_0\tilde{\delta}^{m-|\alpha|}\tilde{P}_\alpha}{CA}\in(1+A\tau_0)\blacklozenge\Gamma$
		\end{enumerate}
		\item[(II)] Suppose $0<\delta<\infty$ and $P_w \in \Gamma_{in}$, $P_\alpha\in\P$ ($\alpha\in \A$) satisfy:
		\begin{enumerate}
			\item[(A)] $\partial^\beta P_\alpha (x_0) =\delta_{\beta\alpha}$ for $\beta,\alpha\in\A$.
			\item[(B)] $|\partial^\beta P_\alpha(x_0)| \leq cA\delta^{|\alpha|-|\beta|}$ for $\alpha\in\A$,$\beta\in\M$, $\beta\geq\alpha$.
			\item[(C)] $P_w \pm \frac{M_0\delta^{m-|\alpha|}P_\alpha}{cA}\in(1+A\tau_0)\blacklozenge\Gamma$
		\end{enumerate}
		Then $0<\delta\leq\tilde{\delta}$.
	\end{enumerate}
	
	Thanks to the large $A$ assumption, we know that $A$ is greater than $\max\{C,\frac{C}{c}\}$ (so that $\hat{P}^y \in \Gamma_{in}$). Then it is clear we are in case (II), therefore $\epsilon^{-1}\delta_Q \leq \tilde{\delta}$.
\end{proof}

\begin{lemma}
	\label{lem:ok-basis} We place ourselves in the setting of Section \ref{setup-for-the-induction-step}. Let $Q$ be an OK dyadic cube. Then:
		\begin{itemize}
		\item[\refstepcounter{equation}\text{(\theequation)}\label{cz1-wb}] $%
		5Q\subseteq 5Q_{0}$.
		
		\item[\refstepcounter{equation}\text{(\theequation)}\label{cz2-wb}] Either $%
		\#(E_0\cap 5Q)\leq 1$ or there exists $\hat{A}<\mathcal{A}$ (strict
		inequality) for which the following holds:
		
		\item[\refstepcounter{equation}\text{(\theequation)}\label{cz3-wb}] For each $%
		y\in E_0\cap 5Q$ there exists $\hat{P}^{y}\in \mathcal{P}$ satisfying
		
		\begin{itemize}
			\item[(\ref{cz3-wb}a)] $\tg_{l\left( \mathcal{A}\right) -3}$ has a weak $%
			\left( \hat{\mathcal{A}},\epsilon ^{-1}\delta _{Q},CA\right) $-basis at $%
			\left( y,M_{0},\tau_{0},\hat{P}^{y}\right) $.
			
			\item[(\ref{cz3-wb}b)] $\left\vert \partial ^{\beta }\left( \hat{P}^{y}-P^{0}\right)
			\left( x_{0}\right) \right\vert \leq AM_{0}\left( \epsilon ^{-1}\delta
			_{Q_{0}}\right) ^{m-\left\vert \beta \right\vert }$ for all $\beta \in 
			\mathcal{M}$.
			
			\item[(\ref{cz3-wb}c)] $\partial ^{\beta }\left( \hat{P}^{y}-P^{0}\right) \equiv 0$ for 
			$\beta \in \mathcal{A}$.
		\end{itemize}
	\end{itemize}
\end{lemma}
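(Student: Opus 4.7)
The plan is to read off the desired weak basis directly from the output of Algorithm \ref{alg:find-crit-delta}, which is precisely what the OK-ness definition is built around. Conclusion \eqref{cz1-wb} is identical to the OK assumption \eqref{cz1}, so it is automatic. If $\#(E_0\cap 5Q)\leq 1$, then \eqref{cz2-wb} holds in its first alternative and \eqref{cz3-wb} is vacuous. Otherwise, the second alternative in the OK definition yields a set $\hat{\mathcal{A}}<\mathcal{A}$ such that, for every $y\in E_0\cap 5Q$, Algorithm \ref{alg:find-crit-delta} applied with the data specified in \eqref{cz3} returns $\tilde\delta\geq \epsilon^{-1}\delta_Q$.

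For each such $y$, I would set $\hat P^y := P_w$, where $P_w\in\Gamma_{in}$ is the polynomial produced by the algorithm, and take as basis polynomials the $(P_\alpha)_{\alpha\in\hat{\mathcal{A}}}$ also returned by it. Conclusion (I) of Lemma \ref{lem:find-crit-delta} provides exactly the ingredients of a weak $(\hat{\mathcal{A}},\tilde\delta,CA)$-basis for $\tg_{l(\mathcal{A})-3}$ at $(y,M_0,\tau_0,\hat P^y)$: (I)(A) is \eqref{pb3}; (I)(B) is the weak form of \eqref{pb4} (the one restricted to $\beta\geq\alpha$); $\hat P^y\in\Gamma_{in}\subset\tg_{l(\mathcal{A})-3}(y,AM_0,A\tau_0)$ gives \eqref{pb1}; and (I)(C), combined with the blob-field axiom \eqref{blob1}, absorbs $(1+A\tau_0)\blacklozenge$ into an enlargement of $(M,\tau)$ by a bounded factor, yielding \eqref{pb2}.

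Next I would rescale this weak basis from scale $\tilde\delta$ down to $\epsilon^{-1}\delta_Q$, which is the scaling remark \eqref{pb7}. For \eqref{pb4}, since $\beta\geq\alpha$ forces $|\alpha|-|\beta|\leq 0$, the exponent in $\tilde\delta^{|\alpha|-|\beta|}$ is nonpositive, so shrinking the scale to $\epsilon^{-1}\delta_Q\leq\tilde\delta$ only relaxes the bound. For \eqref{pb2}, the point $\hat P^y\pm \frac{M_0(\epsilon^{-1}\delta_Q)^{m-|\alpha|}}{CA}P_\alpha$ lies on the segment between $\hat P^y$ and $\hat P^y\pm \frac{M_0\tilde\delta^{m-|\alpha|}}{CA}P_\alpha$ with convex-combination weight $(\epsilon^{-1}\delta_Q/\tilde\delta)^{m-|\alpha|}\in[0,1]$; both endpoints lie in $\tg_{l(\mathcal{A})-3}(y,CAM_0,CA\tau_0)$ by the previous paragraph, and convexity of this set closes the loop and establishes (\ref{cz3-wb}a).

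Finally, (\ref{cz3-wb}b) and (\ref{cz3-wb}c) are immediate from $\hat P^y=P_w\in\Gamma_{in}\subset\mathcal{P}^0$, together with the explicit definition of $\mathcal{P}^0$ in \eqref{cz3}, which encodes exactly the two required conditions at the reference point $x_0$ and at scale $\epsilon^{-1}\delta_{Q_0}$. The only care required in the whole argument is the bookkeeping involving two distinct scales ($\epsilon^{-1}\delta_Q$ for the basis, $\epsilon^{-1}\delta_{Q_0}$ for the $\mathcal{P}^0$ constraint) and two distinct base points ($y$ versus $x_0$), but this is already baked into the OK definition, so no genuine obstacle arises and the lemma is essentially a tautological unwinding of that definition.
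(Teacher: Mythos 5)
Your proof is correct and follows essentially the same route as the paper: both read off a weak $(\hat{\mathcal{A}},\tilde\delta,CA)$-basis at $(y,M_0,\tau_0,P_w)$ directly from conclusion (I) of Lemma \ref{lem:find-crit-delta}, use the blob axiom \eqref{blob1} to absorb the $(1+A\tau_0)\blacklozenge$ factor, then shrink the scale from $\tilde\delta$ to $\epsilon^{-1}\delta_Q$ via \eqref{pb7}, and extract (\ref{cz3-wb}b), (\ref{cz3-wb}c) from $P_w\in\Gamma_{in}\subset\mathcal{P}^0$. Your write-up is actually more careful than the paper's (which states the weak-basis claim in one line, with a few apparent $x_0$-for-$y$ and $\mathcal{A}$-for-$\hat{\mathcal{A}}$ typos), since you spell out each of \eqref{pb1}--\eqref{pb4} and justify the rescaling of \eqref{pb2} by the explicit convex-combination argument.
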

\begin{proof}
	If $\#(E_0 \cap 5Q) \leq 1$ we are done. Suppose $\#(E_0 \cap 5Q) \geq 2$. It is clear from the definition of an OK cube that Algorithm \ref{alg:find-crit-delta} will return a $\tilde{\delta} \geq \epsilon^{-1}\delta_Q$ such that:
		\begin{enumerate}
		\item[(I)] There exist $P_w\in\Gamma_{in}$ and $P_\alpha\in\P$ ($\alpha\in\A$) such that:
		\begin{enumerate}
			\item[(A)] $\partial^\beta P_\alpha (x_0) =\delta_{\beta\alpha}$ for $\beta,\alpha\in\A$.
			\item[(B)] $|\partial^\beta P_\alpha(x_0)| \leq CA\tilde{\delta}^{|\alpha|-|\beta|}$ for $\alpha\in\A$,$\beta\in\M$, $\beta\geq\alpha$.
			\item[(C)] $P_w \pm \frac{M_0\tilde{\delta}^{m-|\alpha|}P_\alpha}{CA}\in(1+A\tau_0)\blacklozenge\Gamma$
		\end{enumerate}
		\item[(II)] Suppose $0<\delta<\infty$ and $P_w \in \Gamma_{in}$, $P_\alpha\in\P$ ($\alpha\in \A$) satisfy:
		\begin{enumerate}
			\item[(A)] $\partial^\beta P_\alpha (x_0) =\delta_{\beta\alpha}$ for $\beta,\alpha\in\A$.
			\item[(B)] $|\partial^\beta P_\alpha(x_0)| \leq cA\delta^{|\alpha|-|\beta|}$ for $\alpha\in\A$,$\beta\in\M$, $\beta\geq\alpha$.
			\item[(C)] $P_w \pm \frac{M_0\delta^{m-|\alpha|}P_\alpha}{cA}\in(1+A\tau_0)\blacklozenge\Gamma$
		\end{enumerate}
		Then $0<\delta\leq\tilde{\delta}$.
	\end{enumerate}
In particular, because $\tg$ is a blob field, $P_\alpha$ forms a weak $(\A, \tilde{\delta}, C_\Gamma CA)$-basis for $\tg$ at $(x_0, M_0, \tau_0, P_w)$. Therefore, it also forms a weak $(\A, \epsilon^{-1}\delta_Q, C_\Gamma CA)$-basis.
\end{proof}

A dyadic cube $Q$ will be called a \underline{Calder\'on-Zygmund cube} (or a 
\underline{CZ} cube) if it is OK, but no dyadic cube strictly containing $Q$
is OK.

Recall that given any two distinct dyadic cubes $Q$, $Q^{\prime }$, either $Q
$ is strictly contained in $Q^{\prime }$, or $Q^{\prime }$ is strictly
contained in $Q$, or $Q \cap Q^{\prime }=\emptyset$. The first two
alternatives here are ruled out if $Q$, $Q^{\prime }$ are CZ cubes. Hence,
the Calder\'on-Zygmund cubes are pairwise disjoint.

Any CZ cube $Q$ satisfies \eqref{cz1} and is therefore contained in the
interior of $5Q_0$. On the other hand, let $x$ be an interior point of $5Q_0$%
. Then any sufficiently small dyadic cube $Q$ containing $x$ satisfies $5Q
\subset 5Q_0$ and $\#(E_0 \cap 5Q) \leq 1$; hence, $Q$ is OK. However, any
sufficiently large dyadic cube $Q$ containing $x$ will fail to satisfy $5Q
\subseteq 5Q_0$; hence $Q$ is not OK. It follows that $x$ is contained in a
maximal OK dyadic cube. Thus, we have proven

\begin{lemma}
\label{lemma-cz1} The CZ cubes form a partition of the interior of $5Q_0$.
\end{lemma}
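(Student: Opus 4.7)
The proof is essentially given in the two paragraphs immediately preceding the lemma statement; my plan is just to organize those observations into a clean three-part argument establishing (i) disjointness, (ii) containment in the interior of $5Q_0$, and (iii) covering of that interior.

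First I would verify disjointness. Suppose $Q$ and $Q'$ are two distinct CZ cubes. By the nesting trichotomy for dyadic cubes, either $Q \subsetneq Q'$, or $Q' \subsetneq Q$, or $Q \cap Q' = \emptyset$. The first two cases contradict the maximality clause in the definition of a CZ cube (since a CZ cube is OK, and no OK dyadic cube is strictly contained in another OK dyadic cube of the CZ family). So $Q \cap Q' = \emptyset$, proving the CZ cubes are pairwise disjoint.

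Next I would check that every CZ cube lies in the interior of $5Q_0$. A CZ cube $Q$ is OK, so \eqref{cz1} gives $5Q \subseteq 5Q_0$. Since $Q \subset 5Q$ (strictly, as $Q$ is a proper subset of its five-fold dilate), and $5Q$ is closed inside the interior of $5Q_0$ up to the usual half-open dyadic convention, $Q$ is contained in the interior of $5Q_0$.

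Finally I would show that every interior point $x$ of $5Q_0$ lies in some CZ cube. Fix such an $x$. Consider the dyadic cubes $Q \ni x$. For $Q$ sufficiently small, $5Q \subseteq 5Q_0$ (by the assumption that $x$ is interior), and moreover $5Q$ can be shrunk to miss all but at most one point of the finite set $E_0$; such a $Q$ therefore satisfies \eqref{cz1} and the first alternative in \eqref{cz2}, so it is OK. On the other hand, for $Q$ sufficiently large we have $5Q \not\subseteq 5Q_0$, so \eqref{cz1} fails and $Q$ is not OK. Among the dyadic cubes $Q \ni x$, let $\tilde Q$ be the largest OK one; since the tower of dyadic cubes containing $x$ is totally ordered by inclusion, the largest OK element exists and is unique. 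Any dyadic cube strictly containing $\tilde Q$ also contains $x$ and is not OK, so $\tilde Q$ is CZ. Thus $x$ belongs to $\tilde Q$, which lies in the collection of CZ cubes. Combined with the previous two steps, this shows the CZ cubes partition the interior of $5Q_0$. The only thing to be a bit careful about is the existence of a \emph{largest} OK cube in the tower through $x$; this follows because OK-ness fails for all sufficiently large dyadic cubes through $x$, so the subset of OK cubes in that tower is finite and nonempty, hence has a maximum. I do not anticipate any genuine obstacle; the lemma is essentially a bookkeeping consequence of the CZ stopping rule.
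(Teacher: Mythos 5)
Your proof matches the paper's argument essentially line for line: disjointness via the dyadic nesting trichotomy plus maximality, containment from \eqref{cz1}, and covering from the observation that every interior point of $5Q_0$ admits small OK cubes (by finiteness of $E_0$) but no arbitrarily large OK cubes (since \eqref{cz1} eventually fails), hence a unique maximal OK dyadic cube. Correct, and the same approach.
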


Next, we establish

\begin{lemma}
\label{lemma-cz2} Let $Q$, $Q^{\prime }$ be CZ cubes. If $\frac{65}{64}Q
\cap \frac{65}{64}Q^{\prime }\not= \emptyset$, then $\frac{1}{2}\delta_Q
\leq \delta_{Q^{\prime }}\leq 2\delta_Q$.
\end{lemma}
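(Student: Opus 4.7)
The plan is to argue by contradiction and symmetry: assume WLOG that $\delta_{Q'}>2\delta_Q$, so by dyadicity $\delta_{Q'}\geq 4\delta_Q = 2\delta_{Q^+}$, and derive that $Q^+$ is itself OK, contradicting the maximality of $Q$ as a CZ cube.

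The key geometric step is to show the containment $5Q^+\subset 5Q'$. Pick any point $p\in\tfrac{65}{64}Q\cap\tfrac{65}{64}Q'$ (which exists by hypothesis). Since $Q\subset Q^+$, the center $c(Q^+)$ differs from $c(Q)$ by at most $\tfrac{1}{4}\delta_{Q^+}$ in each coordinate, so $\|p-c(Q^+)\|_\infty\leq \tfrac{1}{4}\delta_{Q^+}+\tfrac{65}{128}\delta_Q=\tfrac{129}{256}\delta_{Q^+}$; similarly $\|p-c(Q')\|_\infty\leq\tfrac{65}{128}\delta_{Q'}$. Using $\delta_{Q^+}\leq\tfrac{1}{2}\delta_{Q'}$, the triangle inequality gives, for any $y\in 5Q^+$,
\begin{equation*}
\|y-c(Q')\|_\infty \leq \tfrac{5}{2}\delta_{Q^+}+\tfrac{129}{256}\delta_{Q^+}+\tfrac{65}{128}\delta_{Q'} \leq \tfrac{769}{512}\delta_{Q'}+\tfrac{260}{512}\delta_{Q'}=\tfrac{1029}{512}\delta_{Q'}<\tfrac{5}{2}\delta_{Q'},
\end{equation*}
hence $5Q^+\subset 5Q'$.

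It remains to verify that $Q^+$ is OK. Condition \eqref{cz1} for $Q^+$ follows from $5Q^+\subset 5Q'\subset 5Q_0$, the latter inclusion using that $Q'$ is OK. For \eqref{cz2}--\eqref{cz3}, note that $E_0\cap 5Q^+\subset E_0\cap 5Q'$. If the right-hand side has at most one element, then so does the left, and we are done. Otherwise, since $Q'$ is OK there is a label $\hat{\A}<\A$ such that for each $y\in E_0\cap 5Q'$ Algorithm \ref{alg:find-crit-delta}, applied with the parameters listed in \eqref{cz3}, produces $\tilde{\delta}_y\geq\epsilon^{-1}\delta_{Q'}$. The crucial observation is that the inputs to that algorithm depend only on $y$, $\hat{\A}$, $A$, $M_0$, $\tau_0$, the blob field, and the set $\P^0$ (which is defined from $x_0$, $P^0$, $\delta_{Q_0}$ and thus does not depend on $Q$ at all). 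In particular the output $\tilde{\delta}_y$ is the same for the cube $Q^+$. For $y\in E_0\cap 5Q^+\subset E_0\cap 5Q'$ we therefore have $\tilde{\delta}_y\geq\epsilon^{-1}\delta_{Q'}\geq\epsilon^{-1}\delta_{Q^+}$, which is exactly \eqref{cz3} with the same $\hat{\A}$. Hence $Q^+$ is OK.

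This contradicts the fact that $Q$ is a CZ cube (i.e., a maximal OK dyadic cube), proving $\delta_{Q'}\leq 2\delta_Q$. Exchanging the roles of $Q$ and $Q'$ yields $\delta_Q\leq 2\delta_{Q'}$, completing the proof. The only nontrivial step is the geometric inclusion $5Q^+\subset 5Q'$; everything else is bookkeeping using the $Q$-independence built into the definition of \eqref{cz3}.
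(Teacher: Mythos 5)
Your proof is correct and follows the same route as the paper: assume by contradiction that one cube is at least $4$ times larger, show $5Q^+\subset 5Q'$, and conclude $Q^+$ is OK by noting that the inputs to Algorithm \ref{alg:find-crit-delta} in \eqref{cz3} depend only on $y$, $\hat{\A}$, $x_0$, $P^0$, $\delta_{Q_0}$ and not on the cube $Q$, which contradicts the maximality of $Q$ among OK cubes. The only difference is that you spell out the $\ell_\infty$ estimates behind the containment $5Q^+\subset 5Q'$, which the paper leaves implicit.
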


\begin{proof}
Suppose not. Without loss of generality, we may suppose that $\delta_Q \leq 
\frac{1}{4}\delta_{Q^{\prime }}$. Then $\delta_{Q^+} \leq \frac{1}{2}%
\delta_{Q^{\prime }}$, and $\frac{65}{64}Q^+ \cap \frac{65}{64}Q^{\prime
}\not= \emptyset$; hence, $5Q^+ \subset 5Q^{\prime }$. The cube $Q^{\prime }$
is OK. Therefore,

\begin{itemize}
\item[\refstepcounter{equation}\text{(\theequation)}\label{cz4}] $%
5Q^{+}\subset 5Q^{\prime }\subseteq 5Q_{0}$.
\end{itemize}

If $\#\left( E_0 \cap 5Q^{\prime }\right) \leq 1$, then also $\#\left( E_0 \cap
5Q^{+}\right) \leq 1$. Otherwise, since $Q'$ is OK, there exists $\hat{\mathcal{A}}<\mathcal{A}
$ such that for each $y\in E\cap 5Q^{\prime }$, Algorithm \ref{alg:find-crit-delta} with the corresponding data will produce a $\tilde{\delta}$ such that $\tilde{\delta} \geq \epsilon^{-1}\delta_{Q'} \geq \epsilon^{-1}\delta_{Q^+}$.

Therefore, for each $y \in E_0 \cap 5Q^+\subseteq E_0 \cap 5Q^{\prime }$, Algorithm \ref{alg:find-crit-delta} produces a $\tilde{\delta}$ such that $\tilde{\delta} \geq \epsilon^{-1}\delta_{Q^+}$.

This tells us that $Q^{+}$ is OK. However, $Q^{+}$ strictly contains the CZ cube $Q$; therefore, $Q^{+}$ cannot be OK. This contradiction completes the proof of Lemma \ref{lemma-cz2}.
\end{proof}

Note that the proof of Lemma \ref{lemma-cz2} made use of our decision to
involve $x_0$, $\delta_{Q_0}$ rather than $y$, $\delta_Q$ in \eqref{cz3}, as well as Algorithm \ref{alg:find-crit-delta} producing a weak basis instead of a strong basis.

\begin{lemma}
\label{lemma-cz3} Only finitely many CZ cubes $Q$ satisfy the condition

\begin{itemize}
\item[\refstepcounter{equation}\text{(\theequation)}\label{cz9}] $\frac{65}{%
64}Q \cap \frac{65}{64} Q_0 \not= \emptyset$.
\end{itemize}
\end{lemma}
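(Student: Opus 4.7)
The plan is an elementary volume-counting argument. I will show that every CZ cube $Q$ with $\tfrac{65}{64}Q \cap \tfrac{65}{64}Q_0 \neq \emptyset$ has sidelength bounded below by a strictly positive constant $\delta_\star$ depending only on $\delta_{Q_0}$, $n$, and the minimum spacing of points of $E$. Lemma \ref{lemma-cz1} then finishes the job: the CZ cubes are pairwise disjoint subsets of $5Q_0$, and each contributes volume at least $\delta_\star^n$, so the number of candidates is at most $(5\delta_{Q_0}/\delta_\star)^n$.

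To produce the lower bound, I would use that $Q$ being a CZ cube means its dyadic parent $Q^+$ is not OK. A priori this can happen either because $5Q^+ \not\subseteq 5Q_0$ (failure of \eqref{cz1}) or because no valid $\hat{\mathcal{A}}$ exists witnessing \eqref{cz2}; but since \eqref{cz2} is trivially true when $\#(E_0 \cap 5Q^+) \leq 1$, the second alternative forces $\#(E_0 \cap 5Q^+) \geq 2$. So in every case at least one of the two conditions $5Q^+ \not\subseteq 5Q_0$ or $\#(E_0 \cap 5Q^+) \geq 2$ must hold.

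In the first case, I would invoke the hypothesis $\tfrac{65}{64}Q \cap \tfrac{65}{64}Q_0 \neq \emptyset$, which by a direct $\ell^\infty$ computation gives $\mathrm{dist}_\infty(Q, Q_0) \leq (\delta_Q + \delta_{Q_0})/128$. Combined with the elementary inclusions $5Q^+ \subseteq Q + [-5\delta_Q, 5\delta_Q]^n$ (since $Q^+ \subseteq Q + [-\delta_Q,\delta_Q]^n$ and $5Q^+ = Q^+ + [-4\delta_Q,4\delta_Q]^n$) and $5Q_0 = Q_0 + [-2\delta_{Q_0}, 2\delta_{Q_0}]^n$, this forces $\delta_Q \geq c_1\delta_{Q_0}$ for an absolute constant $c_1 > 0$ (one can take $c_1 = 1/3$). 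In the second case, since $5Q^+$ has $\ell^\infty$-diameter $10\delta_Q$ and contains two distinct points of $E$, we obtain $\delta_Q \geq d_{\min}/(10\sqrt{n})$, where $d_{\min} > 0$ is the minimum pairwise distance among points of $E$ (the case is vacuous when $\#E \leq 1$). Setting $\delta_\star = \min(c_1\delta_{Q_0},\, d_{\min}/(10\sqrt{n}))$ yields the required bound.

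There is no substantive obstacle: the argument is self-contained and only uses the definition of OK-ness together with Lemma \ref{lemma-cz1}. In particular it never needs to examine the precise structure of condition \eqref{cz3} or the choice of $\hat{\mathcal{A}}$, because \eqref{cz2} is automatically satisfied whenever $5Q^+$ meets $E_0$ in at most one point. The only mild computation is the $\ell^\infty$ geometry that controls Case~1, and this is entirely routine.
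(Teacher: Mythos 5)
Your proof is correct and takes essentially the same route as the paper: both establish a uniform lower bound $\delta_Q \geq \delta_\star$ on the sidelength of a CZ cube meeting \eqref{cz9}, using the finiteness of $E$ and the constraint $5Q \subset 5Q_0$, and then deduce finiteness. Your version is slightly more explicit (casing on why $Q^+$ fails to be OK, and counting by volume via Lemma~\ref{lemma-cz1} rather than by enumerating dyadic cubes with $\delta_\ast \leq \delta_Q \leq \delta_{Q_0}$), but the substance is the same.
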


\begin{proof}
There exists some small positive number $\delta _{\ast }$ such that any
dyadic cube $Q$ satisfying \eqref{cz9} and $\delta _{Q}\leq \delta _{\ast }$
must satisfy also $5Q\subset 5Q_{0}$ and $\#(E_0\cap 5Q)\leq 1$. (Here we use
the finiteness of $E$.)

Consequently, any CZ cube $Q$ satisfying \eqref{cz9} must have sidelength $\delta
_{Q}\geq \delta _{\ast }$ (and also $\delta _{Q}\leq \delta
_{Q_{0}}$ since $5Q\subset 5Q_{0}$ because $Q$ is OK). There are only
finitely many dyadic cubes $Q$ satisfying both \eqref{cz9} and $\delta
_{\ast }\leq \delta _{Q}\leq \delta _{Q_{0}}$.

The proof of Lemma \ref{lemma-cz3} is complete.
\end{proof}

\section{Auxiliary Polynomials}

\label{auxiliary-polynomials}

We again place ourselves in the setting of Section \ref{setup-for-the-induction-step} and we make use of the Calder\'on-Zygmund
decomposition defined in Section \ref{cz-decomposition}.

Recall that $x_0  \in E_0 = E_0 \cap 5Q_0^+$, and that $\tg_{l(\mathcal{A})}$
has an $(\mathcal{A}, \epsilon^{-1}\delta_{Q_0}, C_B)$-basis at $(x_0,M_0,\tau_0,P^0) $; moreover, $\mathcal{A} \subseteq \mathcal{M}$ is monotonic,
and $\epsilon$ is less than a small enough constant determined by $C_B$, $C_w
$, $m$, $n$.

Let $y_0\in E_0\cap 5Q_{0}$. Then $|x_{0}-y|\leq C\delta _{Q_{0}}=(C\epsilon
)(\epsilon ^{-1}\delta _{Q_{0}})$. Hence, by Corollary \ref{cor-to-transport}
in Section \ref{transport-lemma}, there exists $P^{y}\in \mathcal{P}$ with
the following properties.

\begin{itemize}
\item[\refstepcounter{equation}\text{(\theequation)}\label{ap1}] $\tg_{l\left( \mathcal{A}\right) -1}$ has an $\left( \mathcal{A},\epsilon
^{-1}\delta _{Q_{0}},C\right) $-basis $\left( P_{\alpha }^{y}\right)
_{\alpha \in \mathcal{A}}$ at $\left( y,M_{0},\tau_0,P^{y}\right) $,
\end{itemize}

\begin{itemize}
\item[\refstepcounter{equation}\text{(\theequation)}\label{ap2}] $\partial
^{\beta }\left( P^{y}-P^{0}\right) \equiv 0$ for $\beta \in \mathcal{A}$,
\end{itemize}

\begin{itemize}
\item[\refstepcounter{equation}\text{(\theequation)}\label{ap3}] $\left\vert
\partial ^{\beta }\left( P^{y}-P^{0}\right) \left( x_{0}\right) \right\vert
\leq CM_{0}\left( \epsilon ^{-1}\delta _{Q_{0}}\right) ^{m-\left\vert \beta
\right\vert }$ for $\beta \in \mathcal{M}$.
\end{itemize}

We fix $P^{y},P_{\alpha }^{y}$ $\left( \alpha \in \mathcal{A}\right) $ as
above for each $y\in E_0\cap 5Q_{0}$. We study the relationship between the
polynomials $P^{y},P_{\alpha }^{y}$ $(\alpha \in \mathcal{A})$ and the Calder%
\'{o}n-Zygmund decomposition.

\begin{lemma}[``Controlled Auxiliary Polynomials'']
\label{lemma-ap1} Let $Q \in$ CZ, and suppose that

\begin{itemize}
\item[\refstepcounter{equation}\text{(\theequation)}\label{ap4}] $\frac{65}{%
64}Q\cap \frac{65}{64}Q_{0}\not=\emptyset $.
\end{itemize}

Let

\begin{itemize}
\item[\refstepcounter{equation}\text{(\theequation)}\label{ap5}] $y\in E_0 \cap
5Q_{0}\cap 5Q^{+}$.
\end{itemize}

Then

\begin{itemize}
\item[\refstepcounter{equation}\text{(\theequation)}\label{ap6}] $\left\vert
\partial ^{\beta }P_{\alpha }^{y}\left( y\right) \right\vert \leq C\cdot
\left( \epsilon ^{-1}\delta _{Q}\right) ^{\left\vert \alpha \right\vert
-\left\vert \beta \right\vert }$ for $\alpha \in \mathcal{A}$, $\beta \in 
\mathcal{M}$.
\end{itemize}
\end{lemma}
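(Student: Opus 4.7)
I proceed by contradiction. When $|\beta|\geq|\alpha|$, the bound is automatic: (pb4) applied to the given $(\A,\epsilon^{-1}\delta_{Q_0},C)$-basis gives $|\partial^\beta P_\alpha^y(y)|\leq C(\epsilon^{-1}\delta_{Q_0})^{|\alpha|-|\beta|}\leq C(\epsilon^{-1}\delta_Q)^{|\alpha|-|\beta|}$, since the exponent is $\leq 0$ and $\delta_Q\leq\delta_{Q_0}$. So suppose the bound fails for some $\alpha\in\A$, $\beta\in\M$ with $|\alpha|>|\beta|$; then the quantity $M^{\ast}:=\max_{\alpha\in\A,\beta\in\M}(\epsilon^{-1}\delta_Q)^{|\beta|-|\alpha|}|\partial^\beta P_\alpha^y(y)|$ exceeds a preassigned large constant $C^{\ast}$ depending only on $C_B$, $C_w$, $m$, $n$, $C_{\Gamma}$, to be chosen below.

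By (pb7), $(P_\alpha^y)_{\alpha\in\A}$ is a weak $(\A,\epsilon^{-1}\delta_Q,C)$-basis at $(y,M_0,\tau_0,P^y)$ for $\tg_{l(\A)-1}$. Since $M^{\ast}$ exceeds $C^{\ast}$, the Relabeling Lemma (Lemma \ref{lemma-pb2}) produces a strict $\hat{\A}<\A$ monotonic together with an $(\hat{\A},\epsilon^{-1}\delta_Q,C')$-basis at $(y,M_0,\tau_0,P^y)$ for $\tg_{l(\A)-1}$. Rescaling via (pb6) from $\epsilon^{-1}\delta_Q$ up to $\epsilon^{-1}\delta_{Q^+}=2\epsilon^{-1}\delta_Q$ costs only a factor $2^m$, giving an $(\hat{\A},\epsilon^{-1}\delta_{Q^+},C'')$-basis at the same anchor. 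The original $(\A,\epsilon^{-1}\delta_{Q_0},C)$-basis likewise yields (via (pb6)) an $(\A,\epsilon^{-1}\delta_{Q^+},C'')$-basis at $(y,M_0,\tau_0,P^y)$; we will return to the constants in this rescaling at the end.

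For each $y'\in E_0\cap 5Q^+$, I apply the Transport Lemma (Lemma \ref{lemma-transport}) with $\A_T=\A$, $\hat{\A}_T=\hat{\A}$, $P^0_T=\hat{P}^0_T=P^y$, $\delta=\epsilon^{-1}\delta_{Q^+}$, and $l_0=l(\A)-1$. The hypotheses (t3), (t4) hold trivially since $P^0_T-\hat{P}^0_T\equiv 0$ (so $C_{DIFF}=0$), and (t5) follows from $|y-y'|\leq C\delta_{Q^+}$ and the Small $\epsilon$ Assumption. The output is $\hat{P}^{\#,y'}\in\P$ such that $\tg_{l(\A)-2}$ admits an $(\hat{\A},\epsilon^{-1}\delta_{Q^+},C''')$-basis at $(y',M_0,\tau_0,\hat{P}^{\#,y'})$, together with $\partial^\beta(\hat{P}^{\#,y'}-P^y)\equiv 0$ for $\beta\in\A$ (by (t7)) and $|\partial^\beta(\hat{P}^{\#,y'}-P^y)(y)|\leq C'''M_0(\epsilon^{-1}\delta_{Q^+})^{m-|\beta|}$ for $\beta\in\M$ (by (t8)).

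To contradict the CZ-property of $Q$ I invoke Lemma \ref{lem:cz-equiv} applied to $Q^+$ with witness label $\hat{\A}$ and anchor polynomials $\hat{P}^{y'}:=\hat{P}^{\#,y'}$. Condition (\ref{cz3-orig}a) follows from $\tg_{l(\A)-2}\subseteq\tg_{l(\A)-3}$ (strong implies weak). Condition (\ref{cz3-orig}c) follows by combining (t7) with \eqref{ap2}: $\partial^\beta(\hat{P}^{\#,y'}-P^0)=\partial^\beta(\hat{P}^{\#,y'}-P^y)+\partial^\beta(P^y-P^0)\equiv 0$ on $\A$. Condition (\ref{cz3-orig}b) follows by Taylor-expanding $\hat{P}^{\#,y'}-P^y$ around $y$, evaluating at $x_0$, using $|x_0-y|\leq C\delta_{Q_0}$ and $\delta_{Q^+}\leq\delta_{Q_0}$, and then adding \eqref{ap3}. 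Lemma \ref{lem:cz-equiv} then asserts that $Q^+$ is OK, contradicting the fact that $Q$ is CZ, which forces the assumed bound. The principal obstacle will be tracking the scale-rescaling constants (so that the input constants supplied to the Transport Lemma remain absolute rather than depending on the ratio $\delta_{Q_0}/\delta_{Q^+}$) and handling the geometric edge case where $5Q^+\not\subseteq 5Q_0$; in that regime $\delta_Q$ is forced to be comparable to $\delta_{Q_0}$ and the claim is immediate from (pb4) at the original scale $\epsilon^{-1}\delta_{Q_0}$.
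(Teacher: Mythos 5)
Your proof has the right overall shape (Relabeling Lemma to produce $\hat{\A} < \A$, Transport Lemma to propagate to nearby points, then invoke Lemma \ref{lem:cz-equiv} to conclude a larger cube is OK, contradicting maximality), and the $|\beta|\geq|\alpha|$ / $\delta_Q\sim\delta_{Q_0}$ reductions and the Taylor-expansion step for (\ref{cz3-orig}b) are all fine. But the ``principal obstacle'' you flag and defer to the end is not a bookkeeping issue --- it is the crux of the proof, and your route to $Q^+$ cannot get around it.

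Here is the problem concretely. To invoke the Transport Lemma with both labels $\A$ and $\hat{\A}$ (which you need: (t7) gives the vanishing on $\A$ required for (\ref{cz3-orig}c), while the $\hat{\A}$-output is needed for (\ref{cz3-orig}a)), you must supply a \emph{strong} $(\A, \epsilon^{-1}\delta_{Q^+}, C_B)$-basis for $\tg_{l(\A)-1}$ at $(y,M_0,\tau_0,P^y)$ with an \emph{absolute} constant $C_B$. You try to produce this by rescaling the original $(\A,\epsilon^{-1}\delta_{Q_0},C)$-basis down via (pb6), but that costs a factor $(\delta_{Q_0}/\delta_{Q^+})^m$, which is unbounded. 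Nor can you get it directly at the scale $\delta_Q$ or $\delta_{Q^+}$: your contradiction hypothesis is precisely that $\max_{\alpha,\beta}(\epsilon^{-1}\delta_Q)^{|\beta|-|\alpha|}|\partial^\beta P_\alpha^y(y)|>K$, and this quantity has no a priori upper bound, so the (pb4) half of the strong basis condition for $\A$ at scale $\delta_Q$ (hence $\delta_{Q^+}$) fails with any controlled constant. The weak basis you get from (pb7) is genuinely weaker and does not supply this. Consequently the $C_B$ fed to the Transport Lemma depends on $\delta_{Q_0}/\delta_Q$, which propagates into $C_B'$, into $\epsilon_0$ of (t5), and into the constants needed by Lemma \ref{lem:cz-equiv}, so the conclusion ``$Q^+$ is OK'' is not reached with the fixed $A$ appearing in the definition of OK.

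What resolves this in the paper is the intermediate-cube argument: defining $X_\nu$ along the dyadic chain from $Q$ up to sidelength $\sim\delta_{Q_0}$ and stopping at the first $\tilde{\nu}$ with $X_{\tilde{\nu}}\leq K$. At the scale $\delta_{\tilde{Q}}$ one has \emph{both} the lower bound $X_{\tilde{\nu}}\geq 2^{-m}K$ (driving the Relabeling Lemma to give strict $\hat{\A}<\A$) and, crucially, the upper bound $X_{\tilde{\nu}}\leq K$, which is exactly what turns the weak $\A$-basis into a strong $(\A,\epsilon^{-1}\delta_{\tilde{Q}},CK)$-basis with $CK$ an absolute constant (this is \eqref{ap24}$\Rightarrow$\eqref{ap34}). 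The cube $\tilde{Q}$ still strictly contains $Q$, so showing $\tilde{Q}$ is OK yields the same contradiction. Your argument never locates a scale at which the $\A$-basis constant is pinned down, which is why jumping to $Q^+$ does not close. You need to insert this stopping-time step; with it in place, the rest of your outline goes through.
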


\begin{proof}
Let $K\geq 1$ be a large enough constant to be picked below and assume that

\begin{itemize}
\item[\refstepcounter{equation}\text{(\theequation)}\label{ap7}] $%
\max_{\alpha \in \mathcal{A}\text{,}\beta \in \mathcal{M}}\left( \epsilon
^{-1}\delta _{Q}\right) ^{\left\vert \beta \right\vert -\left\vert \alpha
\right\vert }\left\vert \partial ^{\beta }P_{\alpha }^{y}\left( y\right)
\right\vert >K\text{.} $
\end{itemize}

We will derive a contradiction.

Thanks to \eqref{ap1}, we have

\begin{itemize}
\item[\refstepcounter{equation}\text{(\theequation)}\label{ap8}] $%
P^{y},P^{y}\pm cM_{0}\cdot \left( \epsilon ^{-1}\delta _{Q_{0}}\right)
^{m-\left\vert \alpha \right\vert }P_{\alpha }^{y}\in \Gamma _{l\left( 
\mathcal{A}\right) -1}\left( y,CM_{0},C\tau_0\right) $ for $\alpha \in \mathcal{A}$,
\end{itemize}

\begin{itemize}
\item[\refstepcounter{equation}\text{(\theequation)}\label{ap9}] $\partial
^{\beta }P_{\alpha }^{y}\left( y\right) =\delta _{\beta \alpha }$ for $\beta
,\alpha \in \mathcal{A}$,
\end{itemize}

and

\begin{itemize}
\item[\refstepcounter{equation}\text{(\theequation)}\label{ap10}] $%
\left\vert \partial ^{\beta }P_{\alpha }^{y}\left( y\right) \right\vert \leq
C\left( \epsilon ^{-1}\delta _{Q_{0}}\right) ^{\left\vert \alpha \right\vert
-\left\vert \beta \right\vert }$ for $\alpha \in \mathcal{A}$, $\beta \in 
\mathcal{M}$.
\end{itemize}

Also,

\begin{itemize}
\item[\refstepcounter{equation}\text{(\theequation)}\label{ap11}] $5Q\subset
5Q_{0}$ since $Q$ is OK.
\end{itemize}

If $\delta _{Q}\geq 2^{-12}\delta _{Q_{0}}$, then from \eqref{ap10}, %
\eqref{ap11}, we would have

\begin{itemize}
\item[\refstepcounter{equation}\text{(\theequation)}\label{ap14}] $%
\max_{\alpha \in \mathcal{A}\text{,}\beta \in \mathcal{M}}\left( \epsilon
^{-1}\delta _{Q}\right) ^{\left\vert \beta \right\vert -\left\vert \alpha
\right\vert }\left\vert \partial ^{\beta }P_{\alpha }^{y}\left( y\right)
\right\vert \leq C^{\prime }$.
\end{itemize}

We will pick

\begin{itemize}
\item[\refstepcounter{equation}\text{(\theequation)}\label{ap15}] $%
K>C^{\prime }$, with $C^{\prime }$ as in \eqref{ap14}.
\end{itemize}

Then \eqref{ap14} contradicts our assumption \eqref{ap7}.

Thus, we must have

\begin{itemize}
\item[\refstepcounter{equation}\text{(\theequation)}\label{ap16}] $\delta
_{Q}<2^{-12}\delta _{Q_{0}}$.
\end{itemize}

Let

\begin{itemize}
\item[\refstepcounter{equation}\text{(\theequation)}\label{ap17}] $Q=\hat{Q}%
_{0}\subset \hat{Q}_{1}\subset \cdots \subset \hat{Q}_{\nu _{\max }}$ be all
the dyadic cubes containing $Q$ and having sidelength at most $2^{-10}\delta
_{Q_{0}}$.
\end{itemize}

Then

\begin{itemize}
\item[\refstepcounter{equation}\text{(\theequation)}\label{ap18}] $\hat{Q}%
_{0}=Q$, $\delta _{\hat{Q}_{\nu _{\max }}}=2^{-10}\delta _{Q_{0}}$, $\hat{Q}%
_{\nu +1}=\left( \hat{Q}_{\nu }\right) ^{+}$ for $0\leq \nu \leq \nu _{\max
}-1$, and $\nu _{\max }\geq 2$.
\end{itemize}

For $0\leq \nu \leq \nu _{\max }$, we define

\begin{itemize}
\item[\refstepcounter{equation}\text{(\theequation)}\label{ap19}] $X_{\nu
}=\max_{\alpha \in \mathcal{A}\text{,}\beta \in \mathcal{M}}\left( \epsilon
^{-1}\delta _{\hat{Q}_{\nu }}\right) ^{\left\vert \beta \right\vert
-\left\vert \alpha \right\vert }\left\vert \partial ^{\beta }P_{\alpha
}^{y}\left( y\right) \right\vert $.
\end{itemize}

From \eqref{ap7} and \eqref{ap10}, we have

\begin{itemize}
\item[\refstepcounter{equation}\text{(\theequation)}\label{ap20}] $X_{0}>K$, 
$X_{\nu _{\max }}\leq C^{\prime }$,
\end{itemize}

and from \eqref{ap18}, \eqref{ap19}, we have

\begin{itemize}
\item[\refstepcounter{equation}\text{(\theequation)}\label{ap21}] $%
2^{-m}X_{\nu }\leq X_{\nu +1}\leq 2^{m}X_{\nu }$, for $0\leq \nu \leq \nu
_{\max }$.
\end{itemize}

We will pick

\begin{itemize}
\item[\refstepcounter{equation}\text{(\theequation)}\label{ap22}] $%
K>C^{\prime }$ with $C^{\prime }$ as in \eqref{ap20}.
\end{itemize}

Then $\tilde{\nu}:=\min \left\{ \nu :X_{\nu }\leq K\right\} $ and $\tilde{Q}=%
\hat{Q}_{\tilde{\nu}}$ satisfy the following, thanks to \eqref{ap20}, %
\eqref{ap21}, \eqref{ap22}: $\tilde{\nu}\not=0$, hence

\begin{itemize}
\item[\refstepcounter{equation}\text{(\theequation)}\label{ap23}] $\tilde{Q}$
is a dyadic cube strictly containing $Q$; also $2^{-m}K\leq X_{\tilde{\nu}%
}\leq K$,
\end{itemize}

hence

\begin{itemize}
\item[\refstepcounter{equation}\text{(\theequation)}\label{ap24}] $%
2^{-m}K\leq \max_{\alpha \in \mathcal{A}\text{,}\beta \in \mathcal{M}}\left(
\epsilon ^{-1}\delta _{\tilde{Q}}\right) ^{\left\vert \beta \right\vert
-\left\vert \alpha \right\vert }\left\vert \partial ^{\beta }P_{\alpha
}^{y}\left( y\right) \right\vert \leq K$.
\end{itemize}

Also, since $Q\subset \tilde{Q}$, we have $\frac{65}{64}\tilde{Q}\cap \frac{%
65}{64}Q_{0}\not=\emptyset $ by \eqref{ap4}; and since $\delta _{\tilde{Q}%
}\leq 2^{-10}\delta _{Q_{0}}$, we conclude that

\begin{itemize}
\item[\refstepcounter{equation}\text{(\theequation)}\label{ap25}] $5\tilde{Q}%
\subset 5Q_{0}$. 
\end{itemize}

From \eqref{ap8}, \eqref{ap10}, and \eqref{ap25}, we have

\begin{itemize}
\item[\refstepcounter{equation}\text{(\theequation)}\label{ap26}] $%
P^{y},P^{y}\pm cM_{0}\left( \epsilon ^{-1}\delta _{\tilde{Q}}\right)
^{m-\left\vert \alpha \right\vert }P_{\alpha }^{y}\in \Gamma _{l\left( 
\mathcal{A}\right) -1}\left( y,CM_{0},C\tau_0\right) \subset \Gamma _{l\left( 
\mathcal{A}\right) -2}\left( y,C'M_{0},C'\tau_0\right) $ for $\alpha \in \mathcal{A}$;
\end{itemize}

and

\begin{itemize}
\item[\refstepcounter{equation}\text{(\theequation)}\label{ap27}] $%
\left\vert \partial ^{\beta }P_{\alpha }^{y}\left( y\right) \right\vert \leq
C'\left( \epsilon ^{-1}\delta _{\tilde{Q}}\right) ^{\left\vert \alpha
\right\vert -\left\vert \beta \right\vert }$ for $\alpha \in \mathcal{A}$, $%
\beta \in \mathcal{M}$, $\beta \geq \alpha $.
\end{itemize}

Our results \eqref{ap9}, \eqref{ap26}, \eqref{ap27} tell us that

\begin{itemize}
\item[\refstepcounter{equation}\text{(\theequation)}\label{ap28}] $\left(
P_{\alpha }^{y}\right) _{\alpha \in \mathcal{A}}$ is a weak $\left( \mathcal{%
A},\epsilon ^{-1}\delta _{\tilde{Q}},C\right) $-basis for $\tg%
_{l\left( \mathcal{A}\right) -2}$ at $\left( y,M_{0},\tau_0,P^{y}\right) $.
\end{itemize}

Note also that

\begin{itemize}
\item[\refstepcounter{equation}\text{(\theequation)}\label{ap29}] $\epsilon
^{-1}\delta _{\tilde{Q}}\leq \epsilon ^{-1}\delta _{Q_{0}}\leq \delta _{\max
}$, by \eqref{ap25} and hypothesis (A2) of the Main Lemma for $\mathcal{A}$.
\end{itemize}

Moreover,

\begin{itemize}
\item[\refstepcounter{equation}\text{(\theequation)}\label{ap30}] $\tg_{l\left( \mathcal{A}\right) -2}$ is $\left( C,\delta _{\max }\right) 
$-convex.
\end{itemize}

If we take

\begin{itemize}
\item[\refstepcounter{equation}\text{(\theequation)}\label{ap31}] $K\geq
C^{\ast }$ for a large enough $C^{\ast }$,
\end{itemize}

then \eqref{ap24}, \eqref{ap28}$\cdots $\eqref{ap31} and the Relabeling
Lemma (Lemma \ref{lemma-pb2}) produce a monotonic set $\hat{\mathcal{A}}%
\subset \mathcal{M}$, such that

\begin{itemize}
\item[\refstepcounter{equation}\text{(\theequation)}\label{ap32}] $\hat{%
\mathcal{A}}<\mathcal{A}$ (strict inequality)
\end{itemize}

and

\begin{itemize}
\item[\refstepcounter{equation}\text{(\theequation)}\label{ap33}] $\tg_{l\left( \mathcal{A}\right) -2}$ has an $\left( \hat{\mathcal{A}}%
,\epsilon ^{-1}\delta _{\tilde{Q}},C'\right) $-basis at $\left(
y,M_{0},\tau_0,P^{y}\right) $.
\end{itemize}

Also, from \eqref{ap9}, \eqref{ap24}, \eqref{ap26}, we see that

\begin{itemize}
\item[\refstepcounter{equation}\text{(\theequation)}\label{ap34}] $\left(
P_{\alpha }^{y}\right) _{\alpha \in \mathcal{A}}$ is an $\left( \mathcal{A}%
,\epsilon ^{-1}\delta _{\tilde{Q}},CK\right) $-basis for $\tg%
_{l\left( \mathcal{A}\right) -2}$ at $\left( y,M_{0},\tau_0,P^{y}\right) $.
\end{itemize}

We now pick

\begin{itemize}
\item[\refstepcounter{equation}\text{(\theequation)}\label{ap35}] $K=\hat{C}$
(a constant determined by $C_{B}$, $C_{w}$, $m$, $n$), with $\hat{C}\geq 1 $
large enough to satisfy \eqref{ap15}, \eqref{ap22}, \eqref{ap31}.
\end{itemize}

Then \eqref{ap33} and \eqref{ap34} tell us that

\begin{itemize}
\item[\refstepcounter{equation}\text{(\theequation)}\label{ap36}] $\tg_{l\left( \mathcal{A}\right) -2}$ has both an $\left( \hat{\mathcal{A}%
},\epsilon ^{-1}\delta _{\tilde{Q}},C\right) $-basis and an $\left( \mathcal{%
A},\epsilon ^{-1}\delta _{\tilde{Q}},C\right) $-basis at $\left(
y,M_{0},\tau_0,P^{y}\right) $.
\end{itemize}

Let $z\in E_0\cap 5\tilde{Q}$. Then $z,y\in 5\tilde{Q}^{+}$, hence

\begin{itemize}
\item[\refstepcounter{equation}\text{(\theequation)}\label{ap37}] $%
\left\vert z-y\right\vert \leq C\delta _{\tilde{Q}}=C\epsilon \cdot \left(
\epsilon ^{-1}\delta _{\tilde{Q}}\right) $.
\end{itemize}

From \eqref{ap36}, \eqref{ap37}, the Small $\epsilon $ Assumption and Lemma %
\ref{lemma-transport} (and our hypothesis that $\mathcal{A}$ is monotonic;
see Section \ref{setup-for-the-induction-step}), we obtain a polynomial $%
\check{P}^{z}\in \mathcal{P}$, such that

\begin{itemize}
\item[\refstepcounter{equation}\text{(\theequation)}\label{ap38}] $\tg
_{l\left( \mathcal{A}\right) -3}$ has an $\left( \hat{\mathcal{A}},\epsilon
^{-1}\delta _{\tilde{Q}},C\right) $-basis at $\left( z,M_{0},\tau_0,\check{P}%
^{z}\right) $,
\end{itemize}

\begin{itemize}
\item[\refstepcounter{equation}\text{(\theequation)}\label{ap39}] $\partial
^{\beta }\left( \check{P}^{z}-P^{y}\right) \equiv 0$ for $\beta \in \mathcal{%
A}$,
\end{itemize}

and

\begin{itemize}
\item[\refstepcounter{equation}\text{(\theequation)}\label{ap40}] $%
\left\vert \partial ^{\beta }\left( \check{P}^{z}-P^{y}\right) \left(
y\right) \right\vert \leq CM_{0}\left( \epsilon ^{-1}\delta _{\tilde{Q}%
}\right) ^{m-\left\vert \beta \right\vert }$ for $\beta \in \mathcal{M}$.
\end{itemize}

From \eqref{ap25} and \eqref{ap40}, we have

\begin{itemize}
\item[\refstepcounter{equation}\text{(\theequation)}\label{ap41}] $%
\left\vert \partial ^{\beta }\left( \check{P}^{z}-P^{y}\right) \left(
y\right) \right\vert \leq CM_{0}\left( \epsilon ^{-1}\delta _{Q_{0}}\right)
^{m-\left\vert \beta \right\vert }$ for $\beta \in \mathcal{M}$.
\end{itemize}

Since $y\in \frac{65}{64}Q_{0}$ by hypothesis of Lemma \ref{lemma-ap1}, while $x_{0}\in
\frac{65}{64}Q_{0}$ by hypothesis of the Main Lemma for $\mathcal{A}$, we have $%
\left\vert x_{0}-y\right\vert \leq C\delta _{Q_{0}}$, and therefore %
\eqref{ap41} implies that

\begin{itemize}
\item[\refstepcounter{equation}\text{(\theequation)}\label{ap42}] $%
\left\vert \partial ^{\beta }\left( \check{P}^{z}-P^{y}\right) \left(
x_{0}\right) \right\vert \leq CM_{0}\left( \epsilon ^{-1}\delta
_{Q_{0}}\right) ^{m-\left\vert \beta \right\vert }$ for $\beta \in \mathcal{M%
}$.
\end{itemize}

From \eqref{ap2}, \eqref{ap3}, \eqref{ap39}, \eqref{ap42}, we now have

\begin{itemize}
\item[\refstepcounter{equation}\text{(\theequation)}\label{ap43}] $\partial
^{\beta }\left( \check{P}^{z}-P^{0}\right) \equiv 0$ for $\beta \in \mathcal{%
A}$
\end{itemize}

and

\begin{itemize}
\item[\refstepcounter{equation}\text{(\theequation)}\label{ap44}] $%
\left\vert \partial^{\beta }\left( \check{P}^{z}-P^{0}\right) \left(
x_{0}\right) \right\vert \leq CM_{0}\left( \epsilon ^{-1}\delta
_{Q_{0}}\right) ^{m-\left\vert \beta \right\vert }$ for $\beta \in \mathcal{M%
}$.
\end{itemize}

Our results \eqref{ap38}, \eqref{ap43}, \eqref{ap44} hold for every $z\in
E_0\cap 5\tilde{Q}$. Therefore, for each $z\in E_0\cap 5\tilde{Q}$ there exists $\hat{P}^{z}\in \mathcal{P}$ satisfying

\begin{itemize}
	\item[(\ref{cz3-orig}a)] $\tg_{l\left( \mathcal{A}\right) -3}$ has a weak $%
	\left( \hat{\mathcal{A}},\epsilon ^{-1}\delta _{Q},C\right) $-basis at $%
	\left( z,M_{0},\tau_{0},\hat{P}^{z}\right) $.
	
	\item[(\ref{cz3-orig}b)] $\left\vert \partial ^{\beta }\left( \hat{P}^{z}-P^{0}\right)
	\left( x_{0}\right) \right\vert \leq CM_{0}\left( \epsilon ^{-1}\delta
	_{Q_{0}}\right) ^{m-\left\vert \beta \right\vert }$ for all $\beta \in 
	\mathcal{M}$.
	
	\item[(\ref{cz3-orig}c)] $\partial ^{\beta }\left( \hat{P}^{z}-P^{0}\right) \equiv 0$ for 
	$\beta \in \mathcal{A}$.
\end{itemize}

We can apply now Lemma \ref{lem:cz-equiv}. Therefore we conclude that $\tilde{Q}$ is OK. 

However, since $\tilde{Q}$ properly contains the CZ cube $Q$, (see %
\eqref{ap23}), $\tilde{Q}$ cannot be OK.

This contradiction proves that our assumption \eqref{ap7} must be false.

Thus, $\left\vert \partial ^{\beta }P_{\alpha }^{y}\left( y\right)
\right\vert \leq K\left( \epsilon ^{-1}\delta _{Q}\right) ^{\left\vert
\alpha \right\vert -\left\vert \beta \right\vert }$ for $\alpha \in \mathcal{%
A}$, $\beta \in \mathcal{M}$.

Since we picked $K=\hat{C}$ in \eqref{ap35}, this implies the estimate %
\eqref{ap6}, completing the proof of Lemma \ref{lemma-ap1}.
\end{proof}

\begin{corollary}
\label{cor-to-lemma-ap1}

Let $Q\in $ CZ, and suppose $\frac{65}{64}Q\cap \frac{65}{64}%
Q_{0}\not=\emptyset $. Let $y\in E_0\cap 5Q_{0}\cap 5Q^{+}$. Then $\left(
P_{\alpha }^{y}\right) _{\alpha \in \mathcal{A}}$ is an $\left( \mathcal{A}%
,\epsilon ^{-1}\delta _{Q},C\right) $-basis for $\tg_{l\left( 
\mathcal{A}\right) -1}$ at $\left( y,M_{0},\tau_0,P^{y}\right) $.
\end{corollary}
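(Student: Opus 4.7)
The corollary is essentially a bookkeeping consequence of Lemma \ref{lemma-ap1} together with the coarse-scale basis already constructed in \eqref{ap1}--\eqref{ap3}. The plan is simply to verify the four defining conditions \eqref{pb1}--\eqref{pb4} of an $(\mathcal{A}, \epsilon^{-1}\delta_Q, C)$-basis for $\tg_{l(\mathcal{A})-1}$ at $(y, M_0, \tau_0, P^y)$, using as inputs the $(\mathcal{A}, \epsilon^{-1}\delta_{Q_0}, C)$-basis $(P_\alpha^y)_{\alpha \in \mathcal{A}}$ furnished by \eqref{ap1} and the finer derivative bound obtained from Lemma \ref{lemma-ap1}.

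First, the conditions \eqref{pb1} and \eqref{pb3} do not involve the parameter $\delta$, so they transfer directly from \eqref{ap1}: we have $P^y \in \Gamma_{l(\mathcal{A})-1}(y, CM_0, C\tau_0)$, and $\partial^\beta P_\alpha^y(y) = \delta_{\alpha\beta}$ for $\beta, \alpha \in \mathcal{A}$. Next, Lemma \ref{lemma-ap1} provides exactly condition \eqref{pb4} at the new scale:
\begin{equation*}
|\partial^\beta P_\alpha^y(y)| \leq C(\epsilon^{-1}\delta_Q)^{|\alpha|-|\beta|} \quad \text{for } \alpha \in \mathcal{A},\, \beta \in \mathcal{M}.
\end{equation*}

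The only point requiring any argument is \eqref{pb2}, which must be promoted from the scale $\epsilon^{-1}\delta_{Q_0}$ to the (smaller) scale $\epsilon^{-1}\delta_Q$. From \eqref{ap1} (equivalently, \eqref{ap8}), both $P^y$ and $P^y \pm c M_0 (\epsilon^{-1}\delta_{Q_0})^{m-|\alpha|} P_\alpha^y$ lie in the convex set $\Gamma_{l(\mathcal{A})-1}(y, CM_0, C\tau_0)$ for each $\alpha \in \mathcal{A}$. Because $Q$ is OK, $5Q \subseteq 5Q_0$, so $\delta_Q \leq \delta_{Q_0}$; since $m - |\alpha| \geq 1$ for every $\alpha \in \mathcal{M}$, we obtain $(\epsilon^{-1}\delta_Q)^{m-|\alpha|} \leq (\epsilon^{-1}\delta_{Q_0})^{m-|\alpha|}$. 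Thus the polynomial $P^y \pm \frac{M_0 (\epsilon^{-1}\delta_Q)^{m-|\alpha|}}{C'} P_\alpha^y$ is a convex combination of $P^y$ and $P^y \pm c M_0 (\epsilon^{-1}\delta_{Q_0})^{m-|\alpha|} P_\alpha^y$ once $C'$ is chosen large enough, and therefore lies in $\Gamma_{l(\mathcal{A})-1}(y, CM_0, C\tau_0)$ by convexity.

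Combining these three verifications, $(P_\alpha^y)_{\alpha \in \mathcal{A}}$ satisfies \eqref{pb1}--\eqref{pb4} with $\delta = \epsilon^{-1}\delta_Q$ and a constant $C$ determined by the inputs, proving the corollary. There is no genuine obstacle: the serious work, namely replacing the coarse bound \eqref{ap10} on $|\partial^\beta P_\alpha^y(y)|$ by the fine bound at scale $\epsilon^{-1}\delta_Q$, has already been carried out in the proof of Lemma \ref{lemma-ap1}; all that remains is rescaling the ``$\pm$-balanced'' containment by convexity.
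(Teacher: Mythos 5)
Your proof is correct and follows essentially the same route as the paper: combine \eqref{pb1}, \eqref{pb3} from \eqref{ap1}, use Lemma \ref{lemma-ap1} to get \eqref{pb4} at scale $\epsilon^{-1}\delta_Q$, and downgrade the \eqref{pb2}-condition from scale $\epsilon^{-1}\delta_{Q_0}$ to $\epsilon^{-1}\delta_Q$ using $\delta_Q \le \delta_{Q_0}$ (which follows from $Q$ being OK) together with convexity. The only difference is that you spell out the convex-combination argument that the paper states more tersely.
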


\begin{proof}
From \eqref{ap1} we have

\begin{itemize}
\item[\refstepcounter{equation}\text{(\theequation)}\label{ap45}] $%
P^{y},P^{y}\pm cM_{0}\left( \epsilon ^{-1}\delta _{Q_{0}}\right)
^{m-\left\vert \alpha \right\vert }P_{\alpha }\in \Gamma _{l\left( \mathcal{A%
}\right) -1}\left( y,CM_{0},C\tau_0\right) $ for $\alpha \in \mathcal{A}$;
\end{itemize}

and

\begin{itemize}
\item[\refstepcounter{equation}\text{(\theequation)}\label{ap46}] $\partial
^{\beta }P_{\alpha }^{y}\left( y\right) =\delta _{\beta \alpha }$ for $\beta
,\alpha \in \mathcal{A}$.
\end{itemize}

Since $5Q \subseteq 5Q_0$ (because $Q$ is OK), we have $\delta_Q \leq
\delta_{Q_0}$, and \eqref{ap45} implies

\begin{itemize}
\item[\refstepcounter{equation}\text{(\theequation)}\label{ap47}] $%
P^{y},P^{y}\pm cM_{0}\left( \epsilon ^{-1}\delta _{Q}\right) ^{m-\left\vert
\alpha \right\vert }P_{\alpha }\in \Gamma _{l\left( \mathcal{A}\right)
-1}\left( y,CM_{0},C\tau_0\right) $ for $\alpha \in \mathcal{A}$.
\end{itemize}

Lemma \ref{lemma-ap1} tells us that

\begin{itemize}
\item[\refstepcounter{equation}\text{(\theequation)}\label{ap48}] $%
\left\vert \partial ^{\beta }P_{\alpha }^{y}\left( y\right) \right\vert \leq
C\left( \epsilon ^{-1}\delta _{Q}\right) ^{\left\vert \alpha \right\vert
-\left\vert \beta \right\vert }$ for $\alpha \in \mathcal{A}$, $\beta \in 
\mathcal{M}$.
\end{itemize}

From \eqref{ap46}, \eqref{ap47}, \eqref{ap48}, we conclude that $%
(P^y_\alpha)_{\alpha \in \mathcal{A}}$ is an $(\mathcal{A}%
,\epsilon^{-1}\delta_Q,C)$-basis for $\tg_{l(\mathcal{A})-1}$ at $%
(y,M_0,P^y)$, completing the proof of Corollary \ref{cor-to-lemma-ap1}.
\end{proof}

\begin{lemma}[\textquotedblleft Consistency of Auxiliary
Polynomials\textquotedblright ]
\label{lemma-ap2} Let $Q,Q^{\prime }\in $ CZ, with

\begin{itemize}
\item[\refstepcounter{equation}\text{(\theequation)}\label{ap49}] $\frac{65}{%
64}Q\cap \frac{65}{64}Q_{0}\not=\emptyset $, $\frac{65}{64}Q^{\prime }\cap 
\frac{65}{64}Q_{0}\not=\emptyset $
\end{itemize}

and

\begin{itemize}
\item[\refstepcounter{equation}\text{(\theequation)}\label{ap50}] $\frac{65}{%
64}Q\cap \frac{65}{64}Q^{\prime }\not=\emptyset $.
\end{itemize}

Let

\begin{itemize}
\item[\refstepcounter{equation}\text{(\theequation)}\label{ap51}] $y\in
E_0\cap 5Q_{0}\cap 5Q^{+}$, $y^{\prime }\in E_0\cap 5Q_{0}\cap 5\left( Q^{\prime
}\right) ^{+}$.
\end{itemize}

Then

\begin{itemize}
\item[\refstepcounter{equation}\text{(\theequation)}\label{ap52}] $%
\left\vert \partial ^{\beta }\left( P^{y}-P^{y^{\prime }}\right) \left(
y^{\prime }\right) \right\vert \leq CM_{0}\left( \epsilon ^{-1}\delta
_{Q}\right) ^{m-\left\vert \beta \right\vert }$ for $\beta \in \mathcal{M}$.
\end{itemize}
\end{lemma}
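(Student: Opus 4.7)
The plan is to mirror the pattern of Lemma \ref{lemma-ap1}: transport $P^{y}$ from $y$ to $y'$ via Corollary \ref{cor-to-transport} to produce a polynomial $\hat{P}^{\#}$ whose discrepancy from $P^{y'}$ can be controlled by Lemma \ref{lemma-pb3} (``Control $\Gamma$ Using Basis''). The contradiction alternative, should Lemma \ref{lemma-pb3} return a strictly smaller label $\hat{\A}<\A$, will be that $Q^{+}$ turns out to be OK, contradicting the CZ status of $Q$. First dispose of the easy case $\delta_{Q}\geq c\delta_{Q_{0}}$ by Taylor-expanding \eqref{ap3} for $P^{y}-P^{0}$ and $P^{y'}-P^{0}$ at $x_{0}$ to $y'$ (using $|y'-x_{0}|\leq C\delta_{Q_{0}}\leq C'\delta_{Q}$). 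So assume $\delta_{Q}<2^{-10}\delta_{Q_{0}}$, which ensures $5Q^{+}\subseteq 5Q_{0}$.

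By Corollary \ref{cor-to-lemma-ap1}, $(P_{\alpha}^{y})$ is an $(\A,\epsilon^{-1}\delta_{Q},C)$-basis for $\tg_{l(\A)-1}$ at $(y,M_{0},\tau_{0},P^{y})$. Since $|y-y'|\leq C\delta_{Q}=(C\epsilon)(\epsilon^{-1}\delta_{Q})$ is admissible under the Small $\epsilon$ Assumption, Corollary \ref{cor-to-transport} produces $\hat{P}^{\#}\in\P$ such that $\tg_{l(\A)-2}$ has an $(\A,\epsilon^{-1}\delta_{Q},C')$-basis at $(y',M_{0},\tau_{0},\hat{P}^{\#})$, with $\partial^{\beta}(\hat{P}^{\#}-P^{y})\equiv 0$ on $\A$ and $|\partial^{\beta}(\hat{P}^{\#}-P^{y})(y)|\leq C'M_{0}(\epsilon^{-1}\delta_{Q})^{m-|\beta|}$ for $\beta\in\M$; Taylor expansion upgrades this to the same bound at $y'$. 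Corollary \ref{cor-to-lemma-ap1} applied to $Q'$, combined with Lemma \ref{lemma-cz2} and \eqref{pb6}, furnishes an $(\A,\epsilon^{-1}\delta_{Q},C)$-basis for $\tg_{l(\A)-2}$ at $(y',M_{0},\tau_{0},P^{y'})$; \eqref{ap2} then yields $\partial^{\beta}(\hat{P}^{\#}-P^{y'})\equiv 0$ on $\A$. It therefore suffices to bound $|\partial^{\beta}(\hat{P}^{\#}-P^{y'})(y')|$ by $CM_{0}(\epsilon^{-1}\delta_{Q})^{m-|\beta|}$.

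Apply Lemma \ref{lemma-pb3} with $\vec{\Gamma}=\tg_{l(\A)-2}$, $x_{0}=y'$, $\delta=\epsilon^{-1}\delta_{Q}$, $P^{0}=P^{y'}$, $P=\hat{P}^{\#}$; the hypotheses \eqref{pb71}, \eqref{pb72} have just been verified. If \eqref{pb73} fails, namely $\max_{\beta}(\epsilon^{-1}\delta_{Q})^{|\beta|-m}|\partial^{\beta}(\hat{P}^{\#}-P^{y'})(y')|<M_{0}$, the desired estimate holds with constant $1$ and we are done. Otherwise, Lemma \ref{lemma-pb3} delivers $\hat{\A}<\A$ (strict) monotonic and $\hat{P}^{0}\in\P$ with an $(\hat{\A},\epsilon^{-1}\delta_{Q},C'_{B})$-basis for $\tg_{l(\A)-2}$ at $(y',M_{0},\tau_{0},\hat{P}^{0})$, plus $\partial^{\beta}(\hat{P}^{0}-P^{y'})\equiv 0$ on $\A$ and $|\partial^{\beta}(\hat{P}^{0}-P^{y'})(y')|\leq M_{0}(\epsilon^{-1}\delta_{Q})^{m-|\beta|}$ on $\M$. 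Rescale to $\delta_{Q^{+}}=2\delta_{Q}$ via \eqref{pb6}, which costs only the fixed factor $2^{m}$. For each $z\in E_{0}\cap 5Q^{+}$, apply the Transport Lemma (Lemma \ref{lemma-transport}) simultaneously to the $\A$-basis at $P^{y'}$ and the $\hat{\A}$-basis at $\hat{P}^{0}$, transporting from $y'$ to $z$ (admissibility $|z-y'|\leq C\delta_{Q^{+}}$ comes from the Small $\epsilon$ Assumption). Obtain $\check{P}^{z}\in\P$ with an $(\hat{\A},\epsilon^{-1}\delta_{Q^{+}},C'')$-basis for $\tg_{l(\A)-3}$ at $(z,M_{0},\tau_{0},\check{P}^{z})$, $\partial^{\beta}(\check{P}^{z}-P^{y'})\equiv 0$ on $\A$ (hence $\partial^{\beta}(\check{P}^{z}-P^{0})\equiv 0$ on $\A$ by \eqref{ap2}), together with a difference estimate at $y'$ that Taylor-expands via $|x_{0}-y'|\leq C\delta_{Q_{0}}$ and \eqref{ap3} to $|\partial^{\beta}(\check{P}^{z}-P^{0})(x_{0})|\leq CM_{0}(\epsilon^{-1}\delta_{Q_{0}})^{m-|\beta|}$. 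By Lemma \ref{lem:cz-equiv}, $Q^{+}$ is OK; since $Q^{+}\supsetneq Q$ and $Q\in\mathrm{CZ}$, this contradicts the maximality of $Q$.

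The main obstacle is keeping all constants independent of the scale ratio $\delta_{Q_{0}}/\delta_{Q}$: the rescaling from $\delta_{Q}$ to $\delta_{Q^{+}}$ introduces only the bounded factor $2^{m}$ because $\delta_{Q^{+}}/\delta_{Q}=2$; and the estimates for condition (b) of Lemma \ref{lem:cz-equiv} at $x_{0}$ must use $\delta_{Q^{+}}\leq\delta_{Q_{0}}$ to convert the bounds obtained at scale $\epsilon^{-1}\delta_{Q^{+}}$ into bounds at scale $\epsilon^{-1}\delta_{Q_{0}}$, which requires the Taylor remainder in $|x_{0}-y'|\leq C\delta_{Q_{0}}$ to respect the powers of $\epsilon$ appropriately.
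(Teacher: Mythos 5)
Your proposal is correct and follows essentially the same strategy as the paper's proof of Lemma \ref{lemma-ap2}: a preliminary transport, a case split governed by Lemma \ref{lemma-pb3} (``Control $\Gamma$ Using Basis''), and a contradiction via Lemma \ref{lem:cz-equiv} showing that $Q^{+}$ would otherwise be OK. The only difference is the direction of transport—you move $P^{y}$ from $y$ to $y'$, apply Lemma \ref{lemma-pb3} at $y'$ with $P^{0}=P^{y'}$ and $P=\hat{P}^{\#}$, and then transport from $y'$ across to $z\in E_{0}\cap 5Q^{+}$, whereas the paper transports $P^{y'}$ from $y'$ to $y$, applies Lemma \ref{lemma-pb3} at $y$, and transports from $y$ to $z$ within $5Q^{+}$—but both choices are valid once one checks, as you do implicitly via Lemma \ref{lemma-cz2} and $\frac{65}{64}Q\cap\frac{65}{64}Q'\neq\emptyset$, that $|y'-z|\leq C\delta_{Q}$ keeps the cross-cube transport admissible under the Small $\epsilon$ Assumption.
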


\begin{proof}
Suppose first that $\delta _{Q}\geq 2^{-20}\delta _{Q_{0}}$. Then \eqref{ap3}
(applied to $y$ and to $y^{\prime }$) tells us that 
\begin{equation*}
\left\vert \partial ^{\beta }\left( P^{y}-P^{y^{\prime }}\right) \left(
x_{0}\right) \right\vert \leq CM_{0}\left( \epsilon ^{-1}\delta
_{Q_{0}}\right) ^{m-\left\vert \beta \right\vert }\text{ for }\beta \in 
\mathcal{M}\text{.}
\end{equation*}%
Hence, $\left\vert \partial ^{\beta }\left( P^{y}-P^{y^{\prime }}\right)
\left( y^{\prime }\right) \right\vert \leq C^{\prime }M_{0}\left( \epsilon
^{-1}\delta _{Q_{0}}\right) ^{m-\left\vert \beta \right\vert }\leq C^{\prime
\prime }M_{0}\left( \epsilon ^{-1}\delta _{Q}\right) ^{m-\left\vert \beta
\right\vert }$ for $\beta \in \mathcal{M}$, since $x_{0}$, $y^{\prime }\in
\frac{65}{64}Q_0$. Thus, \eqref{ap52} holds if $\delta _{Q}\geq 2^{-20}\delta
_{Q_{0}}$. Suppose

\begin{itemize}
\item[\refstepcounter{equation}\text{(\theequation)}\label{ap53}] $\delta
_{Q}<2^{-20}\delta _{Q_{0}}$.
\end{itemize}

By \eqref{ap50} and Lemma \ref{lemma-cz2}, we have

\begin{itemize}
\item[\refstepcounter{equation}\text{(\theequation)}\label{ap54}] $\delta
_{Q},\delta _{Q^{\prime }}\leq 2^{-20}\delta _{Q_{0}}$ and $\frac{1}{2}%
\delta _{Q}\leq \delta _{Q^{\prime }}\leq 2\delta _{Q}$.
\end{itemize}

Together with \eqref{ap49}, this implies that

\begin{itemize}
\item[\refstepcounter{equation}\text{(\theequation)}\label{ap55}] $%
5Q^{+},5\left( Q^{\prime }\right) ^{+}\subseteq 5Q_{0}$. 
\end{itemize}

From Corollary \ref{cor-to-lemma-ap1}, we have

\begin{itemize}
\item[\refstepcounter{equation}\text{(\theequation)}\label{ap56}] $\tg_{l\left( \mathcal{A}\right) -1}$ has an $\left( \mathcal{A},\epsilon
^{-1}\delta _{Q^{\prime }},C\right) $-basis at $\left( y^{\prime
},M_{0},\tau_0,P^{y^{\prime }}\right) $.
\end{itemize}

From \eqref{ap50}, \eqref{ap51}, \eqref{ap54}, we have

\begin{itemize}
\item[\refstepcounter{equation}\text{(\theequation)}\label{ap57}] $%
\left\vert y-y^{\prime }\right\vert \leq C\delta _{Q^{\prime }}=C\epsilon
\left( \epsilon ^{-1}\delta _{Q^{\prime }}\right) $.
\end{itemize}

We recall from \eqref{ap54} and the hypotheses of the Main Lemma for $%
\mathcal{A}$ that

\begin{itemize}
\item[\refstepcounter{equation}\text{(\theequation)}\label{ap58}] $\epsilon
^{-1}\delta _{Q^{\prime }}\leq \epsilon ^{-1}\delta _{Q_{0}}\leq \delta
_{\max }$,
\end{itemize}

and we recall from Section \ref{setup-for-the-induction-step} that

\begin{itemize}
\item[\refstepcounter{equation}\text{(\theequation)}\label{ap59}] $\mathcal{A%
}$ is monotonic.
\end{itemize}

Thanks to \eqref{ap56}$\cdots $\eqref{ap59}, Corollary \ref{cor-to-transport}
in Section \ref{transport-lemma} produces a polynomial $P^{\prime }\in 
\mathcal{P}$ such that

\begin{itemize}
\item[\refstepcounter{equation}\text{(\theequation)}\label{ap60}] $\tg_{l\left( \mathcal{A}\right) -2}$ has an $\left( \mathcal{A},\epsilon
^{-1}\delta _{Q^{\prime }},C\right) $-basis at $\left( y,M_{0},\tau_0,P^{\prime
}\right) $;
\end{itemize}

\begin{itemize}
\item[\refstepcounter{equation}\text{(\theequation)}\label{ap61}] $\partial
^{\beta }\left( P^{\prime }-P^{y^{\prime }}\right) \equiv 0$ for $\beta \in 
\mathcal{A}$;
\end{itemize}

and

\begin{itemize}
\item[\refstepcounter{equation}\text{(\theequation)}\label{ap62}] $%
\left\vert \partial ^{\beta }\left( P^{\prime }-P^{y^{\prime }}\right)
\left( y^{\prime }\right) \right\vert \leq CM_{0}\left( \epsilon ^{-1}\delta
_{Q^{\prime }}\right) ^{m-\left\vert \beta \right\vert }$ for $\beta \in 
\mathcal{M}$.
\end{itemize}

From \eqref{ap60} we have in particular that

\begin{itemize}
\item[\refstepcounter{equation}\text{(\theequation)}\label{ap63}] $P^{\prime
}\in \Gamma _{l\left( \mathcal{A}\right) -2}\left( y,CM_{0},\tau_0\right) $,
\end{itemize}

and from \eqref{ap62} and \eqref{ap54} we obtain

\begin{itemize}
\item[\refstepcounter{equation}\text{(\theequation)}\label{ap64}] $%
\left\vert \partial ^{\beta }\left( P^{y^{\prime }}-P^{\prime }\right)
\left( y^{\prime }\right) \right\vert \leq CM_{0}\left( \epsilon ^{-1}\delta
_{Q}\right) ^{m-\left\vert \beta \right\vert }$ for $\beta \in \mathcal{M}$.
\end{itemize}

If we knew that

\begin{itemize}
\item[\refstepcounter{equation}\text{(\theequation)}\label{ap65}] $%
\left\vert \partial ^{\beta }\left( P^{y}-P^{\prime }\right) \left( y\right)
\right\vert \leq M_{0}\left( \epsilon ^{-1}\delta _{Q}\right) ^{m-\left\vert
\beta \right\vert }$ for $\beta \in \mathcal{M}$,
\end{itemize}

then also $\left\vert \partial ^{\beta }\left( P^{y}-P^{\prime }\right)
\left( y^{\prime }\right) \right\vert \leq C^{\prime }M_{0}\left( \epsilon
^{-1}\delta _{Q}\right) ^{m-\left\vert \beta \right\vert }$ for $\beta \in 
\mathcal{M}$ since $\left\vert y-y^{\prime }\right\vert \leq C\delta _{Q}$
thanks to \eqref{ap50}, \eqref{ap51}, \eqref{ap54}. Consequently, by %
\eqref{ap64}, we would have $\left\vert \partial ^{\beta }\left(
P^{y^{\prime }}-P^{y}\right) \left( y^{\prime }\right) \right\vert \leq
CM_{0}\left( \epsilon ^{-1}\delta _{Q}\right) ^{m-\left\vert \beta
\right\vert }$ for $\beta \in \mathcal{M}$, which is our desired inequality %
\eqref{ap52}. Thus, Lemma \ref{lemma-ap2} will follow if we can prove %
\eqref{ap65}.

Suppose \eqref{ap65} fails. We will deduce a contradiction.

Corollary \ref{cor-to-lemma-ap1} shows that $\tg_{l\left( \mathcal{A%
}\right) -1}$ has an $\left( \mathcal{A},\epsilon ^{-1}\delta _{Q},C\right) $%
-basis at $\left( y,M_{0},\tau_0,P^{y}\right) $. Since $\Gamma _{l\left( \mathcal{A}%
\right) -1}\left( x,M,\tau\right) \subset \Gamma _{l\left( \mathcal{A}\right)
-2}\left( x,CM,C\tau\right) $ for all $x\in E_0$, $M>0$, it follows that
\begin{itemize}
\item[\refstepcounter{equation}\text{(\theequation)}\label{ap66}] $\vec{%
\Gamma}_{l\left( \mathcal{A}\right) -2}$ has an $\left( \mathcal{A},\epsilon
^{-1}\delta _{Q},C\right) $-basis at $\left( y,M_{0},\tau_{0},P^{y}\right) $.
\end{itemize}
\begin{remark}
  This small difference $\Gamma_{l(\A)-1}(x,M,\tau)\subset\G_{l(\A)-2}(x,CM,C\tau)$ instead of $\Gamma_{l(\A)-1}(x,M,\tau)\subset\G_{l(\A)-2}(x,M,\tau)$ (which would be the direct analogy from \cite{feffermanFinitenessPrinciplesSmooth2016}) doesn't affect the result, it just modifies $C$ in \eqref{ap66}.
\end{remark}

From \eqref{ap61} and \eqref{ap2} (applied to $y$ and $y^{\prime }$), we see
that

\begin{itemize}
\item[\refstepcounter{equation}\text{(\theequation)}\label{ap67}] $\partial
^{\beta }\left( P^{y}-P^{\prime }\right) \equiv 0$ for $\beta \in \mathcal{A}
$.
\end{itemize}

Since we are assuming that \eqref{ap65} fails, we have

\begin{itemize}
\item[\refstepcounter{equation}\text{(\theequation)}\label{ap68}] $%
\max_{\beta \in \mathcal{M}}\left( \epsilon ^{-1}\delta _{Q}\right)
^{\left\vert \beta \right\vert }\left\vert \partial ^{\beta }\left(
P^{y}-P^{\prime }\right) \left( y\right) \right\vert \geq M_{0}\left(
\epsilon ^{-1}\delta _{Q}\right) ^{m}$.
\end{itemize}

Also, from \eqref{ap54} and the hypotheses of the Main Lemma for $\mathcal{A}
$, we have

\begin{itemize}
\item[\refstepcounter{equation}\text{(\theequation)}\label{ap69}] $\epsilon
^{-1}\delta _{Q}<\epsilon ^{-1}\delta _{Q_{0}}\leq \delta _{\max }$.
\end{itemize}

But we know that

\begin{itemize}
\item[\refstepcounter{equation}\text{(\theequation)}\label{ap70}] $\tg_{l\left( \mathcal{A}\right) -2}$ is $\left( C,\delta _{\max }\right) 
$-convex.
\end{itemize}

Our results \eqref{ap63}, \eqref{ap66}$\cdots$\eqref{ap70} and Lemma \ref%
{lemma-pb3} produce a set $\hat{\mathcal{A}}\subseteq \mathcal{M}$ and a
polynomial $\hat{P}\in \mathcal{P}$, with the following properties:

\begin{itemize}
\item[\refstepcounter{equation}\text{(\theequation)}\label{ap71}] $\hat{%
\mathcal{A}}$ is monotonic;
\end{itemize}

\begin{itemize}
\item[\refstepcounter{equation}\text{(\theequation)}\label{ap72}] $\hat{%
\mathcal{A}}<\mathcal{A}$ (strict inequality);
\end{itemize}

\begin{itemize}
\item[\refstepcounter{equation}\text{(\theequation)}\label{ap73}] $\tg_{l\left( \mathcal{A}\right) -2}$ has an $\left( \hat{\mathcal{A}}%
,\epsilon ^{-1}\delta _{Q},C\right) $-basis at $\left( y,M_{0},\tau_{0},\hat{P}%
\right) $;
\end{itemize}

\begin{itemize}
\item[\refstepcounter{equation}\text{(\theequation)}\label{ap74}] $\partial
^{\beta }\left( \hat{P}-P^{y}\right) \equiv 0$ for $\beta \in \mathcal{A}$
(recall, $\mathcal{A}$ is monotonic);
\end{itemize}

and

\begin{itemize}
\item[\refstepcounter{equation}\text{(\theequation)}\label{ap75}] $%
\left\vert \partial ^{\beta }\left( \hat{P}-P^{y}\right) \left( y\right)
\right\vert \leq CM\left( \epsilon ^{-1}\delta _{Q}\right) ^{m-\left\vert
\beta \right\vert }$ for $\beta \in \mathcal{M}$.
\end{itemize}

Now let $z\in E_0\cap 5Q^{+}$. We recall that $\mathcal{A}$ is monotonic, and
that \eqref{ap66}, \eqref{ap67}, \eqref{ap73}, \eqref{ap74}, \eqref{ap75} hold. Moreover,
since $y,z\in 5Q^{+}$, we have $\left\vert y-z\right\vert \leq C\delta
_{Q}=C\epsilon \left( \epsilon ^{-1}\delta _{Q}\right) $. Thanks to the
above remarks and the Small $\epsilon $ Assumption, we may apply Lemma \ref%
{lemma-transport} to produce $\check{P}^{z}\in \mathcal{P}$ satisfying the
following conditions.

\begin{itemize}
\item[\refstepcounter{equation}\text{(\theequation)}\label{ap76}] $\tg_{l\left( \mathcal{A}\right) -3}$ has an $\left( \hat{\mathcal{A}}%
,\epsilon ^{-1}\delta _{Q},C\right) $-basis at $\left( z,M_{0},\tau_0,\check{P}%
^{z}\right) $.
\end{itemize}

\begin{itemize}
\item[\refstepcounter{equation}\text{(\theequation)}\label{ap77}] $\partial
^{\beta }\left( \check{P}^{z}-P^{y}\right) \equiv 0$ for $\beta \in \mathcal{%
A}$.
\end{itemize}

\begin{itemize}
\item[\refstepcounter{equation}\text{(\theequation)}\label{ap78}] $%
\left\vert \partial ^{\beta }\left( \check{P}^{z}-P^{y}\right) \left(
y\right) \right\vert \leq CM_{0}\left( \epsilon ^{-1}\delta _{Q}\right)
^{m-\left\vert \beta \right\vert }$ for $\beta \in \mathcal{M}$.
\end{itemize}

By \eqref{ap2} and \eqref{ap77}, we have

\begin{itemize}
\item[\refstepcounter{equation}\text{(\theequation)}\label{ap80}] $\partial
^{\beta }\left( \check{P}^{z}-P^{0}\right) \equiv 0$ for $\beta \in \mathcal{%
A}$.
\end{itemize}

By \eqref{ap54} and \eqref{ap78}, we have $\left\vert \partial ^{\beta
}\left( \check{P}^{z}-P^{y}\right) \left( y\right) \right\vert \leq
CM_{0}\left( \epsilon ^{-1}\delta _{Q_{0}}\right) ^{m-\left\vert \beta
\right\vert }$ for $\beta \in \mathcal{M}$, hence $\left\vert \partial
^{\beta }\left( \check{P}^{z}-P^{y}\right) \left( x_{0}\right) \right\vert
\leq CM_{0}\left( \epsilon ^{-1}\delta _{Q_{0}}\right) ^{m-\left\vert \beta
\right\vert }$ for $\beta \in \mathcal{M}$, since $x,y\in 5Q_{0}^{+}$.
Together with \eqref{ap3}, this yields the
estimate

\begin{itemize}
\item[\refstepcounter{equation}\text{(\theequation)}\label{ap81}] $%
\left\vert \partial ^{\beta }\left( \check{P}^{z}-P^{0}\right) \left(
x_{0}\right) \right\vert \leq CM_{0}\left( \epsilon ^{-1}\delta
_{Q_{0}}\right) ^{m-\left\vert \beta \right\vert }$ for $\beta \in \mathcal{M%
}$.
\end{itemize}

We have proven \eqref{ap76}, \eqref{ap80}, \eqref{ap81} for each $z\in E_0\cap
5Q^{+}$. Thus, $5Q^{+}\subset 5Q_{0}$ (see \eqref{ap55}), $\hat{\mathcal{A}}<%
\mathcal{A}$ (strict inequality; see \eqref{ap72}), and for each $z\in E_0\cap
5Q^{+}$ there exists $\check{P}^{z}\in \mathcal{P}$ such that

\begin{itemize}
\item $\tg_{l\left( \mathcal{A}\right) -3}$ has an $\left( \hat{%
\mathcal{A}},\epsilon ^{-1}\delta _{Q^{+}},C\right) $-basis at $\left(
z,M_{0},\tau_0,\check{P}^{z}\right) $;

\item $\partial ^{\beta }\left( \check{P}^{z}-P^{0}\right) \equiv 0$ for $%
\beta \in \mathcal{A}$; and

\item $\left\vert \partial ^{\beta }\left( \check{P}^{z}-P^{0}\right) \left(
x_{0}\right) \right\vert \leq CM_{0}\left( \epsilon ^{-1}\delta
_{Q_{0}}\right) ^{m-\left\vert \beta \right\vert }$ for $\beta \in \mathcal{M%
}$. (See \eqref{ap74}, \eqref{ap80}, \eqref{ap81}.)
\end{itemize}

We can apply now Lemma \ref{lem:cz-equiv}, and we see that $%
Q^{+}$ is OK. On the other hand $Q^{+}$ cannot be OK, since it properly
contains the CZ cube $Q$. Assuming that \eqref{ap65} fails, we have derived
a contradiction. Thus, \eqref{ap65} holds, completing the proof of Lemma \ref%
{lemma-ap2}.
\end{proof}

\section{Good News About CZ Cubes}

\label{gn}

In this section we again place ourselves in the setting of Section \ref%
{setup-for-the-induction-step}, and we make use of the auxiliary polynomials 
$P^{y}$ and the CZ cubes $Q$ defined above.

\begin{lemma}
\label{lemma-gn1} Let $Q \in$ CZ, with

\begin{itemize}
\item[\refstepcounter{equation}\text{(\theequation)}\label{gn1}] $\frac{65}{%
64}Q\cap \frac{65}{64}Q_{0}\not=\emptyset $
\end{itemize}

and

\begin{itemize}
\item[\refstepcounter{equation}\text{(\theequation)}\label{gn2}] $\#\left(
E_0\cap 5Q\right) \geq 2$.
\end{itemize}

Let

\begin{itemize}
\item[\refstepcounter{equation}\text{(\theequation)}\label{gn3}] $y\in E_0\cap
5Q$.
\end{itemize}

Then there exist a set $\mathcal{A}^\# \subseteq \mathcal{M}$ and a
polynomial $P^\# \in \mathcal{P}$ with the following properties.

\begin{itemize}
\item[\refstepcounter{equation}\text{(\theequation)}\label{gn4}] $\mathcal{A}%
^\#$ is monotonic.
\end{itemize}

\begin{itemize}
\item[\refstepcounter{equation}\text{(\theequation)}\label{gn5}] $\mathcal{A}%
^\# < \mathcal{A}$ (strict inequality).
\end{itemize}

\begin{itemize}
\item[\refstepcounter{equation}\text{(\theequation)}\label{gn6}] $\tg_{l\left( \mathcal{A}\right) -3}$ has an $\left( \mathcal{A}^{\#},\epsilon
^{-1}\delta _{Q},C\left( A\right) \right) $-basis at $\left(
y,M_{0},\tau_0,P^{\#}\right) $.
\end{itemize}

\begin{itemize}
\item[\refstepcounter{equation}\text{(\theequation)}\label{gn7}] $\left\vert
\partial ^{\beta }\left( P^{\#}-P^{y}\right) \left( y\right) \right\vert
\leq C\left( A\right) M_{0}\left( \epsilon ^{-1}\delta _{Q}\right)
^{m-\left\vert \beta \right\vert }$ for $\beta \in \mathcal{M}$.
\end{itemize}
\end{lemma}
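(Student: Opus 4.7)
The plan is to use the Calder\'on-Zygmund stopping rule (i.e.\ the OK-ness of $Q$) to extract a strictly smaller label $\A^{\#} < \A$, and to produce the accompanying polynomial $P^{\#}$ in one of two ways, depending on whether a naive choice already satisfies \eqref{gn7}.

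\textbf{Step 1 (extract a weak basis from OK-ness).} Since $Q$ is a CZ cube, $Q$ is OK; by \eqref{gn2}, the second alternative of \eqref{cz2} applies, producing a (possibly non-monotonic) $\hat{\A}<\A$ for which Algorithm \ref{alg:find-crit-delta} returns $\tilde{\delta}\geq \epsilon^{-1}\delta_Q$ at every $z\in E_0\cap 5Q$. Lemma \ref{lem:ok-basis} then hands us, for $z=y$, a polynomial $\hat{P}^y$ such that $\tg_{l(\A)-3}$ admits a weak $(\hat{\A},\epsilon^{-1}\delta_Q,CA)$-basis at $(y,M_0,\tau_0,\hat{P}^y)$, together with the $x_0$-controls (\ref{cz3-wb}b)--(\ref{cz3-wb}c). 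Subtracting \eqref{ap2}--\eqref{ap3} applied at $y$ from those controls yields $\partial^\beta(\hat{P}^y-P^y)\equiv 0$ for $\beta\in\A$ and $|\partial^\beta(\hat{P}^y-P^y)(x_0)|\leq C(A)M_0(\epsilon^{-1}\delta_{Q_0})^{m-|\beta|}$ for $\beta\in\M$.

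\textbf{Step 2 (dichotomy on \eqref{gn7}).} If $|\partial^\beta(\hat{P}^y-P^y)(y)|\leq M_0(\epsilon^{-1}\delta_Q)^{m-|\beta|}$ holds for every $\beta\in\M$, the Relabeling Lemma (Lemma \ref{lemma-pb2}) upgrades the weak basis from Step 1 to an $(\A^{\#},\epsilon^{-1}\delta_Q,C(A))$-basis at $(y,M_0,\tau_0,\hat{P}^y)$ for some monotonic $\A^{\#}\leq\hat{\A}<\A$; setting $P^{\#}=\hat{P}^y$ verifies \eqref{gn4}--\eqref{gn7}. Otherwise the negation is exactly hypothesis \eqref{pb73} of Lemma \ref{lemma-pb3} with $\delta=\epsilon^{-1}\delta_Q$, $P^0=P^y$, $P=\hat{P}^y$. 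Corollary \ref{cor-to-lemma-ap1} supplies the required $(\A,\epsilon^{-1}\delta_Q,C)$-basis $(P^y_\alpha)$ for $\tg_{l(\A)-1}$ (and hence for $\tg_{l(\A)-3}$ with enlarged $C_B$) at $(y,M_0,\tau_0,P^y)$; \eqref{pb71}--\eqref{pb72} follow from $\hat{P}^y\in\Gamma_{l(\A)-3}(y,CAM_0,CA\tau_0)$ and the $\A$-orthogonality established above. Lemma \ref{lemma-pb3} then outputs a monotonic $\A^{\#}<\A$ and a polynomial $P^{\#}=\hat{P}^{\#}$ for which (gn4)--(gn7) follow directly from \eqref{pb74}--\eqref{pb78}.

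\textbf{Main obstacle.} The delicate point is Case 2. The only information we have about $\hat{P}^y-P^y$ is pinned at the base point $x_0$ on the $\delta_{Q_0}$-scale, so a direct Taylor transfer from $x_0$ to $y$ would lose a factor of $(\delta_{Q_0}/\delta_Q)^{m-|\beta|}$, which is fatal whenever $\delta_Q\ll\delta_{Q_0}$. The resolution is to pivot off the strong $(\A,\epsilon^{-1}\delta_Q,C)$-basis at $y$ already provided by Corollary \ref{cor-to-lemma-ap1} and invoke Lemma \ref{lemma-pb3}: this \emph{replaces} $\hat{P}^y$ by a different polynomial $\hat{P}^{\#}$ which is automatically within $O(M_0(\epsilon^{-1}\delta_Q)^{m-|\beta|})$ of $P^y$ at $y$, paying only the price of passing to a strictly smaller monotonic label.
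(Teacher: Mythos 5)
Your proposal is correct and follows essentially the same route as the paper: extract a weak $(\hat{\A},\epsilon^{-1}\delta_Q,CA)$-basis at $y$ for $\tg_{l(\A)-3}$ from OK-ness via Lemma \ref{lem:ok-basis}, combine with the $\A$-orthogonality $\partial^\beta(\hat{P}^y-P^y)\equiv 0$ from \eqref{ap2} and \eqref{cz3-wb}c, and then split according to whether $\hat{P}^y$ already lies within $M_0(\epsilon^{-1}\delta_Q)^{m-|\beta|}$ of $P^y$ at $y$, invoking the Relabeling Lemma (Lemma \ref{lemma-pb2}) in the first case and Lemma \ref{lemma-pb3} (pivoting off Corollary \ref{cor-to-lemma-ap1}) in the second. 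This is the same case dichotomy and the same pair of lemmas the paper uses.
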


\begin{proof}
Recall that

\begin{itemize}
\item[\refstepcounter{equation}\text{(\theequation)}\label{gn8}] $\partial
^{\beta }\left( P^{y}-P^{0}\right) \equiv 0$ for $\beta \in \mathcal{A}$
(see \eqref{ap2} in Section \ref{auxiliary-polynomials})
\end{itemize}

and that

\begin{itemize}
\item[\refstepcounter{equation}\text{(\theequation)}\label{gn9}] $5Q
\subseteq 5Q_0$, since $Q$ is OK.
\end{itemize}

Thanks to \eqref{gn3} and \eqref{gn9}, Corollary \ref{cor-to-lemma-ap1} in
Section \ref{auxiliary-polynomials} applies, and it tells us that

\begin{itemize}
\item[\refstepcounter{equation}\text{(\theequation)}\label{gn10}] $\tg_{l\left( \mathcal{A}\right) -1}$ has an $\left( \mathcal{A},\epsilon
^{-1}\delta _{Q},C\right) $-basis at $\left( y,M_{0},\tau_0,P^{y}\right) $.
\end{itemize}

On the other hand, $Q$ is OK and $\#(E\cap 5Q) \geq 2$; hence by Lemma \ref{lem:ok-basis}, there exist $%
\hat{\mathcal{A}} \subseteq \mathcal{M}$ and $\hat{P} \in \mathcal{P}$ with
the following properties

\begin{itemize}
\item[\refstepcounter{equation}\text{(\theequation)}\label{gn11}] $\tg_{l\left( \mathcal{A}\right) -3}$ has a weak $\left( \hat{\mathcal{A}}%
,\epsilon ^{-1}\delta _{Q},CA\right) $-basis at $\left( y,M_{0},\tau_0,\hat{P}%
\right) $.
\end{itemize}

\begin{itemize}
\item[\refstepcounter{equation}\text{(\theequation)}\label{gn12}] $%
\left\vert \partial ^{\beta }\left( \hat{P}-P^{0}\right) \left( x_{0}\right)
\right\vert \leq AM_{0}\left( \epsilon ^{-1}\delta _{Q_{0}}\right)
^{m-\left\vert \beta \right\vert }$ for $\beta \in \mathcal{M}$.
\end{itemize}

\begin{itemize}
\item[\refstepcounter{equation}\text{(\theequation)}\label{gn13}] $\partial
^{\beta }\left( \hat{P}-P^{0}\right) \equiv 0$ for $\beta \in \mathcal{A}$.
\end{itemize}

\begin{itemize}
\item[\refstepcounter{equation}\text{(\theequation)}\label{gn14}] $\hat{%
\mathcal{A}} < \mathcal{A}$ (strict inequality).
\end{itemize}

We consider separately two cases.

\underline{Case 1:} Suppose that

\begin{itemize}
\item[\refstepcounter{equation}\text{(\theequation)}\label{gn15}] $%
\left\vert \partial ^{\beta }\left( \hat{P}-P^{y}\right) \left( y\right)
\right\vert \leq M_{0}\left( \epsilon ^{-1}\delta _{Q}\right) ^{m-\left\vert
\beta \right\vert }$ for $\beta \in \mathcal{M}$.
\end{itemize}

The properties of approximate refinements guarantee that

\begin{itemize}
\item[\refstepcounter{equation}\text{(\theequation)}\label{gn16}] $\tg_{l\left( \mathcal{A}\right) -3}$ is $\left( C,\delta _{\max }\right) 
$-convex.
\end{itemize}

Also, \eqref{gn9} and hypothesis (A2) of the Main Lemma for $\mathcal{A}$
give

\begin{itemize}
\item[\refstepcounter{equation}\text{(\theequation)}\label{gn17}] $\epsilon
^{-1}\delta _{Q}\leq \epsilon ^{-1}\delta _{Q_{0}}\leq \delta _{\max }$.
\end{itemize}

Applying \eqref{gn11}, \eqref{gn16}, \eqref{17}, and Lemma \ref{lemma-pb2},
we obtain a set $\mathcal{A}^\# \subseteq \mathcal{M}$ such that

\begin{itemize}
\item[\refstepcounter{equation}\text{(\theequation)}\label{gn18}] $\mathcal{A%
}^{\#}\leq \hat{\mathcal{A}}$,
\end{itemize}

\begin{itemize}
\item[\refstepcounter{equation}\text{(\theequation)}\label{gn19}] $
\mathcal{A}^\#$ is monotonic,
\end{itemize}

and

\begin{itemize}
\item[\refstepcounter{equation}\text{(\theequation)}\label{gn20}] $\tg_{l\left( \mathcal{A}\right) -3}$ has an $\left( \mathcal{A}%
^{\#},\epsilon ^{-1}\delta _{Q},C\left( A\right) \right) $-basis at $\left(
y,M_{0},\tau_0,\hat{P}\right) $.
\end{itemize}

Setting $P^\# =\hat{P}$, we obtain the desired conclusions \eqref{gn4}$\cdots
$\eqref{gn7} at once from \eqref{gn14}, \eqref{gn15}, \eqref{gn18}, %
\eqref{gn19}, and \eqref{gn20}.

Thus, Lemma \ref{lemma-gn1} holds in Case 1.

\underline{Case 2:} Suppose that $\left\vert \partial ^{\beta }\left( \hat{P}%
-P^{y}\right) \left( y\right) \right\vert >M_{0}\left( \epsilon ^{-1}\delta
_{Q}\right) ^{m-\left\vert \beta \right\vert }$ for some $\beta \in \mathcal{%
M}$, i.e.,

\begin{itemize}
\item[\refstepcounter{equation}\text{(\theequation)}\label{gn21}] $%
\max_{\beta \in \mathcal{M}}\left( \epsilon ^{-1}\delta _{Q}\right)
^{\left\vert \beta \right\vert }\left\vert \partial ^{\beta }\left( \hat{P}%
-P^{y}\right) \left( y\right) \right\vert >M_{0}\left( \epsilon ^{-1}\delta
_{Q}\right) ^{m}$.
\end{itemize}

From \eqref{gn11} we have

\begin{itemize}
\item[\refstepcounter{equation}\text{(\theequation)}\label{gn22}] $\hat{P}%
\in \Gamma _{l\left( \mathcal{A}\right) -3}\left( y,CAM_{0},CA\tau_0\right) $
\end{itemize}

Since $\Gamma _{l(\mathcal{A})-1}(x,M,\tau)\subseteq \Gamma _{l\left( \mathcal{A}%
\right) -3}\left( x,CM,C\tau\right) $ for all $x\in E,M>0$, \eqref{gn10} implies
that

\begin{itemize}
\item[\refstepcounter{equation}\text{(\theequation)}\label{gn23}] $\tg_{l\left( \mathcal{A}\right) -3}$ has an $\left( \mathcal{A},\epsilon
^{-1}\delta _{Q},C\right) $-basis at $\left( y,M_{0},\tau_0,P^{y}\right) $.
\end{itemize}

As in Case 1,

\begin{itemize}
\item[\refstepcounter{equation}\text{(\theequation)}\label{gn24}] $\vec{%
\Gamma}_{l\left( \mathcal{A}\right) -3}$ is $\left( C,\delta _{\max }\right) 
$-convex,
\end{itemize}

and

\begin{itemize}
\item[\refstepcounter{equation}\text{(\theequation)}\label{gn25}] $\epsilon
^{-1}\delta _{Q}\leq \epsilon ^{-1}\delta _{Q_{0}}\leq \delta _{\max }$.
\end{itemize}

From \eqref{gn8} and \eqref{gn13} we have

\begin{itemize}
\item[\refstepcounter{equation}\text{(\theequation)}\label{gn26}] $\partial
^{\beta }\left( \hat{P}-P^{y}\right) \equiv 0$ for $\beta \in \mathcal{A}$.
\end{itemize}

Thanks to \eqref{gn21}$\cdots$\eqref{gn26} and Lemma \ref{lemma-pb3} there exist 
$\mathcal{A}^\# \subseteq \mathcal{M}$ and $P^\# \in \mathcal{P}$ with the
following properties: $\mathcal{A}^\#$ is monotonic; $\mathcal{A}^\# <%
\mathcal{A}$ (strict inequality); $\tg_{l(\mathcal{A})-3}$ has an $(%
\mathcal{A}^\#,\epsilon^{-1}\delta_{Q},C(A))$-basis at $(y,M_0,\tau_0,P^\#)$; $%
\partial^\beta(P^\#-P^y)\equiv 0$ for $\beta \in \mathcal{A}$; $%
|\partial^\beta (P^\# -P^y)(y)|\leq M_0 (\epsilon^{-1}\delta_Q)^{m-|\beta|}$
for $\beta \in \mathcal{M}$.

Thus, $\mathcal{A}^\#$ and $P^\#$ satisfy \eqref{gn4}$\cdots$\eqref{gn7},
proving Lemma \ref{lemma-gn1} in Case 2.

We have seen that Lemma \ref{lemma-gn1} holds in all cases.
\end{proof}

\begin{remarks}

\begin{itemize} 
\item 	We will need to find the polynomial $P^{\#}$ from Lemma \ref{lemma-gn1} in the main algorithm. We can do so by solving a linear programming problem with dimension and number of constraints bounded by a constant depending on $n,m$; and we know a solution exists.
	
\item Once again, the fact that the approximate refinements don't satisfy $\Gamma_{l+1}(x, M, \tau) \subset \Gamma_{l}(x, M, \tau)$ but instead $\Gamma_{l+1}(x, M, \tau) \subset \Gamma_l(x, CM, C\tau)$ doesn't affect the fact that previous refinements also have a basis, it only affects the constant $C$ of such a basis.

\item The proof of Lemma \ref{lemma-gn1} gives a $\hat{P}$ that satisfies
also $\partial^\beta(\hat{P}-P^0)\equiv 0$ for $\beta \in \mathcal{A}$, but
we make no use of that.

\item Note that $x_0$ and $\delta_{Q_0}$ appear in \eqref{gn12}, rather than
the desired $y, \delta_Q$. Consequently, \eqref{gn12} is of no help in the
proof of Lemma \ref{lemma-gn1}.
\end{itemize}
\end{remarks}

In the proof of our next result, we use our Induction Hypothesis that the
Main Lemma for $\mathcal{A}^{\prime }$ holds whenever $\mathcal{A}^{\prime }<%
\mathcal{A}$ and $\mathcal{A}^{\prime }$ is monotonic. (See Section \ref%
{setup-for-the-induction-step}.)

\begin{lemma}
\label{lemma-gn2}

Let $Q\in $ CZ. Suppose that

\begin{itemize}
\item[\refstepcounter{equation}\text{(\theequation)}\label{gn27}] $\frac{65}{%
64}Q\cap \frac{65}{64}Q_{0}\not=\emptyset $
\end{itemize}

and

\begin{itemize}
\item[\refstepcounter{equation}\text{(\theequation)}\label{gn28}] $\#\left(
E_0\cap 5Q\right) \geq 2$.
\end{itemize}

Let

\begin{itemize}
\item[\refstepcounter{equation}\text{(\theequation)}\label{gn29}] $y\in
E_0\cap 5Q$. If $\#(E_0\cap \frac{65}{64}Q) > 0$, assume $y \in E_0 \cap \frac{65}{64}Q$.
\end{itemize}

Then there exists $F^{y,Q} \in C^m(\frac{65}{64}Q)$ such that

\begin{itemize}
\item[(*1)] $\left\vert \partial ^{\beta }\left( F^{y,Q}-P^{y}\right)
\right\vert \leq C\left( \epsilon \right) M_{0}\delta _{Q}^{m-\left\vert
\beta \right\vert }$ on $\frac{65}{64}Q$, for $\left\vert \beta \right\vert
\leq m$; and

\item[(*2)] $J_{z}\left( F^{y,Q}\right) \in \Gamma _{0}\left( z,C\left(
\epsilon \right) M_{0}, \tau_0\right) $ for all $z\in E\cap \frac{65}{64}Q$.
\end{itemize}
\end{lemma}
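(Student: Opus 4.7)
The strategy is to extract, via Lemma \ref{lemma-gn1}, a monotonic $\A^{\#}<\A$ and a basis polynomial $P^{\#}$; invoke the Induction Hypothesis (the Main Lemma for $\A^{\#}$) on the cube $Q$ with base point $y$; and finally correct from $P^{\#}$ back to $P^{y}$ by a Taylor estimate. Since the hypotheses \eqref{gn27}--\eqref{gn29} match \eqref{gn1}--\eqref{gn3}, Lemma \ref{lemma-gn1} yields a monotonic $\A^{\#}<\A$ and a polynomial $P^{\#}\in\P$ such that $\tg_{l(\A)-3}$ carries an $(\A^{\#},\epsilon^{-1}\delta_{Q},C(A))$-basis at $(y,M_{0},\tau_{0},P^{\#})$, together with the estimate $|\partial^{\beta}(P^{\#}-P^{y})(y)|\leq C(A)M_{0}(\epsilon^{-1}\delta_{Q})^{m-|\beta|}$ for $\beta\in\M$. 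By \eqref{m2}, $l(\A)-3\geq l(\A^{\#})$; and since $\tg_{l}(x,M,\tau)\subset\tg_{l'}(x,CM,C\tau)$ for $l\geq l'$ (with $C$ depending only on $l-l'$ and structural constants), the above basis extends to an $(\A^{\#},\epsilon^{-1}\delta_{Q},C'(A))$-basis at $\tg_{l(\A^{\#})}$ at $(y,M_{0},\tau_{0},P^{\#})$.

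Apply the Induction Hypothesis, the Main Lemma for $\A^{\#}$, with $Q$ in place of $Q_{0}$, $y$ in place of $x_{0}$, $P^{\#}$ in place of $P^{0}$, the same $M_{0},\tau_{0},\epsilon$, and $C_{B}=C'(A)$. Hypothesis (A1) is the basis just obtained; (A2), $\epsilon^{-1}\delta_{Q}\leq\epsilon^{-1}\delta_{Q_{0}}\leq\delta_{\max}$, holds since $5Q\subset 5Q_{0}$ (as $Q$ is OK) and by hypothesis (A2) of the outer Main Lemma; (A3) is the Small $\epsilon$ Assumption \eqref{s3}, which permits $\epsilon$ to depend on $A$ and hence on $C'(A)$. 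Note $y\in E\cap\frac{65}{64}Q$ whenever $E_{0}\cap\frac{65}{64}Q\neq\emptyset$ by hypothesis \eqref{gn29}; if that set is empty but $E\cap\frac{65}{64}Q$ is not, we transport the basis from $y$ to any $y'\in E\cap\frac{65}{64}Q$ via Corollary \ref{cor-to-transport} (the distance being $\leq C\delta_{Q}=C\epsilon(\epsilon^{-1}\delta_Q)$) and proceed identically. The Induction Hypothesis then produces $F\in C^{m}(\frac{65}{64}Q)$ with $|\partial^{\beta}(F-P^{\#})|\leq C(\epsilon)M_{0}\delta_{Q}^{m-|\beta|}$ on $\frac{65}{64}Q$ for $|\beta|\leq m$, and $J_{z}(F)\in\Gamma_{0}(z,C(\epsilon)M_{0},C(\epsilon)\tau_{0})$ for each $z\in E\cap\frac{65}{64}Q$. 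Setting $F^{y,Q}:=F$, this gives (*2).

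For (*1), Taylor expand $P^{\#}-P^{y}$ (of degree $\leq m-1$) around $y$: using $|x-y|\leq C\delta_{Q}$ for $x\in\frac{65}{64}Q$ (since $y\in 5Q$) together with the derivative bound \eqref{gn7}, each term contributes at most $C(A)M_{0}(\epsilon^{-1}\delta_{Q})^{m-|\beta|-|\gamma|}(C\delta_{Q})^{|\gamma|}=C(A)M_{0}\epsilon^{|\gamma|-(m-|\beta|)}\delta_{Q}^{m-|\beta|}$; summing over $|\gamma|\leq m-1-|\beta|$ gives $|\partial^{\beta}(P^{\#}-P^{y})(x)|\leq C(\epsilon)M_{0}\delta_{Q}^{m-|\beta|}$ on $\frac{65}{64}Q$ for $|\beta|\leq m-1$, while the case $|\beta|=m$ is trivial since both polynomials have degree $<m$. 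Triangle inequality with the bound on $F-P^{\#}$ yields (*1). In the degenerate case $E\cap\frac{65}{64}Q=\emptyset$, simply set $F^{y,Q}:=P^{y}$: (*1) is trivial and (*2) is vacuous.

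The main obstacle is purely bookkeeping: tracking the cascade of $A$- and $\epsilon$-dependent constants through the nesting of approximate refinements (which converts $(\A^{\#},\ldots)$-bases between $\tg_{l(\A)-3}$ and $\tg_{l(\A^{\#})}$), through the possible basis transport to $E\cap\frac{65}{64}Q$, through the invocation of the Induction Hypothesis, and through the Taylor correction---verifying that the Small $\epsilon$ Assumption of Section \ref{setup-for-the-induction-step} (which permits $\epsilon$ to depend on $A$) is strong enough to absorb all of them.
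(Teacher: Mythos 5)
Your proposal matches the paper's argument almost step for step: apply Lemma \ref{lemma-gn1} to obtain $\A^{\#}<\A$ monotonic with $P^{\#}$ and \eqref{gn6}--\eqref{gn7}; note $l(\A^{\#})\leq l(\A)-3$ so \eqref{gn6} yields an $(\A^{\#},\epsilon^{-1}\delta_Q,C(A))$-basis for $\tg_{l(\A^{\#})}$; invoke the Induction Hypothesis (Main Lemma for $\A^{\#}$) on $Q,y,P^{\#}$; and derive (*1) by Taylor-correcting from $P^{\#}$ to $P^{y}$ using \eqref{gn7} (the paper's \eqref{gn34}). The paper handles the degenerate case by setting $F^{y,Q}=P^{\#}$ rather than your $P^{y}$, but both give (*1) trivially and (*2) vacuously.

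One caution about the extra case you introduce: your proposal to ``transport the basis to some $y'\in E\cap\frac{65}{64}Q$'' when $E_0\cap\frac{65}{64}Q=\emptyset$ but $E\cap\frac{65}{64}Q\neq\emptyset$ would not work as stated, because Corollary \ref{cor-to-transport} drops the refinement index by one (it takes a basis for $\tg_{l(\A)-3}$ at $y$ to a basis for $\tg_{l(\A)-4}$ at $y'$); when $l(\A^{\#})=l(\A)-3$ exactly, you are left at level $l(\A^{\#})-1$, which is too low to invoke the Main Lemma for $\A^{\#}$. However, this case does not actually need to be handled: the paper splits only on whether $E_0\cap\frac{65}{64}Q$ is empty (the same condition governing the placement of $y$ in \eqref{gn29}), and the only downstream use of (*2) is the estimate \eqref{li2} for $z\in E_0\cap\frac{65}{64}Q$. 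So the stronger-looking conclusion for all of $E\cap\frac{65}{64}Q$ is not load-bearing, and you should simply restrict to $E_0\cap\frac{65}{64}Q$ in (*2).
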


\begin{proof}
Our hypotheses \eqref{gn27}, \eqref{gn28}, \eqref{gn29} imply the
hypotheses of Lemma \ref{lemma-gn1} (\eqref{gn29} is stronger than the corresponding hypothesis of Lemma \ref{lemma-gn1}). Let $\mathcal{A}^{\#}$, $P^{\#}$
satisfy the conclusions \eqref{gn4}$\cdots $\eqref{gn7} of that Lemma.

Thanks to conclusion \eqref{gn7} of Lemma \ref{lemma-gn1} (together with %
\eqref{gn29}), we have

\begin{itemize}
	\item[\refstepcounter{equation}\text{(\theequation)}\label{gn34}] $%
	\left\vert \partial ^{\beta }\left( P^{\#}-P^{y}\right) \right\vert \leq
	C\left( \epsilon \right) M_{0}\delta _{Q}^{m-\left\vert \beta \right\vert }$
	on $\frac{65}{64}Q$ for $\left\vert \beta \right\vert \leq m$.
\end{itemize}

(Recall that $P^\#-P^y$ is a polynomial of degree at most $m-1$.)

We distinguish two cases:

\underline{Case 1}. Suppose $\#(E_0 \cap \frac{65}{64}Q_0)>0$.

Recall the definition of $l(\mathcal{A})$; see \eqref{gn1}, \eqref{gn2} in
Section \ref{statement-of-the-main-lemma}. We have $l(\mathcal{A}^{\#})\leq
l(\mathcal{A})-3$ since $\mathcal{A}^{\#}<\mathcal{A}$; hence \eqref{gn6}
implies that

\begin{itemize}
\item[\refstepcounter{equation}\text{(\theequation)}\label{gn30}] $\tg_{l\left( \mathcal{A}^{\#}\right) }$ has an $\left( \mathcal{A}%
^{\#},\epsilon ^{-1}\delta _{Q},C\left( A\right) \right) $-basis at $\left(
y,M_{0},\tau_0, P^{\#}\right) $.
\end{itemize}

Also, since $Q$ is OK, we have $5Q \subseteq 5Q_0$, hence $\delta_Q \leq
\delta_{Q_0}$. Hence, hypothesis (A2) of the Main Lemma for $\mathcal{A}$
implies that

\begin{itemize}
\item[\refstepcounter{equation}\text{(\theequation)}\label{gn31}] $\epsilon
^{-1}\delta _{Q}\leq \delta _{\max }$.
\end{itemize}

By \eqref{gn4}, \eqref{gn5}, and our Inductive Hypothesis, the Main Lemma
holds for $\mathcal{A}^{\#}$. Thanks to \eqref{gn29}, \eqref{gn30}, %
\eqref{gn31} and the Small $\epsilon $ Assumption in Section \ref%
{setup-for-the-induction-step}, the Main Lemma for $\mathcal{A}^{\#}$ now
yields a function $F\in C^{m}\left( \frac{65}{64}Q\right) $, such that

\begin{itemize}
\item[\refstepcounter{equation}\text{(\theequation)}\label{gn32}] $%
\left\vert \partial ^{\beta }\left( F-P^{\#}\right) \right\vert \leq C\left(
\epsilon \right) M_{0}\delta_Q ^{m-\left\vert \beta \right\vert }$ on $\frac{65%
}{64}Q$, for $\left\vert \beta \right\vert \leq m$; and
\end{itemize}

\begin{itemize}
\item[\refstepcounter{equation}\text{(\theequation)}\label{gn33}] $%
J_{z}\left( F\right) \in \Gamma _{0}\left( z,C\left( \epsilon \right)
M_{0},C(\epsilon)\tau_0\right) $ for all $z\in E\cap \frac{65}{64}Q$.
\end{itemize}

 Taking $%
F^{y,Q}=F$, we may read off the desired conclusions (*1) and (*2) from %
\eqref{gn32}, \eqref{gn33}, \eqref{gn34}.

\underline{Case 2}. Suppose $\#(E_0 \cap \frac{65}{64}Q_0) = 0$. Take $F^{y,Q} = P^\#$. Then \eqref{gn34} implies the conclusion (*1), and conclusion (*2) holds vacuously.

The proof of Lemma \ref{lemma-gn2} is complete.
\end{proof}

\section{Local Interpolants}
\label{sec:locint}
In this section, we again place ourselves in the setting of Section \ref%
{setup-for-the-induction-step}. We make use of the Calder\'{o}n-Zygmund
cubes $Q$ and the auxiliary polynomials $P^{y}$ defined above. Let

\begin{itemize}
\item[\refstepcounter{equation}\text{(\theequation)}\label{li0}] $\mathcal{Q}%
=\left\{ Q\in CZ:\frac{65}{64}Q\cap \frac{65}{64}Q_{0}\not=\emptyset
\right\} $.
\end{itemize}

For each $Q\in \mathcal{Q}$, we define a function $F^{Q}\in C^{m}\left( 
\frac{65}{64}Q\right) $ and a polynomial $P^{Q}\in \mathcal{P}$. We proceed
by cases. We say that $Q \in \mathcal{Q}$ is

\begin{description}
\item[Type 1] if $\# (E_0 \cap 5Q) \geq 2$,

\item[Type 2] if $\# (E_0 \cap 5Q) = 1$,

\item[Type 3] if $\#(E_0\cap 5Q)=0$ and $\delta _{Q}\leq \frac{1}{1024}\delta
_{Q_{0}}$, and

\item[Type 4] if $\#(E_0\cap 5Q)=0$ and $\delta _{Q}>\frac{1}{1024}\delta
_{Q_{0}}$.
\end{description}

\underline{If $Q$ is of Type 1}, then:
\begin{itemize}
	\item If $\# (E_0 \cap \frac{65}{64}Q) \geq 1$, we pick a point  $y_{Q}\in E_0\cap \frac{65}{64}Q$,
	and set $P^{Q}=P^{y_{Q}}$. Applying Lemma \ref{lemma-gn2}, we obtain a
	function $F^{Q}\in C^{m}\left( \frac{65}{64}Q\right) $ such that
	
	\begin{itemize}
		\item[\refstepcounter{equation}\text{(\theequation)}\label{li1}] $\left\vert
		\partial ^{\beta }\left( F^{Q}-P^{Q}\right) \right\vert \leq C\left(
		\epsilon \right) M_{0}\delta _{Q}^{m-\left\vert \beta \right\vert }$ on $%
		\frac{65}{64}Q$, for $\left\vert \beta \right\vert \leq m$; and
	\end{itemize}
	
	\begin{itemize}
		\item[\refstepcounter{equation}\text{(\theequation)}\label{li2}] $%
		J_{z}\left( F^{Q}\right) \in \Gamma _{0}\left( z,C\left( \epsilon \right)
		M_{0},C(\epsilon)\tau_0\right) $ for all $z\in E_0\cap \frac{65}{64}Q$.
	\end{itemize}

	\item Otherwise, we pick a point $y_Q \in E_0 \cap 5Q$ and set $F^Q = P^Q = P^{y_Q}$. Then \eqref{li1} holds trivially and \eqref{li2} holds vacuously.
	
\end{itemize}

\underline{If $Q$ is of Type 2}, then we let $y_{Q}$ be the one and only
point of $E_0\cap 5Q$, and define $F^{Q}=P^{Q}=P^{y_{Q}}$. Then \eqref{li1}
holds trivially. If $y_{Q}\not\in \frac{65}{64}Q$ then \eqref{li2} holds
vacuously.

If $y_{Q}\in \frac{65}{64}Q$, then \eqref{li2} asserts that $P^{y_{Q}}\in
\Gamma _{0}\left( y_{Q},C\left( \epsilon \right) M_{0},\tau_0\right) $. Thanks to %
\eqref{li1} in Section \ref{auxiliary-polynomials}, we know that $%
P^{y_{Q}}\in \Gamma _{l\left( \mathcal{A}\right) -1}\left(
y_{Q},CM_{0},C\tau_0\right) \subset \Gamma _{0}\left( y_{Q},C\left( \epsilon \right)
M_{0},C(\epsilon)\tau_0\right) $. Thus, \eqref{li1} and \eqref{li2} hold also when $Q$ is of
Type 2.

\underline{If $Q$ is of Type 3}, then $5Q^{+}\subset 5Q_{0}$, since $\frac{65%
}{64}Q\cap \frac{65}{64}Q_{0}\not=\emptyset $ and $\delta _{Q}\leq \frac{1}{%
1024}\delta _{Q_{0}}$. However, $Q^{+}$ cannot be OK, since $Q$ is a CZ
cube. Therefore $\#\left( E_0\cap 5Q^{+}\right) \geq 2$. We pick $y_{Q}\in
E\cap 5Q^{+}$, and set $F^{Q}=P^{Q}=P^{y_{Q}}$. Then \eqref{li1} holds
trivially, and \eqref{li2} holds vacuously.

\underline{If $Q$ is of Type 4}, then we set $F^{Q}=P^{Q}=P^{0}$, and again %
\eqref{li1} holds trivially, and \eqref{li2} holds vacuously.

Note that if $Q$ is of Type 1, 2, or 3, then we have defined a point $y_{Q}$%
, and we have $P^Q=P^{y_Q}$ and 

\begin{itemize}
\item[\refstepcounter{equation}\text{(\theequation)}\label{li3}] $y_{Q}\in
E_0\cap 5Q^{+}\cap 5Q_{0}$.
\end{itemize}

(If $Q$ is of Type 1 or 2, then $y_{Q}\in E_0\cap 5Q$ and $5Q\subseteq 5Q_{0}$
since $Q$ is OK. If $Q$ is of Type 3, then $y_{Q}\in E_0\cap 5Q^{+}$ and $%
5Q^{+}\subset 5Q_{0}$). We have picked $F^{Q}$ and $P^{Q}$ for all $Q\in 
\mathcal{Q}$, and \eqref{li1}, \eqref{li2} hold in all cases.

\begin{lemma}[\textquotedblleft Consistency of the $P^{Q}$\textquotedblright 
]
\label{lemma-li1} Let $Q,Q^{\prime }\in \mathcal{Q}$, and suppose $\frac{65}{%
64}Q\cap \frac{65}{64}Q^{\prime }\not=\emptyset $. Then

\begin{itemize}
\item[\refstepcounter{equation}\text{(\theequation)}\label{li4}] $\left\vert
\partial ^{\beta }\left( P^{Q}-P^{Q^{\prime }}\right) \right\vert \leq
C\left( \epsilon \right) M_{0}\delta _{Q}^{m-\left\vert \beta \right\vert }$
on $\frac{65}{64}Q\cap \frac{65}{64}Q^{\prime }$, for $\left\vert \beta
\right\vert \leq m$.
\end{itemize}
\end{lemma}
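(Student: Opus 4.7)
The plan is to reduce to the two tools already at hand: Lemma \ref{lemma-ap2} (``Consistency of Auxiliary Polynomials'') and the bound \eqref{ap3} on $P^{y}-P^{0}$ at $x_0$, and then propagate a one-point estimate to the whole of $\tfrac{65}{64}Q\cap \tfrac{65}{64}Q'$ by a Taylor expansion of the polynomial $P^{Q}-P^{Q'}$. Throughout, Lemma \ref{lemma-cz2} gives $\tfrac{1}{2}\delta_{Q}\le \delta_{Q'}\le 2\delta_{Q}$, and $\frac{65}{64}Q\cap\frac{65}{64}Q'$ has diameter at most $C\delta_{Q}$. Since $\deg(P^{Q}-P^{Q'})\le m-1$, the case $|\beta|=m$ of \eqref{li4} is automatic, and we only have to treat $\beta\in\mathcal{M}$.

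\textbf{Case A: both $Q$ and $Q'$ are of Type 1, 2 or 3.} Then $P^{Q}=P^{y_Q}$ and $P^{Q'}=P^{y_{Q'}}$, and \eqref{li3} says $y_Q\in E_0\cap 5Q_0\cap 5Q^+$ and $y_{Q'}\in E_0\cap 5Q_0\cap 5(Q')^+$. The hypotheses \eqref{ap49}, \eqref{ap50}, \eqref{ap51} of Lemma \ref{lemma-ap2} are satisfied, so
\[
|\partial^{\beta}(P^{Q}-P^{Q'})(y_{Q'})|\le CM_0(\epsilon^{-1}\delta_{Q})^{m-|\beta|}\qquad(\beta\in\mathcal{M}).
\]
For any $x\in\tfrac{65}{64}Q\cap\tfrac{65}{64}Q'$ we have $|x-y_{Q'}|\le C\delta_{Q}$, so Taylor's formula for the polynomial $P^{Q}-P^{Q'}$ at $y_{Q'}$ gives
\[
|\partial^{\beta}(P^{Q}-P^{Q'})(x)|\le\sum_{|\gamma|\le m-1-|\beta|}\tfrac{1}{\gamma!}|\partial^{\beta+\gamma}(P^{Q}-P^{Q'})(y_{Q'})|\,|x-y_{Q'}|^{|\gamma|}\le C\epsilon^{|\beta|-m}M_0\delta_{Q}^{m-|\beta|},
\]
which is of the form $C(\epsilon)M_0\delta_{Q}^{m-|\beta|}$, as required.

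\textbf{Case B: exactly one of $Q,Q'$ is of Type 4.} Without loss, say $Q$ is of Type 4, so $P^{Q}=P^{0}$ and $\delta_{Q}>\tfrac{1}{1024}\delta_{Q_0}$; by Lemma \ref{lemma-cz2}, also $\delta_{Q'}\ge c\delta_{Q_0}$. Then $Q'$ is of Type 1, 2 or 3, so $P^{Q'}=P^{y_{Q'}}$ with $y_{Q'}\in E_0\cap 5Q_0$. Apply \eqref{ap3} with $y=y_{Q'}$:
\[
|\partial^{\beta}(P^{y_{Q'}}-P^{0})(x_0)|\le CM_0(\epsilon^{-1}\delta_{Q_0})^{m-|\beta|}\le C'M_0(\epsilon^{-1}\delta_{Q})^{m-|\beta|}.
\]
Again $|x-x_0|\le C\delta_{Q_0}\le C'\delta_{Q}$ for $x\in\tfrac{65}{64}Q\cap\tfrac{65}{64}Q'\subset\tfrac{65}{64}Q_0\cup\tfrac{65}{64}Q_0'$-type neighbourhoods, so the same Taylor argument yields $|\partial^{\beta}(P^{Q}-P^{Q'})(x)|\le C(\epsilon)M_0\delta_{Q}^{m-|\beta|}$. \textbf{Case C: both of Type 4.} Then $P^{Q}=P^{Q'}=P^{0}$ and \eqref{li4} is trivial.

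The main (and only mildly delicate) step is the Taylor propagation that upgrades a one-point estimate to an estimate on the overlap region; this costs a single extra factor of $\epsilon^{-1}$, which is already absorbed into $C(\epsilon)$. Everything else is bookkeeping of which case one is in and a direct appeal to Lemmas \ref{lemma-cz2} and \ref{lemma-ap2} together with \eqref{ap3}.
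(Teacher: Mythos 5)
Your proof is correct and follows essentially the same route as the paper: the identical case split (both auxiliary-type, both Type 4, exactly one Type 4), the same appeal to Lemma \ref{lemma-ap2} in the main case and to \eqref{ap3} together with the lower bound $\delta_Q \gtrsim \delta_{Q_0}$ in the mixed case, and the same Taylor propagation of a one-point estimate across $\frac{65}{64}Q\cap\frac{65}{64}Q'$ using that $P^Q-P^{Q'}$ has degree at most $m-1$. The only differences are cosmetic (you evaluate at $y_{Q'}$ rather than $y_Q$, and take the opposite WLOG labelling in the mixed case), neither of which affects the argument.
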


\begin{proof}
Suppose first that neither $Q$ nor $Q^{\prime }$ is Type 4. Then $%
P^{Q}=P^{y_{Q}}$ and $P^{Q^{\prime }}=P^{y_{Q^{\prime }}}$ with $y_{Q}\in
E_0\cap 5Q^{+}\cap 5Q_{0}$, $y_{Q^{\prime }}\in E_0\cap 5\left( Q^{\prime
}\right) ^{+}\cap 5Q_{0}$. Thanks to Lemma \ref{lemma-ap2}, we have%
\begin{equation*}
\left\vert \partial ^{\beta }\left( P^{Q}-P^{Q^{\prime }}\right) \left(
y_{Q}\right) \right\vert \leq C\left( \epsilon \right) M_{0}\delta
_{Q}^{m-\left\vert \beta \right\vert }\text{ for }\beta \in \mathcal{M}\text{%
,}
\end{equation*}
which implies \eqref{li4}, since $y_{Q}\in 5Q^{+}$ and $P^{Q}-P^{Q^{\prime
}} $ is an $(m-1)^{rst}$ degree polynomial.

Next, suppose that $Q$ and $Q^{\prime }$ are both Type 4.

Then by definition $P^{Q}=P^{Q^{\prime }}=P^{0}$, and consequently %
\eqref{li4} holds trivially.

Finally, suppose that exactly one of $Q$, $Q^{\prime }$ is of Type 4.

Since $\delta _{Q}$ and $\delta _{Q^{\prime }}$, differ by at most a factor
of $2$, the cubes $Q$ and $Q^{\prime }$ may be interchanged without loss of
generality. Hence, we may assume that $Q^{\prime }$ is of Type 4 and $Q$ is
not. By definition of Type 4,

\begin{itemize}
\item[\refstepcounter{equation}\text{(\theequation)}\label{li5}] $\delta
_{Q^{\prime }}>\frac{1}{1024}\delta _{Q_{0}}$; hence also $\delta _{Q}\geq 
\frac{1}{1024}\delta _{Q_{0}}$,
\end{itemize}

since $\delta _{Q}$, $\delta _{Q^{\prime }}$, are powers of $2$ that differ
by at most a factor of $2$.

Since $Q^{\prime }$ is of Type 4 and $Q$ is not, we have $P^Q= P^{y_Q}$ and $%
P^{Q^{\prime }} = P^0$, with

\begin{itemize}
\item[\refstepcounter{equation}\text{(\theequation)}\label{li6}] $y_{Q}\in
E_0\cap 5Q^{+}\cap 5Q_{0}$.
\end{itemize}

Thus, in this case, \eqref{li4} asserts that

\begin{itemize}
\item[\refstepcounter{equation}\text{(\theequation)}\label{li7}] $\left\vert
\partial ^{\beta }\left( P^{y_{Q}}-P^{0}\right) \right\vert \leq C\left(
\epsilon \right) M_{0}\delta _{Q}^{m-\left\vert \beta \right\vert }$ on $%
\frac{65}{64}Q\cap \frac{65}{64}Q^{\prime }$, for $\left\vert \beta
\right\vert \leq m$.
\end{itemize}

However, by \eqref{li6} above, property \eqref{ap3} in Section \ref%
{auxiliary-polynomials} gives the estimate

\begin{itemize}
\item[\refstepcounter{equation}\text{(\theequation)}\label{li8}] $\left\vert
\partial ^{\beta }\left( P^{y_{Q}}-P^{0}\right) \left( x_{0}\right)
\right\vert \leq C\left( \epsilon \right) M_{0}\delta _{Q_{0}}^{m-\left\vert
\beta \right\vert }$ for $\left\vert \beta \right\vert \leq m-1$.
\end{itemize}

Recall from the hypotheses of the Main Lemma for $\mathcal{A}$ that $%
x_{0}\in \frac{65}{64}Q_0$. Since $P^{y_{Q}}-P^{0}$ is an $%
(m-1)^{rst}$ degree polynomial, we conclude from \eqref{li8} that

\begin{itemize}
\item[\refstepcounter{equation}\text{(\theequation)}\label{li9}] $\left\vert
\partial ^{\beta }\left( P^{y_{Q}}-P^{0}\right) \right\vert \leq C\left(
\epsilon \right) M_{0}\delta _{Q_{0}}^{m-\left\vert \beta \right\vert }$ on $%
5Q$, for $\left\vert \beta \right\vert \leq m$.
\end{itemize}

The desired inequality \eqref{li7} now follows from \eqref{li5} and %
\eqref{li9}. Thus, \eqref{li4} holds in all cases.

The proof of Lemma \ref{lemma-li1} is complete.
\end{proof}

From estimate \eqref{li1}, Lemma \ref{lemma-li1}, and Lemma \ref{lemma-cz2},
we immediately obtain the following.

\begin{corollary}
\label{cor-to-lemma-li1} Let $Q,Q^{\prime }\in \mathcal{Q}$ and suppose that 
$\frac{65}{64}Q\cap \frac{65}{64}Q^{\prime }\not=\emptyset $. Then

\begin{itemize}
\item[\refstepcounter{equation}\text{(\theequation)}\label{li10}] $%
\left\vert \partial ^{\beta }\left( F^{Q}-F^{Q^{\prime }}\right) \right\vert
\leq C\left( \epsilon \right) M_{0}\delta _{Q}^{m-\left\vert \beta
\right\vert }$ on $\frac{65}{64}Q\cap \frac{65}{64}Q^{\prime }$, for $%
\left\vert \beta \right\vert \leq m$.
\end{itemize}
\end{corollary}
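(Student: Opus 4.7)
The plan is to obtain the estimate by writing $F^Q - F^{Q'} = (F^Q - P^Q) + (P^Q - P^{Q'}) + (P^{Q'} - F^{Q'})$ and bounding each of the three summands separately on $\frac{65}{64}Q \cap \frac{65}{64}Q'$.

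First, for the outer two summands, I would invoke estimate \eqref{li1} applied to $Q$ and to $Q'$, giving $|\partial^\beta(F^Q - P^Q)| \leq C(\epsilon)M_0\delta_Q^{m-|\beta|}$ on $\frac{65}{64}Q$ and $|\partial^\beta(F^{Q'} - P^{Q'})| \leq C(\epsilon)M_0\delta_{Q'}^{m-|\beta|}$ on $\frac{65}{64}Q'$. Since the hypothesis $\frac{65}{64}Q \cap \frac{65}{64}Q' \neq \emptyset$ together with Lemma \ref{lemma-cz2} gives $\delta_{Q'} \leq 2\delta_Q$, we can absorb the factor $2^{m-|\beta|}$ into $C(\epsilon)$ and rewrite the second estimate as $|\partial^\beta(F^{Q'} - P^{Q'})| \leq C(\epsilon)M_0\delta_Q^{m-|\beta|}$ on $\frac{65}{64}Q'$. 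Both bounds hold in particular on the intersection $\frac{65}{64}Q \cap \frac{65}{64}Q'$.

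Next, for the middle summand, I would apply Lemma \ref{lemma-li1} directly: since $Q, Q' \in \mathcal{Q}$ and $\frac{65}{64}Q \cap \frac{65}{64}Q' \neq \emptyset$, we have $|\partial^\beta(P^Q - P^{Q'})| \leq C(\epsilon)M_0\delta_Q^{m-|\beta|}$ on $\frac{65}{64}Q \cap \frac{65}{64}Q'$ for all $|\beta| \leq m$.

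Combining these three bounds via the triangle inequality yields the desired estimate \eqref{li10}, perhaps at the cost of tripling the constant $C(\epsilon)$. There is no real obstacle here: the work was done in establishing \eqref{li1}, Lemma \ref{lemma-li1}, and the doubling property of CZ sidelengths in Lemma \ref{lemma-cz2}. The only subtlety worth flagging is the need to invoke Lemma \ref{lemma-cz2} to express $\delta_{Q'}^{m-|\beta|}$ in terms of $\delta_Q^{m-|\beta|}$, which is why the statement is phrased symmetrically in $Q$ even though \eqref{li1} for $Q'$ naturally produces powers of $\delta_{Q'}$.
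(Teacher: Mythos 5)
Your proposal is correct and matches the paper's approach exactly: the paper states that the corollary follows immediately from \eqref{li1}, Lemma \ref{lemma-li1}, and Lemma \ref{lemma-cz2}, which is precisely the three-term triangle-inequality decomposition you carry out.
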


Regarding the polynomials $P^{Q}$, we make the following simple observation.

\begin{lemma}
\label{lemma-li2} We have

\begin{itemize}
\item[\refstepcounter{equation}\text{(\theequation)}\label{li11}] $%
\left\vert \partial ^{\beta }\left( P^{Q}-P^{0}\right) \right\vert \leq
C\left( \epsilon \right) M_{0}\delta _{Q_{0}}^{m-\left\vert \beta
\right\vert }$ on $\frac{65}{64}Q$, for $\left\vert \beta \right\vert \leq m$
and $Q\in \mathcal{Q}$.
\end{itemize}
\end{lemma}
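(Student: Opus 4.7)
The plan is to split on the type of $Q$ and reduce everything to the estimate \eqref{ap3} at the base point $x_0$, which is then propagated to $\frac{65}{64}Q$ by the fact that $P^Q - P^0$ is a polynomial of degree at most $m-1$.

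First I would dispose of the trivial case: if $Q$ is of Type $4$, then $P^Q = P^0$ by construction, so $P^Q - P^0 \equiv 0$ and \eqref{li11} is vacuous.

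For $Q$ of Type $1$, $2$ or $3$, we have $P^Q = P^{y_Q}$ for some $y_Q \in E_0 \cap 5Q^+ \cap 5Q_0$ (by \eqref{li3}). The estimate \eqref{ap3} in Section~\ref{auxiliary-polynomials} applies at $y_Q$ and gives
\begin{equation*}
|\partial^\beta(P^{y_Q} - P^0)(x_0)| \le C M_0 (\epsilon^{-1}\delta_{Q_0})^{m-|\beta|} \quad \text{for } \beta \in \mathcal{M}.
\end{equation*}
Since $P^{y_Q} - P^0 \in \mathcal{P}$ has degree at most $m-1$, the derivative of order $|\beta| = m$ vanishes identically and the claim is trivial in that case. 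For $|\beta| \le m-1$ I would expand $\partial^\beta(P^{y_Q}-P^0)$ about $x_0$ via Taylor:
\begin{equation*}
\partial^\beta(P^{y_Q}-P^0)(z) = \sum_{|\gamma| \le m-1-|\beta|} \frac{1}{\gamma!} \partial^{\beta+\gamma}(P^{y_Q}-P^0)(x_0)\,(z-x_0)^\gamma.
\end{equation*}

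Now $Q$ is OK so $5Q \subseteq 5Q_0$, hence $\delta_Q \le \delta_{Q_0}$; together with $\frac{65}{64}Q \cap \frac{65}{64}Q_0 \neq \emptyset$ (since $Q \in \mathcal{Q}$) and $x_0 \in \frac{65}{64}Q_0$, we get $|z - x_0| \le C\delta_{Q_0}$ for all $z \in \frac{65}{64}Q$. Plugging in the estimate at $x_0$ yields
\begin{equation*}
|\partial^\beta(P^{y_Q}-P^0)(z)| \le \sum_{|\gamma| \le m-1-|\beta|} C M_0 (\epsilon^{-1}\delta_{Q_0})^{m-|\beta|-|\gamma|} (C\delta_{Q_0})^{|\gamma|} \le C(\epsilon) M_0 \delta_{Q_0}^{m-|\beta|},
\end{equation*}
where the $\epsilon$-dependence absorbs the finitely many factors $\epsilon^{|\beta|+|\gamma|-m}$. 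This is exactly \eqref{li11}. There is no real obstacle here; the result is essentially a bookkeeping consequence of \eqref{ap3} combined with the fact that $\frac{65}{64}Q$ sits inside a $C\delta_{Q_0}$-neighborhood of $x_0$.
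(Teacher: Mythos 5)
Your proposal is correct and follows essentially the same route as the paper: dispose of Type 4 trivially since $P^Q = P^0$, and for Types 1--3 invoke \eqref{ap3} at $x_0$ and propagate the estimate to $\frac{65}{64}Q$ using the fact that $P^Q - P^0$ has degree at most $m-1$ and that $\frac{65}{64}Q \subset 5Q_0$ lies within a $C\delta_{Q_0}$-neighborhood of $x_0$. You simply made explicit the Taylor expansion that the paper leaves implicit.
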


\begin{proof}
Recall that if $Q$ is of Type 1, 2, or 3, then $P^{Q}=P^{y_{Q}}$ for some $%
y_{Q}\in 5Q_{0}$. From estimate \eqref{ap3} in Section \ref%
{auxiliary-polynomials}, we know that

\begin{itemize}
\item[\refstepcounter{equation}\text{(\theequation)}\label{li12}] $%
\left\vert \partial ^{\beta }\left( P^{Q}-P^{0}\right) \left( x_{0}\right)
\right\vert \leq C\left( \epsilon \right) M_{0}\delta _{Q_{0}}^{m-\left\vert
\beta \right\vert }$ for $\left\vert \beta \right\vert \leq m-1$.
\end{itemize}

Since $x_{0}\in \frac{65}{64}Q_0$ (see the hypotheses of the Main Lemma for $%
\mathcal{A}$) and $P^{Q}-P^{0}$ is a polynomial of degree at most $m-1$, and
since $\frac{65}{64}Q\subset 5Q\subset 5Q_{0}$ (because $Q$ is OK), estimate %
\eqref{li12} implies the desired estimate \eqref{li11}.

If instead, $Q$ is of Type 4, then by definition $P^{Q}=P^{0}$, hence
estimate \eqref{li11} holds trivially.

Thus, \eqref{li11} holds in all cases.
\end{proof}

\begin{corollary}
\label{cor-to-lemma-li2} For $Q\in \mathcal{Q}$ and $|\beta |\leq m$, we
have $\left\vert \partial ^{\beta }\left( F^{Q}-P^{0}\right) \right\vert
\leq C\left( \epsilon \right) M_{0}\delta _{Q_{0}}^{m-\left\vert \beta
\right\vert }$ on $\frac{65}{64}Q$.
\end{corollary}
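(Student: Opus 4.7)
The plan is to derive the corollary by the triangle inequality, combining estimate \eqref{li1} (the bound for $F^Q-P^Q$ at scale $\delta_Q$) with Lemma \ref{lemma-li2} (the bound for $P^Q-P^0$ at scale $\delta_{Q_0}$), and then dominating $\delta_Q^{m-|\beta|}$ by $\delta_{Q_0}^{m-|\beta|}$.

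More precisely, I would first recall that since every $Q\in\mathcal{Q}$ is a CZ cube, $Q$ is OK, and therefore $5Q\subseteq 5Q_0$ by \eqref{cz1}. This forces $\delta_Q\leq\delta_{Q_0}$. Because $|\beta|\leq m$, the exponent $m-|\beta|$ is nonnegative, hence
\begin{equation*}
\delta_Q^{m-|\beta|}\leq \delta_{Q_0}^{m-|\beta|}.
\end{equation*}

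Next I would apply the triangle inequality on $\frac{65}{64}Q$ for each multiindex $\beta$ with $|\beta|\leq m$:
\begin{equation*}
\bigl|\partial^\beta(F^Q-P^0)\bigr|\leq \bigl|\partial^\beta(F^Q-P^Q)\bigr|+\bigl|\partial^\beta(P^Q-P^0)\bigr|.
\end{equation*}
Invoking \eqref{li1} (established for every type of cube in the case analysis of Section \ref{sec:locint}) controls the first term by $C(\epsilon)M_0\delta_Q^{m-|\beta|}$, and Lemma \ref{lemma-li2} bounds the second term by $C(\epsilon)M_0\delta_{Q_0}^{m-|\beta|}$. Dominating the $\delta_Q^{m-|\beta|}$ factor by $\delta_{Q_0}^{m-|\beta|}$ via the observation above, and absorbing the sum of two $C(\epsilon)$'s into a single $C(\epsilon)$, yields the desired estimate $|\partial^\beta(F^Q-P^0)|\leq C(\epsilon)M_0\delta_{Q_0}^{m-|\beta|}$ on $\frac{65}{64}Q$.

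There is no real obstacle here; the only thing worth double-checking is that \eqref{li1} has been verified in all four type cases (Types 1–4) of the construction of $F^Q$ and $P^Q$, which was done in the paragraphs just before the statement of Lemma \ref{lemma-li1}. With that in hand, the corollary follows immediately.
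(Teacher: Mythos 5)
Your argument is the same as the paper's: combine \eqref{li1} with \eqref{li11} (Lemma \ref{lemma-li2}) via the triangle inequality and use $\delta_Q\leq\delta_{Q_0}$ (from $5Q\subseteq 5Q_0$, since $Q$ is OK). Correct.
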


\begin{proof}
Recall that, since $Q$ is OK, we have $5Q\subset 5Q_{0}$. The desired
estimate therefore follows from estimates \eqref{li1} and \eqref{li11}.
\end{proof}

\section{Completing the Induction}

\label{completing-the-induction}

We again place ourselves in the setting of Section \ref%
{setup-for-the-induction-step}. We use the CZ cubes $Q$ and the functions $%
F^Q$ defined above. We recall several basic results from earlier sections.

\begin{itemize}
\item[\refstepcounter{equation}\text{(\theequation)}\label{ci1}] $\tg_{0}$ is a $\left( C,\delta _{\max }\right) $-convex blob field with blob constant $C_{\G}$.
\end{itemize}

\begin{itemize}
\item[\refstepcounter{equation}\text{(\theequation)}\label{ci2}] $\epsilon
^{-1}\delta _{Q_{0}}\leq \delta _{\max }$, hence $\epsilon ^{-1}\delta
_{Q}\leq \delta _{\max }$ for $Q\in $ CZ.
\end{itemize}

\begin{itemize}
\item[\refstepcounter{equation}\text{(\theequation)}\label{ci3}] The cubes $%
Q\in $ CZ partition the interior of $5Q_{0}$.
\end{itemize}

\begin{itemize}
\item[\refstepcounter{equation}\text{(\theequation)}\label{ci4}] For $%
Q,Q^{\prime }\in $ CZ, if $\frac{65}{64}Q\cap \frac{65}{64}Q^{\prime
}\not=\emptyset $, then $\frac{1}{2}\delta _{Q}\leq \delta _{Q^{\prime
}}\leq 2\delta _{Q}$.
\end{itemize}

Recall that

\begin{itemize}
\item[\refstepcounter{equation}\text{(\theequation)}\label{ci5}] $\mathcal{Q}%
=\left\{ Q\in CZ:\frac{65}{64}Q\cap \frac{65}{64}Q_{0}\not=\emptyset
\right\} $.
\end{itemize}

Then

\begin{itemize}
\item[\refstepcounter{equation}\text{(\theequation)}\label{ci6}] $\mathcal{Q}
$ is finite.
\end{itemize}

For each $Q \in \mathcal{Q}$, we have

\begin{itemize}
\item[\refstepcounter{equation}\text{(\theequation)}\label{ci7}] $F^{Q}\in
C^{m}\left( \frac{65}{64}Q\right) $,
\end{itemize}

\begin{itemize}
\item[\refstepcounter{equation}\text{(\theequation)}\label{ci8}] $%
J_{z}\left( F^{Q}\right) \in \Gamma _{0}\left( z,C\left( \epsilon \right)
M_{0}, C(\epsilon)\tau_0\right) $ for $z\in E_0\cap \frac{65}{64}Q$, and
\end{itemize}

\begin{itemize}
\item[\refstepcounter{equation}\text{(\theequation)}\label{ci9}] $\left\vert
\partial ^{\beta }\left( F^{Q}-P^{0}\right) \right\vert \leq C\left(
\epsilon \right) M_{0}\delta _{Q_{0}}^{m-\left\vert \beta \right\vert }$ on $%
\frac{65}{64}Q$, for $\left\vert \beta \right\vert \leq m$.
\end{itemize}

\begin{itemize}
\item[\refstepcounter{equation}\text{(\theequation)}\label{ci10}] For each $%
Q,Q^{\prime }\in \mathcal{Q}$, if $\frac{65}{64}Q\cap \frac{65}{64}Q^{\prime
}\not=\emptyset $, then $\left\vert \partial ^{\beta }\left(
F^{Q}-F^{Q^{\prime }}\right) \right\vert \leq C\left( \epsilon \right)
M_{0}\delta _{Q}^{m-\left\vert \beta \right\vert }$ on $\frac{65}{64}Q\cap 
\frac{65}{64}Q^{\prime }$, for $\left\vert \beta \right\vert \leq m$.
\end{itemize}

We introduce a Whitney partition of unity adapted to the cubes $Q\in $ CZ.
For each $Q\in $ CZ, let $\tilde{\theta}_{Q}\in C^{m}\left( \mathbb{R}%
^{n}\right) $ satisfy 
\begin{equation*}
\tilde{\theta}_{Q}=1\text{ on }Q\text{, support }\left( \tilde{\theta}%
_{Q}\right) \subset \frac{65}{64}Q\text{, }\left\vert \partial ^{\beta }%
\tilde{\theta}_{Q}\right\vert \leq C\delta _{Q}^{-\left\vert \beta
\right\vert }\text{ for }\left\vert \beta \right\vert \leq m\text{.}
\end{equation*}%
Setting $\theta _{Q}=\tilde{\theta}_{Q}\cdot \left( \sum_{Q^{\prime }\in
CZ}\left( \tilde{\theta}_{Q^{\prime }}\right) ^{2}\right) ^{-1/2}$, we see
that

\begin{itemize}
\item[\refstepcounter{equation}\text{(\theequation)}\label{ci11}] $\theta
_{Q}\in C^{m}\left( \mathbb{R}^{n}\right) $ for $Q\in $ CZ;
\end{itemize}

\begin{itemize}
\item[\refstepcounter{equation}\text{(\theequation)}\label{ci12}] support $%
\left( \theta _{Q}\right) \subset \frac{65}{64}Q$ for $Q\in $ CZ.
\end{itemize}

\begin{itemize}
\item[\refstepcounter{equation}\text{(\theequation)}\label{ci13}] $%
\left\vert \partial ^{\beta }\theta _{Q}\right\vert \leq C\delta
_{Q}^{-\left\vert \beta \right\vert }$ for $\left\vert \beta \right\vert
\leq m,Q\in $ CZ;
\end{itemize}

and $\sum_{Q\in CZ}\theta _{Q}^{2}=1$ on the interior of $5Q_{0}$, hence

\begin{itemize}
\item[\refstepcounter{equation}\text{(\theequation)}\label{ci14}] $%
\sum_{Q\in \mathcal{Q}}\theta _{Q}^{2}=1$ on $\frac{65}{64}Q_{0}$.
\end{itemize}

We define

\begin{itemize}
\item[\refstepcounter{equation}\text{(\theequation)}\label{ci15}] $%
F=\sum_{Q\in \mathcal{Q}}\theta _{Q}^{2}F^{Q}$.
\end{itemize}

For each $Q\in \mathcal{Q}$, \eqref{ci7}, \eqref{ci11}, \eqref{ci12} show
that $\theta _{Q}^{2}F^{Q}\in C^{m}\left( \mathbb{R}^{n}\right) $. Since
also $\mathcal{Q}$ is finite (see \eqref{ci6}), it follows that

\begin{itemize}
\item[\refstepcounter{equation}\text{(\theequation)}\label{ci16}] $F\in
C^{m}\left( \mathbb{R}^{n}\right) $.
\end{itemize}

Moreover, for any $x\in \frac{65}{64}Q_{0}$ and any $\beta $ of order $%
|\beta |\leq m$, we have

\begin{itemize}
\item[\refstepcounter{equation}\text{(\theequation)}\label{ci17}] $\partial
^{\beta }F\left( x\right) =\sum_{Q\in \mathcal{Q}\left( x\right) }\partial
^{\beta }\left\{ \theta _{Q}^{2}F^{Q}\right\} $, where
\end{itemize}

\begin{itemize}
\item[\refstepcounter{equation}\text{(\theequation)}\label{ci18}] $\mathcal{Q%
}\left( x\right) =\left\{ Q\in \mathcal{Q}:x\in \frac{65}{64}Q\right\} $.
\end{itemize}

Note that

\begin{itemize}
\item[\refstepcounter{equation}\text{(\theequation)}\label{ci19}] $\#\left( 
\mathcal{Q}\left( x\right) \right) \leq C$, by \eqref{ci4}.
\end{itemize}

Let $\hat{Q}$ be the CZ cube containing $x$. (There is one and only one such
cube, thanks to \eqref{ci3}; recall that we suppose that $x\in \frac{65}{64}%
Q_{0}$.) Then $\hat{Q}\in \mathcal{Q}(x)$, and \eqref{ci17} may be written
in the form

\begin{itemize}
\item[\refstepcounter{equation}\text{(\theequation)}\label{ci20}] $\partial
^{\beta }\left( F-P^{0}\right) \left( x\right) =\partial ^{\beta }\left( F^{%
\hat{Q}}-P^{0}\right) \left( x\right) +\sum_{Q\in \mathcal{Q}\left( x\right)
}\partial ^{\beta }\left\{ \theta _{Q}^{2}\cdot \left( F^{Q}-F^{\hat{Q}%
}\right) \right\} \left( x\right) $.
\end{itemize}

(Here we use \eqref{ci14}.) The first term on the right in \eqref{ci20} has
absolute value at most $C\left( \epsilon \right) M_{0}\delta
_{Q_{0}}^{m-\left\vert \beta \right\vert }$; see \eqref{ci9}. At most $C$
distinct cubes $Q$ enter into the second term on the right in \eqref{ci20};
see \eqref{ci19}. For each $Q\in \mathcal{Q}(x)$, we have 
\begin{equation*}
\left\vert \partial ^{\beta }\left\{ \theta _{Q}^{2}\cdot \left( F^{Q}-F^{%
\hat{Q}}\right) \right\} \left( x\right) \right\vert \leq C\left( \epsilon
\right) M_{0}\delta _{Q}^{m-\left\vert \beta \right\vert }\text{,}
\end{equation*}%
by \eqref{ci10} and \eqref{ci13}. Hence, for each $Q\in \mathcal{Q}(x)$, we
have 
\begin{equation*}
\left\vert \partial ^{\beta }\left\{ \theta _{Q}^{2}\cdot \left( F^{Q}-F^{%
\hat{Q}}\right) \right\} \left( x\right) \right\vert \leq C\left( \epsilon
\right) M_{0}\delta _{Q_{0}}^{m-\left\vert \beta \right\vert };
\end{equation*}%
see \eqref{ci3}.

The above remarks and \eqref{ci19}, \eqref{ci20} together yield the estimate

\begin{itemize}
\item[\refstepcounter{equation}\text{(\theequation)}\label{ci21}] $%
\left\vert \partial ^{\beta }\left( F-P^{0}\right) \right\vert \leq C\left(
\epsilon \right) M_{0}\delta _{Q_{0}}^{m-\left\vert \beta \right\vert }$ on $%
\frac{65}{64}Q_{0}$, for $\left\vert \beta \right\vert \leq m$.
\end{itemize}

Moreover, let $z\in E_0$. Then 
\begin{equation*}
J_{z}\left( F\right) =\sum_{Q\in \mathcal{Q}\left( z\right) }J_{z}\left(
\theta _{Q}\right) \odot _{z}J_{z}\left( \theta _{Q}\right) \odot
_{z}J_{z}\left( F^{Q}\right) \text{ (see \eqref{ci17});}
\end{equation*}%
\begin{equation*}
\left\vert \partial ^{\beta }\left[ J_{z}\left( \theta _{Q}\right) \right]
\left( z\right) \right\vert \leq C\delta _{Q}^{-\left\vert \beta \right\vert
}\text{ for }\left\vert \beta \right\vert \leq m-1\text{, }Q\in \mathcal{Q}%
\left( z\right) \text{ (see \eqref{ci13});}
\end{equation*}%
\begin{equation*}
\sum_{Q\in \mathcal{Q}\left( z\right) }\left[ J_{z}\left( \theta _{Q}\right) %
\right] \odot _{z}\left[ J_{z}\left( \theta _{Q}\right) \right] =1
\end{equation*}%
(see \eqref{ci14} and note that $J_{z}(\theta _{Q})=0$ for $Q\not\in 
\mathcal{Q}(z)$ by \eqref{ci12} and \eqref{ci18}); 
\begin{equation*}
J_{z}\left( F^{Q}\right) \in \Gamma _{0}\left( z,C\left( \epsilon \right)
M_{0}, C(\epsilon)\tau_0\right) \text{ for }Q\in \mathcal{Q}\left( z\right) \text{ (see %
\eqref{ci8});}
\end{equation*}%
\begin{equation*}
\left\vert \partial ^{\beta }\left\{ J_{z}\left( F^{Q}\right) -J_{z}\left(
F^{Q^{\prime }}\right) \right\} \left( z\right) \right\vert \leq C\left(
\epsilon \right) M_{0}\delta _{Q}^{m-\left\vert \beta \right\vert }
\end{equation*}%
for $\left\vert \beta \right\vert \leq m-1$, $Q,Q^{\prime }\in \mathcal{Q}%
\left( z\right) $ (see\eqref{ci10}); 
\begin{equation*}
\#\left( \mathcal{Q}\left( z\right) \right) \leq C\text{ (see \eqref{ci19});}
\end{equation*}%
\begin{equation*}
\delta _{Q}\leq \delta _{\max }\text{ (see \eqref{ci2});}
\end{equation*}%
$\tg_{0}$ is a $\left( C,\delta _{\max }\right) $-convex shape
field (see \eqref{ci1}), and recall that the $\delta_Q$ for $Q \in CZ$ differ by at most a factor of 2 for contiguous cubes. Recall that $E_0 = E \cap \frac{65}{64}Q_0$ (see Section  \ref{statement-of-the-main-lemma}). The above results, together with Lemma \ref%
{lemma-wsf2}, tell us that

\begin{itemize}
\item[\refstepcounter{equation}\text{(\theequation)}\label{ci22}] $%
J_{z}\left( F\right) \in \Gamma _{0}\left( z,C\left( \epsilon \right)
M_{0},C(\epsilon)\tau_0\right) $ for all $z\in E\cap \frac{65}{64}Q_{0}$.
\end{itemize}

From \eqref{ci16}, \eqref{ci21}, \eqref{ci22}, we see at once that the
restriction of $F$ to $\frac{65}{64}Q_{0}$ belongs to $C^{m}\left( \frac{65}{%
64}Q_{0}\right) $ and satisfies conditions (C*1) and (C*2) in Section \ref%
{setup-for-the-induction-step}. As we explained in that section, once we
have found a function in $C^{m}\left( \frac{65}{64}Q_{0}\right) $ satisfying
(C*1) and (C*2), our induction on $\mathcal{A}$ is complete. Thus, we have
proven the Main Lemma for all monotonic $\mathcal{A}\subseteq \mathcal{M}$.

\section{Restatement of the Main Lemma}

\label{rml}

In this section, we reformulate the Main Lemma for $\mathcal{A}$ in the case
in which $\mathcal{A}$ is the empty set $\emptyset$. Let us examine
hypotheses (A1), (A2), (A3) for the Main Lemma for $\mathcal{A}$, taking $%
\mathcal{A} = \emptyset$.

Hypothesis (A1) says that $\tg_{l\left( \emptyset \right) }$ has an 
$\left( \emptyset ,\epsilon ^{-1}\delta _{Q_{0}},C_{B}\right) $-basis at $%
\left( x_{0},M_{0},\tau_{0},P^{0}\right) $. This means simply that $P^{0}\in \Gamma
_{l\left( \emptyset \right) }\left( x_{0},C_{B}M_{0},C_{B}\tau_0\right) $.

Hypothesis (A2) says that $\delta _{Q_{0}}\leq \epsilon \delta _{\max }$,
and hypothesis (A3) says that $\epsilon $ is less than a small enough
constant determined by $C_{B}$, $C_{w}$, $m$, $n$, $C_{\G}$.

We take $\epsilon $ to be a small enough constant (determined by $C_{B}$, $%
C_{w}$, $m$, $n$, $C_{\G}$) such that (A3) is satisfied. We take $C_{B}=1$. Thus, we
arrive at the following equivalent version of the Main Lemma for $\emptyset $%
.

\theoremstyle{plain} 
\newtheorem*{thm Restated Main Lemma}{Restated Main
Lemma}%
\begin{thm Restated Main Lemma}Let $\tg_{0}=\left( \Gamma _{0}\left( x,M,\tau \right) \right) _{x\in
E,M>0, \tau\in(0,\tau_{\max}]}$ be a $\left( C_{w},\delta _{\max }\right) $-convex blob
field. For $l\geq 1$, let $\tg_{l}=\left( \Gamma _{l}\left(
x,M,\tau\right) \right) _{x\in E, M>0, \tau\in(0,\tau_{\max}]}$ be the approximate $l^{th}$-refinement of $\tg_{0}$. Fix a dyadic cube $Q_0$ of sidelength $\delta _{Q_{0}}\leq \epsilon
\delta _{\max }$, where $\epsilon >0$ is a small enough constant determined
by $m$, $n$, $C_{W}$, $C_{\G}$. Let $x_{0}\in E\cap \frac{65}{64}Q_0$, and let $P_{0}\in \Gamma
_{l\left( \emptyset \right) }\left( x_{0},M_{0},\tau_0\right) $.

Then there exists a function $F\in C^{m}\left( \frac{65}{64}Q_{0}\right) $,
satisfying 

\begin{itemize}
\item $\left\vert \partial ^{\beta }\left( F-P_{0}\right) \left( x\right)
\right\vert \leq C_{\ast }M_{0}\delta _{Q_{0}}^{m-\left\vert \beta
\right\vert }$ for $x\in \frac{65}{64}Q_{0}$, $\left\vert \beta \right\vert
\leq m$; and 
\item $J_{z}\left( F\right) \in \Gamma _{0}\left( z,C_{\ast }M_{0},C_{\ast}\tau_0\right) $
for all $z\in E\cap \frac{65}{64}Q_{0}$;
\end{itemize}
where $C_{\ast }$ is determined by $C_{w}$, $m$, $n$, $C_{\G}$.\end{thm Restated Main Lemma}

\subsection{What the Main Lemma gives us}
\label{sec:tree}

The statement and proof of the Main Lemma essentially describe a tree that we create top to bottom and then traverse to generate an appropriate function $F$. We fix the constant $\epsilon > 0$ to be a small enough constant determined by $m, n, C_w, C_\Gamma$. We also fix $\hat{M}_0, \hat{\tau}_0$ (inputs of the problem).

We define a node of the tree:
\begin{definition}
	A node $T$ is a tuple of the form $(\A_T, x_T, P_T, Q_T, E_T, C_T)$, where the following properties hold:
	\begin{itemize}
		\item[\refstepcounter{equation}\text{(\theequation)}\label{node1}] $C_T$ belongs to a list $I(\A_T)$ of constants determined by the label $\A_T$ and the constants $C_\G, C_w$, to be specified below.
		\item[\refstepcounter{equation}\text{(\theequation)}\label{node2}]$\A_T$ is monotonic, $\delta_{Q_T} < \epsilon\delta_{\max}$; $E_T = E \cap \frac{65}{64}Q_T$; $x_T \in E_T$ or, if $E_T = \emptyset$, $x_T \in E \cap 5Q_T^+$; $P_T \in \tg_{l(\A)}(x_T, C_T\hat{M}_0, C_T\hat{\tau}_0)$. 
		\item[\refstepcounter{equation}\text{(\theequation)}\label{node3}] $\tg_{l(\A)}(x_T, C_T\hat{M}_0, C_T\hat{\tau}_0)$ has an $(\A, \epsilon^{-1}\delta_{Q_T}, C_{\text{node}})$-basis at $(x_T, \hat{M}_0, \hat{\tau}_0, P_T)$. 
	\end{itemize}
\end{definition}

The root node is $(\emptyset, x_0, P_0, Q_0, E \cap \frac{65}{64}Q_0, 1)$, where $P_0 \in \tg_{l(\emptyset)}(x_0, \hat{M}_0, \hat{\tau}_0)$, $\delta_{Q_0} \leq \epsilon\delta_{\max}$, $x_0 \in E\cap \frac{65}{64}Q_0$.

Corresponding to a node $T$ there is an instance of the \textbf{Main Lemma} in which $\A = \A_T$, $x_0 = x_T$, $P_0 = P_T$, $Q_0 = Q_T$, $E_0 = E_T$, $M_0 = C_{T}\hat{M}_0$ and $\tau_0 = C_{T}\hat{\tau}_0$.

The induction step in our proof of the \textbf{Main Lemma} reduces the construction of an interpolant for a node $T$ (with $\A_T \neq \M$) to the construction of interpolants for nodes $T' = (\A_{T'}, y_{T'}, P_{T'}, Q_{T'}, E_{T'}, C^{\#}_{T'})$ with $\A_{T'} < \A_T$, $Q_{T'}$ a $CZ$ cube, and $C^{\#}_{T'}$ a constant depending only on $C_{T}$, $\A_T$ and $\A_{T'}$.

We take the children of a node $T$ to be all the nodes $T'$ arising in this way. Note that the constants $C_{T}$ associated to nodes containing the label $\A_T$ belong to a finite list $I(\A_T)$ determined by $\A_T, C_\G, C_w$ (see Section \ref{sec:const}).

Nodes of the form $(\A_T, x_T, P_T, Q_T, E_T, C_{T})$ with $\A = \M$ have no children, and the interpolant is $P_T$. Nodes of the form $(\A_T, x_T, P_T, Q_T, E_T, C_{T})$ for which $E_T$ contains at most a single point also have no children, and the interpolant is also $P_T$. All other nodes of our tree have children. This completes our description of the tree. For an algorithmic explanation, see Section \ref{sec:comp-main}.

\section{Tidying Up}

\label{tu}

In this section, we remove from the Restated Main Lemma the small constant $%
\epsilon$ and the assumption that $Q_0$ is dyadic.

\begin{theorem}
\label{theorem-tu1} Let $\tg_{0}=\left( \Gamma _{0}\left(
x,M,\tau\right) \right) _{x\in E, M>0, \tau\in(0,\tau_{\max}]}$ be a $\left( C_{w},\delta _{\max }\right) $%
-convex blob field with blob constant $C_{\G}$. For $l\geq 1$, let $\tg_{l}=\left( \Gamma
_{l}\left( x,M,\tau\right) \right) _{x\in E, M>0, \tau\in(0,\tau_{\max}]}$ be the approximate $l^{th}$-refinement of $%
\tg_{0}$. Fix a cube $Q_0$ of sidelength $\delta _{Q_{0}}\leq
\delta _{\max }$, a point $x_0 \in E \cap \frac{65}{64}Q_0$, and a real number $M_0>0$.
Let $P_{0}\in \Gamma _{l\left( \emptyset \right) +1}\left(
x_{0},M_{0},\tau_0\right) $.

Then there exists a function $F\in C^{m}\left(Q_{0}\right) $ satisfying the
following, with $C_{\ast }$ determined by $C_{w}$, $m$, $n$, $C_{\G}$.

\begin{itemize}
\item $\left\vert \partial ^{\beta }\left( F-P_{0}\right) \left( x\right)
\right\vert \leq C_{\ast }M_{0}\delta _{Q_{0}}^{m-\left\vert \beta
\right\vert }$ for $x\in Q_{0}$, $\left\vert \beta \right\vert \leq m$; and

\item $J_{z}\left( F\right) \in \Gamma _{0}\left( z,C_{\ast }M_{0}, C_{\ast}\tau_0\right) $
for all $z\in E\cap Q_{0}$.
\end{itemize}
\end{theorem}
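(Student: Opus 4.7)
The plan is to partition $Q_0$ into smaller dyadic cubes of comparable sidelength, apply the Restated Main Lemma on each, and then patch the pieces via a Whitney partition of unity. The extra refinement level in the hypothesis ($P_0 \in \tg_{l(\emptyset)+1}$ rather than $\tg_{l(\emptyset)}$) gives us one application of property \eqref{abr2}, which is exactly enough to transport $P_0$ to nearby points of $E$ and thereby feed valid data into the Restated Main Lemma.

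Let $\epsilon_0$ be the small constant furnished by the Restated Main Lemma, fix $\epsilon_1 = \epsilon_0/C_1$ for a sufficiently large $C_1 = C_1(m,n)$, and take $\delta'$ to be the largest dyadic number at most $\epsilon_1 \delta_{Q_0}$, so that $\delta' \leq \epsilon_0 \delta_{\max}$ and $\delta' \geq c\delta_{Q_0}$ with $c$ a fixed positive constant. Let $\mathcal{Q}^*$ be the collection of all dyadic cubes $Q_\nu$ of sidelength $\delta'$ with $\tfrac{65}{64}Q_\nu \cap Q_0 \neq \emptyset$; its cardinality is bounded by a constant determined by $m, n, C_w, C_\G$. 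For each $Q_\nu \in \mathcal{Q}^*$ with $E \cap \tfrac{65}{64}Q_\nu \neq \emptyset$, pick $x_\nu$ in that intersection and apply \eqref{abr2} to $P_0 \in \tg_{l(\emptyset)+1}(x_0, M_0, \tau_0)$ with target $y = x_\nu$, producing $P_\nu \in \tg_{l(\emptyset)}(x_\nu, CM_0, C\tau_0)$ with $|\partial^\beta(P_0 - P_\nu)(x_0)| \leq CM_0 \|x_0 - x_\nu\|^{m-|\beta|} \leq CM_0 \delta_{Q_0}^{m-|\beta|}$. Since $P_0 - P_\nu$ is a polynomial of degree at most $m-1$, this estimate extends to $|\partial^\beta(P_0 - P_\nu)(y)| \leq CM_0 \delta_{Q_0}^{m-|\beta|}$ throughout a neighborhood of $Q_0$. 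Feed $Q_\nu, x_\nu, P_\nu$ into the Restated Main Lemma to obtain $F_\nu \in C^m(\tfrac{65}{64}Q_\nu)$ with $|\partial^\beta(F_\nu - P_\nu)| \leq CM_0 \delta_{Q_\nu}^{m-|\beta|}$ on $\tfrac{65}{64}Q_\nu$ and $J_z(F_\nu) \in \Gamma_0(z, CM_0, C\tau_0)$ for $z \in E \cap \tfrac{65}{64}Q_\nu$. For the remaining $Q_\nu \in \mathcal{Q}^*$, simply set $F_\nu = P_0$.

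Introduce a Whitney partition of unity $\{\theta_\nu\}$ on $\mathcal{Q}^*$ as in Section \ref{completing-the-induction}, with $\sum_\nu \theta_\nu^2 \equiv 1$ on $Q_0$, $\text{supp}(\theta_\nu) \subset \tfrac{65}{64}Q_\nu$, and $|\partial^\beta \theta_\nu| \leq C\delta'^{-|\beta|}$, and set $F = \sum_\nu \theta_\nu^2 F_\nu$. For the first conclusion, write $F - P_0 = \sum_\nu \theta_\nu^2 (F_\nu - P_0)$; each term satisfies $|\partial^\delta(F_\nu - P_0)| \leq CM_0 \delta_{Q_0}^{m-|\delta|}$ on $\tfrac{65}{64}Q_\nu$, and Leibniz together with $\delta'^{-|\beta|} \leq C\delta_{Q_0}^{-|\beta|}$ yields $|\partial^\gamma(\theta_\nu^2 (F_\nu - P_0))| \leq CM_0 \delta_{Q_0}^{m-|\gamma|}$; summing over the bounded collection $\mathcal{Q}^*$ gives the required estimate. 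For the jet condition at $z \in E \cap Q_0$, any $\nu$ with $z \in \tfrac{65}{64}Q_\nu$ automatically has $E \cap \tfrac{65}{64}Q_\nu \neq \emptyset$, so $J_z(F_\nu) \in \Gamma_0(z, CM_0, C\tau_0)$; moreover $|\partial^\beta(J_z(F_\nu) - J_z(F_{\nu'}))(z)| \leq CM_0 \delta'^{m-|\beta|}$ by telescoping $F_\nu - F_{\nu'} = (F_\nu - P_0) - (F_{\nu'} - P_0)$ and using $\delta_{Q_0} \leq C\delta'$. Combined with $\sum_\nu J_z(\theta_\nu) \odot_z J_z(\theta_\nu) = 1$, the jet-derivative bounds $C\delta'^{-|\beta|}$, $\delta' \leq \delta_{\max}$, and the $(C,\delta_{\max})$-convexity of $\tg_0$, Lemma \ref{lemma-wsf2} applied at $z$ delivers $J_z(F) \in \Gamma_0(z, C_*M_0, C_*\tau_0)$.

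The main obstacle is the tension between two competing requirements on $\delta'$: it must be small ($\leq \epsilon_0\delta_{\max}$) so the Restated Main Lemma applies on each $Q_\nu$, yet comparable to $\delta_{Q_0}$ (up to a fixed constant) so the Leibniz rule in the patching loses only a constant factor rather than blowing up by $(\delta_{Q_0}/\delta_{Q_\nu})^{|\beta|}$. Choosing $\delta' \sim \epsilon_1\delta_{Q_0}$ with $\epsilon_1 \leq \epsilon_0$ resolves this tension and also caps $|\mathcal{Q}^*|$ by a constant. The single refinement-index gap between the hypothesis ($\tg_{l(\emptyset)+1}$) and the Restated Main Lemma's hypothesis ($\tg_{l(\emptyset)}$) is exactly what property \eqref{abr2} uses to produce each transported polynomial $P_\nu$ with the correct error $\|x_0 - x_\nu\|^{m-|\beta|}$.
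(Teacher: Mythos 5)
Your proposal is correct and follows essentially the same route as the paper: cover a neighborhood of $Q_0$ by dyadic cubes of comparable sidelength $\sim\epsilon\delta_{Q_0}$, transport $P_0$ to a base point in each such cube (the paper cites ``the results on refinements''; you invoke \eqref{abr2} explicitly, which is exactly the mechanism the paper has in mind, and accounts for the extra index $+1$ in $\tg_{l(\emptyset)+1}$), apply the Restated Main Lemma on each subcube, and patch with a squared Whitney partition of unity, verifying the jet membership via Lemma \ref{lemma-wsf2}. The only differences are cosmetic (your $\delta'$ is chosen dyadic rather than the grid being described first; your bookkeeping is a bit more explicit), so no substantive gap or divergence in strategy.
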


\begin{proof}
Let $\epsilon >0$ be the small constant in the statement of the Restated
Main Lemma in Section \ref{rml}. In particular, $\epsilon $ is determined by 
$C_{w}$, $m$, $n$, $C_{\G}$. We write $c$, $C$, $C^{\prime }$, etc., to denote
constants determined by $C_{w}$, $m$, $n$, $C_{\G}$. These symbols may denote
different constants in different occurrences.

We cover $CQ_{0}$ by a grid of dyadic cubes $\{Q_{\nu }\}$, all having same
sidelength $\delta _{Q_{\nu }}$, with $\frac{\epsilon }{20}\delta
_{Q_{0}}\leq \delta _{Q_{\nu }}\leq \epsilon \delta _{Q_{0}}$, and all
contained in $C^{\prime }Q_{0}$. (We use at most $C$ distinct $Q_{\nu }$ to
do so.)

For each $Q_{\nu }$ with $E\cap \frac{65}{64}Q_{\nu }\not=\emptyset $, we
pick a point $x_{\nu }\in E\cap \frac{65}{64}Q_{\nu }$; we know (by virtue of the results on refinements) there exists $P_{\nu }\in \Gamma _{l(\emptyset
)}(x_{\nu },CM_{0}, C\tau_0)$ such that $\left\vert \partial ^{\beta }\left( P_{\nu
}-P_{0}\right) \left( x_{0}\right) \right\vert \leq CM_{0}\delta
_{Q_{0}}^{m-\left\vert \beta \right\vert }$ for $\beta \in \mathcal{M}$, and
therefore

\begin{itemize}
\item[\refstepcounter{equation}\text{(\theequation)}\label{tu1}] $\left\vert
\partial ^{\beta }\left( P_{\nu }-P_{0}\right) \left( x\right) \right\vert
\leq C^{\prime }M_{0}\delta _{Q_{0}}^{m-\left\vert \beta \right\vert }$ for $%
x\in \frac{65}{64}Q_{0}$ and $\left\vert \beta \right\vert \leq m$.
\end{itemize}

Since $x_{\nu }\in E\cap \frac{65}{64}Q_{\nu }$, $P_{\nu }\in \Gamma
_{l\left( \emptyset \right) }(x_{\nu },CM_{0}, C\tau_0)$, and $\delta _{Q_{\nu }}\leq \epsilon \delta
_{Q_{0}}\leq \epsilon \delta _{\max }$, the Restated Main Lemma applies to $%
x_{\nu },P_{\nu },Q_{\nu }$ to produce $F_{\nu }\in C^{m}\left( \frac{65}{64}%
Q_{\nu }\right) $ satisfying

\begin{itemize}
\item[\refstepcounter{equation}\text{(\theequation)}\label{tu2}] $\left\vert
\partial ^{\beta }\left( F_{\nu }-P_{\nu }\right) \left( x\right)
\right\vert \leq CM_{0}\delta _{Q_{\nu }}^{m-\left\vert \beta \right\vert
}\leq CM_{0}\delta _{Q_{0}}^{m-\left\vert \beta \right\vert }$ for $x\in 
\frac{65}{64}Q_{\nu }$, $\left\vert \beta \right\vert \leq m$;
\end{itemize}

and

\begin{itemize}
\item[\refstepcounter{equation}\text{(\theequation)}\label{tu3}] $%
J_{z}\left( F_{\nu }\right) \in \Gamma _{0}\left( z,CM_{0}, C\tau_0\right) $ for all $%
z\in E\cap \frac{65}{64}Q_{\nu }$.
\end{itemize}

From \eqref{tu1} and \eqref{tu2}, we have

\begin{itemize}
\item[\refstepcounter{equation}\text{(\theequation)}\label{tu4}] $\left\vert
\partial ^{\beta }\left( F_{\nu }-P_{0}\right) \left( x\right) \right\vert
\leq CM_{0}\delta _{Q_{0}}^{m-\left\vert \beta \right\vert }$ for $x\in 
\frac{65}{64}Q_{\nu }$, $\left\vert \beta \right\vert \leq m$.
\end{itemize}

We have produced such $F_{\nu }$ for those $\nu $ satisfying $E\cap \frac{65%
}{64}Q_{\nu }\not=\emptyset $. If instead $E\cap \frac{65}{64}Q_{\nu
}=\emptyset $, then we set $F_{\nu }=P_{0}$. Then \eqref{tu3} holds
vacuously and \eqref{tu4} holds trivially. Thus, our $F_{\nu }$ satisfy %
\eqref{tu3}, \eqref{tu4} for all $\nu $. From \eqref{tu4} we obtain

\begin{itemize}
\item[\refstepcounter{equation}\text{(\theequation)}\label{tu5}] $\left\vert
\partial ^{\beta }\left( F_{\nu }-F_{\nu ^{\prime }}\right) \left( x\right)
\right\vert \leq CM_{0}\delta _{Q_{0}}^{m-\left\vert \beta \right\vert }$
for $x\in \frac{65}{64}Q_{\nu }\cap \frac{65}{64}Q_{\nu ^{\prime }}$, $%
\left\vert \beta \right\vert \leq m$.
\end{itemize}

Next, we introduce a partition of unity. We fix cutoff functions $\theta
_{\nu }\in C^{m}\left( \mathbb{R}^{n}\right) $ satisfying

\begin{itemize}
\item[\refstepcounter{equation}\text{(\theequation)}\label{tu6}] support $%
\theta _{\nu }\subset \frac{65}{64}Q_{\nu }$, $\left\vert \partial ^{\beta
}\theta _{\nu }\right\vert \leq C\delta _{Q_{0}}^{-\left\vert \beta
\right\vert }$ for $\left\vert \beta \right\vert \leq m$, $\sum_{\nu }\theta
_{\nu }^{2}=1$ on $Q_{0}$.
\end{itemize}

We then define

\begin{itemize}
\item[\refstepcounter{equation}\text{(\theequation)}\label{tu7}] $%
F=\sum_{\nu }\theta _{\nu }^{2}F_{\nu }$ on $Q_{0}$.
\end{itemize}

We have then

\begin{itemize}
\item[\refstepcounter{equation}\text{(\theequation)}\label{tu8}] $%
F-P_{0}=\sum_{\nu }\theta _{\nu }^{2}\left( F_{\nu }-P_{0}\right) $ on $%
Q_{0} $.
\end{itemize}

Thanks to \eqref{tu4} and \eqref{tu6}, we have $\theta _{\nu }^{2}\left(
F_{\nu }-P_{0}\right) \in C^{m}\left( Q_{0}\right) $ and $\left\vert
\partial ^{\beta }\left( \theta _{\nu }^{2}\cdot \left( F_{\nu
}-P_{0}\right) \right) \left( x\right) \right\vert \leq CM_{0}\delta
_{Q_{0}}^{m-\left\vert \beta \right\vert }$ for $x\in Q_{0},|\beta |\leq m$,
all $\nu $. Moreover, there are at most $C$ distinct $\nu $ appearing in %
\eqref{tu8}. Hence,

\begin{itemize}
\item[\refstepcounter{equation}\text{(\theequation)}\label{tu9}] $F\in
C^{m}\left( Q_{0}\right) $
\end{itemize}

and

\begin{itemize}
\item[\refstepcounter{equation}\text{(\theequation)}\label{tu10}] $%
\left\vert \partial ^{\beta }\left( F-P_{0}\right) \left( x\right)
\right\vert \leq CM_{0}\delta _{Q_{0}}^{m-\left\vert \beta \right\vert }$
for $x\in Q_{0}$, $\left\vert \beta \right\vert \leq m$.
\end{itemize}

Next, let $z\in E\cap Q_{0}$, and let $Y$ be the set of all $\nu $ such that 
$z\in \frac{65}{64}Q_{\nu }$. Then \eqref{tu6}, \eqref{tu7} give $%
J_{z}(F)=\sum_{\nu \in Y}J_{z}\left( \theta _{\nu }\right) \odot
_{z}J_{z}\left( \theta _{\nu }\right) \odot _{z}J_{z}\left( F_{\nu }\right) $
and we know that $J_{z}(F_{\nu })\in \Gamma _{0}\left( z,CM_{0}, C\tau_0\right) $ for 
$\nu \in Y$ (by \eqref{tu3}); $|\partial ^{\beta }\left[ J_{z}\left( F_{\nu
}\right) -J_{z}\left( F_{\nu ^{\prime }}\right) \right] \left( z\right)
|\leq CM_{0}\delta _{Q_{0}}^{m-\left\vert \beta \right\vert }$ for $%
\left\vert \beta \right\vert \leq m-1$, $\nu ,\nu ^{\prime }\in Y$ (by %
\eqref{tu5}); $\left\vert \partial ^{\beta }\left[ J_{z}\left( \theta _{\nu
}\right) \right] \left( z\right) \right\vert \leq C\delta
_{Q_{0}}^{-\left\vert \beta \right\vert }$ for $\left\vert \beta \right\vert
\leq m-1$, $\nu \in Y$ (by \eqref{tu6}); $\sum_{\nu \in Y}J_{z}\left( \theta
_{\nu }\right) \odot _{z}J_{z}\left( \theta _{\nu }\right) =1$ (again thanks
to \eqref{tu6}); $\#(Y)\leq C$ (since there are at most $C$ distinct $%
Q_{\nu }$ in our grid); and $\delta _{Q_{0}}\leq \delta _{\max }$ (by
hypothesis of Theorem \ref{theorem-tu1}). Since $\tg_{0}$ is $%
(C,\delta _{\max })$-convex, the above remarks and Lemma \ref{lemma-wsf2}
tell us that $J_{z}(F)\in \Gamma _{0}(z,CM_{0},C\tau_0)$. Thus,

\begin{itemize}
\item[\refstepcounter{equation}\text{(\theequation)}\label{tu11}] $%
J_{z}\left( F\right) \in \Gamma _{0}\left( z,CM_{0}, C\tau_0 \right) $ for all $z\in
E\cap Q_{0}$.
\end{itemize}

Our results \eqref{tu9}, \eqref{tu10}, \eqref{tu11} are the conclusions of
Theorem \ref{theorem-tu1}.

The proof of that Theorem is complete.
\end{proof}
\section{From Shape Field to Blob Field}
\label{sec:smoothsel}

In this section we show an application of this result to the "Smooth Selection Problem". In Section III.2 of \cite{feffermanFinitenessPrinciplesSmooth2016} we can see a result on finiteness principles for this problem.

We define the Smooth Selection Problem as follows: Let $E \subset \R^n$ be finite, let $M>0$. For each $x \in E$ let $K(x) \subset \R^D$ be convex. We want to know if there exists a function $F \in C^m(\R^n, \R^D)$ such that $\|F\|_{C^m(\R^n, \R^D)} \leq M$ and $F(z) \in K(z)$ for all $z \in E$. If it exists, we want to give its jet $J_x(F)$ at each point $x\in E$.

Let us first set up notation. We write $c$, $C$, $C^{\prime }$, etc., to
denote constants determined by $m$, $n$, $D$; these symbols may denote
different constants in different occurrences. We will work with $C^{m}$
vector and scalar-valued functions on $\mathbb{R}^{n}$, and also with $%
C^{m+1}$ scalar-valued functions on $\mathbb{R}^{n+D}$. We use Roman letters
($x$, $y$, $z$$,\cdots $) to denote points of $\mathbb{R}^{n}$, and Greek
letters $(\xi ,\eta ,\zeta ,\cdots )$ to denote points of $\mathbb{R}^{D}$.
We denote points of the $\mathbb{R}^{n+D}$ by $(x,\xi )$, $(y,\eta )$, etc.
As usual, $\mathcal{P}$ denotes the vector space of real-valued polynomials of degree at
most $m-1$ on $\mathbb{R}^{n}$. We write $\mathcal{P}^{D}$ to denote the
direct sum of $D$ copies of $\mathcal{P}$. If $F\in C^{m-1}(\mathbb{R}^{n},%
\mathbb{R}^{D})$ with $F(x)=\left( F_{1}\left( x\right) ,\cdots ,F_{D}\left(
x\right) \right) $ for $x\in \mathbb{R}^{n}$, then $J_{x}(F):=(J_{x}\left(
F_{1}\right) ,\cdots ,J_{x}\left( F_{D}\right) )\in \mathcal{P}^{D}$.

We write $\mathcal{P}^{+}$ to denote the vector space of real-valued polynomials of
degree at most $m$ on $\mathbb{R}^{n+D}$. If $F\in C^{m+1}\left( \mathbb{R}%
^{n+D}\right) $, then we write $J_{\left( x,\xi \right) }^{+}F\in \mathcal{P}%
^{+}$ to denote the $m^{th}$-degree Taylor polynomial of $F$ at the point $%
\left( x,\xi \right) \in \mathbb{R}^{n+D}$. 

When we work with $\mathcal{P}^{+}$, we write $\odot _{\left( x,\xi \right)
} $ to denote the multiplication 
\begin{equation*}
P\odot _{\left( x,\xi \right) }Q:=J_{\left( x,\xi \right) }^{+}\left(
PQ\right) \in \mathcal{P}^{+}\text{ for }P,Q\in \mathcal{P}^{+}\text{.}
\end{equation*}

We now introduce the relevant blob field.

Let $E^{+}=\left\{ \left( x,0\right) \in \mathbb{R}^{n+D}:x\in E\right\} $.
For $x_0 \in E$ let $K(x_0)$ be a compact convex sets in $\R^D$.  For $\left( x_{0},0\right) \in E^{+}$, $M>0$ and $\tau\in(0,\tau_{\max}]$ (where $\tau_{\max}$ is a constant depending only on $m,n,D$), let

\begin{itemize}
\item[\refstepcounter{equation}\text{(\theequation)}\label{fpii3}] $\Gamma
\left( \left( x_0,0\right) ,M,\tau\right) =\left\{ 
\begin{array}{c}
P\in \mathcal{P}^{+}:P\left( x_{0},0\right) =0,\nabla _{\xi }P\left(
x_{0},0\right) \in (1+\tau)\blacklozenge K\left( x_{0}\right) , \\ 
\left\vert \partial _{x}^{\alpha }\partial _{\xi }^{\beta }P\left(
x_{0},0\right) \right\vert \leq M\text{ for }\left\vert \alpha \right\vert
+\left\vert \beta \right\vert \leq m%
\end{array}%
\right\} \mathcal{\subset \mathcal{P}}^{+}$.
\end{itemize}

Let $\tg=\left( \Gamma (\left( x_{0},0\right) ,M,\tau)\right) $ $_{\left(
x_0,0\right) \in E^{+},M>0,\tau\in(0,\tau_{\max}]}$.

\begin{lemma}
\label{lemma-bfp1} $\tg$ is a $(C,1)$-convex blob field of blob constant $(2+\tau_{\max})$.
\end{lemma}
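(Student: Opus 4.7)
The plan is to check the two defining properties separately. First I will verify that $\vec{\Gamma}$ is a blob field of blob constant $2+\tau_{\max}$, and then that each blob is $(C,1)$-convex at its base point $(x_0,0)$. The key structural observation driving both arguments is that the defining constraints of $\Gamma((x_0,0),M,\tau)$ split cleanly into three kinds: a linear equality ($P(x_0,0)=0$), a convex membership condition on $\nabla_\xi P(x_0,0)$ inside $(1+\tau)\blacklozenge K(x_0)$, and uniform pointwise bounds on derivatives. The first two behave well under convex combinations and under $\blacklozenge$-enlargement; only the pointwise bound is sensitive to the size of $M$, and only the $K(x_0)$-condition is sensitive to the size of $\tau$.

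For the blob property, fix $P=P_0+\tfrac{\tau}{2}P_1-\tfrac{\tau}{2}P_2$ with $P_i\in\Gamma((x_0,0),M,\tau)$. Evaluation at $(x_0,0)$ gives $P(x_0,0)=0$ immediately. The derivative bound is a straightforward triangle inequality giving $|\partial_x^\alpha\partial_\xi^\beta P(x_0,0)|\leq(1+\tau)M\leq(1+\tau_{\max})M$, which is absorbed by $M'\geq(2+\tau_{\max})M$. For the $\nabla_\xi$ condition, each $\nabla_\xi P_i(x_0,0)\in(1+\tau)\blacklozenge K(x_0)$, so $\nabla_\xi P(x_0,0)\in (1+\tau)\blacklozenge((1+\tau)\blacklozenge K(x_0))$, which by Lemma \ref{lem:twicetau} equals $(1+(2+\tau)\tau)\blacklozenge K(x_0)$. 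Since $\tau\leq\tau_{\max}$, this is contained in $(1+\tau')\blacklozenge K(x_0)$ whenever $\tau'\geq(2+\tau_{\max})\tau$, which is exactly what the blob constant $C=2+\tau_{\max}$ demands.

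For $(C,1)$-convexity, take $P_1,P_2\in\Gamma((x_0,0),M,\tau)$ and $Q_1,Q_2\in\mathcal{P}^+$ with the hypotheses of the definition (with $\delta\leq 1$, and derivative orders $|\gamma|\leq m$ in the $\mathcal{P}^+$-setting). Set $P=\sum_i Q_i\odot_{(x_0,0)}Q_i\odot_{(x_0,0)} P_i$. Using $\sum_i Q_i^2(x_0,0)=1$, one writes at the jet level $P-P_1=Q_2\odot_{(x_0,0)}Q_2\odot_{(x_0,0)}(P_2-P_1)$; a Leibniz expansion using $|\partial^\gamma(Q_2^2)(x_0,0)|\leq C\delta^{-|\gamma|}$ and the small-difference hypothesis gives $|\partial^\gamma(P-P_1)(x_0,0)|\leq CM\delta^{m+1-|\gamma|}\leq CM$ for $\delta\leq 1$ and $|\gamma|\leq m$, hence $|\partial^\gamma P(x_0,0)|\leq C'M$. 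For the two conditions involving the value and the $\xi$-gradient at $(x_0,0)$, the key point—and the only place where genuine care is needed—is that $P_i(x_0,0)=0$ kills all cross terms in the Leibniz expansion of $\nabla_\xi(Q_i^2 P_i)$ at $(x_0,0)$. Therefore $P(x_0,0)=\sum_i Q_i^2(x_0,0)P_i(x_0,0)=0$ and $\nabla_\xi P(x_0,0)=\sum_i Q_i^2(x_0,0)\nabla_\xi P_i(x_0,0)$ is a genuine convex combination of elements of the convex set $(1+\tau)\blacklozenge K(x_0)$, so it stays in $(1+\tau)\blacklozenge K(x_0)\subset(1+C\tau)\blacklozenge K(x_0)$. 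Thus $P\in\Gamma((x_0,0),C'M,C\tau)$, establishing $(C,1)$-convexity. The only real obstacle is the bookkeeping in this last step: one must observe that because the polynomials $P_i$ vanish at $(x_0,0)$, the $\xi$-gradient condition requires \emph{no} enlargement of $\tau$, while the derivative-bound estimate is purely a product-rule/Taylor calculation that exploits $\delta\leq 1=\delta_{\max}$.
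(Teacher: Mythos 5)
Your proposal is correct and follows essentially the same route as the paper: the blob-constant verification via Lemma \ref{lem:twicetau}, the reduction $P-P_1 = Q_2\odot_{(x_0,0)}Q_2\odot_{(x_0,0)}(P_2-P_1)$ for the derivative bound, and the crucial observation that $P_i(x_0,0)=0$ collapses $\nabla_\xi P(x_0,0)$ to a convex combination of the $\nabla_\xi P_i(x_0,0)$. The only stylistic difference is that you articulate explicitly why the $\tau$-parameter needs no enlargement in the $\xi$-gradient condition, which the paper leaves implicit when it writes $P\in\Gamma((x_0,0),CM,C\tau)$.
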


\begin{proof}[Proof of Lemma \protect\ref{lemma-bfp1}]
Clearly, each $\Gamma ((x_0,0),M,\tau)$ is a (possibly empty) convex subset of $%
\mathcal{P}^{+}$.

Let's look at $P \in (1+\tau)\blacklozenge\G((x_0,0),M,\tau)$. That is, there exist $P', P_+, P_- \in \G((x_0,0),M,\tau)$ such that $P = P' +\frac{\tau}{2}P_+-\frac{\tau}{2}P_-$. Clearly, $P(x_0,0) = 0$. Furthermore, $|\partial_x^{\alpha}\partial_{\xi}^{\beta}P(x_0,0)|\leq (1+\tau)M\leq (2+\tau_{\max})M$. Finally, $\nabla_{\xi}P(x_0,0) \in (1+(2+\tau)\tau)\blacklozenge K(x_0) \subset (1+(2+\tau_{\max})\tau)\blacklozenge K(x_0)$ (see Lemma \ref{lem:twicetau}). Thus, $\tg$ is a blob field (with $m+1$, $n+D$ playing the roles of $m$, $n$, respectively) with blob constant $(2+\tau_{\max})$.

To prove $(C,1)$%
-convexity, let $x_{0}\in E,0<\delta \leq 1$, let

\begin{itemize}
\item[\refstepcounter{equation}\text{(\theequation)}\label{fpii4}] $%
P_{1},P_{2}\in \Gamma \left( \left( x_{0},0\right) ,M,\tau\right) $ with
\end{itemize}

\begin{itemize}
\item[\refstepcounter{equation}\text{(\theequation)}\label{fpii5}] $%
\left\vert \partial _{x}^{\alpha }\partial _{\xi }^{\beta }\left(
P_{1}-P_{2}\right) \left( x_{0},0\right) \right\vert \leq M\delta ^{\left(
m+1\right) -\left\vert \alpha \right\vert -\left\vert \beta \right\vert }$
for $\left\vert \alpha \right\vert +\left\vert \beta \right\vert \leq m$;
and let
\end{itemize}

\begin{itemize}
\item[\refstepcounter{equation}\text{(\theequation)}\label{fpii6}] $%
Q_{1},Q_{2}\in \mathcal{P}^{+}$, with
\end{itemize}

\begin{itemize}
\item[\refstepcounter{equation}\text{(\theequation)}\label{fpii7}] $%
\left\vert \partial _{x}^{\alpha }\partial _{\xi }^{\beta }Q_{i}\left(
x_{0},0\right) \right\vert \leq \delta ^{-\left\vert \alpha \right\vert
-\left\vert \beta \right\vert }$ for $i=1,2$, $\left\vert \alpha \right\vert
+\left\vert \beta \right\vert \leq m$, and with
\end{itemize}

\begin{itemize}
\item[\refstepcounter{equation}\text{(\theequation)}\label{fpii8}] $%
Q_{1}\odot _{\left( x_{0},0\right) }Q_{1}+Q_{2}\odot _{\left( x_{0},0\right)
}Q_{2}=1$.
\end{itemize}

We must show that the polynomial

\begin{itemize}
\item[\refstepcounter{equation}\text{(\theequation)}\label{fpii9}] $%
P:=Q_{1}\odot _{\left( x_{0},0\right) }Q_{1}\odot _{\left( x_{0},0\right)
}P_{1}+Q_{2}\odot _{\left( x_{0},0\right) }Q_{2}\odot _{\left(
x_{0},0\right) }P_{2}$ \end{itemize}
belongs to $\Gamma \left( \left( x_{0},0\right)
,CM,C\tau \right) $.

From \eqref{fpii3}, \eqref{fpii4}, we have

\begin{itemize}
\item[\refstepcounter{equation}\text{(\theequation)}\label{fpii10}] $%
\left\vert \partial _{x}^{\alpha }\partial _{\xi }^{\beta }P_{1}\left(
x_{0},0\right) \right\vert \leq M$ for $\left\vert \alpha \right\vert
+\left\vert \beta \right\vert \leq m$,
\end{itemize}

\begin{itemize}
\item[\refstepcounter{equation}\text{(\theequation)}\label{fpii11}] $%
P_{1}\left( x_{0},0\right) =P_{2}\left( x_{0},0\right) =0$, and
\end{itemize}

\begin{itemize}
\item[\refstepcounter{equation}\text{(\theequation)}\label{fpii12}] $\nabla
_{\xi }P_{1}\left( x_{0},0\right) $, $\nabla _{\xi }P_{2}\left(
x_{0},0\right) \in (1+\tau)\blacklozenge K\left( x_{0}\right) $.
\end{itemize}

Then \eqref{fpii9}, \eqref{fpii11} give 
\begin{equation*}
P\left( x_{0},0\right) =0
\end{equation*}%
and 
\begin{equation*}
\nabla _{\xi }P\left( x_{0},0\right) =\left( Q_{1}\left( x_{0},0\right)
\right) ^{2}\nabla _{\xi }P_{1}\left( x_{0},0\right) +\left( Q_{2}\left(
x_{0},0\right) \right) ^{2}\nabla _{\xi }P_{2}\left( x_{0},0\right)
\end{equation*}%
while \eqref{fpii8} yields%
\begin{equation*}
\left( Q_{1}\left( x_{0},0\right) \right) ^{2}+\left( Q_{2}\left(
x_{0},0\right) \right) ^{2}=1\text{.}
\end{equation*}%
Together with \eqref{fpii12} and convexity of $(1+\tau)\blacklozenge K(x_{0})$, the above remarks
imply that

\begin{itemize}
\item[\refstepcounter{equation}\text{(\theequation)}\label{fpii13}] $P\left(
x_{0},0\right) =0$ and $\nabla _{\xi }P\left( x_{0},0\right) \in (1+\tau)\blacklozenge K\left(
x_{0}\right) $.
\end{itemize}

Also, \eqref{fpii8}, \eqref{fpii9} imply the formula

\begin{itemize}
\item[\refstepcounter{equation}\text{(\theequation)}\label{fpii14}] $%
P=P_{1}+Q_{2}\odot _{\left( x_{0},0\right) }Q_{2}\odot _{\left(
x_{0},0\right) }\left( P_{2}-P_{1}\right) $.
\end{itemize}

From \eqref{fpii5}, \eqref{fpii7}, and $\delta \leq 1$, we have 
\begin{eqnarray*}
\left\vert \partial _{x}^{\alpha }\partial _{\xi }^{\beta }\left[ Q_{2}\odot
_{\left( x_{0},0\right) }Q_{2}\odot _{\left( x_{0},0\right) }\left(
P_{2}-P_{1}\right) \right] \left( x_{0},0\right) \right\vert &\leq &CM\delta
^{\left( m+1\right) -\left\vert \alpha \right\vert -\left\vert \beta
\right\vert } \\
&\leq &CM
\end{eqnarray*}%
for $\left\vert \alpha \right\vert +\left\vert \beta \right\vert \leq m$.
Together with \eqref{fpii10} and \eqref{fpii14}, this tells us that

\begin{itemize}
\item[\refstepcounter{equation}\text{(\theequation)}\label{fpii15}] $%
\left\vert \partial _{x}^{\alpha }\partial _{\xi }^{\beta }P\left(
x_{0},0\right) \right\vert \leq CM$ for $\left\vert \alpha \right\vert
+\left\vert \beta \right\vert \leq m$.
\end{itemize}

From \eqref{fpii13}, \eqref{fpii15} and the definition \eqref{fpii3}, we see
that $P\in \Gamma \left( \left( x_{0},0\right) ,CM, C\tau\right) $, completing the
proof of Lemma \ref{lemma-bfp1}.
\end{proof}

Note that the $K(x_0)$ need not be polytopes. For Lemma \ref{lem:bloboracle}, which involves the computation of the initial blob Oracle that will allow us the computation of the interpolant, we need as input the descriptors of some polytopes. Therefore we will work with slightly different blob fields.

We assume that we have an Oracle that given $x \in E$ and $\tau_0 > 0$ charges us $C(\tau_0)$ work to produce $\Delta(x_0, \tau_0)$, a descriptor of length $|\Delta(x_0, \tau_0)| \leq C(\tau_0)$ such that $K(x_0) \subset K(\Delta(x_0, \tau_0)) \subset (1+\tau_0)\blacklozenge K(x_0)$ and $K(\Delta(x_0, \tau_0)) \subset K(\Delta(x_0, \tau_0'))$ for $\tau_0' \geq \tau_0$. The blob field we'll be working with is now given by $K(\Delta(x_0, \tau_0))$.

\begin{remark}
	To obtain this Oracle, supposing $K(x_0)$ are polytopes, we could simply use Algorithm \ref{alg:approx-tau} on the initial polytopes for each $\tau$. In order to have the second property, however, if we use this algorithm, we need a way to guarantee that the $\tau$-nets of the unit ball are "nested", i.e., if $\Lambda_{\tau}$ is a $\tau$-net of the sphere, then we need $\Lambda_{\tau} \subset \Lambda_{\tau'}$ if $\tau' \leq \tau$. For computational purposes, we instead produce a lazy (that is, computed as needed) list of $\tau$s, starting from some $\tau_0$ and computing (as needed) $\Lambda_{2^{-jD}\tau_0}$ for $j \geq 0$. These nets are nested, and therefore so are the convex sets that we find from the descriptor. 
\end{remark}

\begin{lemma}
	\label{lem:finite}
	The blob fields
	\begin{itemize}
		
		\item[\eqref{fpii3}] $\Gamma
		\left( \left( x_0,0\right) ,M,\tau\right) =\left\{ 
		\begin{array}{c}
		P\in \mathcal{P}^{+}:P\left( x_{0},0\right) =0,\nabla _{\xi }P\left(
		x_{0},0\right) \in (1+\tau)\blacklozenge K\left( x_{0}\right) , \\ 
		\left\vert \partial _{x}^{\alpha }\partial _{\xi }^{\beta }P\left(
		x_{0},0\right) \right\vert \leq M\text{ for }\left\vert \alpha \right\vert
		+\left\vert \beta \right\vert \leq m%
		\end{array}%
		\right\} \mathcal{\subset \mathcal{P}}^{+}$.
		
		\item[\refstepcounter{equation}\text{(\theequation)}\label{approx-smooth}] $\Gamma'
		\left( \left( x_0,0\right) ,M,\tau\right) =\left\{ 
		\begin{array}{c}
		P\in \mathcal{P}^{+}:P\left( x_{0},0\right) =0,\nabla _{\xi }P\left(
		x_{0},0\right) \in (1+\tau)\blacklozenge K\left(\Delta(x_0, \tau)\right) , \\ 
		\left\vert \partial _{x}^{\alpha }\partial _{\xi }^{\beta }P\left(
		x_{0},0\right) \right\vert \leq M\text{ for }\left\vert \alpha \right\vert
		+\left\vert \beta \right\vert \leq m%
		\end{array}%
		\right\} \mathcal{\subset \mathcal{P}}^{+}$.
	\end{itemize}
	are $C-$equivalent with $C$ depending only on $m, n, D$. In particular, $\Gamma'$ is a blob field.
\end{lemma}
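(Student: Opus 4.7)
The plan is to derive the lemma as a direct application of Lemma \ref{lem:c-eq-bl} to the blob field $\vec{\Gamma}$, which by Lemma \ref{lemma-bfp1} is a blob field with blob constant $(2+\tau_{\max})$. To invoke Lemma \ref{lem:c-eq-bl}, I must establish, for some constant $C_2 = C_2(m,n,D)$ and for $0 < \tau < \tau_{\max}/C_2$, the two-sided containment
\begin{equation*}
\Gamma((x_0,0), M, \tau) \subset \Gamma'((x_0,0), M, \tau) \subset (1+\tau)\blacklozenge \Gamma((x_0,0), C_2 M, C_2 \tau).
\end{equation*}

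First I would check the left inclusion. Only the $\nabla_\xi$ constraint differs between $\Gamma$ and $\Gamma'$, so it suffices to verify $(1+\tau)\blacklozenge K(x_0) \subset (1+\tau)\blacklozenge K(\Delta(x_0,\tau))$. This is immediate from the Oracle hypothesis $K(x_0) \subset K(\Delta(x_0,\tau))$ together with the obvious monotonicity $K \subset K' \Rightarrow (1+\tau)\blacklozenge K \subset (1+\tau)\blacklozenge K'$ (both sides are just Minkowski combinations in the argument).

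For the right inclusion, take $P \in \Gamma'((x_0,0), M, \tau)$. Combining the Oracle inclusion $K(\Delta(x_0,\tau)) \subset (1+\tau)\blacklozenge K(x_0)$ with Lemma \ref{lem:twicetau} gives
\begin{equation*}
(1+\tau)\blacklozenge K(\Delta(x_0,\tau)) \subset (1+\tau)\blacklozenge\bigl((1+\tau)\blacklozenge K(x_0)\bigr) = (1+(2+\tau)\tau)\blacklozenge K(x_0) \subset (1+3\tau)\blacklozenge K(x_0),
\end{equation*}
where the last step uses $\tau \leq 1$ and the elementary monotonicity of $s \mapsto (1+s)\blacklozenge K$ on $s\geq 0$ (for $s\leq s'$, the inclusion $\frac{s}{2}(K-K) \subset \frac{s'}{2}(K-K)$ holds because $K-K$ is convex and contains $0$, hence is star-shaped about $0$). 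Thus $\nabla_\xi P(x_0,0) \in (1+3\tau)\blacklozenge K(x_0)$, while the conditions $P(x_0,0)=0$ and $|\partial_x^\alpha \partial_\xi^\beta P(x_0,0)| \leq M$ persist unchanged, so $P \in \Gamma((x_0,0), M, 3\tau)$. Writing $P = P + \tfrac{\tau}{2} P - \tfrac{\tau}{2} P$ with all three copies belonging to $\Gamma((x_0,0), 3M, 3\tau) \supset \Gamma((x_0,0), M, 3\tau)$ shows $P \in (1+\tau)\blacklozenge \Gamma((x_0,0), 3M, 3\tau)$, which is the desired right inclusion with $C_2 = 3$.

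Having verified the hypotheses of Lemma \ref{lem:c-eq-bl} with $C_1 = 2+\tau_{\max}$ and $C_2 = 3$, the conclusion follows at once: $\vec{\Gamma}'$ is a blob field and is $C$-equivalent to $\vec{\Gamma}$, with $C$ depending only on $m$, $n$, $D$. The proof is essentially bookkeeping; the only mildly delicate step is combining Lemma \ref{lem:twicetau} with the monotonicity of $(1+s)\blacklozenge K$ in $s$ to absorb the double enlargement $(1+\tau)\blacklozenge((1+\tau)\blacklozenge K)$ into a single $(1+3\tau)\blacklozenge K$ so that Lemma \ref{lem:c-eq-bl} applies cleanly.
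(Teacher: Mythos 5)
Your proof is correct and follows the same approach as the paper's: both verify the two-sided inclusion $\Gamma \subset \Gamma' \subset (1+\tau)\blacklozenge\Gamma(C_2 M, C_2\tau)$ using the Oracle hypothesis and Lemma~\ref{lem:twicetau}, then invoke Lemma~\ref{lem:c-eq-bl}. The only cosmetic differences are the constant ($C_2=3$ under the extra assumption $\tau\le 1$, versus the paper's $C_2 = 2+\tau_{\max}$) and your final step writing $P = P + \tfrac{\tau}{2}P - \tfrac{\tau}{2}P$, which is harmless but redundant since $\Gamma(3M,3\tau)\subset(1+\tau)\blacklozenge\Gamma(3M,3\tau)$ is automatic for any convex set.
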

\begin{proof}
	Clearly, for every $x \in E$ we have $\Gamma(x, M, \tau) \subset \Gamma'(x, M, \tau)$, and $\Gamma'(x, M, \tau)$ is a convex set.
	
	Let $P \in \Gamma'(x, M, \tau)$. Then by definition $P(x_0, 0) = 0$, $\left\vert \partial _{x}^{\alpha }\partial _{\xi }^{\beta }P\left(
	x_{0},0\right) \right\vert \leq M \leq (2+\tau_{\max})M\text{ for }\left\vert \alpha \right\vert
	+\left\vert \beta \right\vert \leq m$ and $\nabla_{\xi }P(x_0, 0) \in (1+\tau)\blacklozenge K(\Delta(x_0, \tau)) \subset (1+\tau)\blacklozenge \left[(1+\tau)\blacklozenge K(x_0)\right] \subset (1+(2+\tau_{\max})\tau)\blacklozenge K(x_0)$ by Lemma \ref{lem:twicetau}.
	
	Therefore $\Gamma'(x, M, \tau) \subset \Gamma(x, (2+\tau_{\max})M, (2+\tau_{\max})\tau)$. By Lemma \ref{lem:c-eq-bl}, $\Gamma'$ is a blob field and it is $C-$equivalent to $\Gamma$, where $C$ depends only on $\tau_{\max}$, which depends only on $n, m, D$.
\end{proof}
\begin{remark}
	This proof gives us worse constants than the optimal ones.
\end{remark}
\begin{lemma}
	\label{lem:bloboracle}
	Suppose we are given $E, E^+$ as above, and an Oracle that for each $x_0 \in E, \tau$ returns a descriptor $\Delta(x_0, \tau)$ ($|\Delta(x_0)| \leq C(\tau)$ charging us $C(\tau)$ work) as defined above. We can produce a blob field Oracle that for $M_0, \tau_0$ will return a list of $\G'(x, M_0, \tau_0)_{x\in E}$ as defined in Lemma \ref{lem:finite}, in time at most $C(\tau_0)N \log N$. 
\end{lemma}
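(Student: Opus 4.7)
The plan is to assemble the descriptor of $\Gamma'((x_0, 0), M_0, \tau_0)$ one point at a time. This works because $\mathcal{P}^+$ is finite-dimensional (with dimension depending only on $m, n, D$) and all three conditions in \eqref{approx-smooth} are affine in the coefficient vector of $P \in \mathcal{P}^+$, so the construction at each $x_0$ is a bounded-size operation that reuses only the single descriptor $\Delta(x_0, \tau_0)$ returned by the input Oracle.

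First, for each $x_0 \in E$, query the input Oracle with parameter $\tau_0$ to obtain $\Delta(x_0, \tau_0)$ of length at most $C(\tau_0)$ at cost $C(\tau_0)$, for a total of $C(\tau_0) N$ work in this pass. Second, using the identity $(1+\tau_0)\blacklozenge K = K + \tfrac{\tau_0}{2}K - \tfrac{\tau_0}{2}K$ together with two applications of Algorithm \ref{alg:AMS} to trivially rescaled and reflected copies of $\Delta(x_0, \tau_0)$ (noting that $\alpha K$ and $-K$ each admit immediate descriptors of the same length as $\Delta$), produce $\tilde{\Delta}^{\blacklozenge}(x_0, \tau_0)$, a descriptor of size $C(\tau_0)$ for a polytope approximating $(1+\tau_0)\blacklozenge K(\Delta(x_0, \tau_0))$ from outside, computed in $C(\tau_0)$ time per point.

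Third, read off the defining constraints of $\Gamma'((x_0, 0), M_0, \tau_0)$ directly as linear constraints on the coefficients of $P$: the equation $P(x_0, 0) = 0$ becomes two linear half-spaces; each bound $|\partial_x^\alpha \partial_\xi^\beta P(x_0,0)| \leq M_0$ with $|\alpha| + |\beta| \leq m$ becomes two more (only finitely many multi-indices, depending on $m, n, D$); and each half-space $\xi_i \cdot v \leq b_i$ of $\tilde{\Delta}^{\blacklozenge}(x_0, \tau_0)$ translates into the inequality $\xi_i \cdot \nabla_\xi P(x_0, 0) \leq b_i$, affine in the coefficients of $P$. Concatenating these yields a descriptor of length $C(\tau_0)$, assembled in $C(\tau_0)$ time per point.

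Summing over $N$ points gives total work $C(\tau_0) N \leq C(\tau_0) N \log N$, as claimed. The main subtlety is purely bookkeeping: the use of Algorithm \ref{alg:AMS} introduces a slight further enlargement of the polytope in $\R^D$, so the Oracle technically represents a blob field $C$-equivalent to (rather than equal to) $\Gamma'$, with equivalence constants depending only on $m, n, D$. By Lemma \ref{lem:twicetau} and Lemma \ref{lem:c-eq-bl} this still defines a blob field in the sense of Definition \ref{def:oracle}, consistent with the remark there that descriptors may be replaced by controlled approximations, and it serves the same role for downstream algorithms.
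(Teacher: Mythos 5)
Your proof is correct and follows the same overall route as the paper: query the input Oracle once per point, translate the $\R^D$-membership constraint into linear constraints on the coefficient vector of $P \in \mathcal{P}^+$ through the linear map $P \mapsto \nabla_\xi P(x_0, 0)$, append the finitely many constraints for the derivative bounds and the centering condition $P(x_0,0)=0$, and concatenate, at cost $C(\tau_0)$ per point and hence $C(\tau_0) N \leq C(\tau_0)N\log N$ in total. Where you differ is the handling of the $(1+\tau_0)\blacklozenge$ enlargement: you explicitly build an approximate outer descriptor for $(1+\tau_0)\blacklozenge K(\Delta(x_0,\tau_0))$ by two applications of Algorithm \ref{alg:AMS} to rescaled and reflected copies of $\Delta(x_0,\tau_0)$, and you correctly flag that the resulting Oracle encodes a blob field only $C$-equivalent to $\Gamma'$, appealing to Lemmas \ref{lem:twicetau} and \ref{lem:c-eq-bl} to justify this. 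The paper instead folds this step into a single matrix--matrix multiplication, which in fact produces a descriptor for $\{P : \nabla_\xi P(x_0,0) \in K(\Delta(x_0,\tau_0))\}$ without the blacklozenge and leaves the $C$-equivalence to $\Gamma'$ implicit; so on this point your version is the more explicit of the two, and the extra AMS calls per point do not affect the complexity bound.
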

\begin{proof}
	Fix $M_0, \tau_0$. For each $\Delta(x_0, \tau_0)$ (obtained in $C(\tau_0)$ operations by calling the Oracle), we produce $\Delta_{\P}(x_0, \tau_0)$ such that $K(\Delta_{\P}(x_0, \tau_0)) = \{P \in \P^+ : \nabla_{\xi}P(x_0, 0) \in (1+\tau_0)\blacklozenge K(\Delta(x_0, \tau_0))\}$. Finding this descriptor is simple and requires only a matrix-matrix multiplication (of dimension $C(\tau_0) \times D$ for $\xi$ and $D\times\text{dim}(\P^+)$ for $\nabla_{\xi}P$), which takes $C(\tau_0)$ operations. The rows of this matrix-matrix multiplication will be the coefficients of the descriptor. The size of the descriptor is $C(\tau_0) \times \text{dim}(\P^+)$. 
	
	Similarly we compute a descriptor $\mathring{\Delta}_{\P}(x_0, M_0)$ such that $K(\mathring{\Delta}_{\P}(x_0, M_0)) = \{P \in \P^+ : |\partial_x^\alpha \partial_\xi^\beta P(x_0, 0)| \leq M_0 \text{ for }|\alpha|+|\beta|\leq m\}$. The number of constraints here is bounded by a constant depending only on $n, m$ and computing the coefficients takes $C$ operations. 
	
	Finally, we create another descriptor representing the constraint $P(x_0, 0) = 0$. 
	
	We return the complete descriptor (combination of these three descriptors, corresponding to the intersection of the convex sets as explained in Remark \ref{rem:intersect}) $\tilde{\Delta}_{\P}(x_0, M_0, \tau_0)$ in $C(\tau_0)$ time. The number of constraints is bounded by $C(\tau_0)$, not depending on $\# E$. We don't need to compute an approximation via Algorithm \ref{alg:AI} because as soon as we start building the refinements those approximations will be computed.
	
	In total, with at most $C(\tau_0)N$ operations, we have returned a list (indexed by $x_0$) of descriptors $\tilde{\Delta}_\P(x_0, M_0, \tau_0)$, which is even less than required by our definition of an Oracle.
\end{proof}
\part{Algorithms}

In this part we will present the two algorithms to use for finding the norm of the interpolant and for computing the interpolant in the smooth selection problem. We also discuss the complexity of both algorithms.

\section{Finding the norm of the interpolant}

Here, we will provide an algorithm that finds (up to an order of magnitude) the norm of the function guaranteed to exist by Theorem \ref{theorem-tu1} of section \ref{tu}.

\subsection{Decision problem}
Given $\tg_0 = (\Gamma_0(x,M,\tau))_{x\in E, M>0, \tau\in(0,\tau_{\max}]}$ a $(C_w,\delta_{\max})$-convex blob field with blob constant $C_{\Gamma}$, and fixing $M_0 > 0$, $\tau_0 \in (0,\frac{\tau_{\max}}{C}]$ this algorithm returns $0$ if no function $F\in \mathcal{C}^m(\R^n)$ exists such that $J_x(F) \in \Gamma_0(x,CM_0,C\tau_0)$ for all $x\in E$ and such that $|\partial^{\beta}F|\leq CM$ in $\R^n$, and $1$ if there exists a function $F\in\mathcal{C}^m(\R^n)$ such that $J_x(F) \in \Gamma_0(x,c_* M_0, c_* \tau_0)$ for all $x \in E$ and $|\partial^{\beta}F|\leq c_*M$ in $\R^n$ with $c, C_*$ determined by $C_w, m, n, C_{\G}$.

Note that $\tg_0$ will come in the form of an Oracle $\Omega$ (as in Definition \ref{def:oracle}) that responds to a query $(M, \tau)$ with a list of the descriptors of $(\Gamma_0(x, M, \tau))_{x\in E}$ and charges work $O(N\log N)$ and storage $O(N)$.

\begin{algorithm}
  \label{alg:dec}
  \KwData{WSPD of $E$, $\tg_0$, $Q_0$ with $\delta_{Q_0}\leq \delta_{\max}$, $x_0 \in E \cap 5Q_0$, $M_0$, $\tau_0$}
  \KwResult{0 if there does not exist a function $F\in \mathcal{C}^m(Q_0)$ such that $J_x(F) \in \Gamma_0(x,CM_0,C\tau_0)$ for all $x\in E\cap \frac{65}{64}Q_0$ and such that $|\partial^{\beta}F|\leq CM$ in $\R^n$, 1 if there exists a function $F\in \mathcal{C}^m(\R^n)$ such that $J_x(F) \in \Gamma_0(x,c_* M_0, c_* \tau_0)$ for all $x \in E \cap \frac{65}{64}Q_0$ and $|\partial^{\beta}F|\leq c_*M$}
  Compute approximate $l(\emptyset)+1$th refinement of $\tg_0$, $\tg_{l(\emptyset)+1}$\;
  \uIf{$\tg_{l(\emptyset)+1} == \emptyset$}{
    \KwRet{0}\;
  }\uElse{
    \KwRet{1}\;
  }
  \caption{Decision}
\end{algorithm}

To compute the refinements, we use the results from Section \ref{sec:computing}. Note that a single call to the Oracle is needed for a given pair $(M, \tau)$. Recall that each refinement takes $CN\log N$ operations, with $C$ depending on $\tau_0$, $n$ and $m$. Computing $l(\emptyset) + 1$ refinements will take then $CN\log N$ operations again. The storage required is $CN$. Therefore, each refinement can be called just like the Oracle.

\begin{remark}
	Recall that, by Megiddo's algorithm \cite{megiddoLinearProgrammingLinear1984}, we can decide whether a given $\Gamma_l (x, M, \tau)$ is empty.
\end{remark}

\section{Constants}
\label{sec:const}
To save ourselves from trouble in the next sections, we will compute and store all the necessary refinements with the appropriate constants as follows. Assume we are given $E$, $\tg_0$ (with relevant constants $C_w, C_\G$), $M_0$, $\tau_0$, $\epsilon$. We are preparing to implement the inductive proof of the Main Lemma by an algorithm.

For $\A = \emptyset$ we only need to compute $\Gamma_{l(\emptyset)}(x, M_0, \tau_0)_{x\in E}$, $\Gamma_{l(\emptyset)-1}(x, C_B'M_0, C_B'\tau_0)_{x \in E}$ and $\Gamma_{l(\emptyset)-3}(x, C_B''M_0, C_B''\tau_0)_{x\in E}$ for certain fixed $C_B'$, $C_B''$ depending only on $n, m, C_{\G}, C_{w}$.

Given $\A, \Gamma_{l(A)}(x, CM_0, C\tau_0), y_Q$, there is a step in the algorithm  corresponding to the Main Lemma for $\A$ where we will find $P^\#$ and $\A^\#$ such that $P^\# \in \Gamma_{l(\A^{\#})}(y_Q, \hat{C}(\A,\A^\#)CM_0, \hat{C}(\A,\A^\#)C\tau_0)$ for $\hat{C}(\A, \A^{\#})$ depending only on $n, m, C_\G, C_w, \A, \A^\#$. We store these $\hat{C}(\A, \A^{\#})$ (for example, in a hashable map) and use them to define the following lists. 

We initialize all $I[\A]$ to be empty. We set $I[\emptyset] = (1)$. Then, for each $\A^\#$, we iterate over all $\A > \A^\#$ and we add to $I[\A^\#]$ all the constants of the form $\hat{C}(\A, \A^\#)C$ with $\hat{C}(\A, \A^\#)$ as above and $C \in I[\A]$. Note that the list of constants $I[\A]$ only depends on $\A, m, n, C_w, C_\G$.

For each monotonic $\A$, for each $C_j^\A \in I[\A]$ we compute and store $\Gamma_{l(\A)}(x, C_j^\A M_0, C_j^\A \tau_0)$, $\Gamma_{l(\A)-1}(x, C_j^\A C'_BM_0, C_j^\A C'_B\tau_0)$ and $\Gamma_{l(\A)-3}(x, C_j^\A C''_BM_0, C_j^\A C''_B\tau_0)$ for all $x\in E$. Since the number of constants depends only on $m, n$, the total time required to compute this collection of refinements is at most $C(\tau_0)N\log N$ and the total space required to store them is at most $C(\tau_0)N$.

\begin{remark}
	Note: the constant $C''_B$ is related to the big $A$ constant and so it will be large.
\end{remark}

\section{Computing CZ decompositions}

\subsection{CZ decomposition}
\label{sec:comp-cz-decomp}

As part of the one-time work, we will need to compute a CZ decomposition for different $\mathcal{A}, CM_0, C\tau_0, x_0, Q_0, P_0$ in the same way as seen in \cite{feffermanFittingSmoothFunction2009a}. Recall that $M_0$ and $\tau_0$ are fixed.

This computation is done for each given node $T = (A_T, x_T, P_T, Q_T, E_T, C_T)$ as well as its corresponding $\tg_{l(\A)}(x, C_T M_0, C_T \tau_0)_{x\in E}$, $\tg_{l(\A)-1}(x, C_B' C_T M_0, C'_B C_T \tau_0)_{x\in E}$ and $\tg_{l(\A)-3}(x, C''_B C_T M_0, C''_B C_T \tau_0)$. 

As we proceed with the one-time work, we calculate the lengthscales $\delta(x, \hat{A})$ using the algorithm ``Finding critical $\delta$, general case'' (Algorithm \ref{alg:find-crit-delta}), with data $x, \hat{\A}, C''_B C_T, M_0, \tau_0$, with $\Gamma_{in}, \Gamma$ as in \eqref{cz3} and with $\hat{\A} < \A$. We calculate these for every $x \in E \cap \frac{65}{64}Q_T = E_T$ and every $\hat{\A} < \A$, and we pass them, along with $\epsilon^{-1}$, to the algorithms defined in \cite{feffermanFittingSmoothFunction2009a} to generate a CZ decomposition of $Q_T$ (see Section 24 in \cite{feffermanFittingSmoothFunction2009a}).

The CZ decomposition corresponding to this particular tuple takes at most $C(\tau_0)N_T\log N_T$ time, and at most $C(\tau_0)N_T$ storage. After performing this work, it answers the following queries in at most $C(\tau_0)\log N_T$ time (see Sections 25, 26, 27 in \cite{feffermanFittingSmoothFunction2009a}):

\begin{itemize}
	\item Given a dyadic cube $Q$ with $5Q \subset 5Q_0$ we can decide whether $Q \in CZ$.
	\item Given $x \in \frac{65}{64}Q_0$ we give a list of all cubes $Q \in CZ$ such that $x \in \frac{65}{64}Q$.
	\item Given a dyadic cube $Q$ we decide whether $E_0 \cap 5Q$ is empty, and if it is not empty we return a representative $y_Q$. If $E_0 \cap \frac{65}{64}Q$ is not empty, then $y_Q \in E_0 \cap \frac{65}{54}Q$. This function is called \texttt{FindRepresentative} in the algorithms.

\end{itemize}
\begin{remark}
	The function in \cite{feffermanFittingSmoothFunction2009a} decides whether $Q^+ \cap E_0$ is empty and if it is not, returns a representative in $Q^{++}\cap E_0$. We can use the same process to find a representative in a general dyadic cube $Q \cap E_0$ in the same time. 
\end{remark}

\begin{remark}
	We can find whether $\frac{65}{64}Q$ is not empty by checking whether $128^n$ smaller dyadic cubes contain a point, similarly for $5Q$.
\end{remark}

\begin{remark}
	The total work for all the one-time work is at most $C(\tau_0) \#(E\cap \frac{65}{64}Q_0)$. See Lemma \ref{lem:constwork} \ref{sec:comp-main} for a discussion.
\end{remark}

\subsection{Partitions of Unity}
Once we have a CZ decomposition for a node $T$, we can compute a partition of unity adapted to the CZ decomposition in at most $C(\tau_0)\log N_T$ work and storage. See Section 28 in \cite{feffermanFittingSmoothFunction2009a} for more details.

\section{Finding a Neighbor}
In this section we describe the algorithm Find-Neighbor that returns $P^{\#}$ as in Section \ref{sec:locint}, case I, or $P^y$ as in Section \ref{sec:locint}, cases II and III. These algorithms will be called always within a node $T$ (see Section \ref{sec:tree}) so all the data needed for the algorithms will be contained in the node. 

Since we know a basis exists for the appropriate $\delta$, we don't need to apply Algorithm \ref{alg:find-crit-delta}. Finding a basis is a linear programming problem with bounded dimension because at most we will be optimizing over vectors consisting of a large (but controlled) number of degree $m-1$ polynomials. Furthermore the number of constraints is also bounded by $C(\tau_0)$ because we are using approximate polytopes. Finally, in the case of \texttt{FindNeighbor} we perform at most $C(\tau_0) \log (N_T)$ query work to find a representative, and then we solve a bounded number of such linear programs. Therefore the total work for \texttt{TransportPoly} is at most $C(\tau_0)$ and for \texttt{FindNeighbor} it is at most $C(\tau_0) \log( N_T)$.

Although these algorithms are simply applications of Megiddo's Algorithm \cite{megiddoLinearProgrammingLinear1984} to different Linear Programming problems, we will write them down here because the input data and constraints are slightly different from each other. For example, \texttt{TransportPoly} uses $\Gamma_{l(\A)-1}$ for a given $\A$, while \texttt{FindNeighbor} uses $\Gamma_{l(\A)-3}$ and has to solve the problem many times (going over all monotonic $\A' < \A$). Note that the constants depend on the large constant $A$ and small constant $\epsilon$ as well as the other intrinsic constants of the problem. Since $A$ and $\epsilon$ depend on $n,m$ and other constants, we don't go into the exact identification of these constants and leave that as a detail to work out in an implementation for a fixed dimension and smoothness problem.

The algorithms use the list and the hashable map defined in Section \ref{sec:const}. 

\begin{algorithm}
	\label{alg:transport}
	\SetKwFunction{Transport}{TransportPoly}
	\SetKwProg{Fn}{Function}{:}{}
	\Fn{\Transport{($\A_T, x_T, P_T, Q_T, E_T, C_T$), $l_0$, $\tg_{l_0}$, $\delta$, $M_0$, $\tau_0$, $y$}}{
		\tcc{$\tg_{l_0}$ has an $(\A, \delta, C_B)$-basis at $(x_T, C_TM_0, C_T\tau_0, P_T)$ }
		\tcc{$\A$ is monotonic}
		\tcc{$|x_0 - y_0|\leq \epsilon_0 \delta$}
		\tcc{$C_T$ is from the list described in Section \ref{sec:const}}
		\tcc{$\tg_{l_0}$ corresponds to $C_T$ in the list described in Section \ref{sec:const}}
		\tcc{$C'$ (in the last restriction) is fixed and depends on $m,n,A,\epsilon$}
		\KwResult{$\hat{P}^{\#}$}
		\tcc{$\tg_{l_0-1}$ has an $(\A,\delta, C'_B)$-basis at $(y_0, C_TM_0, C_T\tau_0, \hat{P}^{\#})$ formed by $P_\alpha$ (byproduct of the computation that we don't need)}

		Use Megiddo's Algorithm to solve 
			\begin{equation*}
			\mathop{\text{maximize}}\limits_{\substack{\hat{P}^{\#} \in \tg_{l_0-1}(y, C_T C'_BM_0, C_T C'_B\tau_0)\\ P_\alpha \in \P}}  1
			\end{equation*}
			\begin{equation*}
			\begin{array}{ll@{}ll}			\text{subject to}& \partial^\beta P_\alpha(y) &=\delta_{\beta\alpha} & \alpha,\beta\in\A\\
			&|\partial^\beta P_\alpha(y)| &\leq C'_B\delta^{|\alpha|-|\beta|}&\alpha \in \A, \beta\in\M\\
			&\hat{P}^{\#} \pm \frac{C_T M_0\delta^{m-|\alpha|}P_{\alpha}}{ C'_B} &\in \tg_{l_0-1}(y, C_T C'_BM_0, C_T C'_B \tau_0)& \alpha\in\A\\
			&\partial^{\beta}(\hat{P}^{\#} - P_T) &\equiv 0 &\beta\in\A\\
			&|\partial^{\beta}(\hat{P}^{\#} - P_T)(x_T)| &\leq C'C_TM_0\delta^{m-|\beta|} &\beta \in \M
			\end{array}
			\end{equation*}
		\KwRet{$\hat{P}^{\#}$}\;
	}
	\caption{Algorithm for Transport Lemma}
\end{algorithm}

\begin{algorithm}
	\label{alg:neighbor}
	\SetKwFunction{NNeighbor}{FindNeighbor}
	\SetKwFunction{Repr}{FindRepresentative}
	\SetKwFunction{Transport}{TransportPoly}
	\SetKwProg{Fn}{Function}{:}{}
	\Fn{\NNeighbor{($\A_T, x_T, P_T, Q_T, E_T, C_T$), $Q \in CZ(\A_T, P_T, x_T)$,  list of $(\tg)$, $M_0$, $\tau_0$}}{
		\tcc{Conditions from \ref{statement-of-the-main-lemma} apply}
		\tcc{$\#(E_0\cap 5Q) \geq 2$}
		\tcc{The list of $(\tg)$ corresponds to the constants $C_T$ as seen in Section \ref{sec:const}}
		\tcc{$C'$ is fixed}
		\KwResult{$y$, $\A^{\#}$, $P^{\#}$, $P_\alpha$}
		\tcc{$\A^{\#} < \A_T$ is monotonic}
		\tcc{$P_\alpha$ form an $(\A^{\#},\epsilon^{-1}\delta_{Q}, C''_B)$-basis for $\tg_{l(\A)-3}$ at $(y, C_T M_0, C_T \tau_0, P^{\#})$}
		
		$y$ = \Repr($Q$)\;
		$P^y = $ \Transport(($\A_T, x_T, P_T, Q_T, E_T, C_T$), $l(\A)$, $\tg_{l(\A)}$,  $\epsilon^{-1}\delta_{Q_T}$, $M_0$, $\tau_0$, $y$)\;
		\For{$\A' = \M$; $\A' < \A$, $\A'$ monotonic}{
			Use Megiddo's Algorithm to solve 	
			\begin{equation*}
			\mathop{\text{maximize}}\limits_{\substack{P' \in \tg_{l(\A)-3}(y,C''_B C_T M_0, C''_B C_T\tau_0)\\ P_\alpha \in \P}} 1
			\end{equation*}
			\begin{equation*}
			\begin{array}{ll@{}ll}
			\text{subject to}& \partial^\beta P_\alpha(y) &=\delta_{\beta\alpha} & \alpha,\beta\in\A'\\
			&|\partial^\beta P_\alpha(y)| &\leq C''_B(\epsilon^{-1}\delta_{Q})^{|\alpha|-|\beta|}&\alpha \in \A', \beta\in\M\\
			&P' \pm \frac{C_T M_0(\epsilon^{-1}\delta_{Q})^{m-|\alpha|}P_{\alpha}}{C''_B} &\in \tg_{l(\A)-3}(y, C_B'' C_T M_0, C_B'' C_T \tau_0)& \alpha\in\A'\\
			&|\partial^{\beta}(P' - P^y)(y)| &\leq C' C_T M_0(\epsilon^{-1}\delta_{Q})^{m-|\beta|} &\beta \in \M
			\end{array}
			\end{equation*}
			\uIf{Problem has solution}{
				$P^{\#} = P'$\;	
				$\A^{\#} = \A'$\;
				break For\;
			}\uElse{
			continue\;
			}
		}
	\KwRet{$y$, $\A^{\#}$, $P^{\#}$}\;
	}
	\caption{Algorithm to find the Nearest Neighbor}
\end{algorithm}
\clearpage
\section{Computing the interpolant (Main Algorithm)}
\label{sec:comp-main}

We describe the main algorithm that will return the jet of the required function at every point in $E$ for given positive real numbers $M_0$, $0 < \tau_0 \leq \tau_{\max}$, $\epsilon$, $A$ as well as a monotonic $\mathcal{A}_T \in \mathcal{M}$, $Q_T$ a dyadic cube, $x_T \in E \cap \frac{65}{64}Q_T$, $P_T$ following the conditions of Section \ref{statement-of-the-main-lemma}. 

As a reminder, constants written as $C, c, $etc. depend only on $m, n$ and may change from one occurrence to the next, while $C(\tau_0), c(\tau_0), $etc. depend only on $m, n, \tau_0$.

We will create a tree as explained in Section \ref{sec:tree} using the algorithms described so far. In this section we will show that the total one-time work to compute the jet of the interpolant at every point $x \in E$ is at most $C(\tau_0)N\log N$, and the space required is at most $C(\tau_0)N$. The algorithm will be run if the decision algorithm (Algorithm \ref{alg:dec}) returns $1$ and will produce always the jet at each $x\in E$ of a function $F$ satisfying the conclusions of the Main Lemma.

\begin{remarks}
	Note that we are guaranteed, when the function is called recursively, that $P_T$, $x_T$ will follow the assumptions of Section \ref{statement-of-the-main-lemma}. Furthermore, for the starting point of the induction ($\emptyset$), we just need to find if the set $\tg_{\emptyset}$ is empty and if it is not, select one polynomial in $\tg_{\emptyset}$.
\end{remarks}

For the data we assume $P_T \in \tg_{l(\A_T)}(x_T, C_T M_0, C_T \tau_0)$ (with $C_T$ belonging to our list of constants associated to $\A_T$ as explained in Section \ref{sec:const}), $x_T \in E_T$ and that the hypotheses of Section \ref{statement-of-the-main-lemma} hold. $[x]_{Q_T}$ is a list of all the points $x \in E_T$.

\begin{algorithm}
	\label{alg:onetime-fun}
	\SetKwFunction{NNeighbor}{FindNeighbor}
	\SetKwFunction{Transport}{TransportPoly}
	\SetKwFunction{onetime}{FindChildren}
	\SetKwProg{Fn}{Function}{:}{}
	\Fn{\onetime{($\A_T, x_T, P_T, Q_T, E_T, C_T$), $\epsilon$,  $(\tg_{l(\A)})_{\A,x}$, $M_0$, $\tau_0$}}{
		\tcc{These $C_T$ and $\tg$ correspond to the list of constants that we computed in Section \ref{sec:const}}
		\uIf{$\A_T== \M$ or $E_T == \emptyset$}{
			\KwRet{$\emptyset$}
		}
		$CZ$ = CZdec($E_T$, $x_T$, $P_T$, $Q_T$, $\A_T$, $C_T M_0$, $C_T \tau_0$)\;
		\uIf{$CZ == \{Q_T\}$}{
			\KwRet{$\emptyset$}
		}\uElse{
			$\{(Q_1, [x]_{Q_1}), \dots, (Q_{k_{\max}}, [x]_{Q_{k_{\max}}})\}$ list of all cubes $Q \in CZ$ such that $\frac{65}{64}Q_\nu \cap E_T \neq \emptyset$ and the points $x \in \frac{65}{64}Q_\nu\cap E_T$ corresponding.\;
			ret = $[]$ empty list\;
			\For{$k=1,\dots,k_{\max}$}{
				\Switch{$\# [x]_{Q_k}$}{
					\uCase{$\geq 2$}{
						$y_k$, $\A_{k}$, $P_{k}$, \_ := \NNeighbor(($\A_T, x_T, P_T, Q_T, E_T, C_T$), $Q_k$, $(\tg_{l(\A)})_{\A,x}$, $M_0$, $\tau_0$)\;
						$C_k = C_T \hat{C}(\A_T, \A_k)$\;
						ret = ret$\cup (\A_k, y_k P_k, Q_k, [x]_{Q_k}, C_k)$\;
					}\uCase{$==1$}{
						$y_k$ := only point in $E\cap \frac{65}{64}Q_k$\;
						$P_k$ := \Transport(($\A_T, x_T, P_T, Q_T, E_T, C_T$), $l(\A_T)$, $\tg_{l(\A_t)}$, $\epsilon^{-1}\delta_{Q_k}$, $M_0$, $\tau_0$, $y_k$)\;
						ret = ret$\cup (\A_T, y_k, P_k, Q_k, [y_k], C_T)$\;
					}
				}
			}
		}
		\KwRet{ret}	
	}
	\caption{Find Children of Node}
\end{algorithm}

Let $N_Q = \#(E\cap\frac{65}{64}Q)$ and $N_T = \#(E\cap\frac{65}{64}Q_T)$. Algorithm \ref{alg:onetime-fun} computes the children of a given node. It runs in at most $C(\tau_0)N_T\log N_T$ time and uses at most $C(\tau_0)N_T$ space. Indeed, computing the $CZ$ decomposition runs in at most $C(\tau_0)N_T\log N_T$ time and uses at most $C(\tau_0)N_T$ space. Finding all cubes $Q \in CZ$ such that $x \in \frac{65}{64}Q$ takes at most $ C(\tau_0)\log {N_{T}}$ time, and we call this query for each $x\in \frac{65}{64}Q_T$ to find the list of cubes $Q_1,\dots,Q_{k_{\max}}$ and the list of points $[x]_{Q_\nu}$ corresponding to each cube. Checking the length of a list is at most $C$ work. Finally, for some of the cubes we call \texttt{FindNeighbor} (at most $C(\tau_0)\log N_T$ work), and for each of them we return a single tuple. In the end there is a list of $k_{\max} \leq C(\tau_0)N_{T}$ tuples, each of them pointing to a cube with either $1$ element or $N_{Q_k}$. 
\begin{lemma}
	\label{lem:constwork}
	$\sum_{k=1}^{k_{\max}}N_{Q_k} \leq C(\tau_0)N_{T}$
\end{lemma}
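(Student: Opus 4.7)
The plan is to swap the order of summation and then invoke the bounded-overlap property of the Calder\'on--Zygmund decomposition established in Section \ref{cz-decomposition}. Write
\begin{equation*}
\sum_{k=1}^{k_{\max}} N_{Q_k} \;=\; \sum_{k=1}^{k_{\max}} \#\bigl(E_T \cap \tfrac{65}{64}Q_k\bigr) \;=\; \sum_{x \in E_T} \#\bigl\{k : x \in \tfrac{65}{64}Q_k\bigr\},
\end{equation*}
so the task reduces to bounding, for each fixed $x \in E_T$, the cardinality of $\{k : x \in \tfrac{65}{64}Q_k\}$ by a constant depending only on $n$.

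The key step is this bound on overlaps. Suppose $Q_k$ and $Q_{k'}$ both satisfy $x \in \tfrac{65}{64}Q_k \cap \tfrac{65}{64}Q_{k'}$. Then $\tfrac{65}{64}Q_k \cap \tfrac{65}{64}Q_{k'} \neq \emptyset$, so Lemma \ref{lemma-cz2} gives $\tfrac{1}{2}\delta_{Q_k} \leq \delta_{Q_{k'}} \leq 2\delta_{Q_k}$. Hence all CZ cubes whose $\tfrac{65}{64}$-enlargement contains $x$ have sidelengths that are pairwise comparable, all within a factor of $2$ of some common value $\delta$. Each such cube $Q_k$ is contained in the Euclidean ball $B(x, C\delta)$ for some absolute constant $C$, and by Lemma \ref{lemma-cz1} the CZ cubes are pairwise disjoint. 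A standard volume/packing argument in $\mathbb{R}^n$ then shows that at most $C'(n)$ pairwise disjoint cubes of sidelength between $\tfrac{1}{2}\delta$ and $2\delta$ can simultaneously fit inside $B(x, C\delta)$.

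Combining these observations,
\begin{equation*}
\sum_{k=1}^{k_{\max}} N_{Q_k} \;\leq\; C'(n)\,\#E_T \;=\; C'(n)\,N_T,
\end{equation*}
and since $C'(n) \leq C(\tau_0)$ trivially, the lemma follows. There is no genuine obstacle here: the only care required is to correctly cite Lemmas \ref{lemma-cz1} and \ref{lemma-cz2} and to note that the resulting packing constant depends only on $n$ (in particular, not on $N$, $M_0$, $\tau_0$, or the node $T$).
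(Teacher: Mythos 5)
Your proposal is correct and follows the same route the paper takes: swap the order of summation and invoke bounded overlap of the sets $\frac{65}{64}Q_k$, which is exactly the ``Corollary of Lemma~\ref{lemma-cz2}'' the paper cites from \cite{feffermanFittingSmoothFunction2009a}. You go slightly further than the paper in making the packing argument explicit (via Lemma~\ref{lemma-cz1} for disjointness and Lemma~\ref{lemma-cz2} for sidelength comparability) and in noting that the overlap constant actually depends only on $n$, not on $\tau_0$, but this is a refinement of the same proof rather than a genuinely different one.
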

\begin{proof}
	Each $x \in [x]_{Q_0}$ will appear in at most $C(\tau_0)$ of the new lists $[x]_{Q_k}$ (the reason is a Corollary of Lemma \ref{lemma-cz2} that can be seen in \cite{feffermanFittingSmoothFunction2009a}). 
\end{proof}

\begin{algorithm}
	\label{alg:onetime-alg}
	\SetKwFunction{onetime}{FindChildren}
	\KwData{$Q_0$ dyadic, $x_0 \in E \cap \frac{65}{64}Q_0$, $P_0 \in \P$, $M_0$, $\tau_0$, $\epsilon$}
	\KwResult{Tree of ($\A_T, x_T, P_T, Q_T, E_T, C_T$)}
	\tcc{Again, we use the list of constants and precomputed $\tg$ as explained in Section \ref{sec:const}}
	Tree[0] := $(\emptyset, x_0, P_0, Q_0, [x]_{Q_0}, 1)$\;
	\While{Tree[i] is not empty}{
		\For{($\A_T, x_T, P_T, Q_T, E_T, C_T$) in Tree[i]}{
			($\A_T, x_T, P_T, Q_T, E_T, C_T$).next = \onetime(($\A_T, x_T, P_T, Q_T, E_T, C_T$), $\epsilon$,  $(\tg_{l(\A)})_{\A,x}$, $M_0$, $\tau_0$)\;
			\tcc{Tree[i] refers to all nodes that are i levels deep in the tree.}
		}
	}
	\KwRet{Tree}
	\caption{Compute Tree}
\end{algorithm}

In Algorithm \ref{alg:onetime-alg} we call the function \texttt{FindChildren} one time for each node in the tree. Each node $T$ in the tree has at most $C(\tau_0)N_T$ children, but as seen in Lemma \ref{lem:constwork} the sum over all work of all children is still at most $C(\tau_0)N_{T}$. There are at most $C$ levels in the tree, because if a node is not a leaf, then the next node will have $\A' < \A$ and this can go on at most until $\M$. Therefore the total work of Algorithm \ref{alg:onetime-alg} is at most $C(\tau_0)N_{0}\log N_{0}$ and the total space used is at most $C(\tau_0)N_{0}$. 

\begin{algorithm}
	\SetKwFunction{query}{QueryFunction}
	\SetKwFunction{NNeighbor}{FindNeighbor}
	\SetKwFunction{Transport}{TransportPoly}
	\SetKwFunction{Repr}{FindRepresentative}
	\SetKwProg{Fn}{Function}{:}{}
	\Fn{\query{Tree, T=($\A_T, x_T, P_T, Q_T, E_T, C_T$), T'=($\A_{T'}, x_{T'}, P_{T'}, Q_{T'}, E_{T'}, C_{T'}$), $x \in \frac{65}{64}Q_{T'} \cap E$}}{
		\tcc{T' is a child of T, therefore $Q_T' \in CZ(Q_T)$.}
		\tcc{$M_0$ and $\tau_0$ are not needed because they were used to compute $P_T$ and the nodes of the tree}
		\uIf{$\A_T == \M$}{
			\KwRet{$P_T$}
		}
		\uIf{T'.next is empty}{
			\KwRet{$P_T'$}
		}\uElse{
			\For{T'' in T'.next, $x \in [y]_{T''}$}{
				$f_{T''} :=$ \query(Tree, T', T'', $x$)\;
			}
			\KwRet{$\sum_{\nu=1}^{\nu_{\max}}J_x((\theta_{Q_\nu}^{\A_T})^2)\odot_x f_{T''}$}\;			
		}
	}
	\label{alg:query}
	\caption{Main Algorithm: Finding the jet}
\end{algorithm}

Algorithm \ref{alg:query} returns the jet of a function $F$ at a point $x$. We only care about \texttt{QueryFunction} applied to the points $x \in E$. If we are in a leaf, we have finished. To find all nodes such that $x \in [y]_{T''}$ we query the CZ decomposition and use at most $C(\tau_0)\log \#(E \cap \frac{65}{64}Q_{T''})$ work. We make at most $C$ recursive calls. This will be true for all recursion levels and the number of levels is bounded by a constant depending only on $m$. Therefore the total work is at most $C(\tau_0)\log \#(E \cap \frac{65}{64}Q_{T'})$. When we call the query function on the root node of the tree, the total work is at most $C(\tau_0)\log N$. We call this \texttt{QueryFunction} once for each $x \in E$ to obtain the jet of $F$ at each $x$. Therefore the total work is at most $C(\tau_0)N\log N$. We compute the jet of $\theta_{Q_\nu}^{\A_N}$ as in Section 28 of \cite{feffermanFittingSmoothFunction2009a}. 

Once we have obtained the jet of $F$ at every $x$, the smooth selection problem (see Section \ref{sec:smoothsel}) becomes reduced to an interpolation problem that can be solved by the methods proposed in \cite{feffermanFittingSmoothFunction2009a}. That is, we can easily find the jet of a suitable function $F'$ for any $x \in \R^n$ such that $\|F'\|_{C^m(\R^n, \R^D)} \leq CM_0$ and $J_z(F') = J_z(F)$ for each $z \in E$. Furthermore we know that the problem will have a solution with norm bounded by $M_0$ times a constant $C$. This concludes our work in this paper.

\end{document}